\newcommand{\cuthere}{%
	\noindent
	\raisebox{-2.8pt}[0pt][0.95\baselineskip]{\ding{34}}
	\unskip{\tiny\dotfill}
}
\date{\today}
\numberwithin{equation}{section}
\theoremstyle{plain}
\newtheorem{thm}{Theorem}[section]
\newtheorem{corollary}[thm]{Corollary}
\newtheorem{proposition}[thm]{Proposition}
\newtheorem{lemma}[thm]{Lemma}
\newtheorem{thm*}{Theorem}
\newtheorem{proposition*}[thm*]{Proposition}
\newtheorem{lemma*}[thm*]{Lemma}
\newtheorem{question*}[thm*]{Question}
\newtheorem{corollary*}[thm*]{Corollary}
\theoremstyle{definition}
\newtheorem{remark}[thm]{Remark}
\newtheorem{question}[thm]{Question}
\newtheorem{remark*}[thm*]{Remark}
\newcommand{\norm}[1]{\left\lVert#1\right\rVert}
\newcommand{\real}{\mathbb{R}}
\newcommand{\dd}{\mathrm{d}}
\newcommand{\ex}[1]{\operatorname{\mathbb{E}}\left[#1\right]}
\newcommand{\pr}[1]{\operatorname{\mathbb{P}}\left(#1\right)}
\renewcommand{\epsilon}{\varepsilon}
\renewcommand{\phi}{\varphi}
\newcommand{\Mf}{\mathcal{M}_f}
\newcommand{\T}{\mathbb{T}}
\newcommand{\TT}{\mathbb{T}^*}
\newcommand{\lawd}{\overset{(d)}{=}}
\newcommand{\ind}[1]{\mathbf{1}_{\left\{#1\right\}}}
\newcommand{\rdtree}{\mathcal{T}}
\renewcommand{\H}{\mathfrak{h}}
\newcommand{\expp}[1]{\exp\left\lbrace#1\right\rbrace}
\newcommand{\exstar}[2]{\operatorname{\mathbb{E}}^{\psi,\ast}_{#1}\left[#2\right]}
\newcommand{\rddtree}{\tau}
\renewcommand{\root}{\emptyset}
\newcommand{\supp}[1]{\mathrm{supp}(#1)}
\renewcommand{\root}{\emptyset}
\newcommand{\g}{\mathfrak{g}}
\newcommand{\e}{\mathrm{e}}
\renewcommand{\for}[1]{\operatorname{\mathbb{P}}_{\! #1}}
\newcommand{\pibar}{\bar{\pi}}
\newcommand{\ndelta}{\operatorname{\mathbf{N}^{\psi_\delta}}}
\newcommand{\rfor}{\operatorname{\mathbb{Q}^\psi_\delta}}
\renewcommand{\leqslant}{\leq}
\renewcommand{\geqslant}{\geq}
\renewcommand{\TT}{\mathbb{T}^*}
\renewcommand{\for}[1]{\operatorname{\mathbb{P}}^\psi_{\! #1}}
\newcommand{\explo}{\operatorname{\mathbb{P}}^\psi}
\newcommand{\fordelta}[1]{\mathbb{P}^{\psi_\delta}_{\! #1}}
\newcommand{\n}{\operatorname{\mathbf{N}^\psi}}
\newcommand{\rfordelta}{\operatorname{\mathbb{Q}^{\psi_\delta}_\delta}}
\newcommand{\tilted}[1]{\operatorname{\mathbf{P}^\psi_{#1}}}
\newcommand{\atom}[1]{\operatorname{\mathbf{P}^{\psi,\mathrm{a}}_{#1}}}
\newcommand{\tiltedh}[1]{\operatorname{\mathbf{P}^\psi_{#1,\mathnormal{h}}}}
\newcommand{\abov}[1]{\operatorname{\theta}_{#1}}
\newcommand{\off}{\xi^\delta}
\newcommand{\total}{W^\delta}
\newcommand{\init}{Z_0^\delta}
\newcommand{\pruned}[1]{#1^{-}}
\newcommand{\dinf}{\delta-\epsilon}
\newcommand{\dsup}{\delta+\epsilon}
\newcommand{\ndinf}{\operatorname{\mathbf{N}^{\psi_{\delta--\epsilon}}}}
\newcommand{\rfordinf}{\operatorname{\mathbb{Q}^\psi_{\delta--\epsilon}}}
\newcommand{\D}{\mathcal{D}}
\newcommand{\DD}{\mathcal{D}_0}
\newcommand{\dS}{d_{S}}
\newcommand{\dK}{d_{0}}
\newcommand{\w}{w}
\newcommand{\wplus}{w_+}
\newcommand{\unique}[1]{E_#1}
\newcommand{\B}{\mathcal{B}_+}
\renewcommand{\supp}[1]{\mathrm{supp}(#1)}
\newcommand{\dbl}{d_{\mathrm{BL}}}
\begin{document}
	\begin{abstract}
		We study the maximal degree of (sub)critical Lévy trees which arise as the scaling limits of Bienaymé-Galton-Watson trees. We determine the genealogical structure of large nodes and establish a Poissonian decomposition of the tree along those nodes. 
		Furthermore, we make sense of the distribution of the Lévy tree conditioned to have a fixed maximal degree. In the case where the Lévy measure is diffuse, we show that the maximal degree is realized by a unique node whose height is exponentially distributed and we also prove that the conditioned Lévy tree can be obtained by grafting a  Lévy forest on an independent size-biased Lévy tree with a degree constraint at a uniformly chosen leaf. Finally, we show that the Lévy tree conditioned on having large maximal degree converges locally to an immortal tree (which is the continuous analogue of the Kesten tree) in the critical case and to a condensation tree in the subcritical case. Our results are formulated in terms of the exploration process which allows to drop the Grey condition.
	\end{abstract}
	\subjclass[2010]{60J80, 60F17, 60J25}
	\title[Conditioning  Lévy trees by their maximal degree]{Conditioning (sub)critical Lévy trees by their maximal degree: decomposition and local limit}
	\keywords{Lévy trees, maximal degree, local limit, condensation, immortal tree}
	\date{\today}
	\author{Romain Abraham$^1$}
	\address{$^1$
		Institut Denis Poisson,
		Universit\'{e} d'Orl\'{e}ans,
		Universit\'e de Tours,
		CNRS,
		France}
	\email{romain.abraham@univ-orleans.fr}
	
	\author{Jean-Fran\c{c}ois Delmas$^2$}
	\address{$^2$
		CERMICS, Ecole des Ponts, France}
	\email{delmas@cermics.enpc.fr}
	
	\author{Michel Nassif$^3$}
	\address{$^3$
		MAP5, Université Paris Cité, CNRS, France}
	\email{michel.nassif@u-paris.fr}
\maketitle
\section{Introduction and main results}
Lévy trees are random metric spaces that encode the genealogical structure of continuous-state branching processes (CB processes for short). As such, they arise as the scaling limits of Bienaymé-Galton-Watson trees. Lévy trees were introduced by Le Gall and Le Jan \cite{le1998branching} and Duquesne and Le Gall \cite{duquesne2002random} in order to generalize Aldous' Brownian tree \cite{aldous1991continuum}. They also appear as scaling limits of various models of trees and graphs, see e.g.~Haas and Miermont \cite{haas2012scaling}, and are naturally related to fragmentation processes, see Miermont \cites{miermont2003fragmentations, miermont2005fragmentations}, Haas and Miermont \cite{haas2004genealogy}, Abraham and Delmas \cite{abraham2008fragmentation}.

In the present paper, we study the maximal degree of a general Lévy tree. More precisely, we first establish a Poissonian decomposition of the Lévy tree along large nodes. Then, we make sense of the distribution of the Lévy tree conditioned to have a fixed maximal degree. In the case where the Lévy measure is diffuse, we show that the maximal degree is realized by a unique node, and we describe how to reconstruct the tree by grafting a Lévy forest on an independent size-biased Lévy tree (with a restriction on the maximal degree) at a uniform leaf. Finally, we investigate the asymptotic behavior of the Lévy tree conditioned to have large maximal degree.

These questions arise naturally in the study of random trees and have been thoroughly investigated in the case of Bienaymé-Galton-Watson trees. The first results in this direction were obtained by Jonsson and Stefánsson \cite{jonsson2011condensation} who showed that a condensation phenomenon appears for a certain class of subcritical Bienaymé-Galton-Watson trees conditioned to have a large size, in the sense that with high probability there exists a unique node with degree proportional to the size. Furthermore, the tree converges locally to a condensation tree consisting of a finite spine with random geometric length onto which independent and identically distributed Bienaymé-Galton-Watson trees are grafted. This was later generalized by Janson \cite{janson2012simply}, with further results by Kortchemski \cite{kortchemski2015condensation}, Abraham and Delmas \cite{abraham2014condensation}, Stufler \cite{stufler2020maximal}. On the other hand, He \cite{he2017conditioning} shows that Bienaymé-Galton-Watson trees conditioned on having large maximal degree converge locally to Kesten's tree (which consists of an infinite spine onto which independent and identically distributed Bienaymé-Galton-Watson trees are grafted) in the critical case and to a condensation tree in the subcritical case.

In the continuum setting, Bertoin \cite{bertoin2011maximal} determined the distribution of the maximal degree of a stable Lévy tree (his result is formulated in terms of Lévy processes). Using the formalism of CB processes, He and Li \cite{he2016distribution} treated the case of a general branching mechanism (in fact their result is more general as they considered CB processes with immigration). In \cite{he2014maximal}, they also studied the local limit of a CB process conditioned to have large maximal degree (i.e.~large maximal jump). In the critical case, they showed that it converges locally to a CB process with immigration. Later , He \cite{he2022local} extended the local convergence result to the whole genealogy: more precisely, he showed that a critical Lévy tree conditioned on having large maximal degree converges locally to an immortal tree (which is the continuous counterpart of Kesten's tree, consisting of an infinite spine onto which trees are grafted according to a Poisson point process). We improve these results by considering the density version of the conditioning instead of the tail version: more explicitly, we study the asymptotic behavior of critical Lévy trees conditioned to have maximal degree equal to (and not larger than) a given value. Density versions are finer than their tail counterparts and are usually more difficult to prove.

The existing litterature in the subcritical case is less developped. He and Li \cite{he2014maximal} showed that a subcritical CB process conditioned to have large maximal degree converges locally to a CB process with immigration which is killed (i.e.~sent to infinity) at an independent exponential time, thus underlining a condensation phenomenon. We improve this result in several directions. Again we consider the density version of the conditioning instead of the tail version. We also extend the convergence result to the whole genealogical structure instead of the population size at a given time: this gives more information and, as an example, allows us to see that only one large node emerges. Finally, we are also able to describe precisely what happens above the condensation node.

For the sake of clarity, we shall formulate our results in terms of Lévy trees in the introduction. This requires an additional assumption on the branching mechanism, namely the Grey condition (see below), in order to have a nice topology on the set of trees. Indeed, this condition ensures that the Lévy tree is a \emph{compact} real tree. However, it is superfluous and will be dropped in the rest of the paper where we will deal with the exploration process instead. Let us mention that a forthcoming work by Duquesne and Winkel \cite{duquesne2022mass} should allow us to use the formalism of real trees even for a general branching mechanism not necessarily satisfying the Grey condition.

Before stating our main results, we need to recall some definitions and to set notations.
\subsection{Real trees}
We recall the formalism of real trees, see \cite{evans2007probability}. A quadruple $(T,d, \root, \mu)$ is called a real tree if $(T,d)$ is a metric space equipped with a distinguished vertex $\root \in T$ called the root and a nonnegative finite measure $\mu$ on $T$ and if the following two properties hold for every $x,y \in T$:
\begin{enumerate}[label=(\roman*),leftmargin=*]
	\item (Unique geodesics). There exists a unique isometric map $f_{x,y} \colon [0,d(x,y)] \to T$ such that $f_{x,y}(0) = x$ and $f_{x,y}(d(x,y)) = y$.
	\item (Loop-free). If $\phi$ is a continuous injective map from $[0,1]$ into $T$ such that $\phi(0) = x$ and $\phi(1) = y$, then we have $\phi([0,1]) = f_{x,y}\left([0,d(x,y)]\right)$.
\end{enumerate}
For every vertex $x \in T$, we define its height by $H(x) = d(\root, x)$. The height of the tree is defined by $\H(T) = \sup_{x\in T}H(x)$. Note that if $(T,d)$ is compact, then $\H(T) < \infty$.

We will denote by $\T$ the set of (isometry classes of) \emph{compact} real trees. Let us mention that it can be equipped with the Gromov-Hausdorff-Prokhorov distance which makes it a Polish space, see e.g. \cite{addario2017scaling}.

We will also need the set $\TT$ of (isometry classes of) compact real trees that are \emph{marked}, i.e. equipped with a distinguished vertex in addition to the root $\root$. Again, $\TT$ can be made into a Polish space when equipped with a marked variant of the Gromov-Hausdorff-Prokhorov distance.

\subsection{Local convergence of real trees} We will make use of the notion of local convergence for \emph{locally compact} real trees which we now recall. For every $h >0$, define the restriction mapping on the set of (isometry classes of) real trees by:
\begin{equation*}
	r_h(T,d,\root,\mu) = (T^h,d_{|T^h \times T^h}, \root,\mu_{|T^h}) \quad \text{where } T^h = \{x \in T\colon\, H(x)\leqslant h\}.
\end{equation*}
In other words, $r_h(T)$ is the real tree obtained from $T$ by removing all nodes whose height is larger than $h$, equipped with the same metric and measure restricted to $T^h$. Recall that the Hopf-Rinow theorem implies that if $T$ is a locally compact real tree, the closed ball $r_h(T)$ is compact. We say that a sequence $T_n$ of locally compact trees converges locally to a locally compact tree $T$ if for every $h>0$, the sequence $r_h(T_n)$ converges for the Gromov-Hausdorff-Prokhorov distance to $r_h(T)$.

\subsection{Grafting procedure} 
Given a real tree $T \in \T$ and a finite or countable family $(( x_i,T_i), \, i\in I)$ of elements of $ T\times \T$, we denote by
\begin{equation*}
	T\circledast_{i\in I} (x_i,T_i)
\end{equation*}
the real tree obtained by grafting $T_i$ on $T$ at the node $x_i$. For a precise definition, we refer the reader to \cite[Section 2.4]{abraham2013forest}.

\subsection{Lévy trees}
Let $\psi$ be a branching mechanism given by:
\begin{equation}
	\psi(\lambda) = \alpha \lambda +\beta \lambda^2 + \int_{(0,\infty)} \left(\e^{-\lambda r}-1+\lambda r\right)\, \pi(\dd r),
\end{equation}
where $\alpha,\beta \geqslant 0$ and $\pi$ is a $\sigma$-finite measure on $(0,\infty)$ such that $\int_{(0,\infty)} (r\wedge r^2)\, \pi(\dd r)< \infty$. The branching mechanism $\psi$ is said to be critical (resp.~subcritical) if $\alpha = 0$ (resp.~$\alpha >0$). In what follows, we assume that $\pi \neq 0$ as otherwise all branching points of the Lévy tree will be binary. Whenever we are dealing with Lévy trees, we always assume that the Grey condition holds:
\begin{equation}\label{eq: grey intro}
	\int^\infty \frac{\dd \lambda}{\psi(\lambda)} < \infty,
\end{equation}
which is equivalent to the compactness of the Lévy tree. In the rest of the paper, this condition will be relaxed to:
\begin{equation}
	\beta >0 \quad \text{or} \quad \int_{(0,1)} r\, \pi(\dd r ) = \infty.
\end{equation}

We will consider a Lévy tree $\rdtree$ under its excursion measure which is denoted by $\n$. Here we briefly recall some results on Lévy trees but we refer the reader to Duquesne and Le Gall \cite{duquesne2002random,duquesne2005probabilistic} for a complete presentation on the subject. One can define a $\sigma$-finite measure $\n$ on the space $\T$, called the excursion measure of the Lévy tree, with the following properties.
\begin{enumerate}[label=(\roman*)]
	\item \textbf{Mass measure.} For $\n$-almost every $\rdtree$, the mass measure $\mu$ is supported on the set of leaves $\operatorname{Lf}(\rdtree)\coloneqq \{x \in \rdtree\colon\, \rdtree \setminus \{x\} \ \text{is connected}\}$. Furthermore, the total mass $\sigma \coloneqq \mu(\rdtree)$ satisfies:
	\begin{equation}
		\n\left[1-\e^{-\lambda \sigma}\right] = \psi^{-1}(\lambda).
	\end{equation}
	\item \textbf{Local times.} For $\n$-almost every $\rdtree$, there exists a process $(L^{a}, \, a \geqslant 0)$ with values in the space of finite measures on $\rdtree$ which is càdlàg for the weak topology and such that
	\begin{equation}
		\mu(\dd x) = \int_0^\infty \dd a\, L^{a}(\dd x).
	\end{equation}
	For every $a \geqslant 0$, the measure $L^{a}$ is supported on $\rdtree(a)\coloneqq\{x \in \rdtree \colon\, H(x) = a\}$ the set of nodes at height $a$. Furthermore, the real-valued process $(L^{a}_\sigma \coloneqq \langle L^{a},1\rangle, \, a \geqslant 0)$ is a $\psi$-CB process under its canonical measure.
	\item \textbf{Branching property.} For every $a\geqslant 0$, let $(\rdtree^{i}, \, i \in I_a)$ be the subtrees of $\rdtree$ originating from level $a$. Then, under $\n$ and conditionally on $r_a(\rdtree)\coloneqq \{x\in \rdtree\colon \, H(x)\leqslant a\}$, the measure $\sum_{i \in I_a} \delta_{\rdtree^{i}}$ is a Poisson point measure with intensity $L_\sigma^a \n$.
	\item \textbf{Branching points.} For $\n$-almost every $\rdtree$, the branching points of $\rdtree$ are either binary or of infinite degree. The set of binary branching points is empty if $\beta=0$ and is a countable dense subset of $\rdtree$ if $\beta>0$. The set 
	\begin{equation*}
		\operatorname{Br_\infty}(\rdtree) \coloneqq\{x \in \rdtree \colon\, \rdtree\setminus\{x\}\text{ has infinitely many connected components}\}
	\end{equation*} 
	of infinite branching points is nonempty with $\n$-positive measure if and only if $\pi \neq 0$. If $\langle \pi,1\rangle = \infty$, the set $\operatorname{Br_\infty}(\rdtree)$ is countable and dense in $\rdtree$ for $\n$-almost every $\rdtree$. Furthermore, the set $\{H(x), \, x \in \operatorname{Br_\infty}(\rdtree) \}$ coincides with the set of discontinuity times of the mapping $a\mapsto L^{a}$. For every such discontinuity time $a$, there is a unique $x_a \in \operatorname{Br_\infty}(\rdtree)\cap \rdtree(a)$ and $\Delta_a >0$ such that
	\begin{equation*}
		L^{a} = L^{a-} + \Delta_a \delta_{x_a}.
	\end{equation*}
	For convenience, we define $\Delta_a$ for every $a \geqslant 0$ by setting $\Delta_a = 0$ if $L^{a} = L^{a-}$. In particular, we have $L^{a}_\sigma = L^{a-}_{\sigma} + \Delta_a$, that is $\Delta_a$ is exactly the size of the jump of the associated CB process at time $a$. We will call $\Delta_a$ the degree (or the mass) of the node $x_a$. This is an abuse of language since a node $x_a \in  \operatorname{Br_\infty}(\rdtree)$ has infinite degree by definition. 
\end{enumerate}
\subsection{Main results}
We denote by $\Delta$ the maximal degree of the Lévy tree $\rdtree$ under $\n$:
\begin{equation}
	\Delta = \sup_{a \geqslant 0} \Delta_a.
\end{equation}
The first result of this paper gives the joint distribution of the maximal degree $\Delta$ and the total mass $\sigma$ under $\n$. The distribution of the maximal degree was already obtained by Bertoin \cite[Lemma 1]{bertoin2011maximal} in the stable Lévy case then by He and Li \cite{he2016distribution} in the general case.

For the sake of notational simplicity, if $\nu$ is a measure on $\real$ we will write $\nu(a,b)$ (resp.~$\nu[a,b)$) instead of $\nu((a,b))$ (resp.~$\nu([a,b))$). We will also write $\nu(a)$ for $\nu(\{a\})$.
Denote by $\pibar \colon \real_+ \to (0,\infty]$ the tail of the Lévy measure $\pi$:
\begin{equation}\label{eq: definition pibar}
	\pibar(\delta) = \pi(\delta,\infty), \quad \forall \delta \geqslant 0,
\end{equation}
and define the Laplace exponent $\psi_\delta$ for every $\delta>0$ by:
\begin{align}\label{eq: definition psi delta}
	\psi_\delta(\lambda) &= \left(\alpha + \int_{(\delta,\infty)} r \, \pi(\dd r)\right)\lambda + \beta \lambda^2 +\int_{(0,\delta]} \left(\e^{-\lambda r}-1+\lambda r\right)\, \pi(\dd r)\notag \\
	&= \psi(\lambda) + \int_{(\delta,\infty)} \left(1-\e^{-\lambda r}\right)\, \pi(\dd r).
\end{align}
Observe that, in terms of the associated Lévy process, this corresponds to removing all jumps with size larger than $\delta$. If the Lévy measure $\pi$ is finite, we also define:
\begin{equation}\label{eq: definition psi 0}
	\psi_0(\lambda) = \left(\alpha + \int_{(0,\infty)} r \, \pi(\dd r)\right)\lambda + \beta \lambda^2.
\end{equation}
\begin{proposition}\label{prop: distribution degree intro}
	For every $\delta >0$ and $\lambda \geqslant 0$, we have:
	\begin{equation}\label{eq: joint distribution intro}
		\n\left[1-\e^{-\lambda \sigma}\ind{\Delta\leqslant \delta}\right] = \psi_\delta^{-1}(\pibar(\delta)+\lambda).
	\end{equation}
	Furthermore, if the Lévy measure $\pi$ is finite, we have:
	\begin{equation}
		\n\left[1-\e^{-\lambda \sigma}\ind{\Delta= 0}\right] = \psi_0^{-1}(\langle \pi, 1\rangle+\lambda).
	\end{equation}
\end{proposition}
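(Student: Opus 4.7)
The plan is to reduce the joint Laplace transform to the classical one for the truncated branching mechanism $\psi_\delta$, via a Poisson thinning of the large jumps. The starting point is the standard Poissonian relation between the excursion measure $\n$ and the law $\for{x}$ of the exploration process associated with a $\psi$-CB process starting from $x$: for any nonnegative multiplicative functional $F$ of the forest,
\begin{equation*}
	\mathbb{E}^\psi_x\!\left[F\right] = \expp{-x\,\n[1-F]}.
\end{equation*}
Applied to $F=\e^{-\lambda\sigma}\ind{\Delta\leq\delta}$, which is indeed multiplicative over the subtree decomposition at height $0$ (since $\sigma$ is additive and $\Delta$ is a maximum), this reduces the proof to establishing
\begin{equation*}
	\mathbb{E}^\psi_x\!\left[\e^{-\lambda\sigma}\ind{\Delta\leq\delta}\right]=\expp{-x\,\psi_\delta^{-1}(\lambda+\pibar(\delta))}.
\end{equation*}

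To compute this, I would realize the $\psi$-CB process $Z$ under $\for{x}$ as a strong solution of the standard SDE driven by a Brownian motion and a Poisson random measure $N(\dd s,\dd u,\dd r)$ of intensity $\dd s\,\dd u\,\pi(\dd r)$ on $\real_+^2\times(0,\infty)$, in which an atom $(s,u,r)$ produces a jump of size $r$ at time $s$ exactly when $u\leq Z_{s-}$. Discarding from $N$ the atoms with $r>\delta$ and absorbing their compensator contribution $\int_{(\delta,\infty)}r\,\pi(\dd r)$ into the linear drift yields, in view of the formula defining $\psi_\delta$, precisely a $\psi_\delta$-CB process $Z^\delta$. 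On the event $\{\Delta\leq\delta\}$ no large atom of $N$ is ever used, hence $Z\equiv Z^\delta$ and $\sigma=\sigma^\delta\coloneqq\int_0^\infty Z^\delta_s\,\dd s$. Conditionally on $Z^\delta$, the relevant large atoms form a Poisson point process of total intensity $\pibar(\delta)\sigma^\delta$, so
\begin{equation*}
	\condex{\ind{\Delta\leq\delta}}{Z^\delta}=\expp{-\pibar(\delta)\sigma^\delta}.
\end{equation*}
Combining with the classical Laplace transform $\mathbb{E}^{\psi_\delta}_x[\e^{-\mu\sigma^\delta}]=\expp{-x\,\psi_\delta^{-1}(\mu)}$ delivers the displayed identity.

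The finite-$\pi$ case follows from the same scheme with $\delta=0$: the purely drift-diffusive $\psi_0$-CB process is well-defined, the event $\{\Delta=0\}$ is the event that no atom of $N$ is used before extinction, and its conditional probability given $Z^0$ is $\expp{-\bra{\pi,1}\sigma^0}$. The main subtlety, and where most of the technical effort will go, is formalizing the Poisson thinning rigorously within the formalism of the exploration process used throughout the paper (so that the Grey condition can be dropped). A convenient alternative is to decompose the spectrally positive Lévy process of Laplace exponent $\psi$ as an independent sum of a Lévy process of exponent $\psi_\delta$ and a compound Poisson process of jump measure $\pi|_{(\delta,\infty)}$, and to read off the two pieces directly at the level of the excursions above the infimum. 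Some care is needed when $\delta$ is an atom of $\pibar$ or when $\pibar(\delta)=\infty$, which will be handled by a standard approximation from continuity points of $\pibar$.
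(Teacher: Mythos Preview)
Your approach is correct and genuinely different from the paper's. You argue at the level of the CB process via the SDE representation and Poisson thinning of large jumps (essentially the He--Li strategy), then lift the answer to $\n$ through the multiplicative relation $\mathbb{E}^\psi_x[F]=\exp(-x\,\n[1-F])$. The paper instead works entirely inside the exploration-process formalism: it introduces the auxiliary quantity $v(\lambda,\mu)=\n[1-\e^{-\lambda\sigma-\mu\int_0^\sigma\ind{\rho_t\in A}\,\dd t}]$, applies the Markov property at time $t$ together with the Poisson excursion decomposition of Lemma~\ref{lemma: poisson decomposition exploration}, and then uses Bismut's formula (Proposition~\ref{prop: representation exploration}) to evaluate the resulting integrals explicitly in terms of a subordinator. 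This yields a closed equation for $v(\lambda,\mu)$; sending $\mu\to\infty$ produces the implicit equation $\psi_\delta(v(\lambda))=\pibar(\delta)+\lambda$.

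Your route is shorter and more transparent conceptually; the paper's route has the advantage of staying entirely within the exploration process, so no separate identification of $\Delta$ with the maximal CB jump is needed and the Grey condition never enters. Note also that the Lévy-process decomposition you propose as an ``alternative'' is exactly what the paper carries out later in Theorem~\ref{thm: degree decomposition}, but there the normalization of the excursion measures is pinned down \emph{using} Proposition~\ref{prop: joint distribution degree lifetime}; your ordering avoids that circularity by working under $\for{x}$ first. Two small remarks: $\pibar(\delta)<\infty$ automatically for every $\delta>0$ under the standing assumption $\int(r\wedge r^2)\,\pi(\dd r)<\infty$, so no approximation is needed there; and the thinning argument (conditioning on $Z^\delta$, the large atoms used before extinction are Poisson with mean $\pibar(\delta)\sigma^\delta$) is clean as stated, since on the event that none are used one has $Z_{s-}=Z^\delta_{s-}$ identically.
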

The proof is given in Section \ref{sect: distribution of degree}.
\begin{remark}
	Let us make a connection with He and Li \cite{he2014maximal}. Recall that under $\n$ the process $(L^{a}_\sigma, \, a \geqslant 0)$ is distributed as a $\psi$-CB process under its canonical measure and that the maximal degree $\Delta$ of the Lévy tree corresponds to the maximal jump of the associated CB process. In particular, taking $\lambda =0$ in \eqref{eq: joint distribution intro} gives the distribution of the maximal jump of a $\psi$-CB process, which was already obtained by He and Li, see \cite[Corollary 4.2]{he2014maximal}. In fact, their result is much more general (see \cite[Theorem 4.1]{he2014maximal}) since they consider a CB process with immigration and in this context, they compute the distribution of the \emph{local} maximal jump which in terms of the Lévy tree corresponds to the maximal degree up to a fixed level $h$. However, they do not give the \emph{joint} distribution of  $\Delta$ and $\sigma$, which in terms of the CB process corresponds to the total mass:
	\begin{equation*}
		\sigma = \int_0^\infty L^{a}_\sigma \, \dd a.
	\end{equation*}
\end{remark}

Next, we give a decomposition of the Lévy tree along the large nodes. More precisely, we identify the distribution of the pruned Lévy tree obtained by removing all nodes with degree larger than $\delta$ (and the subtrees above them). This is again a Lévy tree with branching mechanism $\psi_\delta$ under its excursion measure. Furthermore, one can recover the Lévy tree from the pruned one by grafting Lévy forests at uniformly chosen leaves in a Poissonian manner. Before stating the result, we first need to introduce some notations. For every $r>0$, denote by $\for{r}$ the distribution of the random real tree $\rdtree = \{\root\}\circledast_{i\in I} \rdtree_i$ obtained by gluing together at their root the atoms $(\rdtree_i, \, i \in I)$ of a $\T$-valued Poisson point measure with intensity $r\n[\dd \rdtree]$. This should be interpreted as the distribution of a Lévy forest with initial degree $r>0$. Furthermore, for every $\delta>0$ such that $\pibar(\delta) >0$, set:
\begin{equation*}
	\rfor(\dd \rdtree) = \frac{1}{\pibar(\delta)}\int_{(\delta,\infty)} \pi(\dd r) \for{r}(\dd \rdtree)
\end{equation*}
which is the distribution of a Lévy forest with random initial degree with distribution $\pi$ conditioned on being larger than $\delta$.

\begin{thm}\label{thm: degree decomposition intro}
	Let $\delta\geqslant 0$ such that $\pibar(\delta)<\infty$. Under $\ndelta$ and conditionally on $(\rdtree,\root,d,\mu)$, let $\left((x_i,\rdtree_{i}), \, i\in I\right)$ be the atoms of a Poisson point measure on $\rdtree\times \T$ with intensity $\pibar(\delta)\,\mu(\dd x) \rfor(\dd \widetilde{\rdtree})$. Then, under $\ndelta$, the random tree $\rdtree \circledast_{i\in I} (x_i, \rdtree_{i})$ has distribution $\n$.
\end{thm}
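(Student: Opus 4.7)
The strategy is to lift the statement to the exploration process, where it reduces to a classical decomposition of the underlying Lévy process. Recall that the $\n$-tree is coded by an exploration process $\rho$ built from a Lévy process $X$ with Laplace exponent $\psi$ (under an excursion measure). The Lévy-Itô decomposition of $X$ reads $X = X^\delta + Y^\delta$, where $X^\delta$ and $Y^\delta$ are independent Lévy processes with Laplace exponents $\psi_\delta$ (i.e.\ $X$ with all jumps of size $>\delta$ deleted) and $\lambda \mapsto \int_{(\delta,\infty)}(1-e^{-\lambda r})\pi(dr)$ respectively; in particular, $Y^\delta$ is a compound Poisson process whose jump times form a Poisson point process of rate $\pibar(\delta)$ on $\real_+$, with i.i.d.\ marks of law $\pibar(\delta)^{-1}\mathbf{1}_{(\delta,\infty)}\pi$. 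The statement of the theorem is the reconstruction side of a Poissonian pruning decomposition which I would establish as follows.

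First I would prove the pruning version: under $\n$, let $\rdtree^\delta$ be obtained from $\rdtree$ by removing every subtree rooted above an infinite branching point of degree $>\delta$. By item~(iv) of the Lévy tree properties, these large branching points are in bijection with the jumps of $X$ of size $>\delta$, hence with the jumps of $Y^\delta$. A careful inspection of the construction of $\rho$ then shows that the exploration process coding $\rdtree^\delta$ has the same law as the one driven by $X^\delta$, which, by independence of $X^\delta$ and $Y^\delta$, is itself independent of the data of the removed part; this yields $\rdtree^\delta \sim \ndelta$. Next, by the branching property (item~(iii)), conditionally on an infinite branching point $x_a$ of degree $\Delta_a = r > \delta$ the subtrees grafted above $x_a$ form a Poisson point measure of intensity $r\,\n$, which is by definition a $\for{r}$-Lévy forest. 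Combined with the fact that the sizes of the removed forests are i.i.d.\ with law $\pibar(\delta)^{-1}\mathbf{1}_{(\delta,\infty)}\pi$ (marks of $Y^\delta$), this shows that the family of removed forests forms, conditionally on $\rdtree^\delta$, a PPP on $\rdtree^\delta \times \T$ whose $\T$-marginal intensity is $\rfor$; the theorem's statement then follows at once by reversing the roles of the pruned tree and the graft data.

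The main obstacle lies in identifying the spatial intensity on $\rdtree^\delta$ as $\pibar(\delta)\mu$. The jumps of $Y^\delta$ live on the original exploration time axis $[0,\sigma]$, whereas after stripping them off they need to be reindexed on the shorter axis $[0,\sigma^\delta]$ of the pruned exploration process. A thinning/time-change argument for Poisson point measures yields that the reindexed jump times still form a Poisson process, now of rate $\pibar(\delta)$ with respect to Lebesgue measure on $[0,\sigma^\delta]$, because the excised intervals in the exploration time are independent of the jump times of $Y^\delta$ themselves. Since, by construction, the mass measure $\mu$ on the $\ndelta$-tree is the pushforward of Lebesgue measure on $[0,\sigma^\delta]$ by the canonical projection $s \mapsto \rho_s$ (whose image is supported on the leaves), the grafting points project onto a PPP of intensity $\pibar(\delta)\mu(dx)$ on $\rdtree^\delta$, as required. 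As a consistency check, the reconstructed joint distribution of $(\sigma, \Delta)$ matches Proposition~\ref{prop: distribution degree intro}: the event $\{\Delta \leq \delta\}$ corresponds to ``no Poisson graft'', and a direct computation yields $\ndelta[1 - e^{-(\lambda + \pibar(\delta))\sigma^\delta}] = \psi_\delta^{-1}(\lambda+\pibar(\delta))$, matching \eqref{eq: joint distribution intro}.
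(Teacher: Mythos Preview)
Your approach is essentially the paper's: both decompose the underlying Lévy process as $X = X^\delta + Y^\delta$ with $Y^\delta$ compound Poisson of rate $\pibar(\delta)$, identify the pruned exploration process with the one driven by $X^\delta$, and recover the grafted pieces as $\rfor$-forests. Two places where the paper is more careful than your sketch: (i) your claim that ``the excised intervals are independent of the jump times of $Y^\delta$'' is not literally true (the length of an excised interval depends on the whole future of $X$, including later jumps of $Y^\delta$); the paper instead iterates the strong Markov property at the successive stopping times $T_\delta^{(k)}$ to show directly that the time-changed grafting times $A(T_\delta^{(k)})$ form a rate-$\pibar(\delta)$ Poisson process independent of the pruned process; (ii) the argument is first run under $\mathbb{P}$, and passing to the excursion measures only gives the identity up to a multiplicative constant, which the paper pins down to $1$ by checking $\ndelta[\zeta\geq 1]=\psi_\delta^{-1}(\pibar(\delta))=\n[\Delta>\delta]$.
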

\begin{figure}[h]
	\centering
	\begin{tikzpicture}
		\node at (.05\linewidth, .05\linewidth){\includegraphics[width=.5\linewidth]{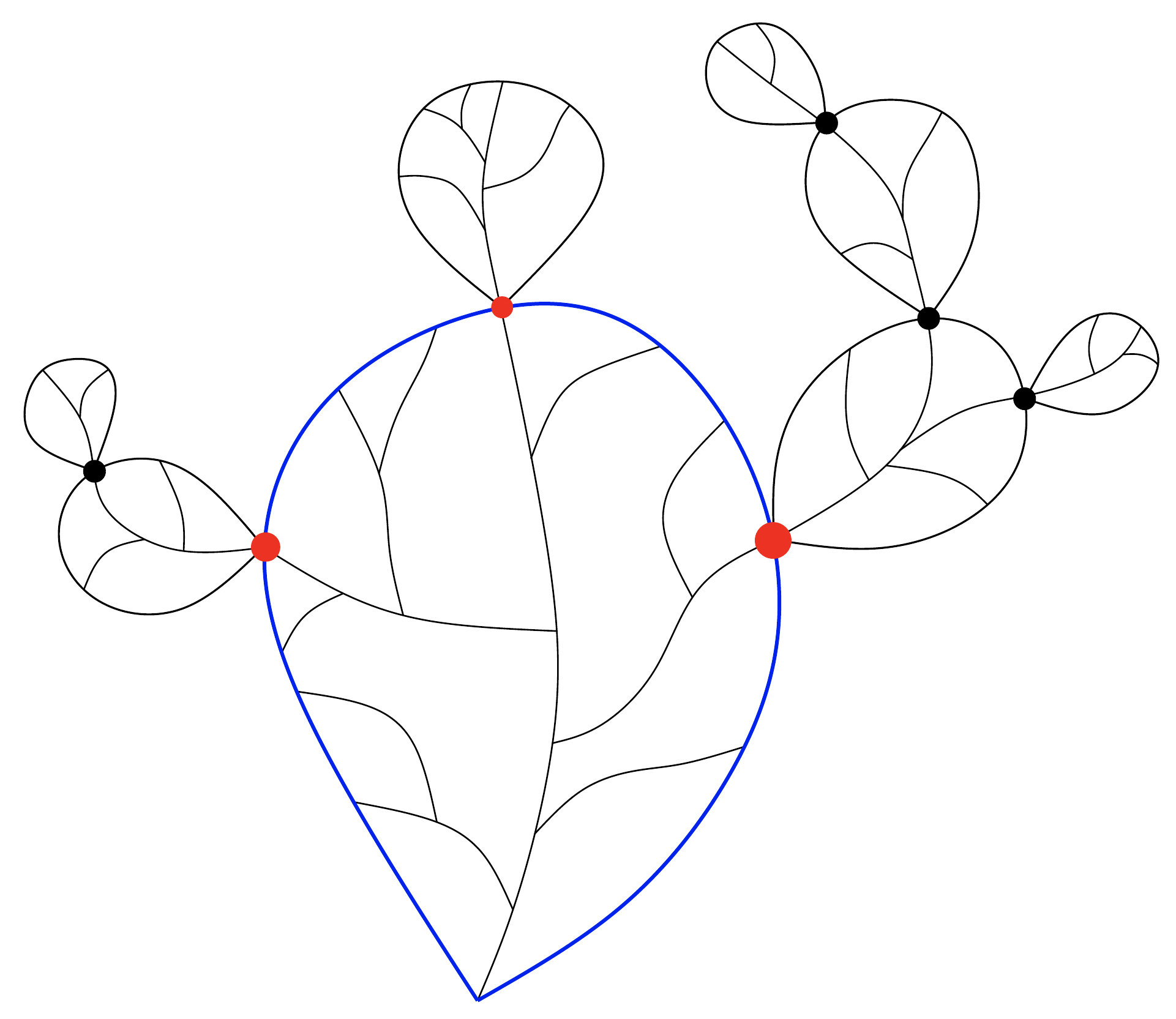}};
		\node at (.5\linewidth, .05\linewidth){\includegraphics[width=.3\linewidth]{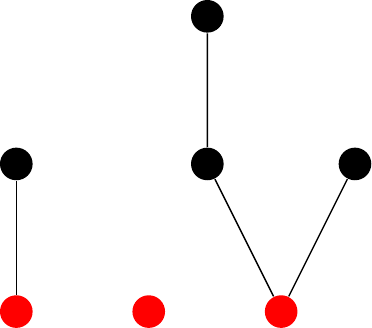}};
	\end{tikzpicture}
	\caption{Decomposition of the Lévy tree $\rdtree$ along the nodes with degree larger than $\delta$ (left) and the associated discrete forest (right). In blue: the pruned subtree $\rdtree^\delta$, in red: the first-generation nodes with degree larger than $\delta$.}
	\label{fig: degree decomposition}
\end{figure}

See Theorem \ref{thm: degree decomposition} for a more precise statement. In particular, the pruned Lévy tree $\rdtree^\delta$ which is obtained from $\rdtree$ by removing all nodes with degree larger than $\delta$ is again a Lévy tree with branching mechanism $\psi_\delta$. Thanks to this decomposition, we prove in Proposition \ref{prop: galton watson} that the discrete forest formed by nodes with degree larger than $\delta$ is a Bienaymé-Galton-Watson forest and we specify its initial distribution and its offspring distribution, see Figure \ref{fig: degree decomposition}.
\begin{remark}
	Theorem \ref{thm: degree decomposition intro} is a special case of the main result in \cite{abraham2010pruning}. In that paper, the authors study a pruning procedure on the Lévy tree defined as follows: they add some marks on the skeleton of the tree according to a Poisson point measure with intensity $\alpha_1 \Lambda$ (where $\Lambda$ is the length measure on $\rdtree$ which is the equivalent of the Lebesgue measure) and add some other marks on the infinite branching points $x_a$ with probability $p(\Delta_a)$ where $p$ is a nonnegative measurable function satisfying:
	\begin{equation*}
		\int_{(0,\infty)} r p(r) \, \pi(\dd r) <\infty.
	\end{equation*}
	Then they show that the subtree $\rdtree^{\alpha_1, p}$ containing the root obtained from $\rdtree$ by removing all the marks is again a Lévy tree and identify its branching mechanism. Furthermore, they determine the distribution of the subtrees above the marks conditionally on $\rdtree^{\alpha_1,p}$.
	It is obvious that the tree $\rdtree^\delta$ coincides with $\rdtree^{\alpha_1,p}$ where $\alpha_1 = 0$ and $p = \mathbf{1}_{(\delta,\infty)}$. Since $p$ satisfies the integrability assumption above (as $\int_{(1,\infty)} r\,\pi(\dd r) <\infty$), their result applies and gives the joint distribution of the pruned tree $\rdtree^\delta$ and the subtrees originating from the nodes with degree larger than $\delta$. However, the proof is much simpler in our particular setting.
\end{remark}

One of our main results is the next theorem giving a decomposition of the Lévy tree at its largest nodes.
Under $\n$, denote by $M_\delta$ the random variable defined by:
\begin{equation*}
	M_\delta = \frac{\e^{\g(\delta)\sigma}-1}{\g(\delta)}, \quad \text{where} \quad \g(\delta) = \pi(\delta)\e^{-\delta\n[\Delta>\delta]}.
\end{equation*}
This should be interpreted as $M_\delta = \sigma$ if $\g(\delta) = 0$ (i.e.~if $\delta$ is not an atom of $\pi$).
\begin{thm}\label{thm: conditioning by degree intro} There exists a regular conditional probability $\n[\cdot|\Delta=\delta]$ for $\delta>0$ such that $\pi[\delta,\infty)>0$, which is given by, for every measurable and bounded $F \colon \T \to \real$:
	\begin{multline}
		\n[F(\rdtree)|\Delta=\delta] = \frac{1}{\n[M_\delta \ind{\Delta< \delta}]}\sum_{k=0}^\infty \frac{\g(\delta)^{k}}{(k+1)!}\\
		\times \n\left[\int \prod_{i=1}^{k+1} \mu(\dd x_i) \for{\delta}(\dd \rdtree_i |\Delta\leqslant \delta) F(\rdtree\circledast_{i=1}^{k+1}(x_i,\rdtree_i))\ind{\Delta<\delta}\right],
	\end{multline}
	where $\n[M_\delta\ind{\Delta<\delta}]< \infty$. In particular, if $\delta >0$ is not an atom of the Lévy measure $\pi$, we have:
	\begin{equation}
		\n[F(\rdtree)|\Delta=\delta] = \frac{1}{\n[\sigma \ind{\Delta<\delta}]} \n\left[\int \mu(\dd x) \for{\delta}(\dd \widetilde{\rdtree}|\Delta\leqslant \delta) F(\rdtree\circledast (x,\widetilde{\rdtree}))\ind{\Delta<\delta}\right].
	\end{equation}
	Furthermore, if $\langle \pi, 1\rangle = \infty$, then $\n$-a.e.~$\Delta>0$, and if $\langle \pi,1\rangle < \infty$, then we have:
	\begin{equation}\label{eq: distribution of the tree with no nodes}
		\n[F(\rdtree)\ind{\Delta=0}] = \operatorname{\mathbf{N}}^{\psi_0}[F(\rdtree)\e^{-\langle \pi,1\rangle \sigma}].
	\end{equation}
\end{thm}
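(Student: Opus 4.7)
The proof rests on applying the Poisson decomposition of Theorem~\ref{thm: degree decomposition intro} at the threshold ``just below $\delta$'': after unfolding the grafted forests by their initial degree, $\rdtree$ under $\n$ is identified with a base tree $\rdtree_0$ whose distribution is that of the $\psi$-Lévy tree with jumps restricted to $(0,\delta)$ (denote its excursion measure by $\mathbf{N}^{\psi_{\delta-}}$), enriched with a Poisson point measure of triples $(x_i,r_i,\rdtree_i')$ of intensity $\mu(\dd x)\,\pi(\dd r)\ind{r\geq\delta}\for{r}(\dd\widetilde{\rdtree})$. Crucially, $\{\Delta(\rdtree)\leq\delta\}$ forbids any atom with $r_i>\delta$ and, for atoms with $r_i=\delta$, forces $\Delta(\rdtree_i')\leq\delta$, an event of $\for{\delta}$-probability $\e^{-\delta\n[\Delta>\delta]}$. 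Poisson thinning thus splits the atoms into ``good'' ones of intensity $\g(\delta)\,\mu(\dd x)\for{\delta}(\dd\widetilde{\rdtree}\,|\,\Delta\leq\delta)$ and ``bad'' ones of total rate $(\pi[\delta,\infty)-\g(\delta))\sigma(\rdtree_0)$.

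The key computation is then to establish the master identity
\begin{equation*}
\n[F(\rdtree)\ind{\Delta\leq\delta}]=\sum_{k\geq 0}\frac{\g(\delta)^k}{k!}\,\n\!\left[\int\prod_{i=1}^k\mu(\dd x_i)\for{\delta}(\dd\rdtree_i\,|\,\Delta\leq\delta)\,F(\rdtree\circledast_{i=1}^k(x_i,\rdtree_i))\ind{\Delta<\delta}\right],
\end{equation*}
obtained by integrating out the ``bad'' atoms (via their exponential factor) and applying the ``no atom'' specialization of the decomposition, namely $\n[G\ind{\Delta<\delta}]=\mathbf{N}^{\psi_{\delta-}}[G\e^{-\pi[\delta,\infty)\sigma}]$, to reabsorb the base-tree expectation into $\n[\cdot\ind{\Delta<\delta}]$. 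The $k=0$ term is exactly $\n[F\ind{\Delta<\delta}]$, so subtracting it and reindexing $k\mapsto k+1$ yields an explicit expression for $\n[F\ind{\Delta=\delta}]$; setting $F\equiv 1$ then produces $\n[\Delta=\delta]=\g(\delta)\n[M_\delta\ind{\Delta<\delta}]$. Finiteness of the normalizer follows from $\n[\sigma\ind{\Delta\leq\delta}]=1/\psi_\delta'(\n[\Delta>\delta])<\infty$, an easy consequence of Proposition~\ref{prop: distribution degree intro}. When $\pi(\delta)>0$, the normalizer is strictly positive and division immediately produces the claimed regular conditional probability.

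The main obstacle will be the non-atom case $\pi(\delta)=0$: then $\g(\delta)=0$, the event $\{\Delta=\delta\}$ is $\n$-negligible, and the formula reduces to its $k=0$ contribution with denominator $\n[\sigma\ind{\Delta<\delta}]$. To recognize this as a version of the regular conditional distribution, I would verify the disintegration identity $\int_{(a,b]}\n[F\,|\,\Delta=\delta']\,\tilde\nu(\dd\delta')=\n[F\ind{\Delta\in(a,b]}]$ on intervals $(a,b]$ free of atoms of $\pi$, where $\tilde\nu$ is the law of $\Delta$ under $\n$. Both sides can be computed by differentiating $\delta\mapsto \n[F\ind{\Delta\leq\delta}]=\ndelta[F\e^{-\pibar(\delta)\sigma}]$ at a non-atom via the chain rule, while the density of $\tilde\nu$ is recovered by differentiating $\n[\Delta>\delta]=\psi_\delta^{-1}(\pibar(\delta))$ from Proposition~\ref{prop: distribution degree intro}; the two computations match, and a monotone-class argument extends the identity to all Borel sets.

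The remaining assertions are direct consequences of Theorem~\ref{thm: degree decomposition intro}. If $\langle\pi,1\rangle=\infty$ then the set of infinite branching points of $\rdtree$ is $\n$-a.e.\ countable and dense, hence nonempty, so $\Delta>0$ $\n$-a.e. If $\langle\pi,1\rangle<\infty$, applying Theorem~\ref{thm: degree decomposition intro} at $\delta=0$ identifies $\{\Delta=0\}$ with the event that the Poisson measure of grafts is empty, which has conditional probability $\e^{-\langle\pi,1\rangle\sigma}$ given the base tree $\rdtree^0\sim\operatorname{\mathbf{N}}^{\psi_0}$, yielding \eqref{eq: distribution of the tree with no nodes}.
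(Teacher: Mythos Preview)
Your treatment of the atom case, the master identity, and the side assertions about $\Delta=0$ is correct and matches the paper's argument: once the Poisson decomposition at threshold $\delta-$ is in hand, thinning the atoms into ``good'' ($r=\delta$, $\Delta\le\delta$) and ``bad'' ones, expanding the good Poisson measure as a series, and reabsorbing the $\e^{-\pi[\delta,\infty)\sigma}$ factor via $\n[G\ind{\Delta<\delta}]=\mathbf{N}^{\psi_{\delta-}}[G\e^{-\pi[\delta,\infty)\sigma}]$ gives exactly the displayed formula, and division by $\g(\delta)\n[M_\delta\ind{\Delta<\delta}]=\n[\Delta=\delta]>0$ is legitimate.

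The non-atom case, however, has a real gap. You propose to verify the disintegration identity by differentiating $\delta\mapsto\n[F\ind{\Delta\le\delta}]=\ndelta[F\e^{-\pibar(\delta)\sigma}]$ ``via the chain rule''. But for a general bounded measurable $F$ there is no obvious way to carry this out: the excursion measure $\ndelta$ depends on $\delta$ through the entire branching mechanism $\psi_\delta$, not through a finite-dimensional parameter, and you give no mechanism for computing $\partial_\delta\ndelta[F\,\cdot\,]$. Likewise, differentiating $\n[\Delta>\delta]=\psi_\delta^{-1}(\pibar(\delta))$ implicitly requires differentiating $\psi_\delta$ in $\delta$, which only makes sense relative to $\pi(\dd\delta)$ and is not a routine chain rule. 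In short, you have identified the correct \emph{candidate} for $\n[\cdot\,|\,\Delta=\delta]$ but not shown it is a version of the regular conditional law.

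The paper takes a different route here. It conditions on the event $E_{\delta,\epsilon}=\{\delta-\epsilon<\Delta<\delta+\epsilon,\ Z_0^{\delta-\epsilon}=1\}$, first proving that $\n[E_{\delta,\epsilon}]\sim\n[\delta-\epsilon<\Delta<\delta+\epsilon]$ as $\epsilon\to 0$ when $\delta$ is a non-atom. On $E_{\delta,\epsilon}$ the Poisson decomposition at $\delta-\epsilon$ yields an \emph{explicit} product form for the conditional law (a marked base tree under $\tilted{(\delta-\epsilon)+}$ times an independent forest under $\mathbb{Q}^\psi_{\delta-\epsilon}(\cdot\,|\,\Delta<\delta+\epsilon)$). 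Passing to the limit then requires three technical ingredients you do not have: total-variation continuity of $r\mapsto\tilted{r+}$ at $\delta$, weak convergence of $\mathbb{Q}^\psi_{\delta-\epsilon}(\cdot\,|\,\Delta<\delta+\epsilon)$ to $\for{\delta}(\cdot\,|\,\Delta\le\delta)$ on a suitable excursion space, and continuity of the grafting map $\mu\mapsto\nu\circledast(s,\mu)$. With these in hand, a standard measure-differentiation theorem (applied to the signed measure $\n[F\ind{\Delta\in\cdot}]$ against $\n[\Delta\in\cdot]$) identifies the limit as the regular conditional probability. This weak-convergence route replaces your unexplained differentiation and is where the actual work lies.
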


The proof is given in Section \ref{sect: conditioning on degree}. Some comments are in order. 
\begin{enumerate}[label=(\roman*)]
	\item Recall that the distribution of $\Delta$ is given in Proposition \ref{prop: distribution degree intro}. Together with the distribution of $\rdtree$ conditionally on $\Delta=\delta$, we can recover the unconditional distribution of the Lévy tree via the disintegration formula:
	\begin{equation*}
		\n[F(\rdtree)] = \n[F(\rdtree)\ind{\Delta=0}] + \int_{(0,\infty)} \n[\Delta\in \dd \delta] \n[F(\rdtree)|\Delta=\delta],
	\end{equation*} 
	where the first term on the right-hand side vanishes if $\pi$ is infinite.
	\item Assume that $\delta>0$ is not an atom of $\pi$. Then, conditionally on $\Delta=\delta$, the Lévy tree can be constructed as follows: take $\widetilde{\rdtree}$ with distribution $\n[\sigma \ind{\Delta\leqslant \delta}]^{-1} \allowbreak\n[\sigma \ind{\Delta\leqslant \delta} \, \dd \rdtree]$, choose a leaf uniformly at random in $\widetilde{\rdtree}$ (i.e.~according to its normalized mass measure $\tilde{\sigma}^{-1}\tilde{\mu}$) and on this leaf graft an independent Lévy forest with initial degree $\delta$ conditioned to have maximal degree $\Delta\leqslant \delta$. In fact, since $\delta$ is not an atom, this random forest will have no other nodes with degree $\delta$ besides the root. This entails that, conditionally on $\Delta=\delta$, there is a unique node realizing the maximum degree. 
	\item The situation is different when $\delta>0$ is an atom of $\pi$. In that case, conditionally on $\Delta=\delta$, the number of first-generation nodes realizing the maximal degree has a Poisson distribution. More precisely, conditionally on $\Delta=\delta$, the Lévy tree can be constructed as follows: take $\widetilde{\rdtree}$ with distribution $\n[M_\delta \ind{\Delta<\delta}]^{-1} \n[M_\delta\ind{\Delta<\delta} \, \dd \rdtree]$,
	and, conditionally on $\widetilde{\rdtree}$, graft a Poisson point measure with intensity $\g(\delta)\, \tilde{\mu}(\dd x) \allowbreak\for{\delta}(\dd \rdtree|\Delta\leqslant \delta)$ conditioned on containing at least one point.
\end{enumerate}

As a consequence, we show in Proposition \ref{prop: joint distribution degree and height} that if the Lévy measure $\pi$ is diffuse, then $\n$-a.e.~there is a unique node $X_\Delta$ with degree $\Delta$. Denote by $H_\Delta = H(X_\Delta)$ its height. Then we give a decomposition of the Lévy tree conditioned on $\Delta=\delta$ and $H_\Delta = h$, see Theorem \ref{thm: disintegration wrt degree and height}.

Finally, we turn to the behavior of a Lévy tree conditioned to have a large maximal degree. Other conditionings have been considered in the past. Duquesne \cite{duquesne2009immigration} (this is also related to Williams' decomposition, see \cite{abraham2009williams}) proved that a (sub)critical Lévy tree conditioned on having a large height converges locally to the immortal tree (which consists of an infinite spine onto which trees are grafted according to a Poisson point process). Later, He \cite{he2022local} proved the same convergence for a critical Lévy tree conditioned on having a large maximal degree $\Delta>\delta$ or a large width. In fact, his result is more general as it allows to condition by any measurable function of the tree satisfying a natural monotonicity property.

Here we treat both the critical and the subcritical cases and we consider the density version $\Delta=\delta$. Similarly to the discrete case, two drastically different types of limiting behavior appear. In the subcritical case, there is a condensation phenomenon where a node with infinite degree emerges at a finite exponentially distributed height. Denote by $X_\Delta$ the lowest node with degree $\Delta$ and let $\mathcal{F}^+_\Delta$ be the forest above $X_\Delta$, seen as a point measure on $\real_+ \times \T$. To be more precise, the forest $\mathcal{F}^+_\Delta = \sum_{i \in I} \delta_{(\ell_i, \rdtree_i)}$ is obtained by decomposing the path of the exploration process (or the height process) into excursions away from $0$, with each excursion arriving at local time $\ell_i$ and coding a tree $\rdtree_i$. Finally, let
$\pruned{\rdtree}_\Delta$ be the pruned Lévy tree, that is the Lévy tree $\rdtree$ after removing $X_\Delta$ and $\mathcal{F}^+_\Delta$. We refer the reader to Theorem \ref{thm: convergence subcritical delta} and Theorem \ref{thm: convergence subcritical} for a precise statement.
\begin{thm}
	Assume that $\psi$ is subcritical and that the Lévy measure $\pi$ is unbounded. Let $F \colon \TT \to \real$ be continuous and bounded, $\Phi \colon \real_+\times \T \to \real_+$ be continuous with bounded support and let $A_\delta$ be equal to any one of the following events: $\{\Delta=\delta\}$, $\{\Delta>\delta\}$, $\{\rdtree$ has exactly one node with degree larger than $\delta\}$ or $\{\rdtree$ has exactly one first-generation node with degree larger than $\delta \}$.
	We have:
	\begin{multline}
		\lim_{\delta \to \infty} \n\left[F(\pruned{\rdtree}_\Delta, X_\Delta)\e^{-\langle\mathcal{F}^+_\Delta,\Phi\rangle}\middle|A_\delta\right]= \alpha \n\left[\int_\rdtree F(\rdtree,x)\, \mu(\dd x)\right] \\
		\times\expp{-\int_0^\infty \dd \ell \n\left[1-\e^{-\Phi(\ell,\rdtree)}\right]}.
	\end{multline}
	In particular, conditionally on $A_\delta$, the height $H(X_\Delta)$ of $X_\Delta$ converges to an exponential distribution with mean $1/\alpha$.
\end{thm}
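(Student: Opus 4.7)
The plan is to establish the limit first for the reference event $A^\star_\delta := \{\rdtree\text{ has exactly one node with degree}>\delta\}$ using the Poisson decomposition of Theorem~\ref{thm: degree decomposition intro}, then to reduce the four listed events to $A^\star_\delta$ by showing their symmetric differences with $A^\star_\delta$ have $\n$-probability $o(\pibar(\delta))$. Under $\ndelta$ write $\rdtree = \rdtree^\delta\circledast_{i\in I}(x_i,\widetilde{\rdtree}_i)$ with $(x_i,\widetilde{\rdtree}_i)$ a PPP of intensity $\pibar(\delta)\mu(\dd x)\rfor(\dd\widetilde{\rdtree})$. Thinning this PPP into atoms whose grafted forest does or does not contain a further node of degree $>\delta$, the event $A^\star_\delta$ equals $\{|I_{\mathrm{bad}}|=0,\,|I_{\mathrm{good}}|=1\}$, on which $X_\Delta=x_1$, $\pruned{\rdtree}_\Delta=\rdtree^\delta$, and $\mathcal{F}^+_\Delta$ is the local-time forest decomposition of $\widetilde{\rdtree}_1$. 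Mecke's formula for PPPs then gives
\[
\n\!\left[F(\pruned{\rdtree}_\Delta,X_\Delta)\,\e^{-\langle\mathcal{F}^+_\Delta,\Phi\rangle}\mathbf{1}_{A^\star_\delta}\right] = \pibar(\delta)\,\ndelta\!\left[\int\mu(\dd x)\,F(\rdtree,x)\,\e^{-\pibar(\delta)\sigma}\right]K(\delta),
\]
with
\[
K(\delta) = \frac{1}{\pibar(\delta)}\int_{(\delta,\infty)}\!\pi(\dd r)\,\e^{-r\n[\Delta>\delta]}\expp{-\int_0^r\!\dd\ell\,\n[\mathbf{1}_{\Delta\leq\delta}(1-\e^{-\Phi(\ell,\cdot)})]},
\]
and $\n[A^\star_\delta] = \pibar(\delta)\,\ndelta[\sigma\,p_\delta\,\e^{-\pibar(\delta)\sigma}]$ where $p_\delta := \rfor[\Delta(\widetilde{\rdtree})\leq\delta]$.

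Three estimates underlie the limit $\delta\to\infty$: (a) $\psi_\delta'(0)=\alpha+\int_{(\delta,\infty)}r\,\pi(\dd r)\to\alpha$, so $\ndelta[\sigma]\to 1/\alpha$; (b) by Proposition~\ref{prop: distribution degree intro}, $\n[\Delta>\delta]=\psi_\delta^{-1}(\pibar(\delta))\sim\pibar(\delta)/\alpha$; (c) $|1-\e^{-r\n[\Delta>\delta]}|\leq r\n[\Delta>\delta]$ combined with $\pibar(\delta)^{-1}\int_{(\delta,\infty)}r\,\pi(\dd r)\cdot\n[\Delta>\delta]\to 0$ (using $\int_{[1,\infty)}r\,\pi(\dd r)<\infty$). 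Since $\Phi$ has bounded support in $\real_+\times\T$ and $\n[\H(\rdtree)>\epsilon]<\infty$ for any $\epsilon>0$, the inner integral $\int_0^r\dd\ell\,\n[1-\e^{-\Phi(\ell,\cdot)}]$ is finite and stabilizes for $r$ large; dominated convergence then yields $K(\delta)\to\expp{-\int_0^\infty\dd\ell\,\n[1-\e^{-\Phi(\ell,\cdot)}]}$ and $p_\delta\to 1$. Combined with the weak convergence $\ndelta\Rightarrow\n$ (since $\psi_\delta\to\psi$ pointwise, via the convergence of the associated exploration processes) and a uniform integrability argument for the functional $\int\mu(\dd x)F(\rdtree,x)\leq\|F\|_\infty\sigma$, this produces the claimed limit for $A^\star_\delta$. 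The delicate point is precisely this uniform integrability, which we handle by truncating at finite height $h$ (so that the restricted tree $r_h(\rdtree)$ is compact with bounded mass, allowing joint weak convergence) and then controlling the tail $\sup_\delta\ndelta[\sigma\mathbf{1}_{\H(\rdtree)>h}]\to 0$ as $h\to\infty$, which holds because $\psi_\delta\geq\psi$ pointwise.

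To extend to the remaining events, note that $\{|I|\geq 2\}$ has $\n$-probability bounded by $\pibar(\delta)^2\ndelta[\sigma^2]=O(\pibar(\delta)(2\beta\pibar(\delta)+\delta\pibar(\delta)\int_{(0,\delta]}r\pi(\dd r)))=o(\pibar(\delta))$ since $\delta\pibar(\delta)\to 0$, and $\{|I|=1,\,\widetilde{\rdtree}_1\text{ has a sub-node of degree}>\delta\}$ has $\n$-probability bounded by $\pibar(\delta)(1-p_\delta)\ndelta[\sigma]=o(\pibar(\delta))$. Hence $\{\Delta>\delta\}$, $\{|I|=1\}$ and $\{\text{exactly one large node}\}$ all produce the same conditional limit as $A^\star_\delta$. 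For the density version $\{\Delta=\delta\}$, assuming first $\pi(\{\delta\})=0$, apply Theorem~\ref{thm: conditioning by degree intro} to $F'(\rdtree):=F(\pruned{\rdtree}_\Delta,X_\Delta)\e^{-\langle\mathcal{F}^+_\Delta,\Phi\rangle}$; under its representation $\rdtree\circledast(x,\widetilde{\rdtree})$ we have $\pruned{\rdtree}_\Delta=\rdtree$, $X_\Delta=x$ and $\mathcal{F}^+_\Delta=\widetilde{\rdtree}$, so the three factors $\n[\sigma\mathbf{1}_{\Delta<\delta}]^{-1}$, $\n[\int\mu(\dd x)F(\rdtree,x)\mathbf{1}_{\Delta<\delta}]$ and $\for{\delta}[\e^{-\langle\widetilde{\rdtree},\Phi\rangle}\mathbf{1}_{\Delta\leq\delta}]/\for{\delta}[\Delta\leq\delta]$ converge to $\alpha$, $\n[\int\mu(\dd x)F(\rdtree,x)]$ and $\expp{-\int_0^\infty\dd\ell\,\n[1-\e^{-\Phi(\ell,\cdot)}]}$ respectively, by the same estimates. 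The exponential-height conclusion follows by taking $F(\rdtree,x)=g(H(x))$ and $\Phi\equiv 0$: the identity $\mu(\dd x)=\int_0^\infty\dd a\,L^a(\dd x)$ together with $\n[L^a_\sigma]=\e^{-\alpha a}$ (obtained by differentiating the CB Laplace exponent $\n[1-\e^{-\lambda L^a_\sigma}]=u_a(\lambda)$ at $\lambda=0$, using $\partial_t u=-\psi(u)$ and $\psi'(0)=\alpha$) reduces the right-hand side to $\alpha\int_0^\infty g(a)\e^{-\alpha a}\dd a$, the Laplace transform of $\mathrm{Exp}(\alpha)$.
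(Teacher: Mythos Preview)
Your overall strategy is sound and the Mecke computation for $A^\star_\delta$ is correct, but your route diverges from the paper's and contains one unnecessary detour and one genuine gap.

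\textbf{Comparison with the paper.} The paper proceeds in the opposite order: it first establishes the case $A_\delta=\{\Delta=\delta\}$ directly from the conditioning Theorem~\ref{thm: conditioning by degree intro} (this is Theorem~\ref{thm: convergence subcritical delta}), and then obtains the three tail-type events by computing their $\n$-measures explicitly (Proposition~\ref{prop: exactly one node}), observing the nesting $\{\total=1\}\subset\{\init=1\}\subset\{\Delta>\delta\}$ forces equivalence, and disintegrating $\{\Delta>\delta\}$ over $\Delta$ (Theorem~\ref{thm: convergence subcritical}). Your approach of starting from $\{\total=1\}$ via Mecke and bounding symmetric differences is a legitimate alternative, and the estimates you use for those differences are essentially the same ones the paper uses.

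\textbf{The detour you should avoid.} The step where you invoke ``weak convergence $\ndelta\Rightarrow\n$'' together with a truncation-in-height uniform integrability argument is both ill-posed (these are $\sigma$-finite excursion measures, not probabilities) and unnecessary. The Poisson decomposition you already used gives the exact identity
\[
\ndelta\!\left[\int\mu(\dd x)\,F(\rdtree,x)\,\e^{-\pibar(\delta)\sigma}\right]
=\n\!\left[\int\mu(\dd x)\,F(\rdtree,x)\,\ind{\Delta\leqslant\delta}\right],
\]
(this is Corollary~\ref{cor: identity in distribution before big jump} in the paper), and since $\bigl|\int\mu(\dd x)F(\rdtree,x)\bigr|\leqslant\|F\|_\infty\,\sigma$ with $\n[\sigma]=1/\alpha<\infty$, ordinary dominated convergence under $\n$ gives the limit immediately. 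This is precisely Lemma~\ref{lemma: weak convergence of bottom part}.

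\textbf{A gap for $\{\Delta=\delta\}$.} You write ``assuming first $\pi(\{\delta\})=0$'' and never return to the atomic case. Since the limit is taken over all $\delta\to\infty$, you must also handle sequences of atoms. The paper does this via Lemma~\ref{lemma: weak convergence atom}, showing $\n[\Delta=\delta]\sim\n[\unique{\delta}]\sim\g(\delta)/\alpha$ along atoms (using $\g(\delta)\to 0$ and a dominated convergence argument on $\n[(\e^{\g(\delta)\sigma}-1)\ind{\Delta<\delta}]$, which works because subcriticality gives $\n[\sigma\e^{\lambda_0\sigma}]<\infty$ for some $\lambda_0>0$). Once $\{\Delta=\delta\}\sim\unique{\delta}$, Remark~\ref{rk: conditioning by Edelta} reduces the atom case to the same factorized expression you already analyzed.
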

The last result should be interpreted as local convergence in distribution to a “condensation tree'' described as follows: start with a size-biased Lévy tree $\widetilde{\rdtree}$ with distribution $\alpha \n[\sigma \dd \rdtree]$, choose a leaf uniformly at random in $\widetilde{\rdtree}$ and on this leaf graft an independent Lévy forest with infinite degree (i.e.~a Poisson point measure on $\real_+\times \T$ with intensity $\dd \ell \n[\dd \rdtree]$). However, the limiting object is a (random) real tree which is not locally compact and the way to circumvent this difficulty is by considering the subtree above the condensation node as a point measure instead.

In the critical case, it should be no surprise that the density version $\Delta=\delta$ gives rise to the same limiting behavior as the tail version $\Delta>\delta$, namely local convergence to the immortal tree. Intuitively, this means that the condensation node goes to infinity and thus becomes invisible to local convergence. Before stating the result, let us define the immortal tree. Let $\sum_{i\in I} \delta_{(s_i,\rdtree_{i})}$ be a Poisson point measure on $\real_+\times\T$ with intensity 
\begin{equation*}
	\dd s \left(2\beta \n[\dd \rdtree] + \int_0^\infty r \,\pi(\dd r) \for{r}(\dd \rdtree)\right).
\end{equation*}
The immortal Lévy tree $\rdtree_\infty^\psi$ with branching mechanism $\psi$ is the real tree obtained by grafting the point measure $\sum_{i\in I} \delta_{(s_i,\rdtree_{i})}$ on an infinite branch. More formally, set:
\begin{equation}
	\rdtree_\infty^\psi= \real_+ \circledast_{i\in I} (s_i,\rdtree_i),
\end{equation}
where $\real_+$ is considered as a real tree rooted at $0$ and equipped with the Euclidean distance and the zero measure. In particular, thanks to \cite[Theorem 4.5]{duquesne2005probabilistic}, we have the following identity which is simply a restatement of Lemma 3.2 in \cite{duquesne2009immigration} in terms of trees:
\begin{equation}
	\ex{F(r_h(\rdtree_\infty^\psi))} = \e^{-\alpha h}\n\left[L_\sigma^h \, F(r_h(\rdtree))\right], \quad \forall h >0.
\end{equation}
\begin{thm}
	Assume that $\psi$ is critical and that $\pi$ is unbounded. Either let $A_\delta = \{\Delta=\delta\}$ and assume that the additional assumption
	\begin{equation}\label{eq: pi small atoms intro}
		\lim_{\delta \to \infty} \frac{\pi(\delta)}{\n[\sigma\ind{\Delta<\delta}]\pibar(\delta)\int_{[\delta,\infty)} r\, \pi(\dd r)} = 0
	\end{equation} 
	holds, or let $A_\delta$ be equal to any of the following events: $\{\Delta>\delta\}$, $\{\rdtree$ has exactly one node with degree larger than $\delta\}$ or $\{\rdtree$ has exactly one first-generation node with degree larger than $\delta \}$. Then, conditionally on $A_\delta$, the Lévy tree $\rdtree$ converges in distribution locally to the immortal Lévy tree $\rdtree_\infty^\psi$, i.e.~we have:
	\begin{equation}
		\lim_{\delta \to \infty}\n\left[F(r_h(\rdtree))|A_\delta\right] = \ex{F(r_h(\rdtree_\infty^\psi))}.
	\end{equation}
\end{thm}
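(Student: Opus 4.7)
The plan is to combine the branching property at height $h$ (for the tail events $\{\Delta>\delta\}$ and the two uniqueness events) with the size-biased decomposition of Theorem \ref{thm: conditioning by degree intro} (for the density event $\{\Delta=\delta\}$), in each case isolating a small parameter of order $q(\delta):=\n[\Delta>\delta]$ and extracting the limit by dominated convergence. Recall that the identity displayed just before the theorem reads $\ex{F(r_h(\rdtree_\infty^\psi))}=\e^{-\alpha h}\n[L_\sigma^h F(r_h(\rdtree))]$, which in the critical case simplifies to $\n[L_\sigma^h F(r_h(\rdtree))]$, and in particular $\n[L_\sigma^h]=1$.

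Let $\Delta_h$ denote the maximal degree among nodes in $r_h(\rdtree)$. For $A_\delta$ equal to one of the three tail events, I would condition on $r_h(\rdtree)$: by the branching property, the subtrees above $h$ form a Poisson point measure with intensity $L_\sigma^h\,\n[\dd\rdtree]$, so that on $\{\Delta_h\leq\delta\}$ Poisson thinning yields closed-form conditional probabilities, for instance
\begin{equation*}
	\n[\Delta>\delta\mid r_h(\rdtree)]=1-\e^{-L_\sigma^h q(\delta)}\qquad\text{and}\qquad\n[A_\delta\mid r_h(\rdtree)]=L_\sigma^h\,\nu(\delta)\,\e^{-L_\sigma^h q(\delta)}
\end{equation*}
for the two uniqueness events, where $\nu(\delta)$ equals $\n[\text{one node w.~deg}>\delta]$ or $\ndelta[\pibar(\delta)\sigma\e^{-\pibar(\delta)\sigma}]$ respectively. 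Dividing $\n[F(r_h(\rdtree))\ind{A_\delta}]$ by $\n[A_\delta]$, the small parameter $q(\delta)$ or $\nu(\delta)$ cancels, and since $(1-\e^{-L_\sigma^h q(\delta)})/q(\delta)\leq L_\sigma^h$ with $\n[L_\sigma^h]=1$, dominated convergence delivers the limit $\n[F(r_h(\rdtree))L_\sigma^h]=\ex{F(r_h(\rdtree_\infty^\psi))}$. The residual contribution from $\{\Delta_h>\delta\}$ is handled by the self-consistent identity $q(\delta)=\n[\Delta_h>\delta]+\n[\ind{\Delta_h\leq\delta}(1-\e^{-L_\sigma^h q(\delta)})]$: expanding $1-\e^{-x}=x+O(x^2)$ and using the critical identity $\n[L_\sigma^h]=1$ produces a leading-order cancellation, forcing $\n[\Delta_h>\delta]=q(\delta)\n[L_\sigma^h\ind{\Delta_h>\delta}]+O(q(\delta)^2)=o(q(\delta))$ (using that $\Delta<\infty$ $\n$-a.e., which holds since $q(\delta)\to 0$).

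For $A_\delta=\{\Delta=\delta\}$ and $\delta$ not an atom of $\pi$, Theorem \ref{thm: conditioning by degree intro} reduces to
\begin{equation*}
	\n[F(r_h(\rdtree))\mid\Delta=\delta]=\frac{1}{\n[\sigma\ind{\Delta<\delta}]}\,\n\Bigl[\int\mu(\dd x)\,\for{\delta}(\dd\widetilde{\rdtree}\mid\Delta\leq\delta)\,F\bigl(r_h(\rdtree\circledast(x,\widetilde{\rdtree}))\bigr)\ind{\Delta<\delta}\Bigr].
\end{equation*}
Split the $\mu(\dd x)$-integral according to $H(x)>h$ or $H(x)\leq h$: on $\{H(x)>h\}$ the grafting does not affect $r_h$ and the integral contributes $(\sigma-\mu(r_h(\rdtree)))F(r_h(\rdtree))\ind{\Delta<\delta}$; on $\{H(x)\leq h\}$ the integrand is bounded by $\norm{F}_\infty\mu(r_h(\rdtree))\ind{\Delta<\delta}$. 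Since $\psi$ is critical, $\n[\sigma]=+\infty$ and therefore $\n[\sigma\ind{\Delta<\delta}]\to+\infty$ as $\delta\to\infty$, which kills both $\mu(r_h(\rdtree))$ contributions after normalization. For the remaining dominant term, the branching property at $h$ combined with a Campbell computation (apply the Mecke formula to the Poisson point measure of subtrees above $h$ with $f(\rdtree_i)=\mu(\rdtree_i)\ind{\Delta(\rdtree_i)\leq\delta}$ and $g(\rdtree_i)=\ind{\Delta(\rdtree_i)\leq\delta}$) yields
\begin{equation*}
	\n[\sigma F(r_h(\rdtree))\ind{\Delta<\delta}]=\n\bigl[F(r_h(\rdtree))\bigl(\mu(r_h(\rdtree))+L_\sigma^h\n[\sigma\ind{\Delta\leq\delta}]\bigr)\e^{-L_\sigma^h q(\delta)}\ind{\Delta_h<\delta}\bigr],
\end{equation*}
so after dividing by $\n[\sigma\ind{\Delta<\delta}]$ (asymptotically equivalent to $\n[\sigma\ind{\Delta\leq\delta}]$) and applying dominated convergence once more (with $\e^{-L_\sigma^h q(\delta)}\to 1$ and $\ind{\Delta_h<\delta}\to 1$), the limit is $\n[F(r_h(\rdtree))L_\sigma^h]$, as required.

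The main obstacle is the atom case: when $\delta$ is an atom of $\pi$, the full formula of Theorem \ref{thm: conditioning by degree intro} features the series $\sum_{k\geq0}\g(\delta)^k/(k+1)!\cdot \n\bigl[\int\prod_{i=1}^{k+1}\mu(\dd x_i)\for{\delta}(\dd\rdtree_i\mid\Delta\leq\delta)F(\cdots)\ind{\Delta<\delta}\bigr]$ together with the functional $M_\delta$, and one must show that the sum is dominated by the $k=0$ term. Each increment of $k$ introduces an additional factor essentially of order $\g(\delta)\n[\sigma\ind{\Delta\leq\delta}]$ through one more $\mu(\dd x)$-integration; since $\g(\delta)\leq\pi(\delta)$, the hypothesis \eqref{eq: pi small atoms intro}---stating precisely that $\pi(\delta)/(\n[\sigma\ind{\Delta<\delta}]\pibar(\delta)\int_{[\delta,\infty)}r\,\pi(\dd r))\to 0$---furnishes exactly the estimate needed to drive this ratio to zero, reducing the atom case to the single-graft analysis above.
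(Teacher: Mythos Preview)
Your treatment of the three tail events via conditioning on $r_h(\rdtree)$ and Poisson thinning is valid and is essentially the argument of He \cite{he2022local} (which the paper cites for $\{\Delta>\delta\}$). The paper takes a different route for the two uniqueness events: it invokes the structural decomposition $\rho\law\tilde\rho\circledast(s,\hat\rho)$ from Lemma~\ref{lemma: decomposition conditionally on large degree}, observes via Lemma~\ref{lemma: height under tilted} that the grafting height $H(\tilde\rho_s)$ is exponential with mean $\wplus(\delta)\to\infty$, hence exceeds $h$ with high probability, and then reduces to Lemma~\ref{lemma: convergence to immortal tree}. Your approach is more direct here; both work. Likewise, for $\{\Delta=\delta\}$ at a non-atom, your split $H(x)>h$ versus $H(x)\le h$ together with the Mecke computation is exactly the content of the paper's Lemma~\ref{lemma: convergence to immortal tree}, just organized slightly differently.

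The atom case, however, contains a genuine gap. Your claim that ``each increment of $k$ introduces an additional factor essentially of order $\g(\delta)\n[\sigma\ind{\Delta\le\delta}]$'' is not justified: the $k$-th term of the series involves $\n[\sigma^{k+1}\ind{\Delta<\delta}]$, not $\bigl(\n[\sigma\ind{\Delta<\delta}]\bigr)^{k+1}$, and these higher moments do \emph{not} factorize. Moreover, $\g(\delta)\w(\delta)=\n[E_\delta]\le\n[\Delta\ge\delta]\to 0$ holds automatically without any hypothesis, so if your heuristic were correct the assumption \eqref{eq: pi small atoms intro} would be superfluous. The paper's argument (Lemma~\ref{lemma: critical atom equivalence of conditionings}) is different: it shows $\n[\Delta=\delta]\sim\n[E_\delta]$ by bounding the ratio $w_1(\delta)/\w(\delta)$ where $w_1(\delta)=\n[\sigma\e^{\g(\delta)\sigma}\ind{\Delta<\delta}]$, and this requires passing through $\operatorname{\mathbf{N}^{\psi_{\delta-}}}$ via Corollary~\ref{cor: identity in distribution before big jump}, estimating $\operatorname{\mathbf{N}^{\psi_{\delta-}}}[\sigma^2\e^{-\pibar(\delta)\sigma}]\le M/(\pibar(\delta)\int_{[\delta,\infty)}r\,\pi(\dd r))$, and then invoking \eqref{eq: pi small atoms intro} precisely to control the resulting error term $M\pi(\delta)/(\w(\delta)\pibar(\delta)\int_{[\delta,\infty)}r\,\pi(\dd r))$. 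Once $\n[\Delta=\delta]\sim\n[E_\delta]$ is established, one conditions on $E_\delta$ instead, which is the single-graft ($k=0$) situation, and your non-atom analysis applies verbatim.
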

We refer to Theorem \ref{thm: convergence critical delta} and Theorem \ref{thm: convergence critical} for a precise statement. The assumption \eqref{eq: pi small atoms intro} is a technical condition which guarantees a fast decay for the size of the atoms of $\pi$. Observe that we have $\lim_{\delta\to \infty}\n[\sigma \ind{\Delta< \delta}] = \n[\sigma]$ which is infinite since $\psi$ is critical. Also notice that \eqref{eq: pi small atoms intro} is automatically satisfied if the Lévy measure $\pi$ is diffuse.

It is worth noting that in the critical case, conditioning by the different events $A_\delta$ yields the same limiting behavior even though in general they are not equivalent in $\n$-measure. In the stable (critical) case $\psi(\lambda) = \lambda^\gamma$ with $\gamma \in (1,2)$, these quantities can be computed explicitly, see Proposition \ref{prop: three conditionings stable}. In that case, we also show in Proposition \ref{prop: galton watson stable} that, conditionally on $\Delta>\delta$, the distribution of the Bienaymé-Galton-Watson forest of nodes with degree larger than $\delta$ is independent of $\delta$.

The rest of the paper is organized as follows. In Section \ref{sect: notation}, we set notation and we introduce the main object we will be dealing with, namely the exploration process. We compute the distribution of the maximal degree in Section \ref{sect: distribution of degree}, then we give a Poissonian decomposition of the exploration process along the large nodes and study their structure in Section \ref{sect: decomposition}. In Section \ref{sect: conditioning on degree} (resp.~Section \ref{sect: conditioning on degree and height}), we make sense of the exploration process conditioned to have a fixed maximal degree (resp.~a fixed maximal degree at a given height). Sections \ref{sect: local limit} and \ref{sect: other conditionings} deal with the local convergence of the exploration process conditioned to have large maximal degree. Finally, Section \ref{sect: stable} is devoted to the study of the stable case $\psi(\lambda) = \lambda^\gamma$.

\section{The exploration process and the Lévy tree}\label{sect: notation}
In this section, we will recall the construction of the exploration process introduced in \cite{le1998branching} and later developped in \cite{duquesne2002random}.
\subsection{Notation}
If $E$ is a Polish space, let $\B(E)$ be the set of real-valued and nonnegative measurable functions defined on $E$ endowed with its Borel $\sigma$-field. For any measure $\nu$ on $E$ and any function $f \in \B(E)$, we write $\langle \nu,f \rangle = \int_E f(x) \, \nu(\dd x)$. We also denote by $\supp{\nu}$ the closed support of the measure $\nu$ in $E$.

We denote by $\Mf(E)$ the set of finite measures on $E$ endowed with the topology of weak convergence. For every $\nu \in \Mf(\real_+)$, we set:
\begin{equation}
	H(\nu) = \sup \supp{\nu},
\end{equation}
with the convention that $H(0) = 0$. Moreover, we let
\begin{equation}
	\Delta(\nu) = \sup\{\nu(x)\colon \, x \geqslant 0\}
\end{equation}
be the largest atomic mass of $\nu$. We say that $\nu$ is diffuse if it has no atoms and set $\Delta(\nu) = 0$ by convention.

Denote by 
\begin{equation}
	\D = D(\real_+, \Mf(\real_+))
\end{equation} 
the set of $\Mf(\real_+)$-valued càdlàg functions equipped with the Skorokhod $J_1$-topology. For a function $\mu =(\mu_t,\, t \geqslant 0)\in \D$, let
\begin{equation}
	\Delta(\mu) = \sup_{t \geqslant 0} \Delta(\mu_t)
\end{equation}
be the largest atom of the entire path of $\mu$.

\subsection{The underlying Lévy process and the height process}
We consider a (sub)critical branching mechanism of the form
\begin{equation}
	\psi(\lambda) = \alpha \lambda + \beta \lambda^2 + \int_{(0,\infty)} (\e^{-\lambda r} - 1+\lambda r) \, \pi(\dd r), \quad \forall \lambda \geqslant 0,
\end{equation}
where $\alpha,\beta \geqslant 0$ and $\pi \neq 0$ is a $\sigma$-finite measure on $(0,\infty)$ satisfying $\int_{(0,\infty)} (r\wedge r^2)\, \pi(\dd r) < \infty$. We consider a spectrally positive Lévy process $X = (X_t, \, t \geqslant 0)$ with Laplace exponent $\psi$ starting from $0$. Namely, we have:
\begin{equation*}
	\ex{\e^{-\lambda X_t}} = \e^{t \psi(\lambda)}, \quad \forall t,\lambda \geqslant 0.
\end{equation*}
We assume that $X$ is of infinite variation a.s.~which is equivalent to the following condition:
\begin{equation}\label{eq: infinite variation}
	\beta >0 \quad \text{or} \quad \int_{(0,1)} r\, \pi(\dd r ) = \infty.
\end{equation}
Duquesne and Le Gall \cite{duquesne2002random} proved that there exists a process $H = (H_t, \, t \geqslant 0)$ called the $\psi$-height process such that for every $t \geqslant 0$, we have the following convergence in probability:
\begin{equation}\label{eq: definition height process}
	H_t = \liminf_{\epsilon \to 0} \frac{1}{\epsilon} \int_0^t \ind{I_t^s < X_s < I_t^s + \epsilon} \, \dd s,
\end{equation}
where, for every $0\leqslant s \leqslant t$, $I_t^s = \inf_{s\leqslant r \leqslant t} X_r$ is the past infimum. They also proved a Ray-Knight theorem for $H$ which shows that the $\psi$-height process $H$ describes the genealogy of the $\psi$-CB process, see \cite[Theorem 1.4.1]{duquesne2002random}.

\subsection{The exploration process}\label{subsect: exploration process}
Although the height process is not Markov in general, it is a simple function of a measure-valued Markov process, the so-called exploration process that we now introduce. The exploration process $\rho = (\rho_t, \, t\geqslant 0)$ is the $\Mf(\real_+)$-valued process defined as follows:
\begin{equation}
	\rho_t(\dd r) = \beta \mathbf{1}_{[0,H_t]}(r)\, \dd r + \sum_{\substack{0<s\leqslant t,\\X_{s-}<I_t^s}} (I_t^s - X_{s-})\delta_{H_s}(\dd r).
\end{equation}
In particular, the total mass of $\rho_t$ is $\langle \rho_t,1\rangle = X_t - I_t$.

We will sometimes refer to $t \geqslant 0$ as a \emph{node} in reference to the corresponding real tree when it is well defined (see Section \ref{subsect: levy tree}). For $s,t \geqslant 0$, we say that $s$ is an ancestor of $t$ and we write $s\preccurlyeq t$ if $s\leqslant t$ and $H_s = \inf_{s\leqslant r \leqslant t} H_r$.  The set
\begin{equation}\label{eq: ancestral line}
	\{s \geqslant 0 \colon \, s\preccurlyeq t\}
\end{equation}
is called the ancestral line of $t$. We say that $t \geqslant 0$ is a first-generation node with property $A\subset \Mf(\real_+)$ if $\rho_t \in A$ and $\rho_s \notin A$ for every (strict) ancestor $s$ of $t$. For example, we will say that $t$ is a first-generation node with mass larger than $\delta>0$ if $\Delta(\rho_t) > \delta$ and $\Delta(\rho_s)\leqslant \delta$ for every $s\preccurlyeq t$ with $s\neq t$. Given $0\leqslant t_1\leqslant \cdots \leqslant t_n$, there exists a unique $s \geqslant 0$ such that $r\preccurlyeq t_i$ for every $1\leqslant i \leqslant n$ if and only if $r \preccurlyeq s$. We write $s = t_1\wedge \cdots \wedge t_n$ and call it the most recent common ancestor (MRCA for short) of $t_1, \ldots, t_n$.

One can recover the heigth process from the exploration process as follows. Denote by $\Delta X_t = X_t - X_{t-}$ the jump of the process $X$ at time $t$.
\begin{proposition}
	Almost surely for every $t>0$, we have:
	\begin{enumerate}[label=(\roman*)]
		\item $H(\rho_t) = H_t$,
		\item $\rho_t= 0$ if and only if $H_t = 0$,
		\item if $\rho_t \neq 0$, then $\supp{\rho_t} = [0,H_t]$,
		\item $\rho_t = \rho_{t-} + \Delta X_t  \delta_{H_t}$.
	\end{enumerate}
\end{proposition}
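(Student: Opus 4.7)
The plan is to prove the four assertions in the order (iv), (ii), (iii), and to deduce (i) at the end from the previous items.

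\textbf{Step 1: proof of (iv).} The key observation is that for every $0 \leq s < t$ one has $I_t^s = I_{t-}^s$. Indeed, $X_{t-} = \lim_{r \uparrow t} X_r$ is a limit of values in $[s,t)$, so $X_{t-} \geq I_{t-}^s$; since $X$ is spectrally positive, $X_t = X_{t-} + \Delta X_t \geq X_{t-} \geq I_{t-}^s$, so taking the infimum over $[s,t]$ coincides with the infimum over $[s,t)$. Consequently, the part of the sum in the definition of $\rho_t$ corresponding to $s < t$ matches term-by-term the full sum defining $\rho_{t-}$. The only additional term is $s = t$, whose condition $X_{t-} < I_t^t = X_t$ is equivalent to $\Delta X_t > 0$, and which contributes $(X_t - X_{t-})\,\delta_{H_t} = \Delta X_t\,\delta_{H_t}$. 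Finally, the continuous parts $\beta \mathbf{1}_{[0,H_t]}(r)\,\dd r$ and $\beta \mathbf{1}_{[0,H_{t-}]}(r)\,\dd r$ coincide because, under the infinite variation condition \eqref{eq: infinite variation}, the $\psi$-height process $H$ is a.s.\ continuous (Duquesne--Le Gall). Combining these observations yields (iv).

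\textbf{Step 2: proof of (ii).} The paper notes the identity $\langle \rho_t, 1 \rangle = X_t - I_t$. Since $\rho_t$ is a finite nonnegative measure, $\rho_t = 0$ if and only if $X_t = I_t$. One then invokes the standard identification $\{t : H_t = 0\} = \{t : X_t = I_t\}$ a.s., which follows from definition \eqref{eq: definition height process} (the integrand vanishes a.e.\ in $s$ precisely when $X_t$ is not a strict new infimum) and is established in Duquesne--Le Gall's construction of the height process.

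\textbf{Step 3: proof of (iii), and deduction of (i).} Assume $\rho_t \neq 0$. For the inclusion $\supp{\rho_t} \subseteq [0,H_t]$, note that every atom of $\rho_t$ is of the form $H_s$ for some jump time $s \preccurlyeq t$ (an ancestor of $t$), and by the definition of the ancestral relation \eqref{eq: ancestral line} such heights satisfy $H_s = \inf_{[s,t]} H \leq H_t$; the continuous part $\beta \mathbf{1}_{[0,H_t]}\,\dd r$ is also supported on $[0,H_t]$. For the reverse inclusion, if $\beta > 0$ then the continuous part has support exactly $[0,H_t]$ and we are done. If $\beta = 0$, the infinite variation condition forces $\int_{(0,1)} r\,\pi(\dd r) = \infty$, hence $\pi((0,\epsilon)) = \infty$ for every $\epsilon > 0$, so $X$ has a dense set of jump times; using the continuity of $H$, for every $h \in [0,H_t]$ one finds an ancestor $s \preccurlyeq t$ with $H_s$ arbitrarily close to $h$ which is also a jump time, so the atoms of $\rho_t$ are dense in $[0,H_t]$. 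Finally, (i) follows: if $\rho_t = 0$ then $H(\rho_t) = 0 = H_t$ by (ii) and the convention $H(0) = 0$; if $\rho_t \neq 0$ then $H(\rho_t) = \sup [0,H_t] = H_t$ by (iii).

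\textbf{Main obstacle.} The genuinely delicate point is the density of jump-ancestor heights in $[0,H_t]$ needed for (iii) when $\beta = 0$. Rather than arguing combinatorially, the cleanest route is to exploit the Markov structure of $\rho$ established in Duquesne--Le Gall together with the fact that the Lévy measure of small jumps is infinite, so that in any height window $(h-\epsilon, h+\epsilon) \subset [0,H_t]$ one finds a jump ancestor with positive $\mathbb{P}$-probability; an independence-of-disjoint-windows argument then upgrades this to the density statement.
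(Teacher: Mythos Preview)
The paper does not prove this proposition; it is recalled from Duquesne--Le Gall \cite{duquesne2002random} (see Lemma 1.2.2 and the material around Proposition 1.2.3 there). Your reconstruction is in the right spirit but contains a recurring error and leaves the hard part unproved.

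The error is the assertion that $H$ is a.s.\ continuous under the infinite variation assumption \eqref{eq: infinite variation}. Continuity of $H$ is equivalent to the Grey condition \eqref{eq: grey condition}, which is strictly stronger. In Step~1 this is harmless: when $\beta>0$ Grey holds automatically (since $\psi(\lambda)\geq\beta\lambda^2$), and when $\beta=0$ the Lebesgue part of $\rho_t$ vanishes so there is nothing to match. But in Step~3 you invoke continuity of $H$ precisely in the case $\beta=0$, which is exactly the regime where Grey may fail, so that appeal is illegitimate.

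The gap is the density of jump-ancestor heights in $[0,H_t]$ when $\beta=0$. Your final paragraph concedes this is the obstacle and offers only a heuristic (``independence-of-disjoint-windows''); the ancestral line of a fixed $t$ does not decompose into independent pieces in any way that makes such an argument go through, and in any case you need the statement \emph{for every $t$ simultaneously}. In \cite{duquesne2002random} the support identity is obtained differently: one represents $\rho_t$ as the image of Lebesgue measure on $[0,X_t-I_t]$ under (the right-continuous inverse of) an explicit increasing map built from the past infimum, and reads off the support directly. Alternatively, for a \emph{fixed} $t$ the subordinator description behind Proposition~\ref{prop: representation exploration} shows that the atoms of $\rho_t$ are the jumps on $[0,H_t]$ of a subordinator whose L\'evy measure is infinite when $\int_{(0,1)} r\,\pi(\dd r)=\infty$, hence dense; but upgrading this from ``a.e.\ $t$'' to ``every $t$'' still requires a further argument using the c\`adl\`ag property of $\rho$.
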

In the definition of the exploration process, since $X$ starts from $0$, we have $\rho_0 = 0$. In order to state the Markov property of $\rho$, we have to define the process $\rho$ starting from any initial measure $\nu \in \Mf(\real_+)$. To that end, for every $a \in [0, \langle \nu,1\rangle]$, we define the erased measure $k_a \nu$ by:
\begin{equation*}
	k_a \nu [0,r] = \nu[0,r] \wedge (\langle \nu ,1\rangle - a), \quad \forall r \geqslant 0.
\end{equation*}
If $a > \langle \nu,1\rangle$, we set $k_a \nu = 0$. In words, the measure $k_a \nu$ is obtained from $\nu$ by erasing a mass $a$ backward starting from $H(\nu)$. For $\mu \in \Mf(\real_+)$ with bounded support, we define the concatenation $[\mu,\nu]\in \Mf(\real_+)$ of the measures $\mu, \nu$ by:
\begin{equation*}
	\langle [\mu,\nu], f \rangle = \langle \mu, f \rangle + \langle \nu, f(H(\mu)+\cdot)\rangle, \quad \forall f \in \B(\real_+).
\end{equation*}
Finally, we set $\rho_0^\nu = \nu$ and
\begin{equation*}
	\rho_t^\nu = [k_{-I_t}\nu, \rho_t], \quad\forall t > 0.
\end{equation*}
We say that $\rho^\nu = (\rho_t^\nu, \, t \geqslant 0)$ is the exploration process started at $\nu$ and we write $\mathbb{P}_\nu$ for its distribution.
\begin{proposition}
	For any $\nu \in \Mf(\real_+)$, the process $\rho^\nu = (\rho_t^\nu, \, t \geqslant 0)$ is a càdlàg strong Markov process in $\Mf(\real_+)$.
\end{proposition}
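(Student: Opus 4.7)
The plan is to reduce everything to properties of the underlying spectrally positive Lévy process $X$ and then to transfer those properties to $\rho$ via the explicit pathwise construction. The key structural identity, which I would establish first, is the following: for every $t \geqslant 0$ and $s \geqslant 0$,
\begin{equation*}
	\rho^\nu_{t+s} = [k_{a_{t,s}^\nu}\rho^\nu_t,\, \rho'_s],
\end{equation*}
where $\rho' = (\rho'_s,\, s\geqslant 0)$ is the exploration process associated to the shifted Lévy process $X' = (X_{t+s}-X_t,\, s\geqslant 0)$, and $a_{t,s}^\nu = -\inf_{0\leqslant u \leqslant s} X'_u$ is the amount of mass that must be erased from the top of $\rho^\nu_t$ before any fresh mass is added. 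This identity is purely pathwise: it follows by splitting the definitions of $H$ in \eqref{eq: definition height process} and of $\rho_t$ into contributions coming before and after time $t$, using that the past infimum $I^s_{t+s'}$ for $s\leqslant t$ eventually equals $I_t + \inf_{u\leqslant s'} X'_u$, so that atoms of $\rho^\nu_t$ at heights that have not yet been excavated are preserved, while the others are erased from above.

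With this identity in hand, I would first verify that $\rho^\nu$ is càdlàg. Starting from $\nu = 0$, the formula $\rho_t = \beta \mathbf{1}_{[0,H_t]}(r)\,\dd r + \sum_{s\leqslant t,\,X_{s-}<I_t^s}(I_t^s-X_{s-})\delta_{H_s}$ together with the càdlàg character of $X$ and of (a suitable version of) the height process $H$ shows that the map $t\mapsto \rho_t$ is càdlàg for the weak topology, jumps occurring exactly at the jump times of $X$ with $\rho_t - \rho_{t-} = \Delta X_t\,\delta_{H_t}$. For a general initial condition $\nu$, the decomposition $\rho^\nu_t = [k_{-I_t}\nu, \rho_t]$ reduces the question to the continuity of $a\mapsto k_a\nu$ (which is càdlàg by inspection of the definition of $k_a$) and of the concatenation operation, both of which are elementary.

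For the Markov property, fix a bounded continuous $F\colon\Mf(\real_+)\to\real$ and $t\geqslant 0$. Applying the identity above at time $t$ yields $\rho^\nu_{t+s} = [k_{a_{t,s}^\nu}\rho^\nu_t,\,\rho'_s]$, where by the Markov property of $X$ the process $X'$ is independent of $\mathcal{F}_t$ and distributed as $X$; hence $\rho'$ is independent of $\mathcal{F}_t$ and distributed as the exploration process started from $0$. Consequently, conditionally on $\mathcal{F}_t$, the process $(\rho^\nu_{t+s},\,s\geqslant 0)$ is a measurable functional of $\rho^\nu_t$ alone and of the independent copy $\rho'$, which means its conditional law is $\mathbb{P}_{\rho^\nu_t}$, establishing the simple Markov property. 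The strong Markov property is obtained in two standard steps: first one checks that the semigroup is Borel measurable and that $\rho^\nu$ has the quasi-left continuity required by \cite[Theorem III.8.11]{ethier1986markov}-type results, then one applies the approximation argument replacing a stopping time $T$ by its dyadic approximations $T_n=\lceil 2^nT\rceil/2^n$, using simple Markov at each dyadic level and passing to the limit by right-continuity.

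The main obstacle I expect is verifying the pathwise identity $\rho^\nu_{t+s} = [k_{a_{t,s}^\nu}\rho^\nu_t,\rho'_s]$ rigorously when $\nu$ or $\rho^\nu_t$ carries atoms precisely at heights where new excursions of $X'$ attach, since in those situations the concatenation and the erasure operators must be handled with care (masses may accumulate or be partially erased within a single atom). This is also what prevents the semigroup from being Feller in the usual sense: the map $\nu\mapsto k_a\nu$ is only one-sided continuous. The standard workaround, which I would use, is to prove a slightly weaker continuity (right-continuity of trajectories plus continuity along sequences avoiding the discontinuity set, which has zero measure under the law of $X'$) and then invoke it in combination with the Markov property to upgrade to strong Markov, as done in \cite[Proposition 1.2.3]{duquesne2002random}.
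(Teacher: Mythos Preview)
The paper does not prove this proposition: it is stated without proof in Section~\ref{subsect: exploration process} as a known result from Duquesne and Le~Gall \cite{duquesne2002random}, where it appears as Proposition~1.2.3. Your sketch is a faithful outline of that original proof---the pathwise identity $\rho^\nu_{t+s} = [k_{-I'_s}\rho^\nu_t,\rho'_s]$ is precisely Lemma~1.2.2 in \cite{duquesne2002random}, and the passage from simple to strong Markov via dyadic approximation and right-continuity is what they do---so your approach is correct and coincides with the literature the paper is quoting.
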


\subsection{The excursion measure of the exploration process}
Let us introduce the excursion measure $\n$. Denote by $I = (I_t, \, t\geqslant 0)$ the infimum process of $X$:
\begin{equation}
	I_t = \inf_{0\leqslant s \leqslant t} X_s.
\end{equation}
Standard results (see e.g.~\cite{bertoin1996levy}) entail that $X-I$ is a strong Markov process with values in $\real_+$ and that the point $0$ is regular. Furthermore, $-I$ is a local time at $0$ for $X-I$. We denote by $\n$ the associated excursion measure of the process $X-I$ away from $0$. It is not difficult to see from \eqref{eq: definition height process} that $H_t$ (and thus also $\rho_t$) only depends on the excursion of $X-I$ above $0$ which straddles time $t$. It follows that the excursion measure of $\rho$ away from $0$ is the “distribution'' of $\rho$ under $\n$. We still denote it by $\n$ and we let 
\begin{equation}
	\sigma = \inf\{t >0\colon\, \rho_t=0\}
\end{equation} be the lifetime of $\rho$ under $\n$ (this coincides with the lifetime of $X-I$ under $\n$). In particular, the following holds for every $\lambda >0$:
\begin{equation}\label{eq: laplace transform sigma}
	\n\left[1-\e^{-\lambda \sigma}\right] = \psi^{-1}(\lambda) \quad \text{and}\quad \n\left[\sigma \e^{-\lambda \sigma}\right] = \frac{1}{\psi'\circ \psi^{-1}(\lambda)},
\end{equation}
where $\psi^{-1}$ is the inverse function of $\psi$. By letting $\lambda \to 0$ we obtain:
\begin{equation}\label{eq: expectation sigma}
	\n[\sigma] = \frac{1}{\alpha},
\end{equation}
with the convention that $1/0 = \infty$. Let us recall Bismut's decomposition for the exploration process. Let $J_a$ be the random element in $\Mf(\real_+)$ defined by $J_a(\dd r)= \mathbf{1}_{[0,a]}(r)\,\dd U_r$, where $U$ is a subordinator with Laplace exponent
\begin{equation}\label{eq: definition phi}
	\phi(\lambda) = \frac{\psi(\lambda)}{\lambda}-\alpha=\beta \lambda + \int_0^\infty \left(1-\e^{-\lambda r}\right)\pibar(r)\, \dd r,
\end{equation}
where the tail $\pibar$ of the Lévy measure $\pi$ is defined in \eqref{eq: definition pibar}.
\begin{proposition}\label{prop: representation exploration}
	For every $F \in \B(\Mf(\real_+))$, we have:
	\begin{equation}
		\n\left[\int_0^\sigma F(\rho_t) \, \dd t\right] = \int_0^\infty \dd a \, \e^{-\alpha a} \ex{F(J_a)}.
	\end{equation}
\end{proposition}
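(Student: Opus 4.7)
By a monotone-class argument, it suffices to establish the formula for $F(\mu)=\e^{-\langle \mu,f\rangle}$ with $f\in \B(\real_+)$. In this case, by the exponential formula for the subordinator $U$ with Laplace exponent $\phi$ (see~\eqref{eq: definition phi}), the right-hand side of the proposition equals
\begin{equation*}
\int_0^\infty \dd a\, \e^{-\alpha a}\, \expp{-\int_0^a \phi(f(r))\, \dd r},
\end{equation*}
and the task is to identify the left-hand side with this quantity.

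The first step is to convert the $\n$-expectation into an expectation under $\mathbb{P}$ via It\^o's excursion theory. Since $-I$ is a continuous local time at $0$ for the strong Markov process $X-I$ with excursion measure $\n$, and since $-I_\infty=+\infty$ almost surely in the (sub)critical case, the compensation formula applied with the predictable multiplicative factor $s\mapsto \e^{\lambda I_s}$ --- together with the fact (ensured by~\eqref{eq: infinite variation}) that $\{s\colon \rho_s=0\}$ has zero Lebesgue measure --- should yield, for every $\lambda>0$ and every $F\in\B(\Mf(\real_+))$,
\begin{equation*}
\ex{\int_0^\infty F(\rho_s)\, \e^{\lambda I_s}\, \dd s} = \frac{1}{\lambda}\, \n\left[\int_0^\sigma F(\rho_u)\, \dd u\right].
\end{equation*}
(As a sanity check, taking $F\equiv 1$ and using~\eqref{eq: expectation sigma} should recover $\ex{\int_0^\infty \e^{\lambda I_s}\, \dd s}=1/(\alpha\lambda)$, which matches plugging $f=0$ into the target formula.)

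The second step is to compute the same expectation directly for $F=\e^{-\langle \cdot,f\rangle}$. I would use time-reversal of $X$ at each fixed $s$: the process $\hat X^s_u\coloneqq X_s-X_{(s-u)-}$, $0\leq u\leq s$, has the same law as $(X_u)_{0\leq u\leq s}$, and under this reversal the jumps of $X$ generating atoms of $\rho_s$ correspond exactly to strict new maxima of $\hat X^s$, indexed by the ladder-height parameter $H_s$, while $-I_s$ coincides with the running supremum of $\hat X^s$ at time $s$. Since $X$ is spectrally positive, classical Wiener--Hopf/fluctuation theory identifies the ascending ladder-height process with a subordinator of Laplace exponent $\phi(\lambda)=\psi(\lambda)/\lambda-\alpha$. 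Consequently, conditionally on $H_s=a$, the random measure $\rho_s$ has the law of $J_a$; integrating over $s$ --- the linear drift $\alpha\lambda$ in $\psi$ producing precisely the factor $\e^{-\alpha a}$ --- should give
\begin{equation*}
\ex{\int_0^\infty \e^{-\langle \rho_s,f\rangle}\,\e^{\lambda I_s}\, \dd s} = \frac{1}{\lambda}\int_0^\infty \e^{-\alpha a}\, \expp{-\int_0^a \phi(f(r))\, \dd r}\, \dd a.
\end{equation*}
Cancelling the common factor $1/\lambda$ against step one will yield the proposition.

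The main obstacle is this last step: carefully tracking the joint law of $(\rho_s,H_s,-I_s)$ through time-reversal and the Wiener--Hopf factorization so that the height variable $a$ carries exactly the weight $\e^{-\alpha a}\,\dd a$, and verifying the clean independence structure between the ladder-height subordinator and $-I_s$ after reversal. All the other ingredients (monotone class, It\^o excursion theory, and the exponential formula for subordinators) are standard and contribute only bookkeeping.
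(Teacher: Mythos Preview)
The paper does not prove this proposition: it is stated in Section~\ref{sect: notation} as a known result (``Bismut's decomposition for the exploration process''), recalled from Duquesne and Le~Gall~\cite{duquesne2002random}. There is therefore no proof in the paper to compare your attempt against.

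Your outline is the classical route to this identity. The reduction by monotone class to exponentials $F(\mu)=\e^{-\langle\mu,f\rangle}$ is standard, and your first step --- passing from $\n$ to $\mathbb{P}$ via the compensation formula with the predictable factor $\e^{\lambda I_s}$ --- is correct: since $I$ is constant on each excursion interval of $X-I$, the identity
\[
\ex{\int_0^\infty F(\rho_s)\,\e^{\lambda I_s}\,\dd s}=\frac{1}{\lambda}\,\n\left[\int_0^\sigma F(\rho_u)\,\dd u\right]
\]
follows immediately from the master formula for the excursion point process.

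Your second step captures the right mechanism (time-reversal at fixed $s$, identification of the atoms of $\rho_s$ with ladder-height increments of $\hat X^s$), and you are honest that this is where the real work lies. The gap you flag is genuine: what is actually needed is the joint distribution of the ladder-\emph{time} and ladder-\emph{height} processes of $\hat X^s$ under the occupation measure $\int_0^\infty\dd s$, and it is the inverse local time at the supremum (the ladder-time subordinator) that produces the density $\e^{-\alpha a}\,\dd a$ in the height variable --- not, as you phrase it, ``the linear drift $\alpha\lambda$ in $\psi$''. In Duquesne--Le~Gall this is handled through the explicit Wiener--Hopf factorization for spectrally one-sided L\'evy processes (the descending ladder-height process is trivial, so the ascending ladder process is fully explicit), combined with a potential-measure computation for the bivariate ladder process. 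Your sketch would become a complete proof once that computation is carried out; as written it correctly identifies the architecture but leaves the key fluctuation-theory input unverified.
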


\subsection{Local times of the height process}\label{subsect: local times}
Although the height process $H$ is not Markov in general, one can show that its local time process exists under $\mathbb{P}$ or $\n$. More precisely, for every $a> 0$, there exists a continuous nondecreasing process $(L_s^{a}, \, s \geqslant 0)$ which can be characterized via the approximation:
\begin{equation*}
	\lim_{\epsilon \to 0} \n\left[\ind{\sup H> h} \sup_{0\leqslant s \leqslant t} \left|\frac{1}{\epsilon} \int_0^s \ind{a-\epsilon < H_r\leqslant a} \, \dd r - L_s^{a}\right|\right] = 0, \quad \forall t,h \geqslant 0.
\end{equation*}
Moreover, $\n$-a.e. the support of the measure $L^{a}(\dd s)\coloneqq \dd_s L_s^{a}$ is contained in $\{s\geqslant 0\colon \,H_s = a\}$ and we have the occupation time formula $\int_0^\infty \dd a \,L^{a}(\dd s) = \mathbf{1}_{[0,\sigma]}(s) \, \dd s$. Furthermore, the process $(L_\sigma^{a}, \, a \geqslant 0)$ is a $\psi$-CB process under its canonical measure.

Let us recall the excursion decomposition of the exploration process above level $h >0$. Set $\tau^{h}_s = \inf\{t >0 \colon \, \int_0^t \ind{H_r\leqslant h} \, \dd r > s\}$ and define the truncated exploration process by:
\begin{equation}\label{eq: definition exploration under h}
	r_h(\rho) = (\rho_{\tau^{h}_s}, \, s \geqslant 0).
\end{equation}
Denote by $\mathcal{E}_h$ the $\sigma$-field generated by the process $r_h(\rho)$. Let $(\alpha_i, \beta_i), i\in I_h$ denote the excursion intervals of $H$ above level $h$. For every $i\in I$, we define the measure-valued process $\rho^{i}$ by:
\begin{equation*}
	\langle \rho^{i}_s , f \rangle = \int_{(a,\infty)} f(r-a) \, \rho_{\alpha_i + s} (\dd r) \quad \text{if } 0<s<\beta_i - \alpha_i
\end{equation*}
and $\rho^{i}_s = 0$ if $s = 0$ or $s \geqslant \beta_i-\alpha_i$. Finally, let $\ell_i= L^{h}_{\alpha_i}$ be the local time at level $h$ at the beginning of the excursion $\rho^{i}$.
\begin{proposition}\label{prop: branching property}
	Under $\n$, conditionally on $\mathcal{E}_h$, the random measure $\sum_{i \in I} \delta_{(\ell_i, \rho^{i})}$ is a Poisson point measure with intensity $\mathbf{1}_{[0,L_\sigma^h]}(\ell) \, \dd \ell \n[\dd \rho]$.
\end{proposition}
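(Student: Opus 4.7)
The approach combines the strong Markov property of the exploration process $\rho$ with It\^o-type excursion theory for the height process $H$ at level $h$. The key geometric observation is that the defining formula of $\rho$ implies that on each excursion interval $(\alpha_i, \beta_i)$ of $H$ above $h$ one has the splitting $\rho_{\alpha_i+s} = [\rho_{\alpha_i}, \rho^{i}_s]$ for $0 \le s \le \beta_i - \alpha_i$: the measure $\rho_{\alpha_i}$ is frozen during the excursion, since no jump of $X$ on $(\alpha_i, \beta_i)$ produces a Dirac in $\rho$ at a height $\le h$, while the part of $\rho_{\alpha_i+s}$ sitting above level $h$ is precisely $\rho^{i}_s$ shifted up by $h$. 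Applying the strong Markov property of $\rho$ at the stopping time $\alpha_i$, together with this splitting and the concatenation formula $\rho_t^\nu = [k_{-I_t}\nu,\rho_t]$ defining $\rho^\nu$, one shows that each excursion $\rho^{i}$ is, conditionally on $\mathcal{E}_h$, distributed as $\rho$ under $\n$ up to its natural killing time, independently of $\mathcal{E}_h$ and of the other excursions.

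\textbf{Poisson structure.} By the It\^o excursion theory for $H$ at level $h$ with local time $(L^h_s, s \ge 0)$, the starting local times $(\ell_i = L^h_{\alpha_i})_{i \in I_h}$ form the points of a Poisson process of rate $1$ restricted to $[0, L^h_\sigma]$. Combined with the distributional identity of the previous paragraph, this yields that $\sum_{i \in I_h} \delta_{(\ell_i, \rho^{i})}$ is, conditionally on $\mathcal{E}_h$, a Poisson point measure on $\real_+ \times \Mf(\real_+)$ with intensity $\mathbf{1}_{[0, L^h_\sigma]}(\ell)\,\dd \ell \,\n[\dd \rho]$. Alternatively, one can verify both the independence and the intensity simultaneously by computing the joint Laplace functional $\n[\exp(-\sum_{i \in I_h} F(\ell_i, \rho^{i}))\,G(r_h(\rho))]$ and matching it against the exponential formula of a Poisson point measure, using Bismut's decomposition (Proposition \ref{prop: representation exploration}) to rewrite integrals over the lifetime of $\rho$ in terms of the subordinator $J_a$.

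\textbf{Main obstacle.} The delicate point is showing that, after splitting at level $h$, the excursion $\rho^{i}$ genuinely loses all memory of the base measure $\rho_{\alpha_i}$ and evolves as an \emph{unconditional} excursion under $\n$, rather than as an exploration process weighted by that base. Making this rigorous requires carefully combining the strong Markov property at $\alpha_i$ with the erasing and concatenation formulas for $\rho^\nu$, and noting that $L^h$ provides a regenerative structure on the level set $\{s : H_s = h\}$ so that excursion theory applies. Once this distributional identity and the conditional independence are in place, the Poissonian conclusion reduces to a standard application of It\^o excursion theory.
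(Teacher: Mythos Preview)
The paper does not give a proof of this proposition: it is stated in Section~\ref{sect: notation} as part of the background material recalled from Duquesne and Le~Gall \cite{duquesne2002random}, where the result appears (essentially Proposition~4.2.3 there, building on Proposition~1.3.1). So there is no ``paper's own proof'' to compare against.

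Your sketch captures the right ingredients: the concatenation identity $\rho_{\alpha_i+s}=[\rho_{\alpha_i},\rho^{i}_s]$ on each excursion interval, the strong Markov property of $\rho$, and excursion theory indexed by the local time $L^h$. This is indeed the line of argument in \cite{duquesne2002random}. Two points where your outline is loose, however. First, asserting that the $\ell_i$ ``form the points of a Poisson process of rate $1$'' is circular: that the excursion intensity in local time is exactly $\dd\ell\,\n[\dd\rho]$ is the content of the proposition, not an input. One has to identify the excursion measure, typically by first working under $\mathbb{P}$ (or $\mathbb{P}_\nu^{\psi,\ast}$), using the strong Markov property at the successive hitting times of level $h$ together with Lemma~\ref{lemma: poisson decomposition exploration}, and then passing to $\n$. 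Second, the ``alternative'' via Bismut and a direct Laplace functional computation is not how this is actually done in the literature; Bismut's decomposition is a one-point formula and does not by itself yield the full conditional Poisson structure. Your identification of the main obstacle is accurate, but a rigorous treatment requires more than invoking the strong Markov property at $\alpha_i$: one must set up the excursion process of $H$ (or of $\rho$) away from level $h$ and check that $L^h$ is an admissible local time for it, which is nontrivial since $H$ is not Markov.
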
 

\subsection{The dual process}
We shall need the $\Mf(\real_+)$-valued process $\eta = (\eta_t, \, t \geqslant 0)$ defined by:
\begin{equation}
	\eta_t(\dd r) = \beta \mathbf{1}_{[0,H_t]}(r)\, \dd r + \sum_{\substack{0<s\leqslant t,\\X_{s-}<I_t^s}} (X_s - I_t^s)\delta_{H_s}(\dd r).
\end{equation}
The process $\eta$ is the dual process of $\rho$ under $\n$ thanks to the following time-reversal property.
\begin{proposition}\label{prop: time reversal}
	The processes $((\rho_t, \eta_t), \, t \geqslant 0)$ and $((\eta_{(\sigma -s)-},\rho_{(\sigma-s)-}), \, t\geqslant 0)$ have the same distribution under $\n$.
\end{proposition}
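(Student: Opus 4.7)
The plan is to derive this duality from a symmetry in Bismut's decomposition (Proposition \ref{prop: representation exploration}) combined with a pathwise time-reversal on the underlying Lévy excursion.

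My first step would be to upgrade Proposition \ref{prop: representation exploration} to a joint representation for the pair $(\rho,\eta)$. Let $U$ and $\tilde U$ be two independent subordinators each with Laplace exponent $\phi$, and for $a\geqslant 0$ set $J_a(\dd r) = \mathbf 1_{[0,a]}(r)\, \dd U_r$ and $\tilde J_a(\dd r) = \mathbf 1_{[0,a]}(r)\, \dd \tilde U_r$. One expects the identity
\begin{equation*}
\n\left[\int_0^\sigma F(\rho_t, \eta_t)\, \dd t\right] = \int_0^\infty \dd a\, \e^{-\alpha a}\, \ex{F(J_a, \tilde J_a)},\quad F \in \B(\Mf(\real_+)^2).
\end{equation*}
This can be proved along the same lines as Proposition \ref{prop: representation exploration}, by tracking separately the ancestral-line contributions from jumps of $X$ occurring "before" and "after" each ancestor of $t$; these two contributions are independent by the strong Markov property of $X$ at the ancestral splits. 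The crucial consequence is that the joint distribution of $(\rho_t,\eta_t)$ under $\n[\int_0^\sigma \cdot\, \dd t]$ is symmetric under the swap $(\rho,\eta)\leftrightarrow(\eta,\rho)$.

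Second, I would observe that the time-reversal $s\mapsto(\sigma-s)-$ of the Lévy excursion acts, at the level of $(\rho,\eta)$, precisely as this swap. Indeed, for $s\leqslant t$ with $X_{s-}<I_t^s$, the jump of $X$ at $s$ contributes $(I_t^s-X_{s-})\delta_{H_s}$ to $\rho_t$ and $(X_s-I_t^s)\delta_{H_s}$ to $\eta_t$. Under the reversal, the future infimum $I_t^s$ of $X$ on $[s,t]$ becomes the past infimum of the reversed path, the jump-halves $(I_t^s-X_{s-}, X_s-I_t^s)$ are exchanged, and the height $H_s$ is preserved thanks to the symmetric form of \eqref{eq: definition height process}. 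Consequently, applying the functional defining $\rho$ to the reversed path produces $\eta_{(\sigma-t)-}$ on the original path, and conversely for $\eta$. Combining this pathwise identification with the joint symmetry of the first step — iterated to handle finitely many times $0<t_1<\cdots<t_n<\sigma$ — yields equality of all finite-dimensional distributions, and the càdlàg regularity of both $\rho$ and $\eta$ lifts this to process-level equality in law.

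The main obstacle is justifying the swap at several times $t_1,\ldots,t_n$ simultaneously: the one-time symmetry follows cleanly from Bismut's decomposition, but iterating it requires a careful conditioning along the ancestral lines of the chosen times, which amounts to reproving the joint Bismut representation in a multi-marginal form. Note also that the naive time-reversal of $X$ alone is spectrally negative and hence not equal in law to $X$; the duality only makes sense at the level of the paired data $(\rho,\eta)$, where the swap absorbs the spectral asymmetry of the jumps. The appearance of $(\sigma-s)-$ rather than $\sigma-s$ is the customary left-limit adjustment for reversing a càdlàg path.
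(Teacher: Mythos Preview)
The paper does not actually prove this proposition: it appears in the preliminary Section~\ref{sect: notation}, where results on the exploration process are recalled from \cite{le1998branching,duquesne2002random} without proof. So there is no argument in the paper to compare yours against; the reference is Duquesne and Le Gall \cite{duquesne2002random}.

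On your approach itself: your two ingredients are each insufficient, and you do not explain how combining them closes the gap. Step~1 (the joint Bismut identity) yields only the exchangeability of $(\rho_t,\eta_t)$ at a \emph{single} time under the size-biased measure $\n[\int_0^\sigma\cdot\,\dd t]$; you yourself flag the multi-time extension as the ``main obstacle'' and offer nothing concrete for it. Step~2 is a pathwise observation, but --- as you correctly note --- the reversed excursion $s\mapsto X_{(\sigma-s)-}$ is spectrally negative and does \emph{not} have the law of $X$ under $\n$, so this observation alone yields no distributional statement. Your claim that the swap ``absorbs the spectral asymmetry'' is precisely the point that needs proving, and invoking the one-time Bismut symmetry does not supply it.

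The missing idea is to use the correct duality transformation. Under $\mathbb{P}$ and for fixed $t>0$, the process $\hat X_s \coloneqq X_t - X_{(t-s)-}$, $0\leqslant s\leqslant t$, has the \emph{same} law as $(X_s,\,0\leqslant s\leqslant t)$ by the classical duality lemma for L\'evy processes --- the vertical reflection is what restores spectral positivity. One then checks pathwise that the exploration and dual processes built from $\hat X$ coincide with $(\eta_{(t-\cdot)-},\rho_{(t-\cdot)-})$ built from $X$; this gives the full process-level identity on $[0,t]$ under $\mathbb{P}$ in one stroke, with no marginal-to-joint bootstrap, and one passes to $\n$ by excursion theory. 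Your Bismut step then becomes superfluous. In fact the logical order in \cite{duquesne2002random} is the reverse of what you propose: the time-reversal is established first and is an input to the two-sided Bismut representation (their Proposition~3.1.3, which this paper cites later), so deriving the former from the latter would be circular if one follows that reference.
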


\subsection{Grafting procedure}
We now explain how to insert a finite collection of measured-valued processes into a measure-valued process. Let $\mu = (\mu(t), \, 0\leqslant t < \sigma)$ be a $\Mf(\real_+)$-valued function with lifetime $\sigma \in (0,\infty]$ such that $\mu(t)$ has bounded support for every $t \in [0,\sigma)$ and let $\sum_{i= 1}^N  \delta_{(s_i,\mu_i)}$ be a finite point measure on $\real_+\times \D$ where the $s_i$ are arranged in increasing order and each $\mu_i$ has a finite lifetime $\sigma^{i}$. Set $s_0 = \Sigma_0 = 0$ and
\begin{equation*}
	\Sigma_i = \sum_{k=1}^{i} \sigma^k, \quad \forall i \geqslant 1.
\end{equation*}
Define a measure-valued process $\tilde{\mu}$ by:
\begin{equation*}
	\tilde{\mu}(t) = \begin{cases}
		\mu(t-\Sigma_i) & \text{if } s_{i-1} +\Sigma_{i-1} \leqslant t < (s_i \wedge \sigma) + \Sigma_{i-1}, \\
		[\mu(s_i), \mu_i(t-s_i-\Sigma_{i-1})] & \text{if } s_i + \Sigma_{i-1}\leqslant t < s_i + \Sigma_i \text{ and } s_i < \sigma.
	\end{cases}
\end{equation*}
Observe that the $(s_i, \mu_i)$ such that $s_i \geqslant \sigma$ do not play a role in this construction and that $\tilde{\mu}$ has lifetime
\begin{equation*}
	\sigma + \sum_{i\colon s_i < \sigma} \sigma^k.
\end{equation*}
We denote this grafting procedure by:
\begin{equation}\label{eq: definition grafting}
	\mu \circledast_{i= 1}^N (s_i,\mu_i) = \tilde{\mu}.
\end{equation}
In words, this is the process obtained from $\mu$ by inserting the measure-valued process $\mu_i$ into $\mu$ at time $s_i < \sigma$.

\subsection{A Poissonian decomposition of the exploration process}\label{subsect: branching property}
Let $\nu \in \Mf(\real_+)$. We write $\mathbb{P}^{\psi,\ast}_\nu$ for the distribution of the exploration process $\rho$ starting at $\nu$ and killed when it first reaches $0$. Let us introduce two probability measures on $\D$ that will play a major role in the rest of the paper. For every $r>0$, we will write $\for{r}$ for $\mathbb{P}^{\psi,\ast}_{r\delta_0}$. This should be interpreted as the distribution of the exploration processes with initial mass $r$. Furthermore, for every $\delta >0$ such that $\pibar(\delta)>0$, set:
\begin{equation}\label{eq: definition random forest}
	\rfor(\dd \rho) = \frac{1}{\pibar(\delta)} \int_{(\delta,\infty)} \pi(\dd r) \for{r}(\dd \rho),
\end{equation}
which is the distribution of the exploration process starting from a random initial mass with distribution $\pi$ conditioned on being larger than $\delta$.

We decompose the path of $\rho$ under $\mathbb{P}^{\psi,\ast}_\nu$ according to excursions of the total mass of $\rho$ above its minimum. Let $(\alpha_i,\beta_i), i\in I$ denote the excursion intervals of the process $X-I$ away from $0$ under $\mathbb{P}^{\psi,\ast}_\nu$. Define the measure-valued process $\rho^{i}$ by $\rho_{(\alpha_i +s)\wedge \beta_i} = [k_{-I_{\alpha_i}} \nu, \rho_s^{i}]$.
\begin{lemma}\label{lemma: poisson decomposition exploration}
	The random measure $\sum_{i\in I} \delta_{(-I_{\alpha_i},\rho^{i})}$ is under $\mathbb{P}^{\psi,\ast}_\nu$ a Poisson point measure with intensity $\mathbf{1}_{[0,\langle \nu,1\rangle)}(u)\, \dd u \n[\dd \rho]$. In particular, under $\for{r}$, it is a Poisson point measure with intensity $\mathbf{1}_{[0,r]}(u)\, \dd u \n[\dd \rho]$.
\end{lemma}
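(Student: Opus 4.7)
The plan is to realise the killed process under $\mathbb{P}^{\psi,\ast}_\nu$ as a measurable functional of the unkilled Lévy process $X$ starting at $0$, and then to deduce the statement from classical Itô excursion theory applied to $X-I$. Since the process $\rho$ is a measurable functional of the excursion of $X-I$ above $0$ straddling $t$, the Poisson structure of the excursions of $X-I$ transfers to a Poisson structure of the excursions $(\rho^{i})_{i\in I}$.

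First I would recall from standard fluctuation theory (see e.g.~\cite{bertoin1996levy}) that $X-I$ is a strong Markov process with $0$ regular and that $-I$ is a continuous local time at $0$. The excursion measure of $X-I$ away from $0$, normalised with respect to $-I$, coincides with $\n$ as introduced in the text. Therefore, under $\mathbb{P}$, the random measure $\sum_{i \in I}\delta_{(-I_{\alpha_i},\rho^{i,0})}$ is a Poisson point measure on $\real_+\times \D$ with intensity $\dd \ell \otimes \n[\dd \rho]$, where $\rho^{i,0}$ denotes the excursion of the ``base'' exploration process started at $0$ on the excursion interval $(\alpha_i,\beta_i)$.

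Next I would transfer this to the killed process starting from $\nu$. By definition, $\rho_t^{\nu} = [k_{-I_t}\nu,\rho_t]$, and since $-I_t$ is constant on each excursion interval $[\alpha_i,\beta_i)$, equal to $-I_{\alpha_i}$, one checks that on this interval
\begin{equation*}
	\rho^{\nu}_{(\alpha_i + s)\wedge \beta_i} = [k_{-I_{\alpha_i}}\nu,\,\rho^{i,0}_s],
\end{equation*}
so the excursion $\rho^{i}$ defined in the statement coincides with $\rho^{i,0}$. The killing time $T = \inf\{t>0\colon \rho^\nu_t = 0\}$ is the first $\beta_i$ for which $k_{-I_{\alpha_i}}\nu = 0$, i.e.~the first time $-I$ reaches $\langle \nu,1\rangle$ at the end of an excursion; thus the excursions surviving the killing are precisely those with $-I_{\alpha_i}<\langle \nu,1\rangle$. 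Restricting the Poisson point measure from the previous step to $\{\ell<\langle \nu,1\rangle\}$ yields a Poisson point measure with intensity $\mathbf{1}_{[0,\langle \nu,1\rangle)}(u)\,\dd u\otimes\n[\dd \rho]$, which is the claim. The specialisation to $\for{r}$ is immediate by taking $\nu=r\delta_0$.

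The only real subtlety is the identification in the first step of the excursion measure of $X-I$ (with respect to the local time $-I$) with the measure $\n$ used throughout the paper, as well as checking that the measurable map from the excursion of $X-I$ to the excursion $\rho^{i}$ of the exploration process is the same one used in the very definition of $\n$ on $\D$; once this bookkeeping is in place, the result is just an application of Itô's excursion theorem together with the restriction property of Poisson random measures.
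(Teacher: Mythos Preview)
The paper states this lemma without proof, treating it as a standard consequence of It\^o excursion theory for $X-I$ (as developed in \cite{duquesne2002random}); your argument is precisely the standard one that supplies the details. The key identifications you make --- that on each excursion interval $I$ is constant so $\rho^{\nu}_{(\alpha_i+s)\wedge\beta_i}=[k_{-I_{\alpha_i}}\nu,\rho_{(\alpha_i+s)\wedge\beta_i}]$, whence $\rho^{i}$ coincides with the base excursion of $\rho$, and that killing $\rho^{\nu}$ at $0$ amounts to restricting to local-time levels $<\langle\nu,1\rangle$ --- are correct, and the conclusion then follows from It\^o's theorem together with the restriction property of Poisson point measures. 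One small remark: the fact that no excursion starts at the exact level $\langle\nu,1\rangle$ (so the open/closed endpoint is immaterial) uses that $X$ has no negative jumps, hence $-I$ is continuous and the first passage of $X$ below $-\langle\nu,1\rangle$ occurs at a point where $X-I=0$.
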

Using this decomposition, we can give another useful interpretation of the measure $\for{r}$. Let $\rho$ be the exploration process starting from $0$ and let $(L_s^0,\, s \geqslant 0)$ be its local time process at $0$. Then the process $(\tilde{\rho}^{(r)}_t, \, t\geqslant 0)$ defined by:
\begin{equation}\label{eq: representation forest r}
	\tilde{\rho}_t^{(r)} = (r- L_t^0)_+ \delta_0 + \rho_t \ind{L_t^0 \leqslant r}
\end{equation}
has distribution $\for{r}$.

In the next lemma, we identify the distribution of the exploration process above level $H(\nu)$ starting from $\nu$. For a measure $\nu \in \Mf(\real_+)$ and a positive real $a>0$, define $\abov{a}(\nu)$ as the measure $\nu$ shifted by $a$. More formally, define a measure $\abov{a}(\nu)$ on $\real_+$ by setting:
\begin{equation*}
	\left\langle \abov{a}(\nu),f \right\rangle = 
	\int_{[a,\infty)} f(x-a)\, \nu(\dd x),
\end{equation*}
for every $f \in \B(\real_+)$ if $a \leqslant H(\nu)$ and $\abov{a}(\nu) =0$ if $a>H(\nu)$.

\begin{lemma}\label{lemma: exploration above initial level}
	Let $\nu \in \Mf(\real_+)$ such that $\nu(H(\nu))>0$. Under $\mathbb{P}^{\psi,\ast}_\nu$, the process $\tilde{\rho} = (\abov{H(\nu)}(\rho_t),\, t\geqslant 0)$ stopped at the first time it hits $0$ has distribution $\for{\nu(H(\nu))}$.
\end{lemma}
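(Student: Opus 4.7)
The plan is to directly compute $\abov{H(\nu)}(\rho^\nu_t)$ from the explicit construction of $\rho^\nu$ in terms of the underlying Lévy process $X$, and match it pathwise with the representation of $\for{r}$ given in equation~\eqref{eq: representation forest r}. The coupling is via the same driving Lévy process $X$ for $\rho^\nu$ and for the free exploration process $\rho$ (starting from $0$), so that the claim becomes a pathwise identity rather than merely a distributional equality.

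By definition $\rho^\nu_t = [k_{-I_t}\nu, \rho_t]$. The hypothesis $\nu(H(\nu))>0$ ensures that $H(\nu)$ is an atom of $\nu$. For $t$ such that $-I_t < \nu(H(\nu))$, the backward erasure has only partially consumed this top atom, so unfolding the definition of $k_a\nu$ yields $H(k_{-I_t}\nu) = H(\nu)$ together with $k_{-I_t}\nu(\{H(\nu)\}) = \nu(H(\nu)) + I_t$. Applying the concatenation definition and the operator $\abov{H(\nu)}$ then gives the pathwise identity
\begin{equation*}
\abov{H(\nu)}(\rho^\nu_t) = \bigl(\nu(H(\nu)) + I_t\bigr)\delta_0 + \rho_t.
\end{equation*}
Setting $\tau = \inf\{t\geq 0 : -I_t \geq \nu(H(\nu))\}$, one has $\rho_\tau = 0$ (new infima of $X$ correspond to zeros of $\rho$), and the right-hand side is nonzero for $t<\tau$; hence $\tau$ is precisely the first time $\abov{H(\nu)}(\rho^\nu)$ vanishes.

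To conclude, compare with $\tilde{\rho}_t^{(r)} = (r - L_t^0)_+ \delta_0 + \rho_t\ind{L_t^0 \leq r}$ from equation~\eqref{eq: representation forest r}, applied with $r=\nu(H(\nu))$. Using the standard identification $L_t^0 = -I_t$ (the local time at $0$ of the exploration process from $0$ coincides, up to the normalization already in force, with $-I$, the local time of $X-I$ at its infimum), the two processes agree on $[0,\tau]$ and both vanish for $t\geq \tau$. Since $\tilde{\rho}^{(\nu(H(\nu)))}$ has distribution $\for{\nu(H(\nu))}$ by construction, the result follows. The only real technical point is carefully unfolding the concatenation $[k_{-I_t}\nu, \rho_t]$ and keeping track of how the top atom of $\nu$ absorbs any atom of $\rho_t$ at $0$; beyond this there is no substantial obstacle, as everything is a pathwise identification enabled by the common driving Lévy process $X$.
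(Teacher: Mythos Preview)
Your proof is correct and follows essentially the same route as the paper: both unfold the concatenation $[k_{-I_t}\nu,\rho_t]$ to obtain $\abov{H(\nu)}(\rho^\nu_t)=(\nu(H(\nu))+I_t)\delta_0+\rho_t$ for $-I_t<\nu(H(\nu))$, then match this with a known description of $\for{\nu(H(\nu))}$. The only difference is packaging---the paper compares via the Poisson excursion decomposition of Lemma~\ref{lemma: poisson decomposition exploration}, whereas you compare directly via the pathwise representation~\eqref{eq: representation forest r} using $L^0_t=-I_t$.
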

\begin{proof}
	We shall use the Poisson decomposition of Lemma \ref{lemma: poisson decomposition exploration}. Using its notations, we have $\rho_{(t+\alpha_i) \wedge \beta_i} = [k_{-I_{\alpha_i}}\nu, \rho_t^{i}]$ where $\sum_{i\in I} \delta_{(-I_{\alpha_i}, \rho^{i})}$ is a Poisson point measure with intensity $\mathbf{1}_{[0,\langle \nu, 1\rangle)}(u) \, \dd u \n[\dd \rho]$. Thus, the exploration process above level $H(\nu)$ stopped at the first time it hits $0$ satisfies:
	\begin{equation*}\abov{H(\nu)}(\rho_{(t+\alpha_i) \wedge \beta_i}) = (\nu(H(\nu))+I_{\alpha_i})\delta_0 + \rho_t^{i}  
	\end{equation*} 
	if $-I_{\alpha_i} \leqslant \mu(H(\nu))$ and it is zero if $-I_{\alpha_i} > \nu(H(\nu))$. Applying Lemma \ref{lemma: poisson decomposition exploration} again, it is easy to see that this is also the Poisson decomposition of $\rho$ under $\for{\nu(H(\nu))}$. This proves the desired result.
\end{proof}

\subsection{The Lévy tree}\label{subsect: levy tree}
Recall that the Grey condition
\begin{equation}\label{eq: grey condition}
	\int^\infty \frac{\dd \lambda}{\psi(\lambda)} < \infty
\end{equation}
is equivalent to the almost sure extinction of the $\psi$-CB process in finite time. If it holds, then the height process $H$ admits a continuous version and one can use the coding of real trees by continuous excursions (see e.g.~\cite{evans2007probability}) in order to define the Lévy tree $\rdtree$ as the tree coded by the height process $H$ under its excursion measure $\n$. Then the Grey condition implies that $\rdtree$ is a \emph{compact} real tree. In the rest of the paper we forego this assumption, but we still interpret the results in terms of trees as they are easier to understand.

\section{Distribution of the maximal degree}\label{sect: distribution of degree}
Under $\n$, denote by $\Delta = \Delta(\rho)$ the largest atomic mass of the exploration process. Thanks to \cite[Theorem 4.6]{duquesne2005probabilistic}, if $\langle \pi,1\rangle<\infty$ then the set of discontinuity times of $\rho$ is $\n$-a.e.~finite (and possibly empty). On the other hand, if $\langle \pi,1\rangle = \infty$ then it is countable and dense in $[0,\sigma]$. In particular, in that case we have that $\n$-a.e.~$\Delta>0$. The main result of this section is the following proposition giving the joint distribution of the lifetime $\sigma$ and the maximal degree $\Delta$ under $\n$. Recall from \eqref{eq: definition pibar} and \eqref{eq: definition psi delta} the definitions of $\pibar$ and $\psi_\delta$, and define:
\begin{equation}\label{eq: definition psi delta minus}
	\psi_{\delta-}(\lambda) = \lim_{\epsilon \uparrow 0}\psi_{\delta-\epsilon}(\lambda)=\left(\alpha + \int_{[\delta,\infty)} r\, \pi(\dd r)\right)\lambda + \beta \lambda^2 + \int_{(0,\delta)} \left(\e^{-\lambda r}-1+\lambda r\right)\, \pi(\dd r).
\end{equation}
\begin{proposition}\label{prop: joint distribution degree lifetime}
	For every $\delta >0$ and $\lambda \geqslant 0$, we have:
	\begin{align}
		\n\left[1-\e^{-\lambda \sigma}\ind{\Delta \leqslant \delta}\right] &= \psi_\delta^{-1}(\pibar(\delta)+\lambda), \label{eq: joint distribution degree lifetime large}\\
		\n\left[1-\e^{-\lambda \sigma}\ind{\Delta < \delta}\right] &= \psi_{\delta-}^{-1}(\pi[\delta,\infty)+\lambda).
		\label{eq: joint distribution degree lifetime strict}
	\end{align}
	In particular, we have:
	\begin{align}
		\n[\Delta >\delta] &= \psi_\delta^{-1}(\pibar(\delta)), \label{eq: distribution degree strict}\\
		\n[\Delta\geqslant \delta] &= \psi_{\delta-}^{-1}(\pi[\delta,\infty)). \label{eq: distribution degree large}
	\end{align}
	Furthermore, if $\langle \pi,1\rangle <\infty$, then we have:
	\begin{equation}
		\n\left[1-\e^{-\lambda \sigma}\ind{\Delta = 0}\right] = \psi_0^{-1}(\langle \pi,1\rangle+\lambda).\label{eq: degree 0 lifetime}
	\end{equation}
\end{proposition}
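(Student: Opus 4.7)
The plan is to derive \eqref{eq: joint distribution degree lifetime large} directly from the Lévy-Itô decomposition of the underlying Lévy process together with the Poisson structure of the excursions of $X-I$, and then to obtain the other four identities as immediate consequences. First, I would split $X=X^\delta+Y^\delta$, where $X^\delta$ is the spectrally positive Lévy process with Laplace exponent $\psi_\delta$ obtained by suppressing all jumps of size larger than $\delta$, and $Y^\delta$ is an independent compound Poisson process with intensity measure $\mathbf{1}_{(\delta,\infty)}\pi$, hence of rate $\pibar(\delta)$. Let $T$ denote the first jump time of $Y^\delta$, so that $T$ is exponentially distributed with parameter $\pibar(\delta)$ and is independent of $X^\delta$. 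For $s>0$, I would then consider the first-passage subordinators $\tau_s=\inf\{t\geqslant 0\colon X_t=-s\}$ and $\tau_s^\delta=\inf\{t\geqslant 0\colon X^\delta_t=-s\}$, whose Laplace exponents (in $s$) are respectively $\psi^{-1}$ and $\psi_\delta^{-1}$.

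Since $X$ and $X^\delta$ coincide on $[0,T)$, a short check shows that the events $\{T>\tau_s\}$ and $\{T>\tau_s^\delta\}$ are identical and that $\tau_s=\tau_s^\delta$ on this common event. The crucial translation is then that the excursions of $X-I$ away from $0$ form a Poisson point process with intensity $\dd s'\,\n[\dd\rho]$ indexed by the local time $-I$, that $\tau_s$ equals the sum of the lifetimes of the excursions with local time in $[0,s]$, and that the jumps of $X$ are in bijection (with equal sizes) with the atoms of $\rho$. Consequently $\{T>\tau_s\}$ is precisely the event that every excursion of $X-I$ with local time in $[0,s]$ has maximal atom $\Delta\leqslant\delta$.

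The proof of \eqref{eq: joint distribution degree lifetime large} follows by computing $\ex{\e^{-\lambda\tau_s}\ind{T>\tau_s}}$ in two ways. On the one hand, the exponential formula for Poisson point processes gives
\begin{equation*}
\ex{\e^{-\lambda\tau_s}\ind{T>\tau_s}}=\exp\bigl(-s\,\n[1-\e^{-\lambda\sigma}\ind{\Delta\leqslant\delta}]\bigr).
\end{equation*}
On the other hand, using $\tau_s=\tau_s^\delta$ on the event considered together with the independence of $T$ from $X^\delta$:
\begin{equation*}
\ex{\e^{-\lambda\tau_s}\ind{T>\tau_s}}=\ex{\e^{-(\lambda+\pibar(\delta))\tau_s^\delta}}=\exp\bigl(-s\,\psi_\delta^{-1}(\lambda+\pibar(\delta))\bigr).
\end{equation*}
Comparing the two expressions and sending $s$ to any convenient positive value yields \eqref{eq: joint distribution degree lifetime large}.

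The proof of \eqref{eq: joint distribution degree lifetime strict} is identical after replacing “jumps of size larger than $\delta$'' by “jumps of size at least $\delta$'' everywhere, so that $Y^\delta$ has intensity $\mathbf{1}_{[\delta,\infty)}\pi$ and rate $\pi[\delta,\infty)$ and the pruned process has Laplace exponent $\psi_{\delta-}$. Identities \eqref{eq: distribution degree strict} and \eqref{eq: distribution degree large} follow by setting $\lambda=0$, and \eqref{eq: degree 0 lifetime} is obtained by letting $\delta\downarrow 0$ in \eqref{eq: joint distribution degree lifetime strict} via monotone convergence, since the integrand $1-\e^{-\lambda\sigma}\ind{\Delta<\delta}$ is monotone in $\delta$, $\pi[\delta,\infty)\uparrow\langle\pi,1\rangle$ and $\psi_{\delta-}\to\psi_0$ pointwise when $\pi$ is finite. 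The only step that requires real care is the identification of big jumps of $X$ with big atoms of $\rho$, since this is what translates the analytic event $\{T>\tau_s\}$ into the degree constraint; everything else is a short Poissonian computation.
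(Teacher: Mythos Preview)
Your proof is correct and is a genuinely different, more elementary route than the paper's. The paper introduces the two-parameter quantity $v(\lambda,\mu)=\n[1-\e^{-\lambda\sigma-\mu\int_0^\sigma\ind{\rho_t\in A}\dd t}]$, applies the Markov property and Bismut's decomposition (Proposition~\ref{prop: representation exploration}) to obtain an implicit equation for $v(\lambda,\mu)$ involving a subordinator computation, then lets $\mu\to\infty$ with the help of a Tauberian theorem to reach $\psi_\delta\circ v(\lambda)=\pibar(\delta)+\lambda$. Your argument bypasses all of this by working directly at the level of the underlying Lévy process: the Lévy--Itô split $X=X^\delta+Y^\delta$, the identity $\Delta=\sup_{t\in[0,\sigma]}\Delta X_t$ (which is exactly the ``identification of big jumps of $X$ with big atoms of $\rho$'' you flag), and the first-passage subordinator give the result in two lines via the exponential formula. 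In fact your splitting is the same one the paper uses later in the proof of Theorem~\ref{thm: degree decomposition}; you are effectively observing that the Laplace-transform identity drops out of that coupling immediately, without needing the full decomposition of the exploration process. What the paper's approach buys is that it stays entirely within the exploration-process formalism and never needs to invoke $\Delta=\sup_t\Delta X_t$ explicitly; what your approach buys is brevity and transparency, at the modest cost of justifying that identification (which follows from $\rho_t=\rho_{t-}+\Delta X_t\,\delta_{H_t}$ together with the fact that each atom mass $I_t^s-X_{s-}$ is bounded above by $\Delta X_s$).
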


\begin{proof}
	We only prove \eqref{eq: joint distribution degree lifetime large}, the proof of \eqref{eq: joint distribution degree lifetime strict} being similar. Fix $\delta >0$ and let $\lambda,\mu \geqslant 0$. Let
	\begin{equation*}
		A = \{\nu \in \Mf(\real_+)\colon \, \nu \text{ has an atom with mass} >\delta\}.
	\end{equation*}
	We shall compute
	\begin{equation}\label{eq: definition v}
		v(\lambda,\mu) = \n\left[1-\e^{-\lambda \sigma - \mu \int_0^\sigma \dd t \,\ind{\rho_t \in A}}\right].
	\end{equation}
	We have:
	\begin{align*}
		v(\lambda,\mu) &= 
		\n\left[\int_0^\sigma \dd t \,(\lambda+\mu \ind{\rho_t \in A}) \e^{-\lambda(\sigma-t)-\mu\int_t^\sigma \dd s \,\ind{\rho_s \in A}}\right] \\
		&= 	 \n\left[\int_0^\sigma \dd t \,(\lambda+\mu \ind{\rho_t \in A})\exstar{\rho_t}{\e^{-\lambda\sigma-\mu\int_0^\sigma \dd s \,\ind{\rho_s \in A}}}\right],
	\end{align*}
	where we applied the Markov property for the last equality. We shall use Lemma \ref{lemma: poisson decomposition exploration} to compute the last expectation.
	
	For a measure $\nu \in \Mf(\real_+)$, denote by $H'(\nu)$ the first atom of $\nu$ with mass larger than $\delta$:
	\begin{equation*}
		H'(\nu) = \inf\{x \geqslant 0  \colon \, \nu(x) >\delta\},
	\end{equation*}
	with the convention that $\inf \emptyset = +\infty$.
	
	Suppose that $\nu \in A$. Recall from Section \ref{subsect: branching property} the excursion decomposition of the exploration process above the minimum of its total mass under $\mathbb{P}^{\psi,\ast}_\nu$. Notice that if $-I_{\alpha_i} < \nu\left([H'(\nu),H(\nu)]\right) - \delta$, then $\rho_{(\alpha_i +s)\wedge \beta_i} \in A$ for every $s \geqslant 0$. On the other hand, if $-I_{\alpha_i} > \nu\left([H'(\nu),H(\nu)]\right) - \delta$, then $\rho_{(\alpha_i +s)\wedge \beta_i} \in A$ if and only if $\rho_s^{i} \in A$. It follows that
	\begin{multline*}
		\exstar{\nu}{\e^{-\lambda\sigma-\mu\int_0^\sigma \dd s \,\ind{\rho_s \in A}}}\\
		\begin{aligned}
			&= \exstar{\nu}{\expp{-\sum_{i\in I} \left(\lambda \sigma^{i}+\mu \int_0^{\beta_i - \alpha_i}\dd s\, \ind{\rho_{\alpha_i +s} \in A}\right)}}\\
			&= \expp{-\int_0^{\langle \nu,1\rangle} \dd u \n\left[1-\e^{-(\lambda+\mu\ind{u<\nu[H'(\nu),H(\nu)] - \delta})\sigma -\mu \ind{u>\nu[H'(\nu),H(\nu)] - \delta} \int_0^\sigma \dd s\, \ind{\rho_s\in A}}\right]}\\
			&= \expp{-\left(\nu[H'(\nu),H(\nu)] - \delta\right)\psi^{-1}(\lambda+\mu)-\left(\nu[0,H'(\nu)) + \delta\right)v(\lambda,\mu)}.
		\end{aligned}
	\end{multline*}
	
	Now suppose that $\nu \notin A$. Then $\mathbb{P}_\nu^{\psi,\ast}$-a.s. we have the equality $\{\rho_{(\alpha_i +s)\wedge \beta_i} \in A\}= \{\rho_s^{i} \in A\}$. It follows that
	\begin{equation*}
		\exstar{\nu}{\e^{-\lambda\sigma-\mu\int_0^\sigma \dd s \,\ind{\rho_s \in A}}} = \expp{-\langle \nu , 1\rangle v(\lambda, \mu)}.
	\end{equation*}
	We deduce that $v(\lambda,\mu)$ is equal to
	\begin{multline}\label{eq: v equal to}
		(\lambda+\mu)\n\left[\int_0^\sigma \dd t \,\ind{\rho_t \in A} \expp{-\left(\rho_t[H'_t,H_t] - \delta\right)\psi^{-1}(\lambda+\mu)-\left(\rho_t[0,H'_t) + \delta\right)v(\lambda,\mu)}\right]\\
		+\lambda\n\left[\int_0^\sigma \dd t \,\ind{\rho_t \notin A}\expp{-\langle \rho_t,1\rangle v(\lambda,\mu)}\right],
	\end{multline}
	where $H'_t = H'(\rho_t)$.
	
	Thanks to Proposition \ref{prop: representation exploration}, for every $\theta, \omega \geqslant 0$ we have:
	\begin{equation}\label{eq: exploration and subordinator}
		\n\left[\int_0^\sigma \dd t \,\ind{\rho_t \in A}\expp{-\theta\rho_t[0,H'_t)-\omega \rho_t[H'_t,H_t]}\right] = \int_0^\infty \dd a \, \e^{-\alpha a} f(a,\theta,\omega),
	\end{equation}
	where we set
	\begin{equation*}
		f(a,\theta,\omega)\coloneqq \ex{\ind{J_a \in A} \e^{-\theta  J_a[0,H'(J_a))-\omega J_a[H'(J_a),H(J_a)]}}.
	\end{equation*}
	
	Recall that $J_a(\dd r)= \mathbf{1}_{[0,a]}(r)\, \dd U_r$ where $U$ is a subordinator with Laplace exponent $\phi$ defined in \eqref{eq: definition phi}. Denote by $T$ be the time of the first jump of $U$ exceeding $\delta$:
	\begin{equation}
		T \coloneqq \inf \left\lbrace r>0\colon \, \Delta U_r >\delta\right\rbrace
	\end{equation}
	Then it is clear that $H(J_a)=a$, $H'(J_a)= T$ and $\{J_a \in A\} = \{T \leqslant	a\}$. Thus, we get:
	\begin{align*}
		f(a,\theta,\omega) &= \ex{\ind{T \leqslant a} \e^{-\theta  U_{T-}-\omega \Delta U_{T}-\omega (U_a-U_{T})}} \\
		&=\ex{\ind{T \leqslant a} \e^{-\theta  U_{T-}-\omega \Delta U_{T}-\phi(\omega)(a-T)}},
	\end{align*}
	where we used the strong Markov property at time $T$ for the last equality.
	
	Set
	\begin{equation}
		s_\delta = \int_\delta^\infty \pibar(r)\, \dd r \quad \text{and} \quad \phi_\delta(\lambda) =\beta \lambda +\int_0^\delta (1-\e^{-\lambda r})\pibar(r)\, \dd r.
	\end{equation}
	Using basic results on Poisson point processes, we have that $T$ is exponentially distributed with mean $1/s_\delta$, $\Delta U_{T}$ has distribution $s_\delta^{-1}\mathbf{1}_{[\delta,\infty)}(x) \pibar(x)\, \dd x$ and is independent of $T$, and the process $(U_r, \, 0\leqslant r < T)$ is distributed as $(V_r, \, 0\leqslant r < T)$, where $V$ is a subordinator with Laplace exponent $\phi_\delta$, independent of $(T, \Delta U_{T})$. Therefore, it follows that
	\begin{equation*}
		f(a,\theta,\omega) = \int_0^{a} \dd t \,\e^{-s_\delta t-\phi_\delta(\theta)t - \phi(\omega)(a-t)} \int_\delta^\infty \dd x\, \pibar(x) \e^{-\omega x}.
	\end{equation*}
	We deduce from \eqref{eq: exploration and subordinator} that
	\begin{multline}\label{eq: v 1}
		\n\left[\int_0^\sigma \dd t\, \ind{\rho_t \in A}\expp{-\theta\rho_t([0,H'_t))-\omega \rho_t([H'_t,H_t])}\right]\\
		= \frac{1}{(\alpha + \phi(\omega))(s_\delta + \alpha + \phi_\delta(\theta))}\int_\delta ^\infty \dd x\, \pibar(x) \e^{-\omega x}.
	\end{multline}
	Similar arguments yield
	\begin{align}\label{eq: v 2}
		\n\left[\int_0^\sigma \dd t\, \ind{\rho_t \notin A}\expp{-\theta\langle \rho_t,1\rangle }\right] 
		&=\int_0^\infty \dd a \,\e^{-\alpha a} \ex{\ind{J_a \notin A} \e^{-\theta \langle J_a,1\rangle}} \nonumber\\
		&= \int_0^\infty \dd a \,\e^{-\alpha a} \ex{\ind{T > a} \e^{-\theta U_a}}\nonumber\\
		&= \frac{1}{s_\delta +\alpha +\phi_\delta(\theta)}\cdot
	\end{align}
	
	It follows from \eqref{eq: v equal to}, \eqref{eq: v 1} and \eqref{eq: v 2} that
	\begin{multline}\label{eq: v equal to bis}
		v(\lambda,\mu) = \frac{(\lambda+\mu) \e^{\delta(\psi^{-1}(\lambda+\mu)-v(\lambda,\mu))}}{(\alpha + \phi\circ\psi^{-1}(\lambda + \mu))(s_\delta + \alpha + \phi_\delta\circ v(\lambda,\mu))} \int_\delta ^\infty \dd x \,\pibar(x) \e^{-\psi^{-1}(\lambda+\mu) x}\\
		+\frac{\lambda}{ s_\delta + \alpha + \phi_\delta\circ v(\lambda,\mu)}\cdot
	\end{multline}
	From \eqref{eq: definition v}, it is clear by monotone convergence that $v(\lambda,\mu)\uparrow v(\lambda)$ as $\mu \uparrow \infty$, where
	\begin{equation*}
		v(\lambda) \coloneqq \n\left[1-\e^{-\lambda\sigma}\ind{\Delta\leqslant \delta}\right].
	\end{equation*}
	Furthermore, thanks to a Tauberian theorem, we have as $\mu \to \infty$:
	\begin{equation}\label{eq: equivalent en infty}
		\int_\delta^\infty \e^{-\psi^{-1}(\lambda+\mu)x}\pibar(x)\, \dd x \sim \frac{\pibar(\delta)\e^{-\delta\psi^{-1}(\lambda+\mu)}}{\psi^{-1}(\lambda+\mu)}\cdot
	\end{equation}
	Thus, letting $\mu \to \infty$ in \eqref{eq: v equal to bis} and using that
	$
	\psi^{-1}(x)\left(\alpha + \phi\circ \psi^{-1}(x) \right)= x
	$
	for every $x>0$, we get:
	\begin{equation}\label{eq: implicit}
		v(\lambda) = \frac{\pibar(\delta)\e^{-\delta v(\lambda)} +\lambda}{s_\delta+\alpha + \phi_\delta \circ v(\lambda)}\cdot
	\end{equation}
	
	Notice that for every $x>0$, we have:
	\begin{align*}
		s_\delta+\alpha +\phi_\delta(x) &= \alpha + \phi(x)+\int_\delta^\infty \e^{-x r}\pibar(r)\, \dd r \\
		&= \frac{1}{x}\left(\psi(x)+\int_{(\delta,\infty)} \left(1-\e^{-x r}\right)\, \pi(\dd r) - \pibar(\delta)\left(1-\e^{- x \delta}\right)\right)\\
		&= \frac{1}{x}\left(\psi_\delta(x) - \pibar(\delta)\left(1-\e^{- x \delta}\right)\right),
	\end{align*}
	where we used \eqref{eq: definition phi} and Fubini's theorem for the second equality and the definition of $\psi_\delta$ for the last. Thus \eqref{eq: implicit} becomes
	\begin{equation*}
		\psi_\delta\circ v(\lambda) = \pibar(\delta)+\lambda.
	\end{equation*}
	This yields \eqref{eq: joint distribution degree lifetime large}. Then \eqref{eq: degree 0 lifetime} follows by letting $\delta\to 0$.
\end{proof}

As a consequence, the following corollary states that the distribution of $\Delta$ under $\n$ on $(0,\infty)$ and the Lévy measure $\pi$ have the same support and the same atoms.
\begin{corollary}\label{lemma: pi diffuse} The measures $\n[\Delta\in \cdot]_{|(0,\infty)}$ and $\pi$ have the same support. Furthermore, for every $\delta>0$,  $\n[\Delta= \delta]>0$ if and only if $\delta$ is an atom of the Lévy measure $\pi$.
\end{corollary}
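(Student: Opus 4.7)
The plan is to read both claims directly off the explicit formulas
\[
\n[\Delta > \delta] = \psi_\delta^{-1}(\pibar(\delta)), \qquad \n[\Delta \geq \delta] = \psi_{\delta-}^{-1}(\pi[\delta,\infty))
\]
of Proposition \ref{prop: joint distribution degree lifetime}, combined with the two elementary identities
\[
\psi_{\delta-}(\lambda) - \psi_\delta(\lambda) = \pi(\{\delta\})(1 - \e^{-\lambda \delta}), \qquad \pi[\delta,\infty) - \pibar(\delta) = \pi(\{\delta\}),
\]
and the fact that each of the Laplace exponents $\psi_\delta, \psi_{\delta-}$ is a continuous strictly increasing bijection of $\real_+$ onto $\real_+$, which follows from the infinite-variation assumption \eqref{eq: infinite variation} and from $\pibar(\delta) < \infty$ (a consequence of $\int (r\wedge r^2)\,\pi(\dd r) < \infty$).

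For the atom part, I write
\[
\n[\Delta = \delta] = \psi_{\delta-}^{-1}(\pi[\delta,\infty)) - \psi_\delta^{-1}(\pibar(\delta)).
\]
If $\pi(\{\delta\}) = 0$, the two identities above give $\psi_{\delta-} = \psi_\delta$ and $\pi[\delta,\infty) = \pibar(\delta)$, hence the difference vanishes. If $\pi(\{\delta\}) > 0$, I set $a = \psi_\delta^{-1}(\pibar(\delta))$ and split cases. When $\pibar(\delta) > 0$ one has $a > 0$, and
\[
\psi_{\delta-}(a) = \pibar(\delta) + \pi(\{\delta\})(1 - \e^{-a\delta}) < \pibar(\delta) + \pi(\{\delta\}) = \pi[\delta,\infty),
\]
so by strict monotonicity of $\psi_{\delta-}^{-1}$ the difference is strictly positive. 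When $\pibar(\delta) = 0$ one has $a = 0$ but $\psi_{\delta-}^{-1}(\pi(\{\delta\})) > 0$ since $\psi_{\delta-}(0) = 0 < \pi(\{\delta\})$, again giving strict positivity.

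For the support part, if $\delta \notin \supp(\pi)$, choose $\epsilon > 0$ with $\pi(\delta-\epsilon, \delta+\epsilon) = 0$. Then $\psi_{\delta-\epsilon} = \psi_{(\delta+\epsilon)-}$ and $\pibar(\delta-\epsilon) = \pi[\delta+\epsilon, \infty)$, so \eqref{eq: distribution degree strict} and \eqref{eq: distribution degree large} yield $\n[\Delta > \delta-\epsilon] = \n[\Delta \geq \delta+\epsilon]$, i.e.~$\n[\Delta \in (\delta-\epsilon, \delta+\epsilon)] = 0$. Conversely, if $\delta \in \supp(\pi)$, either $\delta$ is an atom (handled above) or every neighborhood of $\delta$ contains $\delta_1 < \delta_2$ with $\pi(\delta_1, \delta_2] > 0$; setting $b = \psi_{\delta_2}^{-1}(\pibar(\delta_2))$, a computation analogous to the one above gives $\psi_{\delta_1}(b) = \pibar(\delta_1) - \int_{(\delta_1,\delta_2]} \e^{-br}\pi(\dd r) < \pibar(\delta_1)$, so $\psi_{\delta_1}^{-1}(\pibar(\delta_1)) > b$, and therefore $\n[\Delta \in (\delta_1, \delta_2]] > 0$.

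The main obstacle is purely bookkeeping: one must check the boundary case $\pibar(\delta) = 0$ (where $a = 0$ and the monotonicity argument degenerates) and verify that $\psi_\delta^{-1}, \psi_{\delta-}^{-1}$ are truly defined and strictly increasing on the relevant range of values. Once this is handled, both statements follow at once from strict monotonicity of the inverse Laplace exponents.
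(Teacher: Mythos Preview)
Your proof is correct and follows the same approach as the paper, which simply says the corollary is ``clear from \eqref{eq: distribution degree strict} and \eqref{eq: distribution degree large}''; you have carefully unpacked what this means by exploiting the identities $\psi_{\delta-}(\lambda)-\psi_\delta(\lambda)=\pi(\{\delta\})(1-\e^{-\lambda\delta})$ and $\pi[\delta,\infty)-\pibar(\delta)=\pi(\{\delta\})$ together with strict monotonicity of the inverse exponents. The boundary case $\pibar(\delta)=0$ and the analogous degenerate case $\pibar(\delta_2)=0$ in the support argument are both handled correctly.
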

\begin{proof}This is clear from \eqref{eq: distribution degree strict} and \eqref{eq: distribution degree large}.
\end{proof}
\begin{remark}\label{rk: atoms of degree}
	More precisely, if $\delta>0$ is an atom of $\pi$, we have:
	\begin{equation}
		\n[\Delta=\delta] = \psi_{\delta-}^{-1}(\pi[\delta,\infty)) - \psi_\delta^{-1}(\pibar(\delta)).
	\end{equation}
	Furthermore, if $\langle \pi,1\rangle < \infty$, then we have:
	\begin{equation}
		\n[\Delta>0] = \psi_0^{-1}(\langle \pi,1\rangle)>0.
	\end{equation}
\end{remark}

\section{Degree decomposition of the Lévy tree}\label{sect: decomposition}
In this section, we give a decomposition of the Lévy tree along the large nodes. More precisely, we identify the distribution of the pruned Lévy tree obtained by removing large nodes. Furthermore, we show that the initial Lévy tree can be recovered in distribution from the pruned one by grafting Lévy forests in a Poissonian manner. We apply this decomposition to describe the structure of the discrete tree formed by large nodes.

\subsection{A Poissonian decomposition of the Lévy tree}\label{subsect: poisson decomposition}
The main result of this section is the following Poissonian decomposition along the nodes with mass larger than $\delta$. Recall from \eqref{eq: definition grafting} the definition of the grafting procedure $\circledast$.
\begin{thm}\label{thm: degree decomposition}
	The following holds:
	\begin{enumerate}[label=(\roman*)]
		\item Let $\delta \geqslant 0$ such that $\pibar(\delta)<\infty $. Under $\ndelta$, let $\left((s_i,\rho_{i}), \, i \in I\right)$ be the atoms of a Poisson point measure with intensity $\pibar(\delta)\,\dd s\allowdisplaybreaks\rfor(\dd \tilde{\rho})$, independent of $\rho$. Then, under $\ndelta$, the process $\rho \circledast_{i\in I} (s_i,\rho_{i})$ has distribution $\n$.
		\item Let $\delta>0$. Under $\operatorname{\mathbf{N}^{\psi_{\delta--}}}$, let $\left((s_i,\rho_{i}), \, i \in I\right)$ be the atoms of a Poisson point measure with intensity 
		\begin{equation*}
			\dd s \int_{[\delta,\infty)} \pi(\dd r) \for{r}(\dd \tilde{\rho}).
		\end{equation*}
		Then, under $\operatorname{\mathbf{N}^{\psi_{\delta--}}}$, the process $\rho \circledast_{i\in I} (s_i,\rho_{i})$ has distribution $\n$.
	\end{enumerate}
\end{thm}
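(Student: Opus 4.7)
Plan. My strategy is to prove part (i) by identifying distributions on the Skorokhod space $\D$ through their Laplace functionals, and then deduce part (ii) by a monotone limit argument along $\delta' \uparrow \delta$ avoiding the (at most countably many) atoms of $\pi$. Since both sides of (i) define $\sigma$-finite measures on $\D$, it suffices to check agreement on the separating class of functionals $\{\exp(-\int_0^\sigma g(\rho_s)\, ds) : g \in \B(\Mf(\real_+))\text{ bounded}\}$.

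Fix such a $g$, set $F(\rho) = \int_0^\sigma g(\rho_s)\, ds$, and denote by $\tilde{\rho} = \rho \circledast_i (s_i, \rho_i)$ the grafted process appearing on the right-hand side of (i). Conditioning on $\rho$ sampled under $\ndelta$ and applying the exponential formula for the Poisson point measure of grafts yields
\begin{equation*}
\mathbb{E}\left[\exp(-F(\tilde{\rho})) \,\middle|\, \rho\right] = \exp\left(-F(\rho) - \pibar(\delta)\int_0^\sigma V(\rho_s)\, ds\right),
\end{equation*}
where, denoting by $\sigma'$ the lifetime of a generic trajectory $\rho'$,
\begin{equation*}
V(\nu) = \int \rfor(d\rho')\left(1 - \exp\left(-\int_0^{\sigma'} g([\nu, \rho'_u])\, du\right)\right).
\end{equation*}
The desired equality thus reduces to
\begin{equation*}
\n\bigl[1 - e^{-F(\rho)}\bigr] = \ndelta\left[1 - \exp\left(-F(\rho) - \pibar(\delta)\int_0^\sigma V(\rho_s)\, ds\right)\right].
\end{equation*}
Both sides can be characterized as fixed points of integral equations obtained by combining the Bismut-type representation of Proposition~\ref{prop: representation exploration} with the excursion decomposition of Lemma~\ref{lemma: poisson decomposition exploration}, applied respectively under $\n$ (driven by $\psi$) and $\ndelta$ (driven by $\psi_\delta$). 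The key identity $\psi(\lambda) = \psi_\delta(\lambda) - \int_{(\delta,\infty)}(1-e^{-\lambda r})\,\pi(dr)$ then forces the two equations to coincide; this lifts to general path functionals the scalar calculation carried out in the proof of Proposition~\ref{prop: joint distribution degree lifetime}. As a sanity check, the special case $g \equiv \lambda$ gives $\pibar(\delta)V = \psi_\delta(\psi^{-1}(\lambda)) - \lambda$, whence $\psi_\delta^{-1}(\lambda + \pibar(\delta) V) = \psi^{-1}(\lambda)$, recovering~\eqref{eq: laplace transform sigma}.

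For part (ii), I would pick a sequence $\delta_n \uparrow \delta$ avoiding the atoms of $\pi$, apply part (i) at each $\delta_n$, and pass to the limit. The Laplace exponents $\psi_{\delta_n}$ converge pointwise to $\psi_{\delta--}$, the measures $\mathbf{N}^{\psi_{\delta_n}}$ converge to $\mathbf{N}^{\psi_{\delta--}}$ by standard stability of spectrally positive Lévy processes, and the grafting intensities $\pibar(\delta_n)\, ds\, \mathbb{Q}^\psi_{\delta_n}(d\rho')$ tend vaguely to $ds \int_{[\delta,\infty)}\pi(dr)\for{r}(d\rho')$. Continuity of the grafting procedure $\circledast$ on càdlàg paths then propagates the limit to the law of the assembled process.

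The main obstacle is the matching of Laplace functionals in part (i). Unlike the scalar computation of Proposition~\ref{prop: joint distribution degree lifetime}, one must derive and solve a fixed-point equation at the level of path functionals and verify that the additive correction $\pibar(\delta)V(\rho_s)$ inside the exponential on the right-hand side cancels exactly the contribution of the large-jump compensator in $\psi - \psi_\delta$. Making this rigorous relies on the strong Markov property of the exploration process, on Lemma~\ref{lemma: exploration above initial level} to recognize each grafted forest as having law $\for{r}$, and on the PPP decomposition of Lemma~\ref{lemma: poisson decomposition exploration}.
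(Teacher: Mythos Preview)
Your approach is genuinely different from the paper's, but it has a real gap at its foundation and the main step is only sketched.

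First, the separating class claim fails in general: functionals of the form $\exp(-\int_0^\sigma g(\rho_s)\,ds)$ depend on the path only through its \emph{occupation measure} $\int_0^\sigma \delta_{\rho_s}\,ds$, not on the order in which values are visited. Two distinct laws on $\D$ can share the same occupation-measure law, so this family does not separate measures on path space without further argument. You would need to justify why, for exploration processes specifically, the occupation measure determines the law---this is plausible given the tree structure but is not automatic.

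Second, even granting the separating class, the ``fixed-point'' step is where all the work lies, and you have only asserted it. In the scalar case of Proposition~\ref{prop: joint distribution degree lifetime} one obtains a single equation in a single unknown $v(\lambda)$. For general $g$, after applying the Markov property and Lemma~\ref{lemma: poisson decomposition exploration}, the inner term is $\n\bigl[1-\exp(-\int_0^\sigma g([k_{-I}\nu,\rho_s])\,ds)\bigr]$, which depends on the erased initial measure $k_{-I}\nu$ through the concatenation. You do not get a closed equation for a single scalar; you get a functional equation in the initial condition, and proving that the two sides satisfy the \emph{same} such equation with a \emph{unique} solution is substantially harder than the scalar computation you cite as a sanity check.

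By contrast, the paper bypasses all of this with a direct pathwise coupling under $\mathbb{P}$: write $X = X^\delta + \sum_{s\le t} e_s$ where $X^\delta$ has exponent $\psi_\delta$ and $e$ is an independent Poisson point process of jumps exceeding $\delta$. The pruned exploration process $\tilde\rho$ (obtained by excising the subpaths above first-generation large nodes) is then shown, via the strong Markov property at the successive stopping times $T_\delta^{(k)}$ and Lemma~\ref{lemma: exploration above initial level}, to coincide in law with the $\psi_\delta$-exploration process, while the excised pieces form an independent Poisson point measure with intensity $\pibar(\delta)\,ds\,\rfor(d\tilde\rho)$. The passage from $\mathbb{P}$ to the excursion measures then only requires checking one normalizing constant, which is done using Proposition~\ref{prop: joint distribution degree lifetime}. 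Part~(ii) is handled by the same argument with $(\delta,\infty)$ replaced by $[\delta,\infty)$, not by a limiting procedure.
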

\begin{remark}
	As mentioned in the introduction, the above theorem is a special case of the main result in \cite{abraham2010pruning} where the number of marks is finite. This greatly simplifies the proof which is why we choose include it. Observe however that the decomposition in \cite{abraham2010pruning} is proved under $\mathbb{P}$ and that an additional argument is needed to show that it still holds under the excursion measures, see the end of the proof below.
\end{remark}
\begin{proof}
	We only prove the first part, the second one being similar. Notice that the statement is trivial if $\pibar(\delta) = 0$ since in that case we have $\psi_\delta = \psi$ and the intensity of the Poisson point measure is $0$. Thus we may assume that $\pibar(\delta)\in (0,\infty)$. We shall start by proving the identity under $\mathbb{P}$ using a coupling argument. Let $X^\delta= (X_t^\delta, \, t \geqslant 0)$ be a Lévy process with Laplace exponent $\psi_\delta$ and let $e= (e_t,\, t \geqslant 0)$ be an independent Poisson point process on $\real_+$ with intensity $\ind{r>\delta}\,\pi(\dd r)$. Define the process $X = (X_t, \, t \geqslant 0)$ by:
	\begin{equation*}
		X_t = X^\delta_t + \sum_{s\leqslant t} e_s, \quad \forall t \geqslant 0.
	\end{equation*}
	Then the process $X$ is also a Lévy process with Laplace transform $\psi_\delta(\lambda) + \int_{(\delta,\infty)}(\e^{-\lambda r}-1)\, \pi(\dd r) = \psi(\lambda)$. In words, the process $X^\delta$ is obtained from $X$ by removing jumps of size larger than $\delta$.
	
	Denote by $\rho$ (resp.~$\rho^\delta$) the exploration process associated with $X$ (resp.~$X^\delta$). Let $T_\delta \coloneqq \inf\{t >0\colon\, \Delta(\rho_t)>\delta\}$ be the first time $\rho$ contains an atom with mass larger than $\delta$. It is clear from the definition that the process $\rho$ jumps exactly when $X$ does, so that $T_\delta= \inf \{t >0\colon \, \Delta X_t >\delta\}$. Therefore, we have that $X_t = X_t^\delta$ for $t<T_\delta$, which implies that $\rho_t = \rho_t^\delta$ for $t<T_\delta$.
	
	Now, from the construction of $X$, we get that $T_\delta = \inf\{t>0\colon \, e_t>\delta\}$, that is $T_\delta$ is the first time that the Poisson point process $e$ enters in $(\delta,\infty)$. Therefore the random time $T_\delta$ is exponentially distributed with mean $1/\pibar(\delta)$ and the jump $ \Delta X_{T_\delta}=e_{T_\delta}$ has distribution $\ind{r>\delta} \, \pi(\dd r)/\pibar(\delta)$ and is independent of $T_\delta$. Furthermore, the pair $(T_\delta, \Delta X_{T_\delta})$ is independent of $X^\delta$.

	Recall from \eqref{eq: ancestral line} the definition of the ancestral line of $t\in [0,\sigma]$. Let $\Delta_t = \sup_{s\preccurlyeq t}\Delta X_s=\sup_{s\preccurlyeq t} \Delta(\rho_s)$ be the maximal degree of the ancestral line of $t$. For every $t \geqslant 0$, let
	\begin{equation}
		A(t) \coloneqq \int_0^t \ind{\Delta_s \leqslant \delta}\, \dd s
	\end{equation} be the Lebesgue measure of the set of individuals prior to $t$ whose lineage does not contain any node with mass larger than $\delta$. Let $C_t \coloneqq \inf\{s \geqslant 0\colon\, A_s >t\}$ be the right-continuous inverse of $A$ and define the pruned exploration process $\tilde{\rho}=(\tilde{\rho}_t = \rho_{C_t}, \, t\geqslant 0)$. In other words, we remove from the tree all the individuals above a node with mass larger than $\delta$ and the pruned exploration process $\tilde{\rho}$ codes the remaining tree. 
	
	Next, let us consider excursions of $\rho$ above nodes of mass larger than $\delta$. Let $T_\delta^{(1)} = T_\delta$ be the first time $\rho$ contains an atom with mass larger than $\delta$ and $L_\delta^{(1)} =L_\delta = \inf\{t >T_\delta\colon\, H_t < H_{T_\delta}\}$ be the first time that atom is erased. Define recursively the stopping times $T_\delta^{(k)}= \inf\{t>L_\delta^{(k-1)}\colon\, \Delta(\rho_t)>\delta\}$ the $k$-th time $\rho$ contains a (first-generation) node with mass larger than $\delta$ and $L_\delta^{(k)} = \inf\{t>T_\delta^{(k)}\colon\, H_t <H_{T_\delta^{(k)}}\}$ the first time that node is erased. Finally, let $\rho^{(k)}$ be the path of the exploration process above level $H_{T^{(k)}_\delta}$ between times $T_\delta^{(k)}$ and $L_\delta^{(k)}$, defined by:
	\begin{equation*}
		\rho^{(k)}_t = \abov{H_{T^{(k)}_\delta}}(\rho_{t+T^{(k)}_\delta}), \quad\forall 0\leqslant t \leqslant L^{(k)}_\delta-T^{(k)}_\delta.
	\end{equation*}
	Notice that by construction, we have:
	\begin{equation*}
		\rho = \tilde{\rho} \circledast_{k=1}^\infty (A(T_\delta^{(k)}),\rho^{(k)}).
	\end{equation*}
	
	Using the strong Markov property under $\mathbb{P}$ at time $T_\delta$ and Lemma \ref{lemma: exploration above initial level}, we get that, conditionally on $\Delta(\rho_{T_\delta})$ (which is equal to $\Delta X_{T_\delta}$), the process $\rho^{(1)}$ has distribution $\for{\Delta X_{T_\delta}}$. 
	
	But the random time $T_\delta$ is exponentially distributed with mean $1/\pibar(\delta)$, the jump $\Delta X_{T_\delta}$ has distribution $\mathbf{1}_{(\delta,\infty)}(r)\, \pi(\dd r)/\pibar(\delta)$ and they are independent. We deduce that $\rho^{(1)}$ is independent of $T_\delta$ and has distribution $\rfor$. Furthermore, $(T_\delta,\Delta X_{T_\delta})$ is generated by the Poisson point process $e$ while $\tilde{\rho}$ is generated by $X^\delta$. These being independent, we deduce that $\tilde{\rho}$ is independent of $(T_\delta, \Delta X_{T_\delta})$, and thus of $(A(T_\delta^{(1)})=T_\delta, \rho^{(1)})$. Iterating this argument and using the strong Markov property, we get that the random measure
	\begin{equation*}
		\sum_{k=1}^\infty \delta_{(A(T_\delta^{(k)}),\rho^{(k)})}
	\end{equation*}
	is a Poisson point measure with intensity $\pibar(\delta)\,\dd s \rfor(\dd \rho)$ and is independent of $\tilde{\rho}$. 
	
	It remains to show that $\tilde{\rho}$ is distributed as $\rho^\delta$. Recall that $\tilde{\rho}_t = \rho_{C_t}$. From this, it is clear that the two processes are equal to $\rho$ before time $T_\delta$. Furthermore, at time $T_\delta$ we have $\tilde{\rho}_{T_\delta} = \rho_{T_\delta-} =\rho_{L_\delta} = \rho^\delta_{T_\delta}$. Now applying the strong Markov property to $\rho$ at $L_\delta$ gives that, conditionally on $\tilde{\rho}_{T_\delta}$, the process $(\rho_{t+L_\delta}, \, t \geqslant 0)$ has distribution $\mathbb{P}_{\tilde{\rho}_{T_\delta}}$. As a consequence, conditionally on $\tilde{\rho}_{T_\delta}$, the process $(\tilde{\rho_t} = \rho_{t+L_\delta-T_\delta}, \, A(T_\delta^{(1)})\leqslant t < A(T_\delta^{(2)}) )$ is distributed as $(\rho^\delta_t, \, S_1\leqslant t < S_2)$, where $0\leqslant S_1\leqslant S_2 \leqslant \ldots$ are the ordered atoms of a Poisson point process on $\real_+$ with intensity $\pibar(\delta)\, \dd s$, independent of $\rho^\delta$. Iterating this argument, we deduce that $\tilde{\rho}$ and $\rho^\delta$ have the same distribution. This proves the Poisson decomposition under $\mathbb{P}$. Therefore, the same decomposition holds under the excursion measures up to a normalizing constant: there exists a constant $c>0$ such that, under $\ndelta$, the process $\rho \circledast_{i\in I} (s_i, \rho_{i})$ has distribution $c\n$, where the random measure $\sum_{i\in I} \delta_{(s_i, \rho_{i})}$ is under $\ndelta$ a Poisson point measure with intensity $\pibar(\delta) \, \dd s\rfor(\dd \tilde{\rho})$. Let $\zeta = \operatorname{Card}\{i \in I\colon \, s_i < \sigma\}$. Then, under $\ndelta$ and conditionally on $\rho$, the random variable $\zeta$ has Poisson distribution with parameter $\pibar(\delta) \sigma$. It follows that
	\begin{equation*}
		\ndelta\left[\zeta\geqslant 1\right] = \ndelta\left[\ndelta\left[\zeta\geqslant 1\middle|\rho\right]\right] = \ndelta\left[1-\e^{-\pibar(\delta)\sigma}\right] = \psi_\delta^{-1}(\pibar(\delta))= \n[\Delta>\delta],
	\end{equation*}
	where in the last equality we used Proposition \ref{prop: joint distribution degree lifetime}. This gives $c=1$ and the result readily follows.
\end{proof}

The following corollary is an immediate consequence of the Poissonian decomposition from Theorem \ref{thm: degree decomposition}.
\begin{corollary}\label{cor: identity in distribution before big jump}
	Let $\delta>0$ and let $F\in \B(\D)$. We have:
	\begin{align}
		\n\left[F(\rho)\ind{\Delta\leqslant \delta}\right] &= \ndelta\left[F(\rho)\e^{-\pibar(\delta) \sigma}\right],\\
		\n\left[F(\rho)\ind{\Delta< \delta}\right] &= \operatorname{\mathbf{N}}^{\psi_{\delta-}}\left[F(\rho)\e^{-\pi[\delta,\infty) \sigma}\right]
	\end{align}
	Furthermore, if $\langle \pi,1\rangle < \infty$, then we have:
	\begin{equation}
		\n\left[F(\rho) \ind{\Delta=0}\right] = \operatorname{\mathbf{N}}^{\psi_{0}}\left[F(\rho)\e^{-\langle\pi,1\rangle \sigma}\right].
	\end{equation}
\end{corollary}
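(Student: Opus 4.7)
The plan is to apply Theorem \ref{thm: degree decomposition} directly and translate the event $\{\Delta\leqslant\delta\}$ (resp.~$\{\Delta<\delta\}$) on the reconstructed process into the event that the Poisson point measure of grafted subtrees has no atom arriving before the lifetime $\sigma$.

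For the first identity, I would work under $\ndelta$. Since $\psi_\delta$ has Lévy measure $\ind{r\leqslant\delta}\pi(\dd r)$, the underlying Lévy process has no jumps of size greater than $\delta$, hence $\ndelta$-a.e.~$\Delta(\rho)\leqslant\delta$. By Theorem \ref{thm: degree decomposition}(i), the process $\tilde\rho := \rho\circledast_{i\in I}(s_i,\rho_i)$ has distribution $\n$, where the point measure $\sum_i\delta_{(s_i,\rho_i)}$ is independent of $\rho$ with intensity $\pibar(\delta)\,\dd s\,\rfor(\dd\tilde\rho)$. Since under $\rfor$ the initial mass of $\tilde\rho$ is strictly greater than $\delta$, any atom with $s_i<\sigma$ inserts into $\tilde\rho$ an initial atom of mass exceeding $\delta$ which is preserved by the concatenation. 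Consequently $\{\Delta(\tilde\rho)\leqslant\delta\}$ coincides with the event that no $(s_i,\rho_i)$ satisfies $s_i<\sigma$, and on this event $\tilde\rho=\rho$. Conditioning on $\rho$ (hence on $\sigma$), this empty-PPM event has probability $\e^{-\pibar(\delta)\sigma}$, which yields the first identity.

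The second identity follows by the identical argument applied to Theorem \ref{thm: degree decomposition}(ii): under $\operatorname{\mathbf{N}}^{\psi_{\delta-}}$ the process $\rho$ has no jumps of size $\geqslant\delta$, while every grafted subtree has initial mass $\geqslant\delta$, so $\{\Delta(\tilde\rho)<\delta\}$ is again the empty-PPM event, whose conditional probability given $\rho$ is $\e^{-\pi[\delta,\infty)\sigma}$. For the third identity I would simply specialize the first to $\delta=0$: when $\langle\pi,1\rangle<\infty$ we have $\pibar(0)=\langle\pi,1\rangle<\infty$, so Theorem \ref{thm: degree decomposition}(i) applies, $\{\Delta\leqslant 0\}=\{\Delta=0\}$, and $\ndelta$ becomes $\operatorname{\mathbf{N}}^{\psi_0}$.

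The only step that is not a direct substitution is the event translation: one needs to check that each graft with $s_i<\sigma$ genuinely produces an atom of size $>\delta$ (resp.~$\geqslant\delta$) in $\tilde\rho$, so that $\{\Delta(\tilde\rho)\leqslant\delta\}$ (resp.~$\{\Delta(\tilde\rho)<\delta\}$) forces the graft count to vanish. This follows immediately from the definition of $\circledast$ combined with the fact that under $\rfor$ (resp.~under $\for{r}$ with $r\in[\delta,\infty)$) the initial measure is $r\delta_0$ with $r>\delta$ (resp.~$r\geqslant\delta$). No other subtlety is involved, and the remainder is the Poisson formula $\pr{\text{no atom in }[0,\sigma)\mid\rho}=\e^{-\pibar(\delta)\sigma}$.
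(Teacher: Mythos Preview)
Your proof is correct and follows exactly the approach the paper intends: the paper states only that the corollary is ``an immediate consequence of the Poissonian decomposition from Theorem \ref{thm: degree decomposition}'', and you have spelled out precisely that deduction, translating $\{\Delta\leqslant\delta\}$ into the event that no Poisson atom lands in $[0,\sigma)$ and reading off the conditional probability $\e^{-\pibar(\delta)\sigma}$.
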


The Poissonian decomposition of Theorem \ref{thm: degree decomposition} also holds for forests.
\begin{proposition}\label{prop: degree decomposition forests}
	Let $\delta >0$ such that $\pibar(\delta)<\infty$ and let $r>0$. Under $\fordelta{r}$ (resp.~$\rfordelta$), let $\left((s_i,\rho_{i}), \, i \in I\right)$ be the atoms of a Poisson point measure with intensity $\pibar(\delta)\,\dd s\rfor(\dd \tilde{\rho})$. Then, under $\fordelta{r}$ (resp.~$\rfordelta$), the process $\rho \circledast_{i\in I} (s_i,{\rho}_{i})$ has distribution $\for{r}$ (resp.~$\rfor$).
\end{proposition}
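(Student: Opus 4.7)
The plan is to bootstrap Proposition \ref{prop: degree decomposition forests} from Theorem \ref{thm: degree decomposition} via the Poissonian excursion decomposition of Lemma \ref{lemma: poisson decomposition exploration}. I would first handle the case of $\fordelta{r}$. Under $\fordelta{r}$, Lemma \ref{lemma: poisson decomposition exploration} applied to the branching mechanism $\psi_\delta$ gives that the excursions $(\rho^{\delta,j})_{j\in J}$ of $\rho$ above the successive minima of the underlying Lévy process, indexed by their starting local times $u_j = -I_{\alpha_j} \in [0,r]$, form a Poisson point measure on $[0,r]\times \D$ with intensity $\mathbf{1}_{[0,r]}(u)\, \dd u\, \ndelta[\dd\rho]$, and $\rho$ is exactly the concatenation of these excursions in their natural order.

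Next, let $\mathcal{M} = \sum_{i \in I} \delta_{(s_i, \rho_i)}$ be the independent Poisson point measure with intensity $\pibar(\delta)\, \dd s\, \rfor(\dd \tilde\rho)$ on $\real_+ \times \D$. Conditionally on $\rho$, hence on the excursion intervals $(\alpha_j, \beta_j)$, the restriction property of Poisson point measures entails that the point measures
\begin{equation*}
	\mathcal{M}_j \coloneqq \sum_{i \in I : \alpha_j \leqslant s_i < \beta_j} \delta_{(s_i - \alpha_j, \rho_i)}
\end{equation*}
are mutually independent, each distributed as a Poisson point measure with intensity $\pibar(\delta)\, \dd s\, \rfor(\dd \tilde\rho)$ on $[0, \sigma^{\delta,j}) \times \D$, where $\sigma^{\delta,j} = \beta_j - \alpha_j$. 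Applying Theorem \ref{thm: degree decomposition} separately to each pair $(\rho^{\delta,j}, \mathcal{M}_j)$ shows that the grafted excursion $\rho^{\delta,j} \circledast \mathcal{M}_j$ has distribution $\n$, and these grafted excursions remain mutually independent. The crucial observation is that the grafting procedure \eqref{eq: definition grafting} commutes with concatenation: inserting $\rho_i$ at a time $s_i \in (\alpha_j, \beta_j)$ into the full concatenated path is the same as inserting it at the shifted time $s_i - \alpha_j$ into the $j$-th excursion and then concatenating. Consequently, $\rho \circledast_{i \in I} (s_i, \rho_i)$ is the concatenation of the independent $\n$-distributed grafted excursions, indexed by $u_j \in [0,r]$. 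Running Lemma \ref{lemma: poisson decomposition exploration} backwards, now for the branching mechanism $\psi$, identifies this concatenation as having distribution $\for{r}$, which is exactly what is claimed.

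The case of $\rfordelta$ would then follow immediately by integrating over the initial mass $r$ against $\mathbf{1}_{(\delta,\infty)}(r)\, \pi(\dd r)/\pibar(\delta)$, using the definition \eqref{eq: definition random forest} of $\rfor$ and $\rfordelta$. The only step requiring some care is justifying the excursion-wise splitting of $\mathcal{M}$ when the excursion intervals $(\alpha_j, \beta_j)$ are themselves random: since these intervals are measurable functions of $\rho$ alone and $\mathcal{M}$ is independent of $\rho$, one can first condition on $\rho$, apply the restriction property of Poisson point measures, and then integrate out the conditioning. No new computation beyond Theorem \ref{thm: degree decomposition} is needed.
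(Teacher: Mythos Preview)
The paper states Proposition \ref{prop: degree decomposition forests} without proof, presenting it as an immediate extension of Theorem \ref{thm: degree decomposition} to forests. Your proposal supplies precisely the argument one would expect the authors had in mind: decompose the forest into its constituent excursions via Lemma \ref{lemma: poisson decomposition exploration}, apply Theorem \ref{thm: degree decomposition} excursion by excursion, and reassemble. The key steps---that the independent grafting PPM splits over the (random) excursion intervals by the restriction property after conditioning on $\rho$, that grafting commutes with concatenation, and that the marked excursions form a PPM with intensity $\mathbf{1}_{[0,r]}(u)\,\dd u\,\n[\dd\rho]$ via the marking theorem---are all correctly identified. The passage to $\rfordelta$ by integrating over the initial mass is also correct.

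One small point worth making explicit: when you say ``running Lemma \ref{lemma: poisson decomposition exploration} backwards'', you are using that the path under $\for{r}$ is \emph{determined} by its excursion PPM together with the local-time indexing (the concatenation map is a measurable bijection), so equality in distribution of the PPMs implies equality in distribution of the concatenated paths. This is routine but is the only place where the argument is not purely a citation.
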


\subsection{Structure of nodes with mass larger than $\delta$}\label{subsect: structure of big nodes}
Here, we give a description of the structure of nodes with mass larger than $\delta$ under $\n$. Let us start by determining the distribution of the height of MRCA (see Section \ref{subsect: exploration process} for the definition) of the set of nodes with mass larger than $\delta$.
\begin{proposition}
	Under $\n$, conditionally on $\Delta >\delta$, the height of the MRCA of the set of nodes with mass larger than $\delta$ is exponentially distributed with mean $1/\psi_\delta'(\n[\Delta>\delta])$.
\end{proposition}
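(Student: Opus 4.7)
The strategy is to represent the full exploration process under $\n$ via the pruned process under $\ndelta$ enriched by an independent Poisson grafting (Theorem \ref{thm: degree decomposition}), identify the MRCA event in this representation, and reduce to a Ricatti ODE for a functional of a continuous-state branching process.

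By Theorem \ref{thm: degree decomposition}(i), the exploration process under $\n$ has the same distribution as $\rho \circledast_{i \in I}(s_i, \rho_i)$, where $\rho$ is distributed as under $\ndelta$ (the pruned process) and $\sum_{i \in I} \delta_{(s_i, \rho_i)}$ is an independent Poisson point measure on $[0,\sigma] \times \D$ with intensity $\pibar(\delta)\,\dd s\,\rfor(\dd \tilde\rho)$. Since each grafted excursion $\rho_i$ carries an initial mass strictly greater than $\delta$, the first-generation nodes of mass $>\delta$ in the combined tree are precisely those grafted at heights $H_{s_i}$ for the $s_i$'s lying in $[0,\sigma)$, and the MRCA of all nodes of mass $>\delta$ coincides with the MRCA of these first-generation ones. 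Consequently, the event $\{H_{\mathrm{MRCA}} > h,\, \Delta > \delta\}$ is the event that at least one $s_i < \sigma$ exists and all such $s_i$ lie in a common excursion interval of the pruned height process above level $h$. Let $(\alpha_j, \beta_j)_{j \in J_h}$ denote these excursion intervals with lengths $\sigma^{(j)} = \beta_j - \alpha_j$, set $\sigma^{\le h} = \int_0^\sigma \ind{H_t \le h}\,\dd t$ and $\sigma^h_\delta = L^h_\sigma$. Conditioning on $\rho$, the Poisson structure of the $s_i$'s gives
\begin{equation*}
\ndelta\!\left[\ind{H_{\mathrm{MRCA}} > h,\, \Delta > \delta}\,\Big|\, \rho\right] = \e^{-\pibar(\delta) \sigma^{\le h}} \sum_{j \in J_h} (1 - \e^{-\pibar(\delta) \sigma^{(j)}}) \prod_{k \ne j} \e^{-\pibar(\delta) \sigma^{(k)}}.
\end{equation*}

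Now condition further on $r_h(\rho)$. By the branching property for $\ndelta$ (Proposition \ref{prop: branching property}), the lengths $(\sigma^{(j)})_{j \in J_h}$ are the atoms of a Poisson point measure on $(0,\infty)$ of intensity $\sigma^h_\delta\,\ndelta[\sigma \in \dd u]$. Setting $q = \n[\Delta > \delta] = \psi_\delta^{-1}(\pibar(\delta))$ (from Proposition \ref{prop: joint distribution degree lifetime}), the test function $u \mapsto 1 - \e^{-\pibar(\delta) u}$ has finite integral $\sigma^h_\delta\, q$ against this intensity, so the Campbell--Mecke formula applies and yields
\begin{equation*}
\ndelta\!\left[\sum_{j \in J_h} (1 - \e^{-\pibar(\delta) \sigma^{(j)}}) \prod_{k \ne j} \e^{-\pibar(\delta) \sigma^{(k)}}\,\Big|\, r_h(\rho)\right] = \sigma^h_\delta\, q\, \e^{-\sigma^h_\delta q}.
\end{equation*}
Taking expectation and writing $Y_a = L^a_\sigma$, and noting that $\sigma^{\le h} = \int_0^h Y_a\,\dd a$ by the occupation time formula, we obtain
\begin{equation*}
\n[H_{\mathrm{MRCA}} > h,\, \Delta > \delta] = q\, \ndelta\!\left[Y_h\, \e^{-q Y_h - \pibar(\delta) \int_0^h Y_a\,\dd a}\right].
\end{equation*}

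Under $\ndelta$, the process $(Y_a)_{a \ge 0}$ has the law of a $\psi_\delta$-CB process under its canonical measure. The Feynman--Kac/Ricatti computation gives $\ndelta[1 - \e^{-\lambda Y_h - \pibar(\delta) \int_0^h Y_a\,\dd a}] = U_\lambda(h)$, where $U_\lambda(0^+) = \lambda$ and $U_\lambda'(t) = \pibar(\delta) - \psi_\delta(U_\lambda(t))$. The key observation is that $q$ is a stationary point of this ODE, since $\psi_\delta(q) = \pibar(\delta)$, hence $U_q \equiv q$. Differentiating in $\lambda$, the function $V_\lambda := \partial_\lambda U_\lambda$ satisfies $V_\lambda' = -\psi_\delta'(U_\lambda) V_\lambda$ with $V_\lambda(0^+) = 1$, so $V_q(h) = \e^{-h \psi_\delta'(q)}$. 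Combining with the previous display, $\n[H_{\mathrm{MRCA}} > h,\, \Delta > \delta] = q\, \e^{-h \psi_\delta'(q)}$, and dividing by $\n[\Delta > \delta] = q$ gives the claimed exponential distribution with mean $1/\psi_\delta'(q)$. The main delicate point is the application of the Campbell--Mecke formula in the presence of the infinite excursion measure $\ndelta$; this is handled by the integrability of the test function $u \mapsto 1 - \e^{-\pibar(\delta) u}$ against $\ndelta[\sigma \in \dd u]$, which is precisely $\psi_\delta^{-1}(\pibar(\delta)) = q < \infty$.
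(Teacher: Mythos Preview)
Your proof is correct and follows a genuinely different route from the paper's own argument. Both proofs begin with the Poissonian grafting representation of Theorem~\ref{thm: degree decomposition}, identifying the height of the MRCA with the infimum of the pruned height process between the extremal Poisson marks. From there the approaches diverge.

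The paper splits according to the number $M$ of marks: for $M\geqslant 2$ it invokes \cite[Proposition~3.2.3]{duquesne2002random} (the distribution of the random variable $K$) directly, and for $M=1$ it carries out a separate explicit computation using the Markov property, time-reversal (Proposition~\ref{prop: time reversal}) and Bismut's decomposition, before recombining the two pieces. Your argument avoids this case distinction entirely: you express the event $\{H_{\mathrm{MRCA}}>h,\,\Delta>\delta\}$ as ``all marks lie in a single excursion of the pruned process above level $h$'', condition on $r_h(\rho)$ via the branching property (Proposition~\ref{prop: branching property}), apply the Campbell--Mecke formula to the Poisson point measure of excursion lengths, and reduce to the Feynman--Kac functional $\ndelta[Y_h\,\e^{-q Y_h-\pibar(\delta)\int_0^h Y_a\,\dd a}]$ of the $\psi_\delta$-CB process. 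The key observation that $q=\psi_\delta^{-1}(\pibar(\delta))$ is a stationary point of the Riccati ODE then yields the exponential law in one stroke.

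Your approach is more self-contained (no black-box citation of \cite{duquesne2002random} is needed) and treats the cases $M=1$ and $M\geqslant 2$ uniformly, at the cost of invoking the Feynman--Kac identity for CB processes under the canonical measure; the paper's approach trades that for a more hands-on computation in the $M=1$ case and an appeal to an existing result for $M\geqslant 2$. The justification of the $\lambda$-differentiation under $\ndelta$ is routine (on $\{Y_h=0\}$ the integrand is constant in $\lambda$, and on $\{Y_h>0\}\subset\{\sup H>h\}$ the measure is finite), so there is no gap.
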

Notice that, as $\delta\to \infty$, $\psi_\delta'(\n[\Delta>\delta])$ converges to $\alpha$ which is positive in the subcritical case and $0$ in the critical case (this implies that the height of the MRCA goes to infinity).
\begin{proof}
	Under $\ndelta$, denote by $\tau_1\leqslant \tau_2 \leqslant \ldots$ the jump times of a standard Poisson process with intensity $\pibar(\delta)$. Denote by $M = \sup\{i\geqslant 1 \colon \, \tau_i \leqslant \sigma\}$ the number of marks which arrive during the lifetime $\sigma$ and set:
	\begin{equation*}
		J = 
		\begin{cases}
			\inf\{H_s \colon \, \tau_1\leqslant s \leqslant \tau_M\} & \text{if } M \geqslant 1, \\
			\infty & \text{if } M = 0.
		\end{cases}
	\end{equation*}
	It is clear from Theorem \ref{thm: degree decomposition} that, under $\n$, the height of the MRCA of the set of nodes with mass larger than $\delta$ is distributed as $J$ under $\ndelta$, with the convention that this height is equal to $\infty$ if there are no such nodes. Thus, we need to determine the distribution of $J$ under $\ndelta$ and conditionally on $M\geqslant 1$.
	
	Notice that, on the event $\{M\geqslant 2\}$, $J$ agrees with the random variable $K$ defined in \cite[p.96]{duquesne2002random}. Proposition 3.2.3 therein gives:
	\begin{equation}\label{eq: distribution J}
		\ndelta\left[f(J)\ind{M\geqslant 2}\middle| M\geqslant 1\right] = \left(\psi_\delta'(\n[\Delta>\delta])-\frac{\pibar(\delta)}{\n[\Delta>\delta]}\right) \int_0^\infty f(a) \e^{-a \psi_\delta'(\n[\Delta>\delta])} \, \dd a,
	\end{equation}
	where we used that $\psi_\delta(\n[\Delta>\delta])= \pibar(\delta)$ by \eqref{eq: distribution degree strict}.
	
	Next, notice that under $\ndelta$, conditionally on $\rho$, $M$ has Poisson distribution with parameter $\pibar(\delta)\sigma$. Furthermore, conditionally on $\rho$ and on $M= 1$, $\tau_1$ is uniformly distributed on $[0,\sigma]$. Thus, by conditioning on $\rho$, we get:
	\begin{align*}
		\ndelta\left[f(J) \ind{M=1}\right] &= \pibar(\delta)\ndelta\left[\int_0^\sigma f(H_t) \e^{-\pibar(\delta)\sigma}\, \dd t\right]\\
		&= \pibar(\delta)\ndelta\left[\int_0^\sigma f(H_t) \e^{-\pibar(\delta)t} \e^{-\pibar(\delta)(\sigma - t)}\, \dd t\right]\\
		&= \pibar(\delta)\ndelta\left[\int_0^\sigma f(H_t) \e^{-\pibar(\delta)t} \operatorname{\mathbb{E}}_{\rho_t}^{\psi_\delta,\ast}\left[\e^{-\pibar(\delta)\sigma }\right]\, \dd t\right],
	\end{align*}
	where we used the Markov property of the exploration process under $\ndelta$ for the last equality. Thanks to Lemma \ref{lemma: poisson decomposition exploration}, for every $\nu \in \Mf(\real_+)$ we have:
	\begin{equation*}
		\operatorname{\mathbb{E}}_{\nu}^{\psi_\delta,\ast}\left[\e^{-\pibar(\delta)\sigma }\right] = \e^{-\psi_\delta^{-1}(\pibar(\delta)) \langle \nu, 1\rangle} = \e^{-\n[\Delta>\delta]\langle \nu,1\rangle},
	\end{equation*}
	where we used \eqref{eq: distribution degree strict} for the last equality.
	
	Therefore, we get:
	\begin{align*}
		\ndelta\left[f(J) \ind{M=1}\right] &= \pibar(\delta)\ndelta\left[\int_0^\sigma f(H_t) \e^{-\pibar(\delta)t} \e^{-\langle \rho_t, 1\rangle \n[\Delta>\delta]}\, \dd t\right]\\
		&= \pibar(\delta)\ndelta\left[\int_0^\sigma f(H_t) \e^{-\pibar(\delta)(\sigma-t)} \e^{-\langle \eta_t, 1\rangle \n[\Delta>\delta]}\, \dd t\right] \\
		&=\pibar(\delta)\ndelta\left[\int_0^\sigma f(H_t) \e^{-\langle \rho_t+\eta_t, 1\rangle \n[\Delta>\delta]}\, \dd t\right],
	\end{align*}
	where we used the time-reversal property of the exploration process for the second equality and the Markov property for the last. By \cite[Proposition 3.1.3]{duquesne2002random}, we deduce that
	\begin{equation*}
		\ndelta\left[f(J) \ind{M=1}\right]  = \pibar(\delta) \int_0^\infty f(a) \e^{-\psi_\delta'(\n[\Delta>\delta])a}\, \dd a.
	\end{equation*}
	Thanks to Theorem \ref{thm: degree decomposition}, it is clear that $\ndelta[M\geqslant 1] = \n[\Delta>\delta]$. It follows that
	\begin{equation*}
		\ndelta\left[f(J)\ind{M=1}\middle| M\geqslant 1\right] = \frac{\pibar(\delta)}{\n[\Delta>\delta]} \int_0^\infty f(a) \e^{-\psi_\delta'(\n[\Delta>\delta])a}\, \dd a.
	\end{equation*}
	In conjunction with \eqref{eq: distribution J}, this yields:
	\begin{equation*}
		\ndelta\left[f(J)\middle| M\geqslant 1\right] = \psi_\delta'(\n[\Delta>\delta])\int_0^\infty f(a) \e^{-\psi_\delta'(\n[\Delta>\delta])a}\, \dd a.
	\end{equation*}
	This shows that, under $\ndelta$ and conditionally on $M\geqslant 1$, $J$ is exponentially distributed with mean $1/\psi_\delta'(\n[\Delta>\delta])$ and the proof is now complete.
\end{proof}

Let $\rddtree_\delta$ be the (random) discrete forest spanned by nodes with mass larger than $\delta$. More explicitly, $\rddtree_\delta$ starts with $\init$ individuals, where $\init$ is the number of first-generation nodes of $\rho$ with mass larger than $\delta$ (that is nodes of $\rho$ with mass larger than $\delta$ having no ancestors with mass larger than $\delta$). Then, each node $v$ of $\rddtree_\delta$ gets $\off_v$ children, where $\off_v$ is the number of first-generation descendants with mass larger than $\delta$ of the corresponding node in $\rho$. Finally, denote by $\total$ the total population of $\rddtree_\delta$ or equivalently the total number of nodes of $\rho$ with mass larger than $\delta$. We shall identify the distribution of $\rddtree_\delta$. Given two $\mathbb{N}$-valued random variables $Z_0$ and $\xi$, we call a $(Z_0,\xi)$-Bienaymé-Galton-Watson forest a collection of $Z_0$ independent Bienaymé-Galton-Watson trees with offspring distribution (the law of) $\xi$. 

Under $\ndelta$ (resp.~under $\rfordelta$), let $\sum_{i\in I} \delta_{(s_i,\rho^{i})}$ be a Poisson point measure with intensity $\pibar(\delta)\, \dd s \allowbreak\rfor(\dd \tilde{\rho})$ independent of $\rho$ and let 
\begin{equation}
	\zeta = \operatorname{Card}\{i \in I\colon\, s_i < \sigma\}
\end{equation}
be the number of points arriving during the lifetime $\sigma$. Basic properties of Poisson point measures imply that, under $\ndelta$ (resp.~under $\rfordelta$) and conditionally on $\rho$, the random variable $\zeta$ has Poisson distribution with parameter $\pibar(\delta)\sigma$.

\begin{proposition}\label{prop: galton watson}
	Let $\delta>0$ such that $\pibar(\delta)>0$. Under $\n$, the random forest $\rddtree_\delta$ is a $(\init,\off)$-Bienaymé-Galton-Watson forest, where $\init$ is distributed as $\zeta$ under $\ndelta$ and $\off$ is distributed as $\zeta$ under $\rfordelta$. Their Laplace transforms are given by, for every $\lambda >0$:
	\begin{align}
		\n\left[1-\e^{-\lambda \init}\right]  &= \psi_\delta^{-1}\left((1-\e^{-\lambda})\pibar(\delta)\right), \label{eq: initial distribution}\\
		\n\left[\e^{-\lambda \off}\right]  &= \frac{1}{\pibar(\delta)}\int_{(\delta,\infty)} \e^{-r\psi_\delta^{-1}\left((1-\e^{-\lambda})\pibar(\delta)\right)}\, \pi(\dd r).\label{eq: offspring distribution}
	\end{align}
\end{proposition}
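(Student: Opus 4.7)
The plan is to unfold $\rddtree_\delta$ generation by generation using the Poissonian decomposition of Theorem \ref{thm: degree decomposition} and its forest analog (Proposition \ref{prop: degree decomposition forests}). Assuming first $\pibar(\delta)<\infty$, Theorem \ref{thm: degree decomposition} (read in reverse) represents $\rho$ under $\n$ as $\tilde{\rho}\circledast_{i\in I}(s_i,\rho^{(i)})$, where $\tilde{\rho}$ is distributed under $\ndelta$ and $\sum_{i}\delta_{(s_i,\rho^{(i)})}$ is an independent Poisson point measure with intensity $\pibar(\delta)\,\dd s\,\rfor(\dd\tilde{\rho})$. The first-generation nodes of $\rho$ with mass larger than $\delta$ are in bijection with the Poisson atoms $(s_i,\rho^{(i)})$ satisfying $s_i<\tilde{\sigma}$, so their number $\init$ is distributed as $\zeta$ under $\ndelta$, and the subtrees attached at these nodes are i.i.d.~copies of $\rho$ under $\rfor$, independent of $\tilde{\rho}$.

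Next I iterate. Each subtree distributed under $\rfor$ can, by Proposition \ref{prop: degree decomposition forests}, be written as $\tilde{\rho}^{(i)}\circledast_j(s^{(i)}_j,\rho^{(i,j)})$ with $\tilde{\rho}^{(i)}$ under $\rfordelta$ and an independent Poisson point measure of the same intensity $\pibar(\delta)\,\dd s\,\rfor(\dd\tilde{\rho})$. Consequently, the offspring count of the $i$-th first-generation node equals the number of such Poisson atoms arriving before the lifetime of $\tilde{\rho}^{(i)}$, which is distributed as $\zeta$ under $\rfordelta$; these offspring counts are mutually independent and independent of $\init$ by standard properties of Poisson point measures. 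Since the sub-subtrees produced at the second generation are again i.i.d.~under $\rfor$, the argument propagates inductively to all generations, yielding the claimed $(\init,\off)$-Bienaymé-Galton-Watson structure.

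For the Laplace transforms, conditionally on the exploration process $\zeta$ is Poisson with parameter $\pibar(\delta)\sigma$, so
\begin{equation*}
\n\left[1-\e^{-\lambda\init}\right] = \ndelta\!\left[1-\e^{-\pibar(\delta)(1-\e^{-\lambda})\sigma}\right] = \psi_\delta^{-1}\!\left((1-\e^{-\lambda})\pibar(\delta)\right)
\end{equation*}
by \eqref{eq: joint distribution degree lifetime large} applied with $\lambda' = \pibar(\delta)(1-\e^{-\lambda})$. For $\off$, unfolding the definition $\rfordelta(\dd\rho)=\pibar(\delta)^{-1}\!\int_{(\delta,\infty)}\pi(\dd r)\fordelta{r}(\dd\rho)$ and using the identity $\fordelta{r}[\e^{-\mu\sigma}] = \e^{-r\psi_\delta^{-1}(\mu)}$ (a direct consequence of Lemma \ref{lemma: poisson decomposition exploration} applied under $\fordelta{r}$ together with \eqref{eq: laplace transform sigma} for $\psi_\delta$) yields \eqref{eq: offspring distribution} via the same conditioning on $\sigma$.

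The main obstacle is essentially organisational: one must carefully justify that successive applications of the forest decomposition yield conditionally independent subtrees at every generation, so that the offspring counts form an i.i.d.~family independent of $\init$. This follows from the Poisson structure established by Theorem \ref{thm: degree decomposition} and Proposition \ref{prop: degree decomposition forests}, but deserves a clean statement. The case $\pibar(\delta) = \infty$, not covered by Theorem \ref{thm: degree decomposition}, can be handled by a monotone limiting argument as $\delta' \downarrow \delta$ along values for which $\pibar(\delta')<\infty$, since the large-mass nodes with mass $>\delta'$ are nested in $\delta'$.
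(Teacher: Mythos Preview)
Your argument is correct and follows essentially the same route as the paper: invoke the Poissonian decompositions of Theorem~\ref{thm: degree decomposition} and Proposition~\ref{prop: degree decomposition forests} to read off the Bienaymé--Galton--Watson structure, then compute the Laplace transforms by conditioning on the exploration process so that $\zeta$ becomes Poisson with parameter $\pibar(\delta)\sigma$. One small point: your final paragraph worrying about the case $\pibar(\delta)=\infty$ is unnecessary, since for any $\delta>0$ the integrability condition $\int_{(0,\infty)}(r\wedge r^2)\,\pi(\dd r)<\infty$ forces $\pibar(\delta)=\pi(\delta,\infty)<\infty$; the hypothesis $\pibar(\delta)>0$ in the statement is only there to ensure the forest is nontrivial.
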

\begin{proof}
	That $\rddtree_\delta$ is under $\n$ a Bienaymé-Galton-Watson forest with the mentioned distribution is an immediate consequence of the Poissonian decompositions given in Thereom \ref{thm: degree decomposition} and Proposition \ref{prop: degree decomposition forests}. Let us compute the Laplace transforms.
	
	Recall that, under $\ndelta$ and conditionally on $\rho$, $\zeta$ has Poisson distribution with parameter $\pibar(\delta) \sigma$. Using this, we have:
	\begin{equation}
		\n\left[1-\e^{-\lambda \init}\right]  = \ndelta\left[1-\e^{-\lambda \zeta}\right]
		= \ndelta\left[1-\e^{-(1-\e^{-\lambda})\pibar(\delta)\sigma}\right]
		= \psi_\delta^{-1}\left((1-\e^{-\lambda})\pibar(\delta)\right). \label{eq: initial distribution proof}
	\end{equation}
	This proves \eqref{eq: initial distribution}. Similarly, since under $\rfordelta$ and conditionally on $\rho$, $\zeta$ has Poisson distribution with parameter $\pibar(\delta)\sigma$, a similar computation yields:
	\begin{equation*}
		\n\left[\e^{-\lambda \off}\right]  = \rfordelta(\e^{-\lambda \zeta}) = \rfordelta\left(\e^{- (1-\e^{-\lambda})\pibar(\delta)\sigma}\right) = \frac{1}{\pibar(\delta)} \int_{(\delta,\infty)} \pi(\dd r) \fordelta{r}\left(\e^{-(1-\e^{-\lambda})\pibar(\delta)\sigma}\right).
	\end{equation*}
	But, using the Poisson decomposition of Lemma \ref{lemma: poisson decomposition exploration}, we get that:
	\begin{equation}\label{eq: laplace transform of sigma for a forest}
		\fordelta{r}(\e^{-x\sigma}) = \expp{-r \ndelta\left[1-\e^{-x \sigma}\right]} = \e^{-r \psi_\delta^{-1}(x)}, \quad \forall x \geqslant 0,
	\end{equation}
	and \eqref{eq: offspring distribution} readily follows.
\end{proof}

We end this section with the following result on the criticality of the random forest $\rddtree_\delta$.
\begin{proposition}
	Let $\delta>0$ such that $\pibar(\delta)>0$. The mean of $\off$ is given by:
	\begin{equation}\label{eq: mean offspring}
		\n[\off]= \frac{\int_{(\delta,\infty)} r\, \pi(\dd r)}{\alpha + \int_{(\delta,\infty)}r \, \pi(\dd r)}\cdot
	\end{equation}
	In particular, under $\n$, the Bienaymé-Galton-Watson forest $\rddtree_\delta$ is critical (resp.~subcritical) if $\psi$ is critical (resp.~subcritical).
\end{proposition}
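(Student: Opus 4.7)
The plan is to read off $\n[\off]$ by differentiating the Laplace transform \eqref{eq: offspring distribution} at $\lambda=0$ and then to deduce the criticality statement by inspection of the resulting formula. Writing
\begin{equation*}
\n\left[\e^{-\lambda\off}\right]=\frac{1}{\pibar(\delta)}\int_{(\delta,\infty)} \e^{-r h(\lambda)}\,\pi(\dd r),\qquad h(\lambda)\coloneqq \psi_\delta^{-1}\bigl((1-\e^{-\lambda})\pibar(\delta)\bigr),
\end{equation*}
one has $h(0)=0$ (since $\psi_\delta(0)=0$), and by the chain rule combined with the inverse function theorem applied at $0$,
\begin{equation*}
h'(0)=\frac{\pibar(\delta)}{\psi_\delta'(0)},\qquad \psi_\delta'(0)=\alpha+\int_{(\delta,\infty)} r\,\pi(\dd r),
\end{equation*}
the second equality being immediate from \eqref{eq: definition psi delta}.

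Differentiating under the integral sign at $\lambda=0^+$ — justified by the uniform bound $r\,\e^{-r h(\lambda)}\leq r$ valid for $\lambda\geq 0$ (since $h\geq 0$) together with $\int_{(\delta,\infty)} r\,\pi(\dd r)<\infty$, which follows from $\pibar(\delta)<\infty$ controlling $\int_{(\delta,1]} r\,\pi(\dd r)$ and from the standing assumption $\int(r\wedge r^2)\,\pi(\dd r)<\infty$ controlling $\int_{[1,\infty)} r\,\pi(\dd r)$ — then yields
\begin{equation*}
\n[\off]=\frac{h'(0)}{\pibar(\delta)}\int_{(\delta,\infty)} r\,\pi(\dd r)=\frac{\int_{(\delta,\infty)} r\,\pi(\dd r)}{\alpha+\int_{(\delta,\infty)} r\,\pi(\dd r)},
\end{equation*}
which is exactly \eqref{eq: mean offspring}.

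The criticality assertion is then immediate by inspection: $\n[\off]=1$ if and only if $\alpha=0$ (i.e.~$\psi$ is critical), and $\n[\off]<1$ if and only if $\alpha>0$ (i.e.~$\psi$ is subcritical). No genuine obstacle is anticipated; this is a routine moment identity extracted from \eqref{eq: offspring distribution}, and the only mild technicality is the domination argument required to differentiate under the integral sign.
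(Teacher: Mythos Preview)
Your proof is correct. Both your approach and the paper's arrive at \eqref{eq: mean offspring}, but by somewhat different routes.

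The paper works probabilistically: it returns to the identification of $\off$ with $\zeta$ under $\rfordelta$ from Proposition~\ref{prop: galton watson}, uses that $\zeta$ is Poisson with parameter $\pibar(\delta)\sigma$ conditionally on $\rho$ to write $\n[\off]=\pibar(\delta)\rfordelta(\sigma)$, and then computes $\rfordelta(\sigma)$ via the Poissonian decomposition of Lemma~\ref{lemma: poisson decomposition exploration} together with $\ndelta[\sigma]=1/\psi_\delta'(0)$ from \eqref{eq: expectation sigma}. You instead take the Laplace transform \eqref{eq: offspring distribution} as a black box and differentiate at $\lambda=0$, with the inverse function theorem supplying $h'(0)=\pibar(\delta)/\psi_\delta'(0)$. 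Your route is purely analytic and self-contained once \eqref{eq: offspring distribution} is in hand; the paper's route is slightly more conceptual in that it never passes through the full Laplace transform and makes the origin of each factor visible. Both are routine; the only small point worth tightening in your write-up is that the domination needed to differentiate under the integral is for $\partial_\lambda\bigl(\e^{-rh(\lambda)}\bigr)=-rh'(\lambda)\e^{-rh(\lambda)}$, so one should note that $h'$ is bounded on a neighbourhood of $0$ in addition to the bound $r\e^{-rh(\lambda)}\leq r$.
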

\begin{proof}
	Thanks to Proposition \ref{prop: galton watson}, we have:
	\begin{equation*}
		\n[\off]= \rfordelta(\zeta) 
		= \pibar(\delta)\rfordelta(\sigma)
		= \int_{(\delta,\infty)} \pi(\dd r)\fordelta{r}(\sigma).
	\end{equation*}
	But the Poissonian decomposition of $\fordelta{r}$ gives:
	\begin{equation*}
		\fordelta{r}(\sigma) = r\ndelta[\sigma]= \frac{r}{\alpha+\int_{(\delta,\infty)} z\, \pi(\dd z)},
	\end{equation*}
	where we used \eqref{eq: expectation sigma} for the second equality. This yields \eqref{eq: mean offspring}.
\end{proof}

\section{Conditioning on $\Delta = \delta$}\label{sect: conditioning on degree}
The goal of this section is to make sense of the conditional measure $\n[\cdot|\Delta=\delta]$. For every $\delta>0$, we set:
\begin{equation}\label{eq: definition w}
	\w(\delta) = \n[\sigma \ind{\Delta< \delta}] \quad \text{and} \quad \wplus(\delta) = \n[\sigma \ind{\Delta\leqslant \delta}].
\end{equation}
Notice that if $\delta>0$ is not an atom of the Lévy measure $\pi$, then we have $\w(\delta)= \wplus(\delta)$ by Lemma \ref{lemma: pi diffuse}. Furthermore, thanks to Corollary \ref{cor: identity in distribution before big jump}, \eqref{eq: laplace transform sigma} and \eqref{eq: distribution degree large}, we have:
\begin{equation}\label{eq: w general}
	w(\delta) = \operatorname{\mathbf{N}}^{\psi_{\delta-}}\left[\sigma \e^{-\pi[\delta,\infty)\sigma}\right] = \frac{1}{\psi_{\delta-}'\circ \psi_{\delta-}^{-1}(\pi[\delta,\infty))} = \frac{1}{\psi_{\delta-}'(\n[\Delta\geqslant \delta])}\cdot
\end{equation}
Similarly, we have:
\begin{equation}\label{eq: w plus general}
	\wplus(\delta) = \ndelta\left[\sigma \e^{-\pibar(\delta)\sigma}\right] = \frac{1}{\psi_{\delta}'\circ \psi_\delta^{-1}(\pibar(\delta))} = \frac{1}{\psi_\delta'(\n[\Delta>\delta])}\cdot
\end{equation}

For $\delta >0$, denote by $\tilted{\delta}$ the probability measure on the space $ \real_+\times \D$ defined by:
\begin{equation}\label{eq: definition Pdelta}
	\int_{\real_+\times \D} F\, \dd \!\tilted{\delta}= \frac{1}{w(\delta)}\n\left[\int_0^\sigma F(s,\rho)\, \dd s \, \ind{\Delta< \delta}\right],
\end{equation}
for every $F\in  \B(\real_+\times \D)$. Similarly, we set:
\begin{equation}\label{eq: definition Pdelta plus}
	\int_{\real_+\times \D} F\, \dd \!\tilted{\delta+}= \frac{1}{\wplus(\delta)}\n\left[\int_0^\sigma F(s,\rho)\, \dd s \, \ind{\Delta\leqslant \delta}\right].
\end{equation}
Observe that $\tilted{\delta+} = \lim_{\epsilon\to 0+} \tilted{\delta+\epsilon}$ in the sense of weak convergence of measures.

For every $\delta,\epsilon >0$, let 
\begin{equation}
	E_{\delta,\epsilon} = \{\delta-\epsilon < \Delta < \delta+\epsilon, \, Z_0^{\delta-\epsilon} = 1\}
\end{equation}
be the event that the maximal degree is between $\delta-\epsilon$ and $\delta+\epsilon$ and there is a unique first-generation node with mass larger than $\delta-\epsilon$. The next lemma states that, under the assumption that $\delta$ is not an atom of the Lévy measure $\pi$, the two events $E_{\delta,\epsilon}$ and $\{\delta-\epsilon < \Delta < \delta+\epsilon\}$ are equivalent in $\n$-measure as $\epsilon \to 0$. Recall that $\pi$ is a measure on $(0,\infty)$ and as such, its support $\supp{\pi}$ does not contain $0$.
\begin{lemma}\label{lemma: equivalent conditionings}
	Assume that $\delta \in \supp{\pi}$ is not an atom of the Lévy measure $\pi$ and that $\pibar(\delta)>0$. We have $\n[\delta-\epsilon< \Delta< \delta+\epsilon] \sim \n[E_{\delta,\epsilon}]$ as $\epsilon \to 0$.
\end{lemma}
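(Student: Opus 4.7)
The plan is to apply the Poissonian decomposition of Theorem~\ref{thm: degree decomposition}(i) at threshold $\delta-\epsilon$ and then compare the two probabilities via a mean value argument. Under $\n$, I would write $\rho = \tilde\rho \circledast_{i\in I}(s_i,\rho_i)$, where $\tilde\rho$ has law $\ndinf$ and, independently, $\sum_{i\in I}\delta_{(s_i,\rho_i)}$ is a Poisson point measure with intensity $\lambda_\epsilon\,\dd s\,\rfordinf(\dd \tilde\rho')$, where $\lambda_\epsilon := \pibar(\delta-\epsilon)$. Since $\tilde\rho$ has maximal degree $\le \delta-\epsilon$ by construction, one has $Z_0^{\delta-\epsilon} = \#\{i : s_i<\tilde\sigma\}$ and $\Delta = \max_{i:\,s_i<\tilde\sigma} \Delta(\rho_i)$ on $\{Z_0^{\delta-\epsilon}\ge 1\}$.

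Next, I would control the quantity $a_\epsilon := \rfordinf(\Delta < \delta+\epsilon)$. Under $\for{r}$, the initial condition $\rho_0 = r\delta_0$ forces $\Delta\ge r$, so only initial masses $r\in(\delta-\epsilon,\delta+\epsilon)$ contribute and thus $a_\epsilon \le \pi(\delta-\epsilon,\delta+\epsilon)/\pibar(\delta-\epsilon)$. The non-atom hypothesis yields $\pi(\delta-\epsilon,\delta+\epsilon)\to 0$, while $\pibar(\delta-\epsilon)\to \pibar(\delta)>0$; consequently $a_\epsilon\to 0$. By Poisson thinning the atoms split into independent ``good'' ($\Delta(\rho_i)<\delta+\epsilon$) and ``bad'' ($\Delta(\rho_i)\ge \delta+\epsilon$) subsets with respective rates $a_\epsilon \lambda_\epsilon$ and $(1-a_\epsilon)\lambda_\epsilon$. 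The event $\{\delta-\epsilon<\Delta<\delta+\epsilon\}$ then corresponds to having at least one good atom and no bad atom, whereas $E_{\delta,\epsilon}$ corresponds to exactly one good atom and no bad atom. Conditioning on $\tilde\sigma$ and integrating against $\ndinf$, using the identities $\ndinf[1-\e^{-\mu \tilde\sigma}]=\psi_{\delta-\epsilon}^{-1}(\mu)$ and $\ndinf[\tilde\sigma\e^{-\mu\tilde\sigma}]=(\psi_{\delta-\epsilon}^{-1})'(\mu)$ coming from \eqref{eq: laplace transform sigma} applied to $\psi_{\delta-\epsilon}$, I obtain
\begin{equation*}
	\n[\delta-\epsilon<\Delta<\delta+\epsilon] = \psi_{\delta-\epsilon}^{-1}(\lambda_\epsilon) - \psi_{\delta-\epsilon}^{-1}((1-a_\epsilon)\lambda_\epsilon), \qquad \n[E_{\delta,\epsilon}] = a_\epsilon\lambda_\epsilon\,(\psi_{\delta-\epsilon}^{-1})'(\lambda_\epsilon).
\end{equation*}

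Finally, the mean value theorem shows that the first equals $a_\epsilon\lambda_\epsilon\,(\psi_{\delta-\epsilon}^{-1})'(\xi_\epsilon)$ for some $\xi_\epsilon\in((1-a_\epsilon)\lambda_\epsilon,\lambda_\epsilon)$, so the ratio to be studied reduces to $(\psi_{\delta-\epsilon}^{-1})'(\xi_\epsilon)/(\psi_{\delta-\epsilon}^{-1})'(\lambda_\epsilon)$. The non-atom assumption gives $\psi_{\delta-\epsilon}'(\mu) - \psi_\delta'(\mu) = \int_{(\delta-\epsilon,\delta]} r\e^{-\mu r}\,\pi(\dd r) \to 0$ locally uniformly in $\mu$, so $(\psi_{\delta-\epsilon}^{-1})'$ converges locally uniformly to $(\psi_\delta^{-1})'$ near $\pibar(\delta)$. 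Since $\xi_\epsilon,\lambda_\epsilon\to \pibar(\delta)$ (using $a_\epsilon\to 0$), both numerator and denominator tend to $\w(\delta) = 1/\psi_\delta'(\n[\Delta>\delta]) \in(0,\infty)$, whence the ratio tends to $1$, as required. The main obstacle is precisely the control $a_\epsilon\to 0$, where the hypothesis that $\delta$ is not an atom of $\pi$ is essential; the rest of the argument is routine Poisson calculus combined with smoothness of $\psi_{\delta-\epsilon}^{-1}$.
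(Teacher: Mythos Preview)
Your argument is correct and shares its setup with the paper's proof: both apply Theorem~\ref{thm: degree decomposition} at threshold $\delta-\epsilon$, identify $Z_0^{\delta-\epsilon}$ with the number of Poisson atoms on $[0,\tilde\sigma)$, and reduce everything to the single small parameter $a_\epsilon=\rfordinf(\Delta<\delta+\epsilon)\to 0$. Your bound $a_\epsilon\le \pi(\delta-\epsilon,\delta+\epsilon)/\pibar(\delta-\epsilon)$ is exactly the content of the paper's computation \eqref{eq: proba forest has degree delta + epsilon}.

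Where the two proofs diverge is in the comparison step. The paper keeps the $\ndinf$-expectation explicit, writes $\n[\delta-\epsilon<\Delta<\delta+\epsilon]-\n[E_{\delta,\epsilon}]$ as $\ndinf\bigl[\e^{-\lambda_\epsilon\sigma}(\e^{a_\epsilon\lambda_\epsilon\sigma}-1-a_\epsilon\lambda_\epsilon\sigma)\bigr]$, and bounds this via $\e^x-1-x\le x^2\e^x/2$ together with the elementary moment estimate $\ndinf[\sigma^2\e^{-c\sigma}]\le C\,\ndinf[\sigma]$. You instead close the Poisson computation in terms of the Laplace exponent, obtaining
\[
\frac{\n[\delta-\epsilon<\Delta<\delta+\epsilon]}{\n[E_{\delta,\epsilon}]}
=\frac{\psi_{\delta-\epsilon}^{-1}(\lambda_\epsilon)-\psi_{\delta-\epsilon}^{-1}((1-a_\epsilon)\lambda_\epsilon)}{a_\epsilon\lambda_\epsilon\,(\psi_{\delta-\epsilon}^{-1})'(\lambda_\epsilon)},
\]
and conclude by the mean value theorem combined with the locally uniform convergence $(\psi_{\delta-\epsilon}^{-1})'\to(\psi_\delta^{-1})'$ near $\pibar(\delta)$. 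This is a legitimate alternative: it trades the explicit second-moment bound for a smoothness argument on the family $(\psi_{\delta-\epsilon})_\epsilon$. Your route is a bit more streamlined and makes the role of $\wplus(\delta)=(\psi_\delta^{-1})'(\pibar(\delta))$ as the common limit of numerator and denominator transparent; the paper's route is more self-contained in that it never appeals to regularity of $\epsilon\mapsto\psi_{\delta-\epsilon}$ beyond what is immediately visible. One small remark: your limit is $\wplus(\delta)$ rather than $\w(\delta)$, but since $\delta$ is not an atom these coincide, so no harm done.
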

\begin{proof} We start by observing that, thanks to the Poissonian decomposition of $\for{r}$ given in Lemma \ref{lemma: poisson decomposition exploration}, we have:
	\begin{equation}
		\for{r}(\Delta< \delta) = \begin{cases}
			0 & \text{if } r \leqslant \delta,\\
			\e^{-r \n[\Delta\geqslant \delta]} & \text{if } r >\delta.
		\end{cases}
	\end{equation}
	Similarly, we have:
	\begin{equation}\label{eq: distribution degree forest}
		\for{r}(\Delta\leqslant \delta) = \begin{cases}
			0 & \text{if } r < \delta, \\
			\e^{-r \n[\Delta> \delta]} & \text{if } r \geqslant\delta.
		\end{cases}
	\end{equation}
	
	We deduce that
	\begin{align}
		\rfordinf(\Delta< \dsup) &= \frac{1}{\pibar(\dinf)}\int_{(\dinf,\dsup)} \pi(\dd r) \for{r}(\Delta< \dsup) \notag\\
		&= \frac{1}{\pibar(\dinf)}\int_{(\dinf,\dsup)} \e^{-r \n[\Delta\geqslant\dsup]}\,\pi(\dd r). \label{eq: proba forest has degree delta + epsilon}
	\end{align}
	Since $\pi(\delta) = 0$ and $\pibar(\delta)>0$, this implies that
	\begin{equation}\label{eq: probability a forest with initial size delta - epsilon to have degree delta + epsilon}
		\lim_{\epsilon\to 0}\rfordinf(\Delta< \dsup) =0.
	\end{equation} 
	
	Under $\ndinf$ and conditionally on $\rho$, let $\zeta$ be a Poisson random variable with parameter $\pibar(\dinf)\sigma$ and let $\left((s_i, \rho_i), \, i \geqslant 1\right)$ be independent with distribution $\sigma^{-1}\mathbf{1}_{[0,\sigma]}(s)\,\dd s\rfordinf(\dd \tilde{\rho})$, independent of $\zeta$. Thanks to Theorem \ref{thm: degree decomposition}, we have:
	\begin{align}\label{eq: E delta epsilon}
		\n[E_{\delta,\epsilon}] &= \ndinf[\zeta=1, \, \Delta(\rho_{1})< \dsup]\nonumber\\
		&=\ndinf[\pibar(\dinf)\sigma\e^{-\pibar(\dinf)\sigma}]  \rfordinf(\Delta< \dsup)\nonumber\\
		&=\pibar(\delta-\epsilon)\wplus(\delta-\epsilon)\rfordinf(\Delta< \dsup),
	\end{align}
	where we used \eqref{eq: w plus general} for the last equality.
	Similarly, we have:
	\begin{align*}
		\n[\delta-\epsilon<\Delta< \delta+\epsilon] &= \ndinf[\zeta \geqslant 1; \, \forall i \leqslant \zeta, \, \Delta(\rho_i)< \dsup] \\
		&= \ndinf[\zeta\geqslant 1; \rfordinf(\Delta< \dsup)^\zeta]\\
		&= \ndinf\left[\e^{-\pibar(\dinf)\sigma} \left(\e^{\pibar(\dinf)\sigma\rfordinf(\Delta< \dsup)}-1 \right)\right].
	\end{align*}
	
	Therefore, using the inequality $\e^x - 1 - x \leqslant x^2 \e^{x}/2$ and the fact that the function $x \mapsto x \e^{-x}$ is bounded on $\real_+$ by some constant $C>0$, we deduce that
	\begin{multline*}
		0\leqslant \n[\delta-\epsilon<\Delta< \delta+\epsilon]-\n[E_{\delta,\epsilon}]\\
		\begin{aligned}[t]
			&\leqslant \frac{1}{2}\pibar(\dinf)^2\ndinf\left[\sigma^2\e^{-\pibar(\dinf)\sigma \rfordinf(\Delta\geqslant \delta +\epsilon)}\right]\rfordinf(\Delta< \dsup)^2 \\
			&\leqslant \frac{C}{2} \pibar(\dinf)\ndinf[\sigma] \frac{\rfordinf(\Delta< \dsup)^2}{\rfordinf(\Delta\geqslant \delta +\epsilon)}\\
			&= \frac{C\pibar(\dinf)}{2(\alpha+\int_{(\dinf,\infty)}r\, \pi(\dd r))} \frac{\rfordinf(\Delta< \dsup)^2}{\rfordinf(\Delta\geqslant \delta +\epsilon)}\\
			&\leqslant C_\delta \frac{\rfordinf(\Delta< \dsup)^2}{\rfordinf(\Delta\geqslant \delta +\epsilon)}
		\end{aligned}
	\end{multline*}
	for $\epsilon >0$ small enough and some constant $C_\delta$ which is independent of $\epsilon$, where we used \eqref{eq: expectation sigma} for the equality.

	Furthermore, it is clear from \eqref{eq: w plus general} that
	\begin{equation*}
		\pibar(\dinf)\wplus(\delta-\epsilon)= \pibar(\delta-\epsilon) \n[\sigma \ind{\Delta\leqslant \delta-\epsilon}] \geqslant \pibar(\delta/2) \n[\sigma \ind{\Delta\leqslant \delta/2}],
	\end{equation*}
	for $\epsilon >0$ small enough. In particular, it follows from \eqref{eq: E delta epsilon} that there exists a constant $C_\delta' >0$ such that
	\begin{equation*}
		0\leqslant \frac{\n[\delta-\epsilon<\Delta< \delta+\epsilon]-\n[E_{\delta,\epsilon}]}{\n[E_{\delta,\epsilon}]}
		\leqslant C_\delta'\frac{\rfordinf(\Delta< \dsup)}{\rfordinf(\Delta\geqslant \delta +\epsilon)},
	\end{equation*}
	where the right-hand side goes to $0$ as $\epsilon \to 0$ thanks to \eqref{eq: probability a forest with initial size delta - epsilon to have degree delta + epsilon}. This concludes the proof.
\end{proof}
As a consequence, since $E_{\delta,\epsilon}\subset \{\delta-\epsilon< \Delta< \delta+\epsilon\}$, conditioning on either event is equivalent as $\epsilon \to 0$. We choose to work with the former as computations will be simpler. We shall next give a description of the exploration process conditioned on $E_{\delta,\epsilon}$. 

Let 
\begin{equation}\label{eq: definition Tdelta}
	T_\delta  = \inf\{t >0\colon\, \Delta(\rho_t)>\delta\}
\end{equation}
be the first time that the exploration process contains an atom with mass larger than $\delta$ and let
\begin{equation}
	L_\delta=\inf\{t>T_\delta\colon\, H(\rho_t)<H(\rho_{T_\delta}) \}
\end{equation}
be the first time that node is erased. We split the path of the exploration process into two parts: ${\rho}^{\delta,-}$ is the pruned exploration process (that is the exploration process minus the first node with mass larger than $\delta$):
\begin{equation}\label{eq: definition bottom}
	{\rho}^{\delta,-}_t = \begin{cases}
		\rho_t & \text{if } t < T_{\delta}, \\
		\rho_{t-T_{\delta}+L_{\delta}} & \text{if } t\geqslant T_{\delta},
	\end{cases}
\end{equation}
and ${\rho}^{\delta,+}$ is the path of the exploration process above the unique first-generation node with mass larger than $\delta$:
\begin{equation}\label{eq: definition upper}
	{\rho}^{\delta,+}_t = \abov{H_{T_{\delta}}}(\rho_{(t+T_{\delta})\wedge L_{\delta}}), \quad \forall t \geqslant 0.
\end{equation}
Notice that ${\rho}^{\delta,+}_0$ is a multiple of the Dirac measure at $0$.
\begin{lemma}\label{lemma: decomposition degree delta}
	Let $F,G\in \B(\real_+\times \D )$. For every $\delta,\epsilon>0$ such that $\pibar(\delta-\epsilon)>0$, we have:
	\begin{equation}\label{eq: decomposition degree delta}
		\n\left[F(T_{\delta-\epsilon},{\rho}^{\delta-\epsilon,-})G({\rho}^{\delta-\epsilon,+})\middle|E_{\delta,\epsilon}\right] 
		=\int_{\real_+\times\D} F\, \dd\!\tilted{(\delta--\epsilon)+}\times\rfordinf(G(\rho)|\Delta< \delta+\epsilon).
	\end{equation}
\end{lemma}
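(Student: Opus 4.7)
The plan is to apply Theorem~\ref{thm: degree decomposition}(i) with $\delta$ replaced by $\delta-\epsilon$, which realizes $\rho$ (of law $\n$) as the grafting of a pruned process $\rho'$ of law $\ndinf$ with the atoms $\{(s_i,\rho_i)\}_{i\in I}$ of an independent Poisson point measure with intensity $\pibar(\delta-\epsilon)\,\dd s\,\rfordinf(\dd\tilde\rho)$. On the event $E_{\delta,\epsilon}$, exactly one atom $(s_1,\rho_1)$ of this measure has $s_1<\sigma(\rho')$, and it satisfies $\Delta(\rho_1)<\delta+\epsilon$. Unpacking the grafting construction from Section~\ref{subsect: branching property}, one then identifies $T_{\delta-\epsilon}=s_1$, ${\rho}^{\delta-\epsilon,-}=\rho'$, and ${\rho}^{\delta-\epsilon,+}=\rho_1$: indeed $\rho'$ has no atom of mass $>\delta-\epsilon$, while $\rho_1$ starts from $r\delta_0$ with $r>\delta-\epsilon$ and is concatenated at height $H(\rho'(s_1))$, so the inserted first-generation node of mass $>\delta-\epsilon$ appears precisely at grafted time $s_1$ and is erased at time $s_1+\sigma(\rho_1)$.

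Extracting this single qualifying Poisson atom via the Campbell formula gives
\begin{equation*}
\n\!\left[F(T_{\delta-\epsilon},{\rho}^{\delta-\epsilon,-})G({\rho}^{\delta-\epsilon,+})\mathbf{1}_{E_{\delta,\epsilon}}\right]=\pibar(\delta-\epsilon)\,\ndinf\!\left[\int_0^\sigma\!F(s,\rho)\,\dd s\,\e^{-\pibar(\delta-\epsilon)\sigma}\right]\rfordinf\!\left[G(\rho)\mathbf{1}_{\Delta<\delta+\epsilon}\right],
\end{equation*}
the exponential arising as the void probability that no other Poisson atom lands in $[0,\sigma(\rho'))\times\D$. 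Corollary~\ref{cor: identity in distribution before big jump} then rewrites the $\ndinf$ expectation as
\begin{equation*}
\n\!\left[\int_0^\sigma F(s,\rho)\,\dd s\,\mathbf{1}_{\Delta\leq\delta-\epsilon}\right]=\wplus(\delta-\epsilon)\int F\,\dd\!\tilted{(\delta--\epsilon)+},
\end{equation*}
by the very definition~\eqref{eq: definition Pdelta plus}. Specializing to $F\equiv G\equiv 1$ recovers $\n[E_{\delta,\epsilon}]=\pibar(\delta-\epsilon)\wplus(\delta-\epsilon)\rfordinf(\Delta<\delta+\epsilon)$, matching~\eqref{eq: E delta epsilon}; dividing then yields the claimed identity.

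The only delicate point is the first step, namely verifying that under the Poissonian coupling the geometric quantities $T_{\delta-\epsilon}$, ${\rho}^{\delta-\epsilon,\pm}$ really agree with $s_1$, $\rho'$, $\rho_1$. This relies on the fact that the concatenation $[\rho'(s_1),r\delta_0]$ introduces an atom of mass $r>\delta-\epsilon$ at the fresh height $H(\rho'(s_1))$ without disturbing the ancestral line of $\rho'$, so that the excursion interval $[T_{\delta-\epsilon},L_{\delta-\epsilon})$ of the grafted process is precisely $[s_1,s_1+\sigma(\rho_1))$. Once this identification is in place, the remaining computation is a routine application of standard Poisson point measure manipulations together with Corollary~\ref{cor: identity in distribution before big jump}.
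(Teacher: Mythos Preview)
Your proof is correct and follows essentially the same approach as the paper's: apply Theorem~\ref{thm: degree decomposition} at level $\delta-\epsilon$, identify $(T_{\delta-\epsilon},\rho^{\delta-\epsilon,-},\rho^{\delta-\epsilon,+})$ with the single surviving Poisson atom and the pruned process on the event $\{\zeta=1,\Delta(\rho_1)<\delta+\epsilon\}$, factor the expectation, and then invoke Corollary~\ref{cor: identity in distribution before big jump} together with~\eqref{eq: definition Pdelta plus} and~\eqref{eq: E delta epsilon} to normalize. Your use of the Campbell formula is exactly the same computation as the paper's conditioning on $\zeta=1$ plus uniform location of the single atom; you are simply a bit more explicit about the geometric identification of $(T_{\delta-\epsilon},\rho^{\delta-\epsilon,\pm})$, which the paper takes for granted.
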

\begin{proof}
	By Theorem \ref{thm: degree decomposition}, we have
	\begin{equation*}
		\n\left[F(T_{\delta-\epsilon},{\rho}^{\delta-\epsilon,-})G({\rho}^{\delta-\epsilon,+})\mathbf{1}_{E_{\delta,\epsilon}}\right] = \ndinf\left[F(U,\rho) G(\rho^{\dinf})\ind{\zeta=1, \Delta(\rho^{\dinf})< \delta+\epsilon}\right],
	\end{equation*}
	where, under $\ndinf$, conditionally on $\rho$, $U$ is uniformly distributed on $[0,\sigma]$, $\zeta$ is a Poisson random variable with parameter $\pibar(\dinf)\sigma$, $\rho^{\dinf}$ has distribution $\rfordinf$ and they are independent. We deduce that
	\begin{multline*}
		\n\left[F(T_{\delta-\epsilon},{\rho}^{\delta-\epsilon,-})G({\rho}^{\delta-\epsilon,+})\mathbf{1}_{E_{\delta,\epsilon}}\right] \\
		= \ndinf\left[\pibar(\dinf)\e^{-\pibar(\dinf)\sigma}\int_0^\sigma F(s,\rho)\, \dd s\right]\rfordinf(G(\rho)\ind{\Delta< \delta+\epsilon}).
	\end{multline*}
	Together with Corollary \ref{cor: identity in distribution before big jump}, \eqref{eq: definition Pdelta} and \eqref{eq: E delta epsilon}, this yields the desired result.
\end{proof}
We now turn to the study of the asymptotic behavior of the measures appearing in the right-hand side of \eqref{eq: decomposition degree delta}. Recall that the total variation distance of two probability measures $P,Q$ on some measurable space $(E,\mathcal{E})$ is given by:
\begin{equation*}
	d_{\mathrm{TV}}(P,Q) = \sup\{|P(A) -Q(A)|\colon\, A\in \mathcal{E}\}.
\end{equation*}
\begin{lemma}\label{lemma: weak convergence of bottom part delta}
	Assume that $\delta>0$ is not an atom of the Lévy measure $\pi$. Then, the mapping $r\mapsto\tilted{\mathnormal{r}+}$ is continuous at $\delta$ in total variation distance and $\tilted{\delta+} = \tilted{\delta}$.
\end{lemma}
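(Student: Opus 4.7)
The equality $\tilted{\delta+}=\tilted{\delta}$ is immediate: since $\delta$ is not an atom of $\pi$, Corollary~\ref{lemma: pi diffuse} implies $\n[\Delta=\delta]=0$, so $\ind{\Delta<\delta}=\ind{\Delta\leqslant\delta}$ holds $\n$-a.e.\ and comparing \eqref{eq: definition Pdelta} with \eqref{eq: definition Pdelta plus} yields both $\w(\delta)=\wplus(\delta)$ and $\tilted{\delta}=\tilted{\delta+}$.

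For the continuity statement, the plan is to reduce the total variation distance to a scalar continuity question. Introduce the finite measure $\mu_r(F)\coloneqq \n[\int_0^\sigma F(s,\rho)\,ds\,\ind{\Delta\leqslant r}]$, so that $\tilted{r+}=\mu_r/\wplus(r)$ and $\wplus(r)=\mu_r(1)$. For any measurable $F\colon\real_+\times\D\to[-1,1]$, the triangle inequality together with the bound $|\ind{\Delta\leqslant r}-\ind{\Delta\leqslant \delta}|\leqslant \ind{\delta\wedge r<\Delta\leqslant\delta\vee r}$ yields
\begin{equation*}
\bigl|\tilted{r+}(F)-\tilted{\delta+}(F)\bigr| \leqslant \frac{|\mu_r(F)-\mu_\delta(F)|}{\wplus(r)} + \wplus(\delta)\left|\frac{1}{\wplus(r)}-\frac{1}{\wplus(\delta)}\right| \leqslant \frac{2|\wplus(r)-\wplus(\delta)|}{\wplus(r)}.
\end{equation*}
Taking the supremum over such $F$ reduces the proof to showing that the real-valued map $r\mapsto\wplus(r)$ is continuous at $\delta$.

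To establish this scalar continuity, I would appeal to the closed-form expression $\wplus(r)=1/\psi_r'(\n[\Delta>r])$ from \eqref{eq: w plus general} combined with $\n[\Delta>r]=\psi_r^{-1}(\pibar(r))$ from \eqref{eq: distribution degree strict}. Since $\pi(\{\delta\})=0$, the tail $\pibar$ is continuous at $\delta$, and for each fixed $\lambda\geqslant 0$ both $r\mapsto\psi_r(\lambda)$ and $r\mapsto\psi_r'(\lambda)$ are continuous at $\delta$ by dominated convergence in \eqref{eq: definition psi delta}, using $\int(s\wedge s^2)\,\pi(ds)<\infty$ and $\pi(\{\delta\})=0$. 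Composition of these continuous monotone maps then yields the continuity of $\wplus$ at $\delta$, while the denominator $\wplus(r)$ stays bounded away from $0$ near $\delta$ since $\wplus(\delta)>0$ by finiteness of $\psi_\delta'(\n[\Delta>\delta])$ (a consequence of $\n[\Delta>\delta]$ lying in the interior of the domain of the convex function $\psi_\delta$). The main step is really the scalar reduction of the second paragraph; once it is in place, the continuity of $\wplus$ is a routine exercise in dominated convergence.
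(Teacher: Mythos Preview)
Your proof is correct and follows the same line as the paper's very brief argument, which simply invokes $\n[\Delta=\delta]=0$ (via Corollary~\ref{lemma: pi diffuse}) and says the rest ``readily follows from the definition.'' Your first two paragraphs make explicit exactly what the paper leaves implicit: the total variation bound reduces to continuity of $r\mapsto \wplus(r)=\n[\sigma\ind{\Delta\leqslant r}]$ at $\delta$.

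One remark: your third paragraph, which establishes this scalar continuity through the closed form $\wplus(r)=1/\psi_r'(\n[\Delta>r])$ and joint continuity of $(r,\lambda)\mapsto\psi_r(\lambda)$ and its inverse, works but is more elaborate than necessary. A shorter route is to note directly from the definition that $\wplus$ is nondecreasing, right-continuous (dominated convergence, using $\wplus(r_0)<\infty$ for any fixed $r_0>\delta$ as dominating bound), and has left limit $\wplus(\delta-)=\n[\sigma\ind{\Delta<\delta}]=\w(\delta)$ by monotone convergence; since $\n[\Delta=\delta]=0$ gives $\w(\delta)=\wplus(\delta)$, continuity at $\delta$ follows. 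This avoids any discussion of continuity of $\psi_r^{-1}$ in the parameter $r$.
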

\begin{proof}
	Thanks to Corollary \ref{lemma: pi diffuse}, we have $\n[\Delta=\delta] = 0$. Then the result readily follows from the definition of the measure $\tilted{\mathnormal{r}+}$.
\end{proof}

Recall that the space $\Mf(\real_+)$ is equipped with the topology of weak convergence which makes it a Polish space, see \cite[Section 8.3]{bogachev2007measure}. It can be metrized by the so-called bounded Lipschitz distance defined for every $\mu, \nu \in \Mf(\real_+)$ by $d_{\mathrm{BL}}(\mu,\nu) = \sup |\langle \mu , f \rangle - \langle \nu,f \rangle|$, where the supremum is taken over all Lipschitz-continuous and bounded functions $f\colon \real_+\to \real$ such that
\begin{equation*}
	\sup_{x\geqslant 0} |f(x)|+ \sup_{x\neq y} \frac{|f(x) - f(y)|}{|x-y|}\leqslant 1.
\end{equation*}
Recall that $\D$ is the space of càdlàg $\Mf(\real_+)$-valued functions defined on $\real_+$, equipped with the Skorokhod $J_1$-topology and let $\dS$ be the Skorokhod distance associated with the distance $d_{\mathrm{BL}}$ on $\Mf(\real_+)$. Denote by $\DD$ the subset of $\D$ consisting of excursions:
\begin{equation}
	\DD \coloneqq \left\{\mu \in \D\colon \, \sigma(\mu) <\infty, \ \mu_t \neq 0, \, \forall 0 < t < \sigma(\mu) \text{ and } \mu_{\sigma(\mu)-} = 0 \text{ if } \sigma(\mu) >0\right\},
\end{equation}
where $\sigma(\mu) = \inf\{t >0 \colon \, \mu(t+\cdot) \equiv 0\}$ is the lifetime of $\mu$. Notice that if $\mu \in \DD$ such that $\sigma(\mu) = 0$ then necessarily $\mu \equiv 0$. Observe that the mapping $\mu \mapsto \sigma(\mu)$ is measurable with respect to the Skorokhod topology since $\sigma(\mu) = \inf\{t \in \mathbb{Q}\cap(0,\infty)\colon\, \mu_t = 0\}$ and $\mu \mapsto \mu_t$ is measurable. We equip $\DD$ with the following distance:
\begin{equation*}
	\dK(\mu, \nu) = \dS(\mu,\nu) + |\sigma(\mu) - \sigma(\nu)|.
\end{equation*}
\begin{lemma}\label{lemma: continuity}
	Let $\nu \in \D$ and $s >0$. The mapping $\mu \mapsto \nu \circledast (s,\mu)$ is continuous from $(\DD, \dK)$ to $(\D, \dS)$.
\end{lemma}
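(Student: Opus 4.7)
The plan is to exhibit an explicit Skorokhod time change $\tilde{\lambda}_n$ for the grafted processes, built from a Skorokhod time change witnessing $\mu_n \to \mu$ by prepending the identity on $[0,s]$ and appending a constant shift after $s+\sigma(\mu)$, then to reduce the uniform estimate to a Lipschitz property of the concatenation operation $\xi \mapsto [\nu(s),\xi]$.

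First I would dispose of the trivial case where $\nu(t)=0$ for all $t\geq s$: by the grafting formula the tuple $(s,\mu)$ is inactive and $\nu\circledast(s,\mu)=\nu$ for every $\mu\in\DD$, so the map is constant. So assume $s<\sigma(\nu)$. Let $\mu_n\to\mu$ in $(\DD,\dK)$, set $\sigma=\sigma(\mu)$ and $\sigma_n=\sigma(\mu_n)$, so $\sigma_n\to\sigma$ and $\dS(\mu_n,\mu)\to0$. Pick continuous strictly increasing bijections $\lambda_n\colon\real_+\to\real_+$ witnessing the Skorokhod convergence, namely $\lambda_n\to\mathrm{id}$ locally uniformly and $\sup_{u\leq T}\dbl(\mu_n(\lambda_n(u)),\mu(u))\to 0$ for every $T$. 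Using $\mu_{\sigma-}=0$ (from the definition of $\DD$) together with $\sigma_n\to\sigma$, I would replace $\lambda_n$ by a slight linear interpolation on a shrinking neighborhood of $\sigma$ so that $\lambda_n(\sigma)=\sigma_n$ while preserving both convergence statements.

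Next, I would define $\tilde{\lambda}_n\colon\real_+\to\real_+$ piecewise by
\begin{equation*}
\tilde{\lambda}_n(t)=\begin{cases} t, & 0\leq t\leq s,\\ s+\lambda_n(t-s), & s\leq t\leq s+\sigma,\\ t+\sigma_n-\sigma, & t\geq s+\sigma. \end{cases}
\end{equation*}
The pieces match at $t=s$ and $t=s+\sigma$ (where the middle piece lands at $s+\sigma_n$ by the adjustment above), so $\tilde{\lambda}_n$ is a continuous strictly increasing bijection of $\real_+$, and $\tilde{\lambda}_n\to\mathrm{id}$ locally uniformly since the deviation is bounded by $|\sigma_n-\sigma|+\sup_{u\leq\sigma}|\lambda_n(u)-u|$. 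The heart of the proof is then the uniform estimate $\sup_{t\leq T}\dbl\bigl((\nu\circledast(s,\mu_n))(\tilde{\lambda}_n(t)),(\nu\circledast(s,\mu))(t)\bigr)\to 0$, which splits into three regions: on $[0,s)$ the two quantities equal $\nu(t)$ and on $[s+\sigma,T]$ they equal $\nu(t-\sigma)$, giving exact equality; on $[s,s+\sigma]$, writing $t=s+u$, the values are $[\nu(s),\mu_n(\lambda_n(u))]$ and $[\nu(s),\mu(u)]$, and the Lipschitz bound
\begin{equation*}
\dbl\bigl([\eta,\xi_1],[\eta,\xi_2]\bigr)\leq \dbl(\xi_1,\xi_2),
\end{equation*}
which follows because $f\mapsto f(H(\eta)+\cdot)$ preserves the bounded-Lipschitz norm, reduces the bound to $\sup_u\dbl(\mu_n(\lambda_n(u)),\mu(u))\to 0$.

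The step I expect to be the main obstacle is the preliminary adjustment forcing $\lambda_n(\sigma)=\sigma_n$, since Skorokhod time changes coming from $\mu_n\to\mu$ do not map $\sigma$ to $\sigma_n$ in general; however, the discrepancy is $o(1)$, and it can be absorbed by linearly interpolating $\lambda_n$ on a shrinking neighborhood of $\sigma$, using $\mu_{\sigma-}=0$ so that the perturbed values $\mu_n\circ\tilde{\lambda}_n$ are still close to $\mu$ there. The degenerate subcase $\sigma=0$ (hence $\mu\equiv 0$, and the second piece above is empty) requires a separate but analogous time change compressing the short interval $[s,s+\sigma_n]$ onto $\{s\}$, where $\mu_n\to 0$ uniformly in $\dbl$ makes the inserted piece negligible.
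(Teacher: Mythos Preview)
Your proof is correct and shares the same skeleton as the paper's: build a time change for $\nu\circledast(s,\mu_n)$ by gluing the identity on $[0,s]$, a copy of $\lambda_n$ on the grafted portion, and a constant shift afterwards. The difference is purely in how the mismatch $\lambda_n(\sigma)\neq\sigma_n$ is absorbed. The paper leaves $\lambda_n$ untouched and instead inserts a fourth interpolation piece on $[s+\sigma,\,s+\sigma+\epsilon_n]$ into $\tilde\lambda_n$; this forces a finer case analysis, in particular splitting the grafted window according to whether $\lambda_n(t)\leq\sigma_n$ or $\lambda_n(t)>\sigma_n$ (the latter producing $\nu$-values rather than $[\nu(s),\mu_n(\cdot)]$-values), and separately bounding the contribution of the interpolation piece via $\sup_u\langle\mu_n\circ\lambda_n(u),1\rangle$ for $u\geq\sigma$. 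Your pre-adjustment of $\lambda_n$ near $\sigma$ to force $\lambda_n(\sigma)=\sigma_n$ removes that extra piece and collapses the argument to three regions with exact equality on two of them; the work is relocated to verifying that the modified $\lambda_n$ still witnesses $\mu_n\circ\lambda_n\to\mu$ near $\sigma$, which is precisely where $\mu_{\sigma-}=0$ enters. Both routes rely on $\mu\in\DD$ in the same essential way, and your version is somewhat cleaner at the cost of front-loading the delicate step.
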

\begin{proof}
	Denote by $\Lambda$ the set of all continuous functions $\lambda \colon \real_+ \to \real_+$ that are (strictly) increasing, with $\lambda(0) = 0$ and $\lim_{t\to \infty} \lambda(t) = \infty$. Let $\mu_n$ be a sequence in $\DD$ converging to $\mu$ with respect to the distance $\dK$. By definition of the Skorokhod topology (see e.g.~Jacod and Shiryaev \cite[Chapter VI]{jacod2003limit}), this means that there exists a sequence $\lambda_n \in \Lambda$ of time changes such that
	\begin{equation*}
		\lim_{n\to \infty}|\sigma_n - \sigma|=0,\quad \lim_{n \to \infty} \sup_{t\in \real_+} \left|\lambda_n(t)-t\right| = 0 \quad \text{and} \quad \lim_{n \to \infty} \sup_{t \leqslant N}\dbl\left(\mu_n\circ \lambda_n(t),\mu(t)\right) = 0,
	\end{equation*}
	for every $N\geqslant 1$, where we set $\sigma_n = \sigma(\mu_n)$ and $\sigma = \sigma(\mu)$. 
	
	Let $\kappa_n = \nu \circledast (s,\mu_n)$ and $\kappa = \nu \circledast (s,\mu)$. Our goal is to show that $\kappa_n$ converges to $\kappa$ with respect to the Skorokhod topology. To this end, let $\epsilon_n >0$ be a sequence converging to $0$ such that $\epsilon_n >\lambda_n(\sigma)-\sigma_n$ and let $\tilde{\lambda}_n \in \Lambda$ be a time change such that $\tilde{\lambda}_n(t) = t$ if $t\leq s$, $\tilde{\lambda}_n(s+t) = s + \lambda_n(t)$ if $t\leqslant \sigma$, $\tilde{\lambda}_n(s+\sigma+t) = s+\sigma_n +t$ if $t\geqslant \epsilon_n$ and $\tilde{\lambda}_n([s+\sigma,s+\sigma+\epsilon_n])= [s+\lambda_n(\sigma), s+\sigma_n+\epsilon_n]$. Notice that if $t\in [s+\sigma,s+\sigma+\epsilon_n]$, we have:
	\begin{equation*}
		\left|\tilde{\lambda}_n(t)-t\right| \leqslant \left|\lambda_n(\sigma) - \sigma-\epsilon_n\right|+\left|\sigma_n+\epsilon_n - \sigma\right|\leqslant |\lambda_n(\sigma) - \sigma| + |\sigma_n - \sigma|+2\epsilon_n.
	\end{equation*}
	It follows that
	\begin{align*}
		\sup_{t\in \real_+} \left|\tilde{\lambda}_n(t) - t\right| &\leqslant \sup_{s\leqslant t\leqslant s+\sigma }\left|\tilde{\lambda}_n(t)-t\right| + \sup_{s+\sigma \leqslant t\leqslant s+\sigma+\epsilon_n}\left|\tilde{\lambda}_n(t) -t\right| + \sup_{t\geqslant s+\sigma+\epsilon_n} \left|\tilde{\lambda}_n(t) -t\right|\\
		&\leqslant \sup_{t\leqslant \sigma}\left|\lambda_n(t) - t\right| +\left|\lambda_n(\sigma)-\sigma\right| + 2\left|\sigma_n - \sigma\right| + 2\epsilon_n, 	\end{align*}
	where the right-hand side goes to $0$ as $n\to \infty$. 
	
	In order to show that $\kappa_n$ converges to $\kappa$ in $\D$, it is enough to check that
	\begin{equation*}
		\lim_{n\to \infty} \sup_{t\leqslant N} \dbl\left(\kappa_n \circ \tilde{\lambda}_n(t), \kappa(t)\right) = 0, \quad \forall N \geqslant 1.
	\end{equation*}
	If $t \leqslant s$, we have $\kappa_n\circ\tilde{\lambda}_n(t) = \kappa(t) = \nu(t)$. If $t \leqslant \sigma$ and $\lambda_n(t) \leqslant \sigma_n$, we have:
	\begin{equation*}
		\kappa_n \circ \tilde{\lambda}_n(s+t) = \kappa_n(s+\lambda_n(t))= \left[\nu(s), \mu_n\circ\lambda_n(t)\right] \quad \text{and} \quad \kappa(s+t) = [\nu(s),\mu(t)].
	\end{equation*}
	It follows that
	\begin{equation*}
		\dbl\left(\kappa_n\circ\tilde{\lambda}_n(s+t),\kappa(s+t)\right) \leqslant \dbl(\mu_n\circ \lambda_n(t),\mu(t)).
	\end{equation*}
	On the other hand, if $t\leqslant \sigma$ and $\lambda_n(t)>\sigma_n$, we have:
	\begin{equation*}
		\kappa_n \circ \tilde{\lambda}_n(s+t) = \nu(s+\lambda_n(t)-\sigma_n) \quad \text{and} \quad \kappa(s+t) = [\nu(s),\mu(t)].
	\end{equation*}
	In that case, we get:
	\begin{align*}
		\dbl\left(\kappa_n\circ\tilde{\lambda}_n(s+t),\kappa(s+t)\right) &\leqslant \dbl\left(\nu(s+\lambda_n(t)-\sigma_n),\nu(s)\right)+\dbl\left(\nu(s),[\nu(s),\mu(t)]\right)\\
		&\leqslant \dbl\left(\nu(s+\lambda_n(t)-\sigma_n),\nu(s)\right)+\langle \mu(t),1\rangle.
	\end{align*}
	
	If $t \in [s+\sigma,s+\sigma +\epsilon_n]$, then $\kappa_n\circ \tilde{\lambda}_n(t)$ is of the form $\nu(u)$ with $u \in [s,s+\epsilon_n]$ or $[\nu(s), \mu_n(u)]$ with $u \in [\lambda_n(\sigma),\sigma_n]$. We deduce that
	\begin{multline*}
		\dbl\left(\kappa_n\circ \tilde{\lambda}_n(t), \kappa(t)\right)\\
		\begin{aligned}[t] &\leqslant \sup_{s\leqslant u \leqslant s+\epsilon_n} \dbl\left(\nu(u),\nu(t-\sigma)\right)+\sup_{\lambda_n(\sigma)\leqslant u \leqslant \sigma_n} \dbl\left([\nu(s),\mu_n(u)],\nu(t-\sigma)\right)\\
		&\leqslant 3\sup_{s\leqslant u \leqslant s+\epsilon_n} \dbl\left(\nu(u),\nu(s)\right) + \sup_{\lambda_n(\sigma)\leqslant u \leqslant \sigma_n} \langle \mu_n(u),1\rangle.
		\end{aligned}
	\end{multline*}
	Finally, if $t\geqslant \epsilon_n$, then we have $\kappa_n\circ \tilde{\lambda}_n(s+\sigma+t) = \kappa(s+\sigma+t) = \nu(t)$. We deduce that
	\begin{multline}\label{eq: convergence skorkhod}
		\sup_{t\leqslant N} \dbl\left(\kappa_n \circ \tilde{\lambda}_n(t), \kappa(t)\right) \\
		\begin{multlined}[t]\leqslant\sup_{t\leqslant N} \dbl\left(\mu_n \circ \lambda_n(t),\mu(t)\right) + \sup_{s\leqslant u \leqslant s+(\lambda_n(\sigma)-\sigma_n)_+} \dbl\left(\nu(u),\nu(s)\right) \\+3\sup_{s\leqslant u \leqslant s+\epsilon_n} \dbl\left(\nu(u),\nu(s)\right) + \sup_{u\leqslant \sigma, \, \lambda_n(u)>\sigma_n} \langle \mu(u),1\rangle + \sup_{\sigma \leqslant u\leqslant N} \langle \mu_n\circ \lambda_n(u),1\rangle.
		\end{multlined}
	\end{multline}
	Observe that
	\begin{equation*}
		\sup_{u\leqslant \sigma, \, \lambda_n(u)>\sigma_n} \langle \mu(u),1\rangle = \sup_{\lambda_n^{-1}(\sigma_n)<u\leqslant \sigma} \langle \mu(u),1\rangle \to 0,
	\end{equation*}
	since $\lambda_n^{-1}(\sigma_n) \to \sigma$ and since $\mu$ is left-continuous at $\sigma$ and $\mu(\sigma) = 0$. Furthermore, using that $\mu(u) = 0$ for $u\geqslant \sigma$, we have:
	\begin{equation*}
		\sup_{\sigma \leqslant u\leqslant N} \langle \mu_n\circ \lambda_n(u),1\rangle \leqslant \sup_{\sigma \leqslant u \leqslant N} \dbl(\mu_n\circ \lambda_n(u),\mu(u)) \to 0.
	\end{equation*}
	Since $\nu$ is right-continuous at $s$, we deduce that the right-hand side of \eqref{eq: convergence skorkhod} converges to $0$, which concludes the proof.
\end{proof}

\begin{lemma}\label{lemma: weak convergence of forest with no big nodes delta}
	For every $\delta \in \supp{\pi}$, the measure $\rfordinf(\cdot|\Delta< \delta+\epsilon)$ converges weakly to $\for{\delta}(\cdot|\Delta\leqslant \delta)$ as $\epsilon \to 0$ on the space $(\DD,\dK)$.
\end{lemma}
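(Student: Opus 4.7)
I would structure the proof in three steps. By the definition of $\rfordinf$, for any $F\in\B(\DD)$ we rewrite
\begin{equation*}
\rfordinf(F(\rho)\ind{\Delta<\delta+\epsilon})=\frac{1}{\pibar(\delta-\epsilon)}\int_{(\delta-\epsilon,\delta+\epsilon)}\for{r}(F(\rho)\ind{\Delta<\delta+\epsilon})\,\pi(\dd r),
\end{equation*}
the integration being restricted to $(\delta-\epsilon,\delta+\epsilon)$ since $\for{r}(\Delta<\delta+\epsilon)=\e^{-r\n[\Delta\geqslant\delta+\epsilon]}$ for $r<\delta+\epsilon$ and vanishes otherwise, as follows from the Poissonian decomposition recalled in the proof of Lemma \ref{lemma: equivalent conditionings}. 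Normalizing exhibits $\rfordinf(\cdot|\Delta<\delta+\epsilon)$ as a mixture over $r\in(\delta-\epsilon,\delta+\epsilon)$ of the measures $\for{r}(\cdot|\Delta<\delta+\epsilon)$, the mixing probability being the normalized version of $\ind{r\in(\delta-\epsilon,\delta+\epsilon)}\e^{-r\n[\Delta\geqslant\delta+\epsilon]}\,\pi(\dd r)$. Since $\delta\in\supp\pi$, this mixing measure concentrates at $\delta$ as $\epsilon\to 0$, so it suffices to prove that $\for{r}(\cdot|\Delta<\delta+\epsilon)$ converges weakly to $\for{\delta}(\cdot|\Delta\leqslant\delta)$ on $(\DD,\dK)$ uniformly for $r\in(\delta-\epsilon,\delta+\epsilon)$ as $\epsilon\to 0$.

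For this, I would use the Poissonian decomposition of Lemma \ref{lemma: poisson decomposition exploration}: under $\for{r}$, the process $\rho$ is the concatenation, ordered by $u$, of the atoms of a Poisson point measure with intensity $\ind{u\in[0,r]}\,\dd u\otimes\n[\dd\tilde\rho]$. Poisson thinning then identifies $\for{r}(\cdot|\Delta<\delta+\epsilon)$ with the law of the same concatenation procedure applied to a Poisson point measure of intensity $\ind{u\in[0,r]}\,\dd u\otimes\n[\dd\tilde\rho,\Delta<\delta+\epsilon]$, and similarly $\for{\delta}(\cdot|\Delta\leqslant\delta)$ corresponds to $\ind{u\in[0,\delta]}\,\dd u\otimes\n[\dd\tilde\rho,\Delta\leqslant\delta]$. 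I would couple both constructions via a single master Poisson point measure $\Pi$ on $\real_+\times\DD$ of intensity $\dd u\otimes\n[\dd\tilde\rho]$, and let $\tilde\rho^{(r,\epsilon)}$ and $\tilde\rho^{(\delta,0)}$ denote the concatenations obtained from the atoms of $\Pi$ falling respectively in $A(r,\epsilon)\coloneqq[0,r]\times\{\Delta<\delta+\epsilon\}$ and $A(\delta,0)\coloneqq[0,\delta]\times\{\Delta\leqslant\delta\}$.

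A direct computation shows that the expected total lifetime of atoms of $\Pi$ in the symmetric difference $A(r,\epsilon)\triangle A(\delta,0)$ is bounded above by $C(\delta)\bigl(|r-\delta|+\n[\sigma\ind{\delta<\Delta<\delta+\epsilon}]\bigr)$, and tends to $0$ as $(r,\epsilon)\to(\delta,0)$ thanks to \eqref{eq: w plus general} together with continuity of measure. Combined with Lemma \ref{lemma: continuity} applied to each grafting (whose grafted sub-process has vanishing lifetime), this yields $\dK(\tilde\rho^{(r,\epsilon)},\tilde\rho^{(\delta,0)})\to 0$ in probability, hence the desired weak convergence. The main difficulty is that $\n$ is only $\sigma$-finite, so the symmetric difference set almost surely contains infinitely many atoms of $\Pi$; I would handle this by splitting the excursions according to whether their lifetime exceeds a threshold $\eta>0$ -- the excursions above the threshold forming a finite Poisson collection (since $\n[\sigma>\eta]<\infty$) treatable directly via Lemma \ref{lemma: continuity}, while the contribution of those below is absorbed into the total-lifetime estimate above -- before letting $\eta\to 0$.
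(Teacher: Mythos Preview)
Your approach is sound and reaches the goal, but it differs from the paper's in its coupling and in how the conditioning is handled. The paper does \emph{not} condition first and then couple; instead it couples the \emph{unconditioned} laws $\for{r}$ and $\for{\delta}$ via the explicit representation \cref{eq: representation forest r}, namely $\tilde\rho^{(r)}_t=(r-L_t^0)_+\delta_0+\rho_t\ind{L_t^0\leqslant r}$, obtained from a single exploration process $\rho$ started at $0$. The conditioning is then absorbed into an indicator difference, bounded separately by \cref{eq: term 1}, while the metric part is controlled by the concrete estimate $\dK(\tilde\rho^{(r)},\tilde\rho^{(\delta)})\leqslant S_{\delta+\epsilon}-S_\delta+\epsilon+\sup_{\delta<L_t^0<\delta+\epsilon}(X_t-I_t)$, where $S$ is the inverse local time. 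In particular, Lemma~\ref{lemma: continuity} is never invoked in the paper's argument.

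Your Poisson-thinning route is legitimate, and the master-PPM coupling is really the same object as the paper's coupling, described at the level of excursions rather than the running L\'evy process. One point deserves more care, though: when you write that Lemma~\ref{lemma: continuity} ``applied to each grafting (whose grafted sub-process has vanishing lifetime)'' yields $\dK\to 0$, note that Lemma~\ref{lemma: continuity} requires the grafted piece to tend to zero in $\dK$, which controls both the lifetime \emph{and} $\dS$ (hence the total mass $\sup_t\langle\mu_t,1\rangle$). Bounding only the expected total lifetime of atoms in the symmetric difference is therefore not quite enough; you also need that the supremum of $\langle\rho^i_t,1\rangle$ over the extra excursions is small with high probability. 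This is exactly the term $\sup_{\delta<L_t^0<\delta+\epsilon}(X_t-I_t)$ in the paper's bound, and in your framework it follows from the same PPM argument (for any $\eta'>0$, the number of excursions with $u$ in the short interval and $\sup_t\langle\rho_t,1\rangle>\eta'$ is Poisson with vanishing mean). With that addition your threshold scheme goes through.
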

\begin{remark}
	Notice that if $\delta = \inf \supp{\pi}$ is positive, then the measure $\pi$ is necessarily finite and
	we have:
	\begin{equation*}
		\for{\delta}(\Delta\leqslant \delta) \geqslant \for{\delta}(\Delta=0)= \e^{-\delta \n[\Delta>0]} >0.
	\end{equation*}
	This implies that the conditional measure $\for{\delta}(\dd \rho|\Delta\leqslant \delta)$ is well defined.
\end{remark}
\begin{proof}
	It is enough to show that for every Lipschitz-continuous and bounded function $F\colon \DD\to \real$, the following convergence holds:
	\begin{equation*}
		\lim_{\epsilon \to 0} \rfordinf\left(F(\rho)\middle|\Delta< \delta+\epsilon\right) = \for{\delta}\left(F(\rho)\middle|\Delta\leqslant \delta\right).
	\end{equation*}
	Fix such a function $F$. From the definition of $\rfordinf$, we have:
	\begin{equation*}
		\rfordinf\left(F(\rho)\ind{ \Delta< \delta+\epsilon}\right) = \frac{1}{\pibar(\dinf)}\int_{(\dinf,\delta+\epsilon)}\pi(\dd r) \for{r}\left(F(\rho) \ind{\Delta< \delta+\epsilon}\right).
	\end{equation*}
	In conjunction with \eqref{eq: proba forest has degree delta + epsilon}, this gives:
	\begin{multline*}
		\rfordinf\left(F(\rho)\middle| \Delta< \delta+\epsilon\right) \\
		= \frac{1}{\int_{(\dinf,\delta+\epsilon)} \e^{-r \n[\Delta\geqslant\delta+\epsilon]}\,\pi(\dd r)}\int_{(\dinf,\delta+\epsilon)}\pi(\dd r) \for{r}\left(F(\rho)\ind{\Delta< \delta+\epsilon}\right).
	\end{multline*}
	
	Now it is not difficult to show that, as $\epsilon \to 0$, we have:
	\begin{equation*}
		\int_{(\dinf,\delta+\epsilon)} \e^{-r \n[\Delta\geqslant \delta+\epsilon]}\,\pi(\dd r) \sim \pi(\dinf,\delta+\epsilon) \e^{-\delta\n[\Delta>\delta]}.
	\end{equation*}
	Thus, as $\epsilon \to 0$, we have:
	\begin{equation*}
		\rfor\left(F(\rho)\middle| \Delta< \delta+\epsilon\right) \sim \frac{\e^{\delta\n[\Delta>\delta]}}{\pi(\dinf,\delta+\epsilon)}\int_{(\dinf,\delta+\epsilon)}\pi(\dd r) \for{r}\left(F(\rho)\ind{\Delta< \delta+\epsilon}\right).
	\end{equation*}
	Thanks to \eqref{eq: distribution degree forest}, we have $\for{\delta}(\Delta\leqslant \delta) = \e^{-\delta\n[\Delta>\delta]}$. Thus, in order to prove the result, it is enough to show that
	\begin{equation}\label{eq: lebesgue point}
		\lim_{\epsilon\to 0} \frac{1}{\pi(\dinf,\delta+\epsilon)}\int_{(\dinf,\delta+\epsilon)}\pi(\dd r) \for{r}\left(F(\rho)\ind{\Delta< \delta+\epsilon}\right) = \for{\delta}\left(F(\rho)\ind{\Delta\leqslant \delta}\right).
	\end{equation}
	
	Write:
	\begin{multline*}
		\frac{1}{\pi(\dinf,\delta+\epsilon)}\int_{(\dinf,\delta+\epsilon)}\pi(\dd r) \for{r}\left(F(\rho) \ind{\Delta< \delta+\epsilon}\right) - \for{\delta}\left(F(\rho)\ind{\Delta\leqslant \delta}\right) \\
		= \begin{multlined}[t]\frac{1}{\pi(\dinf,\delta+\epsilon)}\int_{(\dinf,\delta+\epsilon)}\pi(\dd r) \left[\for{r}\left(F(\rho) \ind{\Delta< \delta+\epsilon}\right) - \for{\delta}\left(F(\rho) \ind{\Delta< \delta+\epsilon}\right)\right]\\
			+ \for{\delta}\left(F(\rho)\ind{\delta<\Delta < \delta+\epsilon}\right).
		\end{multlined}
	\end{multline*}
	By dominated convergence, it is clear that the second term on the right-hand side converges to $0$. 
	
	For the first term, one can couple the measures $\for{r}$ and $\for{\delta}$ in the following way. Let $\rho$ be the exploration process with branching mechanism $\psi$ starting from $0$ and let $(L_t^0, \, t \geqslant 0)$ be its local time process at $0$. Then the process $\tilde{\rho}^{(r)}$ defined in \eqref{eq: representation forest r} has distribution $\for{r}$ while $\tilde{\rho}^{(\delta)}$ has distribution $\for{\delta}$. It follows that
	\begin{multline}\label{eq: term 0}
		\left|\for{r}\left(F(\rho) \ind{\Delta< \delta+\epsilon}\right) - \for{\delta}\left(F(\rho) \ind{\Delta< \delta+\epsilon}\right)\right| \\
		\begin{aligned}[b]
			&=\left|\ex{F(\tilde{\rho}^{(r)}) \ind{\sup_{L_t^0\leqslant r} \Delta(\rho_t)<\delta+\epsilon} - F(\tilde{\rho}^{(\delta)})\ind{\sup_{L_t^0\leqslant \delta} \Delta(\rho_t)<\delta+\epsilon}}\right|\\
			&\leqslant C \pr{\sup_{\delta< L_t^0\leqslant r} \Delta(\rho_t)\geqslant \delta+\epsilon} + C \ex{1\wedge \dK(\tilde{\rho}^{(r)}, \tilde{\rho}^{(\delta)})}.
		\end{aligned}
	\end{multline}
	Using the Poissonian decomposition from Lemma \ref{lemma: poisson decomposition exploration}, we have for $r\in (\delta,\delta+\epsilon)$:
	\begin{align}\label{eq: term 1}
		\pr{\sup_{\delta< L_t^0\leqslant r} \Delta(\rho_t)\geqslant \delta+\epsilon} &\leqslant \pr{\sup_{\delta< L_t^0< \delta+\epsilon} \Delta(\rho_t)\geqslant \delta} \nonumber\\
		&= \pr{\sup_{\delta<-I_{\alpha_i} < \delta+\epsilon} \Delta(\rho^{i})\geqslant \delta} \nonumber\\
		&= 1-\e^{-\epsilon \n[\Delta\geqslant \delta]}.
	\end{align}
	Next, by definition of $\dK$ we have that $\dK(\tilde{\rho}^{(r)}, \tilde{\rho}^{(\delta)}) = |\sigma(\tilde{\rho}^{(r)}) - \sigma(\tilde{\rho}^{(\delta)})|+ \dS(\tilde{\rho}^{(r)}, \tilde{\rho}^{(\delta)})$. We introduce the right-continuous inverse $S$ of the local time process at $0$ given by:
	\begin{equation*}
		S_r = \inf\{t >0\colon\, L_t^0>r\}, \quad \forall r >0.
	\end{equation*}
	It is well known that the process $S$ is a subordinator. Then the process $\tilde{\rho}^{(r)}$ has lifetime $S_r$. Furthermore, we have:
	\begin{equation*}
		\dS(\tilde{\rho}^{(r)}, \tilde{\rho}^{(\delta)}) \leqslant \sup_{t\geqslant 0} \dbl(\tilde{\rho}_t^{(r)}, \tilde{\rho}_t^{(\delta)}).
	\end{equation*}
	For $L_t^0 \leqslant \delta$, the processes $\tilde{\rho}^{(r)}$ and $\tilde{\rho}^{(\delta)}$ differ only by their masses at $0$ so that $\dbl(\tilde{\rho}_t^{(r)}, \tilde{\rho}_t^{(\delta)}) \leqslant r-\delta \leqslant \epsilon$. On the other hand, for $L_t^0 > \delta$, we have $\tilde{\rho}^{(\delta)} = 0$ so that 
	\begin{equation*}\dbl(\tilde{\rho}_t^{(r)}, \tilde{\rho}_t^{(\delta)}) = \langle \tilde{\rho}_t^{(r)} , 1\rangle = (r-L_t^0) + \langle \rho_t ,1\rangle \leqslant \epsilon + \langle \rho_t,1\rangle,
	\end{equation*} 
	where we recall from Section \ref{subsect: exploration process} that $\langle \rho_t,1\rangle = X_t - I_t$, where $X$ is the underlying Lévy process and $I$ is its running infimum. It follows that
	\begin{equation}\label{eq: term 2}
		\dK(\tilde{\rho}^{(r)}, \tilde{\rho}^{(\delta)}) \leqslant S_{\delta+\epsilon} - S_\delta + \epsilon + \sup_{\delta<L_t^0 <\delta+\epsilon} (X_t - I_t),
	\end{equation}
	where the right-hand side converges to $0$ a.s.~as $\epsilon \to 0$.
	
	Combining \cref{eq: term 0,eq: term 1,eq: term 2}, we deduce that
	\begin{equation*}
		\left|\for{r}\left(F(\rho)\ind{\Delta< \delta+\epsilon}\right) - \for{\delta}\left(F(\rho)\ind{\Delta< \delta+\epsilon}\right)\right|\leqslant C_1(\epsilon),
	\end{equation*}
	for every $r \in (\delta,\delta+\epsilon)$, where $C_1(\epsilon)$ does not depend on $r$ and goes to $0$ as $\epsilon \to 0$. Similarly, for every $r\in (\delta-\epsilon,\delta)$, we have:
	\begin{equation*}
		\left|\for{r}\left(F(\rho)\ind{\Delta< \delta+\epsilon}\right) - \for{\delta}\left(F(\rho)\ind{\Delta< \delta+\epsilon}\right)\right|\leqslant C_2(\epsilon).
	\end{equation*}
	Finally, we deduce that
	\begin{equation*}
		\frac{1}{\pi(\dinf,\delta+\epsilon)}\int_{(\dinf,\delta+\epsilon)}\pi(\dd r) \left|\for{r}\left(F(\rho)\ind{\Delta< \delta+\epsilon}\right) - \for{\delta}\left(F(\rho)\ind{\Delta< \delta+\epsilon}\right)\right|
		\leqslant C_1(\epsilon) + C_2(\epsilon).
	\end{equation*}
	Letting $\epsilon \to 0$ proves \eqref{eq: lebesgue point} and the proof is complete.
\end{proof}

We are now in a position to prove the main result of this section which gives a description of the Lévy tree conditioned on having maximal degree $\delta$. For every atom $\delta>0$ of $\pi$, we set:
\begin{equation}\label{eq: definition g}
	\g(\delta) = \pi(\delta)\for{\delta}(\Delta\leqslant \delta)= \pi(\delta)\e^{-\delta \n[\Delta>\delta]},
\end{equation}
where the last equality is due to \eqref{eq: distribution degree forest}. Under $\n$, denote by $M_\delta$ the random variable defined by:
\begin{equation*}
	M_\delta = \frac{\e^{\g(\delta)\sigma }-1}{\g(\delta)}\cdot
\end{equation*}
This should be interpreted as $M_\delta=\sigma$ if $\delta$ is not an atom of $\pi$.

For every atom $\delta>0$ of the Lévy measure $\pi$, we define a probability measure $\atom{\delta}$ on the space $\D$ as follows. 
Take $\tilde{\rho}$ with distribution $\n[M_\delta \ind{\Delta<\delta}]^{-1} \n[M_\delta \ind{\Delta<\delta}\, \dd \rho]$, and, conditionally on $\tilde{\rho}$, let $\left((s_i, \rho_i), \, i \in I\right)$ be the atoms of a Poisson point measure with intensity $\g(\delta) \allowbreak\mathbf{1}_{[0,\sigma]}(s)\,\dd s \allowbreak\for{\delta}(\dd \hat{\rho}|\Delta\leqslant \delta)$ conditioned on containing at least one point. Then $\atom{\delta}$ is defined as the distribution of the process $\tilde{\rho} \circledast_{i\in I} (s_i, \rho_i)$.
\begin{thm}\label{thm: disintegration wrt degree}
	There exists a regular conditional probability $\n[\cdot|\Delta=\delta]$ for $\delta >0$ such that $\pi[\delta,\infty) >0$, which is given by, for every $F\in \B(\D)$:
	\begin{multline}\label{eq: disintegration wrt degree unified}
		\n[F(\rho)|\Delta=\delta] = \frac{1}{\n[M_\delta \ind{\Delta< \delta}]}\sum_{k=0}^\infty \frac{\g(\delta)^{k}}{(k+1)!}\\
		\times \n\left[\int \prod_{i=1}^{k+1} \mathbf{1}_{[0,\sigma]}(s_i)\, \dd s_i \for{\delta}(\dd {\rho}_i |\Delta\leqslant \delta) F(\rho\circledast_{i=1}^{k+1}(s_i,\rho_i))\ind{\Delta<\delta}\right].
	\end{multline}
	In particular, if $\delta>0$ is not an atom of the Lévy measure $\pi$, we have:
	\begin{equation}
		\n[F(\rho)|\Delta=\delta] = \int_{\real_+\times\D}  \tilted{\delta}(\dd s, \dd \tilde{\rho}) \int_{\D} \for{\delta}(\dd \hat{\rho}|\Delta\leqslant \delta) F(\tilde{\rho}\circledast (s,\hat{\rho})).
	\end{equation}
	If $\delta>0$ is an atom of $\pi$, we have:
	\begin{equation}
		\n[F(\rho)|\Delta=\delta] = \int_{\D} \atom{\delta}(\dd \tilde{\rho})\,  F(\tilde{\rho}).
	\end{equation}
\end{thm}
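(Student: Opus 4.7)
The plan is to split the argument into the atomic and non-atomic cases for $\delta$, since in the atomic case $\n[\Delta=\delta]>0$ and the regular conditional probability is determined by a direct computation, while in the non-atomic case $\n[\Delta=\delta]=0$ and we must pass to a limit.

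\textbf{Non-atom case.} Assume $\delta \in \supp{\pi}$ with $\pi(\delta)=0$. By Lemma~\ref{lemma: equivalent conditionings}, conditioning on $\{\delta-\epsilon<\Delta<\delta+\epsilon\}$ or on $E_{\delta,\epsilon}$ is asymptotically equivalent. Apply Lemma~\ref{lemma: decomposition degree delta} to get
\begin{equation*}
\n\left[F\bigl(\rho^{\delta-\epsilon,-}\circledast(T_{\delta-\epsilon},\rho^{\delta-\epsilon,+})\bigr)\middle|E_{\delta,\epsilon}\right] = \int \tilted{(\delta--\epsilon)+}(\dd s,\dd\tilde\rho)\,\rfordinf(\dd\hat\rho|\Delta<\delta+\epsilon)\,F(\tilde\rho\circledast(s,\hat\rho))
\end{equation*}
for any continuous bounded $F\colon\D\to\real$. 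Let $\epsilon\to 0$: by Lemma~\ref{lemma: weak convergence of bottom part delta}, $\tilted{(\delta--\epsilon)+}\to \tilted{\delta}$ in total variation; by Lemma~\ref{lemma: weak convergence of forest with no big nodes delta}, $\rfordinf(\cdot|\Delta<\delta+\epsilon)\to \for{\delta}(\cdot|\Delta\leqslant\delta)$ weakly on $(\DD,\dK)$; and by Lemma~\ref{lemma: continuity}, the grafting map is continuous. Standard weak convergence arguments then yield the formula
\begin{equation*}
\lim_{\epsilon\to 0}\n[F(\rho)|E_{\delta,\epsilon}] = \int\tilted{\delta}(\dd s,\dd\tilde\rho)\for{\delta}(\dd\hat\rho|\Delta\leqslant\delta)F(\tilde\rho\circledast(s,\hat\rho)),
\end{equation*}
which identifies $\n[\cdot|\Delta=\delta]$ as the measure $\tilde\rho\circledast(s,\hat\rho)$ with the stated distribution, matching the reduction of~\eqref{eq: disintegration wrt degree unified} when $\g(\delta)=0$ (since then only $k=0$ contributes and $M_\delta=\sigma$).

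\textbf{Atom case.} Assume $\delta$ is an atom of $\pi$. Apply Theorem~\ref{thm: degree decomposition}(ii) under $\operatorname{\mathbf{N}}^{\psi_{\delta-}}$: the Poisson point measure of grafted subtrees has intensity $\dd s\int_{[\delta,\infty)}\pi(\dd r)\for{r}(\dd\tilde\rho)$, which splits as $\pi(\delta)\for{\delta}$ plus $\int_{(\delta,\infty)}\pi(\dd r)\for{r}$. Split further the $\{r=\delta\}$-atoms according to whether $\Delta(\rho^i)\leqslant\delta$ or not. On $\{\Delta=\delta\}$ we need: no atoms with $r>\delta$, no $\{r=\delta\}$-atoms with $\Delta(\rho^i)>\delta$, and at least one $\{r=\delta\}$-atom with $\Delta(\rho^i)\leqslant\delta$ (a ``good'' atom). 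Conditionally on $\rho$, the good atoms form a Poisson point measure with intensity $\g(\delta)\mathbf{1}_{[0,\sigma]}(s)\,\dd s\for{\delta}(\dd\hat\rho|\Delta\leqslant\delta)$ independent of the bad atoms, whose absence contributes the factor $\e^{-\sigma(\pi[\delta,\infty)-\g(\delta))}$. Summing over the number $k+1\geqslant 1$ of good atoms and using Corollary~\ref{cor: identity in distribution before big jump} to convert $\operatorname{\mathbf{N}}^{\psi_{\delta-}}[\,\cdot\,\e^{-\pi[\delta,\infty)\sigma}]$ into $\n[\,\cdot\,\ind{\Delta<\delta}]$ give
\begin{equation*}
\n[F(\rho)\ind{\Delta=\delta}] = \sum_{k=0}^\infty \frac{\g(\delta)^{k+1}}{(k+1)!}\n\left[\ind{\Delta<\delta}\int\prod_{i=1}^{k+1}\mathbf{1}_{[0,\sigma]}(s_i)\,\dd s_i\for{\delta}(\dd\rho_i|\Delta\leqslant\delta)F(\rho\circledast_{i=1}^{k+1}(s_i,\rho_i))\right].
\end{equation*}
Taking $F\equiv 1$ recovers $\n[\Delta=\delta]=\g(\delta)\n[M_\delta\ind{\Delta<\delta}]$, which simultaneously verifies that $\n[M_\delta\ind{\Delta<\delta}]<\infty$ (since $\n[\Delta=\delta]\leqslant\n[\Delta\geqslant\delta]<\infty$) and provides the normalization. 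Dividing gives the formula~\eqref{eq: disintegration wrt degree unified}.

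\textbf{Main obstacle.} The delicate step is the non-atom case: showing that the conditional laws $\n[\cdot|E_{\delta,\epsilon}]$ converge weakly on the Skorokhod space $\D$ and that the limit is a genuine regular conditional probability. This requires combining three convergences (in different topologies: total variation for $\tilted{(\delta--\epsilon)+}$, weak on $(\DD,\dK)$ for the forest, plus the joint continuity of grafting from Lemma~\ref{lemma: continuity}), and checking measurability of $\delta\mapsto\n[\cdot|\Delta=\delta]$ together with the disintegration identity (via a monotone class argument on continuous bounded test functions and Lemma~\ref{lemma: equivalent conditionings}). Once the non-atom formula is established, compatibility with the atom case formula at $\g(\delta)=0$ is automatic.
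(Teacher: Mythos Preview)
Your proposal is correct and follows essentially the same approach as the paper. The non-atom case is identical (Lemmas~\ref{lemma: equivalent conditionings}, \ref{lemma: decomposition degree delta}, \ref{lemma: weak convergence of bottom part delta}, \ref{lemma: weak convergence of forest with no big nodes delta}, \ref{lemma: continuity} combined exactly as you describe); the paper wraps up the identification of the limit as a regular conditional probability by citing a measure differentiation theorem (\cite[Theorem~1.30]{evans2015measure}) rather than the monotone class argument you sketch. In the atom case the paper is terser---it simply says the formula follows from Theorem~\ref{thm: degree decomposition}---whereas you spell out the Poisson splitting into good and bad atoms, which is exactly the computation hidden behind that sentence.
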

\begin{remark}\label{rk: conditioning by Edelta}
	Let $\unique{\delta}$ be the event that the maximal degree is $\delta$ and there is a unique first-generation node with mass $\delta$. We have:
	\begin{equation}\label{eq: conditioning by Edelta}
		\n\left[F(\rho)\middle|\unique{\delta}\right] = \int_{\real_+\times\D}  \tilted{\delta}(\dd s, \dd \tilde{\rho}) \int_{\D} \for{\delta}(\dd \hat{\rho}|\Delta\leqslant \delta) F(\tilde{\rho}\circledast (s,\hat{\rho})).
	\end{equation}
	When $\delta$ is an atom of $\pi$, this can be proved by taking $k=0$ in \eqref{eq: disintegration wrt degree unified}. Indeed, we have:
	\begin{equation*}
		\n\left[F(\rho)\mathbf{1}_{\unique{\delta}}\middle|\Delta=\delta\right] = \frac{1}{\n[M_\delta \ind{\Delta<\delta}]} \n\left[\int \mathbf{1}_{[0,\sigma]}(s)\,\dd s \for{\delta}(\dd \hat{\rho}|\Delta\leqslant \delta) F(\rho\circledast (s,\hat{\rho}))\right],
	\end{equation*}
	and the result follows by conditionning. When $\delta$ is not an atom of $\pi$, this follows from Theorem \ref{thm: disintegration wrt degree} together with the fact that, conditionally on $\Delta=\delta$, there is a unique node with mass $\delta$ (see Corollary \ref{cor: unique node with mass delta} below). In other words, conditioning the exploration process by $E_\delta$ when $\delta$ is an atom of $\pi$ yields the same distribution as conditioning by $\Delta=\delta$ when $\delta$ is not an atom of $\pi$.
\end{remark}
\begin{proof}
	Assume that $\delta \in \supp{\pi}$ is an atom of $\pi$. Then the event $\{\Delta=\delta\}$ has positive $\n$-measure (see Corollary \ref{lemma: pi diffuse}) and it follows from Theorem \ref{thm: degree decomposition} that $\rho$ conditioned on $\Delta=\delta$ has distribution $\atom{\delta}$. 
	
	Assume then that $\delta \in \supp{\pi}$ is not an atom of $\pi$	and let $F \colon \D \to \real$ be continuous and bounded. Applying Lemma \ref{lemma: equivalent conditionings} and using the fact $E_{\delta,\epsilon}  \subset \{\delta-\epsilon <\Delta<\delta+\epsilon\}$, we have as $\epsilon \to 0$:
	\begin{equation*}
		\n[F(\rho)|\delta-\epsilon<\Delta<\delta+\epsilon]\sim \n[F(\rho)|E_{\delta,\epsilon}].
	\end{equation*}
	But, thanks to Lemma \ref{lemma: decomposition degree delta}, we have:
	\begin{align}
		\n[F(\rho)|E_{\delta,\epsilon}] &= \n[F({\rho}^{\delta-\epsilon,-} \circledast (T_{\delta-\epsilon}, {\rho}^{\delta-\epsilon,+})|E_{\delta,\epsilon}]\nonumber\\
		&= \int_{\real_+\times \D} \tilted{(\delta--\epsilon)+}(\dd s,\dd \tilde{\rho}) \int_\D \rfordinf(\dd \hat{\rho}|\Delta<\delta+\epsilon) F(\tilde{\rho}\circledast (s,\hat{\rho})). \label{eq: conditioning by delta proof}
	\end{align}
	
	Recall from Lemma \ref{lemma: continuity} that for every fixed $(\nu,s) \in \D\times (0,\infty)$, the mapping $\mu \mapsto \nu\circledast (s,\mu)$ is continuous from $\DD$ to $\D$. Together with Lemma \ref{lemma: weak convergence of bottom part delta} and Lemma \ref{lemma: weak convergence of forest with no big nodes delta}, this gives:
	\begin{equation*}
		\lim_{\epsilon\to 0} \n\left[F(\rho)|\delta-\epsilon<\Delta< \delta+\epsilon\right] \\
		= \int_{\real_+\times \D} \tilted{\delta}(\dd s,\dd \tilde{\rho}) \int_{\D} \for{\delta}(\dd \hat{\rho}|\Delta\leqslant \delta) F(\tilde{\rho}\circledast(s,\hat{\rho})). 
	\end{equation*}
	A standard result on measure differentiation, see e.g. \cite[Theorem 1.30]{evans2015measure}, yields the desired result.
\end{proof}

\begin{corollary}\label{cor: unique node with mass delta}
	Assume that $\delta>0$ is not an atom of the Lévy measure $\pi$. Then, under $\n$ and conditionally on $\Delta=\delta$, there is a unique node with mass $\delta$.
\end{corollary}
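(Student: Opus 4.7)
The plan is to invoke the explicit representation of $\n[\cdot|\Delta=\delta]$ from Theorem~\ref{thm: disintegration wrt degree} and then count, in each of the two independent ingredients of the construction, the number of nodes carrying mass exactly $\delta$.

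First, by Theorem~\ref{thm: disintegration wrt degree}, under $\n[\cdot|\Delta=\delta]$ the exploration process $\rho$ has the same distribution as $\tilde\rho\circledast(s,\hat\rho)$, where $(s,\tilde\rho)$ is sampled from $\tilted{\delta}$ and, independently, $\hat\rho$ is sampled from $\for{\delta}(\cdot|\Delta\leqslant\delta)$. From the definition~\eqref{eq: definition Pdelta} of $\tilted{\delta}$, we have $\Delta(\tilde\rho)<\delta$ almost surely, so $\tilde\rho$ contributes no node of mass~$\delta$.

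Next, I would analyze $\hat\rho$ under $\for{\delta}(\cdot|\Delta\leqslant\delta)$. By Lemma~\ref{lemma: poisson decomposition exploration}, under $\for{\delta}$ the process $\hat\rho$ starts with the initial atom $\delta\cdot\delta_0$ at time $0$ and, above the minimum of its total mass, decomposes as a Poisson point measure of $\n$-excursions with intensity $\mathbf{1}_{[0,\delta]}(u)\,\dd u\,\n[\dd\rho]$. Hence the nodes of $\hat\rho$ of mass $\delta$ are the initial root together with the mass-$\delta$ nodes of these excursions. Conditioning on $\Delta(\hat\rho)\leqslant\delta$ forces each excursion to satisfy $\Delta\leqslant\delta$, and under this conditioning the excursions are i.i.d.\ of law $\n(\cdot|\Delta\leqslant\delta)$. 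Since $\delta$ is not an atom of $\pi$, Corollary~\ref{lemma: pi diffuse} yields $\n[\Delta=\delta]=0$, so $\n(\cdot|\Delta\leqslant\delta)=\n(\cdot|\Delta<\delta)$ and almost surely no excursion contains a node of mass exactly $\delta$. Therefore $\hat\rho$ possesses exactly one node of mass $\delta$, namely its root.

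Finally, the grafting procedure~\eqref{eq: definition grafting} inserts the initial atom of $\hat\rho$ on top of $\tilde\rho(s)$, producing a single node of mass $\delta$ in $\tilde\rho\circledast(s,\hat\rho)$, which gives the uniqueness statement. There is no genuine obstacle in this argument; the only non-trivial input is the vanishing $\n[\Delta=\delta]=0$ for $\delta$ outside the atoms of $\pi$, for which Corollary~\ref{lemma: pi diffuse} is precisely the right tool.
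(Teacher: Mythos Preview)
Your argument is correct and follows the same route as the paper: invoke Theorem~\ref{thm: disintegration wrt degree}, observe that the $\tilted{\delta}$-part has $\Delta<\delta$ by construction, and then use the Poisson decomposition of $\for{\delta}$ together with $\n[\Delta=\delta]=0$ (Corollary~\ref{lemma: pi diffuse}) to conclude that the forest $\hat\rho$ has exactly one mass-$\delta$ node, namely its root.

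One phrasing to tighten: saying that after conditioning on $\Delta(\hat\rho)\leqslant\delta$ ``the excursions are i.i.d.\ of law $\n(\cdot|\Delta\leqslant\delta)$'' is not quite right, since there are infinitely many excursions and $\n$ is $\sigma$-finite rather than a probability. The cleaner way (which the paper uses) is to note that already \emph{before} conditioning, $\n[\Delta=\delta]=0$ implies that the Poisson point measure a.s.\ has no atom with $\Delta=\delta$; since only finitely many excursions satisfy $\Delta>\delta/2$, the supremum is attained, and hence $\{\sup_i\Delta(\rho^i)\leqslant\delta\}=\{\sup_i\Delta(\rho^i)<\delta\}$ a.s. This gives the same conclusion without the loose i.i.d.\ language.
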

\begin{proof}
	Notice that $\tilted{\delta}$-a.s.~$\Delta(\rho) <\delta$ by definition. Thus, thanks to Theorem \ref{thm: disintegration wrt degree}, it is enough to show that $\for{\delta}(\cdot|\Delta\leqslant \delta)$-a.s.~there is a unique node with mass $\delta$. We shall use the Poissonian decomposition from Lemma \ref{lemma: poisson decomposition exploration}. Let $\sum_{i\in I} \delta_{(\ell_i,\rho^{i})}$ be a point measure with distribution $\for{\delta}$, that is a Poisson point measure with intensity $\mathbf{1}_{[0,\delta]}(\ell)\, \dd \ell \n[\dd \rho]$. Then it suffices to check that, conditionally on $\sup_{i\in I} \Delta(\rho^{i})\leqslant \delta$, it holds that $\sup_{i\in I} \Delta(\rho^{i})<\delta$. 
	
	Since $\n[\Delta>\delta/2]<\infty$, only finitely many $\rho^{i}$ are such that $\Delta(\rho^{i})>\delta/2$. We deduce that
	\begin{equation*}
		\pr{\sup_{i\in I} \Delta(\rho^{i})<\delta}=\pr{\sup_{i\in I} \Delta(\rho^{i})\leqslant\delta, \, \Delta(\rho^{i})\neq\delta \text{ for all } i \in I}.
	\end{equation*}
	Thanks to Corollary \ref{lemma: pi diffuse}, we have $\n[\Delta=\delta] = 0$, which implies that $\Delta(\rho^{i})\neq \delta$ for all $i \in I$ almost surely. Therefore we get:
	\begin{equation*}
		\pr{\sup_{i\in I} \Delta(\rho^{i})<\delta}=\pr{\sup_{i\in I} \Delta(\rho^{i})\leqslant\delta}.
	\end{equation*}
	This proves the result.
\end{proof}

As an application of Theorem \ref{thm: disintegration wrt degree}, we can compute the joint distribution of the degree $\Delta$ of the exploration process when the Lévy measure $\pi$ is diffuse and the height $H_\Delta$ of the (unique) node with mass $\Delta$. We start by determining the distribution of $H(\rho_s)$ under $\tilted{\delta}(\dd s,\dd \rho)$. Recall from \eqref{eq: definition w} the definition of $\w$.
\begin{lemma}\label{lemma: height under tilted}
	Under $\tilted{\delta}(\dd s,\dd \rho)$ (resp.~$\tilted{\delta+}(\dd s,\dd \rho)$), the random variable $H(\rho_s)$ is exponentially distributed with mean $\w(\delta)$ (resp.~$\wplus(\delta)$).
\end{lemma}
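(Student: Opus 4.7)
The plan is to reduce the computation under $\tilted{\delta}$ to an integral that can be evaluated by the classical Bismut-type formula, via the Poissonian/Markov identity from Corollary \ref{cor: identity in distribution before big jump}, followed by two applications of the Markov property (one of which via time-reversal). I focus on $\tilted{\delta}$; the $\tilted{\delta+}$ case is identical with $\psi_{\delta-}$ replaced by $\psi_\delta$ and $\pi[\delta,\infty)$ replaced by $\pibar(\delta)$.

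Fix a bounded measurable $f\colon \real_+\to \real$. By the definition \eqref{eq: definition Pdelta} of $\tilted{\delta}$ and by Corollary \ref{cor: identity in distribution before big jump} applied to $F(\rho)=\int_0^\sigma f(H_t)\, \dd t$, one obtains
\begin{equation*}
\tilted{\delta}[f(H(\rho_s))]=\frac{1}{w(\delta)}\,\operatorname{\mathbf{N}}^{\psi_{\delta-}}\!\left[\int_0^\sigma f(H_t)\, \e^{-\pi[\delta,\infty)\sigma}\, \dd t\right].
\end{equation*}
Write $\e^{-\pi[\delta,\infty)\sigma}=\e^{-\pi[\delta,\infty)t}\,\e^{-\pi[\delta,\infty)(\sigma-t)}$ and set $\lambda=\psi_{\delta-}^{-1}(\pi[\delta,\infty))$. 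Conditioning on the past up to time $t$ and using Lemma \ref{lemma: poisson decomposition exploration}, the Markov property yields $\operatorname{\mathbb{E}}^{\psi_{\delta-},\ast}_{\rho_t}[\e^{-\pi[\delta,\infty)\sigma}]=\e^{-\lambda\langle\rho_t,1\rangle}$, so the right-hand side becomes
\begin{equation*}
\operatorname{\mathbf{N}}^{\psi_{\delta-}}\!\left[\int_0^\sigma f(H_t)\,\e^{-\pi[\delta,\infty)t}\,\e^{-\lambda\langle\rho_t,1\rangle}\, \dd t\right].
\end{equation*}

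The next step absorbs the remaining $\e^{-\pi[\delta,\infty)t}$ into a factor $\e^{-\lambda\langle\eta_t,1\rangle}$ using the time-reversal property (Proposition \ref{prop: time reversal}). Applying this identity in distribution to the integrand and substituting $s=\sigma-t$ transforms the integral into $\int_0^\sigma f(H_s)\,\e^{-\pi[\delta,\infty)(\sigma-s)}\,\e^{-\lambda\langle\eta_s,1\rangle}\, \dd s$, and a second application of the Markov property at time $s$ replaces $\e^{-\pi[\delta,\infty)(\sigma-s)}$ by $\e^{-\lambda\langle\rho_s,1\rangle}$. One thus arrives at
\begin{equation*}
\operatorname{\mathbf{N}}^{\psi_{\delta-}}\!\left[\int_0^\sigma f(H_t)\,\e^{-\lambda(\langle\rho_t,1\rangle+\langle\eta_t,1\rangle)}\, \dd t\right]=\int_0^\infty f(a)\,\e^{-\psi_{\delta-}'(\lambda)\,a}\, \dd a,
\end{equation*}
where the last equality is \cite[Proposition 3.1.3]{duquesne2002random} (the same tool already used in the proof of the preceding proposition). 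Since $\psi_{\delta-}'(\lambda)=\psi_{\delta-}'\circ\psi_{\delta-}^{-1}(\pi[\delta,\infty))=1/w(\delta)$ by \eqref{eq: w general}, dividing by $w(\delta)$ gives
\begin{equation*}
\tilted{\delta}[f(H(\rho_s))]=\int_0^\infty f(a)\,\frac{1}{w(\delta)}\,\e^{-a/w(\delta)}\, \dd a,
\end{equation*}
which is the announced exponential distribution with mean $w(\delta)$.

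The main subtlety I expect is justifying the time-reversal step cleanly, since the integrand involves both $H_t$ (which is common to $\rho$ and $\eta$) and $\langle \rho_t,1\rangle$, and one must reverse time only in the $\rho_t$-factor while keeping the $\e^{-\pi[\delta,\infty)t}$ weight consistent with the change of variable $s=\sigma-t$; everything else is a routine invocation of the Markov property and the identity \eqref{eq: w general}.
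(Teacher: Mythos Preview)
Your proof is correct and follows essentially the same route as the paper: both reduce via Corollary~\ref{cor: identity in distribution before big jump} to computing $\operatorname{\mathbf{N}}^{\psi_{\delta-}}\!\bigl[\e^{-\pi[\delta,\infty)\sigma}\int_0^\sigma f(H_t)\,\dd t\bigr]$ and then evaluate this by a Bismut-type identity, arriving at the exponent $\psi_{\delta-}'\circ\psi_{\delta-}^{-1}(\pi[\delta,\infty))=1/w(\delta)$. The only cosmetic difference is that the paper invokes Bismut's decomposition directly (citing \cite[Theorem~2.1]{abraham2013forest}) to compute the integral in one step, whereas you first split $\e^{-\pi[\delta,\infty)\sigma}$ and use Markov~+~time-reversal~+~\cite[Proposition~3.1.3]{duquesne2002random}---which is precisely the computation the paper itself carries out earlier in Section~\ref{subsect: structure of big nodes} for the MRCA height, so your concern about the time-reversal step is already resolved there.
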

\begin{proof}
	We only prove the result under $\tilted{\delta}$, the other being similar. By definition, we have:
	\begin{align*}
		\int_{\real_+\times \D} \ind{H(\rho_s) > h} \, \tilted{\delta}(\dd s, \dd \rho) &= \frac{1}{\w(\delta)}\n\left[\int_0^\sigma \ind{H_s>h}\, \dd s \,\ind{\Delta<\delta}\right]\\
		&= \frac{1}{\w(\delta)} \operatorname{\mathbf{N}}^{\psi_{\delta-}} \left[\e^{-\pi[\delta,\infty)\sigma} \int_0^\sigma \ind{H_s >h}\right],
	\end{align*}
	where we used Corollary \ref{cor: identity in distribution before big jump} for the last equality. 
	
	Thanks to Bismut's decomposition, see e.g. \cite[Theorem 2.1]{abraham2013forest}, we have for every $\lambda >0$:
	\begin{multline}\label{eq: bismut}
		\operatorname{\mathbf{N}}^{\psi_{\delta-}}\left[\e^{-\lambda\sigma}\int_0^\sigma \ind{H_s>h}\, \dd s\right] \\
		\begin{aligned}[b]
			&=\int_h^\infty \dd t \expp{-t\left[\psi_{\delta-}'(0) +2\beta\operatorname{\mathbf{N}}^{\psi_{\delta-}}[1-\e^{-\lambda\sigma}]+\int_{(0,\delta)} r\,\pi(\dd r) \operatorname{\mathbb{P}}_r^{\psi_{\delta-}}(1-\e^{-\lambda\sigma}) \right]}\\
			&= \int_h^\infty \dd t \expp{-t\left[\psi_{\delta-}'(0) + 2\beta \psi_{\delta-}^{-1}(\lambda)+\int_{(0,\delta)} r (
				1-\e^{-r\psi_{\delta-}^{-1}(\lambda)})\, \pi(\dd r)\right]}\\
			&= \int_h^\infty \dd t \,\e^{-t\psi_{\delta-}'\circ \psi_{\delta-}^{-1}(\lambda)}\\
			&= \frac{1}{\psi_{\delta-}'\circ \psi_{\delta-}^{-1}(\lambda)} \e^{-h\psi_{\delta-}'\circ \psi_{\delta-}^{-1}(\lambda)}.
		\end{aligned}
	\end{multline}
	Applying this to $\lambda =\pi[\delta,\infty)$ and using \eqref{eq: w general}, it follows that
	\begin{equation*}
		\int_{\real_+\times \D} \ind{H(\rho_s) > h} \, \tilted{\delta}(\dd s, \dd \rho) = \e^{-h/\w(\delta)}.
	\end{equation*}
	This proves the result.
\end{proof}

Let 
\begin{equation}\label{eq: definition TDelta}
	T_\Delta = \inf\{t \geqslant 0\colon \, \Delta(\rho_t) = \Delta\} 
\end{equation}
be the first time that $\rho$ contains an atom with mass $\Delta$ and let $H_\Delta = H(\rho_{T_\Delta})$ be the value of the height process at that time. We shall determine the joint distribution of $(\Delta,H_\Delta)$ assuming that the Lévy measure $\pi$ is diffuse.

\begin{proposition}\label{prop: joint distribution degree and height}
	Assume that the Lévy measure $\pi$ is diffuse. Then, $\n$-a.e. there is a unique node with mass $\Delta$. Furthermore, for every $\delta, h>0$, we have:
	\begin{equation}
		\n[\Delta>\delta,H_\Delta>h] = \int_{(\delta,\infty)} \e^{-h/w(r)}\,\n[\Delta\in \dd r].
	\end{equation}
	In other words, under $\n$ and conditionally on $\Delta=\delta$, $H_\Delta$ is exponentially distributed with mean $\w(\delta)$.
\end{proposition}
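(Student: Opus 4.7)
The plan is to disintegrate $\n$ over the value of $\Delta$, invoke the representation of $\n[\cdot\mid\Delta=\delta]$ given by Theorem \ref{thm: disintegration wrt degree}, identify $H_\Delta$ as $H(\tilde\rho_s)$ in the grafted decomposition, and conclude by Lemma \ref{lemma: height under tilted}. Since $\pi$ is diffuse, Corollary \ref{lemma: pi diffuse} ensures that no $\delta>0$ is an atom of $\pi$, so Corollary \ref{cor: unique node with mass delta} applies for every $\delta>0$ and gives the existence of a unique node of mass $\delta$ under $\n[\cdot\mid\Delta=\delta]$. A standard disintegration then yields the uniqueness claim on $\{\Delta>0\}$ (noting that on $\{\Delta=0\}$ the exploration process has no atoms at all).

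For the joint distribution, fix $\delta>0$ and apply Theorem \ref{thm: disintegration wrt degree} to represent $\rho$ under $\n[\cdot\mid\Delta=\delta]$ as $\tilde\rho\circledast(s,\hat\rho)$, where $(s,\tilde\rho)\sim\tilted{\delta}$ and $\hat\rho\sim\for{\delta}(\cdot\mid\Delta\leqslant\delta)$ are independent. From the concatenation formula $[\tilde\rho_s,\delta\delta_0]=\tilde\rho_s+\delta\delta_{H(\tilde\rho_s)}$, the grafted process coincides with $\tilde\rho_t$ for $t<s$ and acquires at time $t=s$ a fresh atom of mass $\delta$ at height $H(\tilde\rho_s)$. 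Since $\Delta(\tilde\rho)<\delta$ under $\tilted{\delta}$ whereas $\Delta(\rho)=\delta$ under the conditioning, the uniqueness of the maximal-mass node forces $\tilde\rho_s(\{H(\tilde\rho_s)\})=0$ (otherwise the grafted atom would have mass strictly greater than $\delta$). Consequently the grafted atom is the unique node of mass $\delta$, so $T_\Delta=s$ and
\begin{equation*}
H_\Delta = H(\tilde\rho_s).
\end{equation*}
Lemma \ref{lemma: height under tilted} then gives $\n[H_\Delta>h\mid\Delta=\delta]=\e^{-h/\w(\delta)}$, and integrating against $\n[\Delta\in\dd r]$ over $(\delta,\infty)$ yields the announced formula.

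The main subtlety will be justifying the identification $H_\Delta=H(\tilde\rho_s)$, which rests on ruling out that the grafted atom merges with a pre-existing atom of $\tilde\rho_s$ at its maximal height. One can obtain this directly from the Bismut representation of Proposition \ref{prop: representation exploration}: up to tilting, $\tilde\rho_s$ has the law of $J_a$ with $a=H(\tilde\rho_s)$, and since subordinators almost surely do not jump at a deterministic time, the measure $J_a$ carries no atom at $a$. Alternatively this follows \emph{a posteriori} from the uniqueness established above, as a merged atom would have mass strictly greater than $\delta$, contradicting $\Delta=\delta$.
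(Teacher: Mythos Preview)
Your proof is correct and follows essentially the same route as the paper: uniqueness via Corollary~\ref{cor: unique node with mass delta}, then disintegration against $\n[\Delta\in\dd r]$, identification of $H_\Delta$ with $H(\tilde\rho_s)$ through the grafted representation of Theorem~\ref{thm: disintegration wrt degree}, and conclusion by Lemma~\ref{lemma: height under tilted}. The paper's own proof simply asserts that $H_\Delta$ under $\n[\cdot\mid\Delta=r]$ is distributed as $H(\rho_s)$ under $\tilted{\mathnormal{r}}$; your added justification that the grafted atom cannot merge with a pre-existing atom of $\tilde\rho_s$ at height $H(\tilde\rho_s)$ (either via Bismut or \emph{a posteriori} from $\Delta=\delta$) fills in a detail the paper leaves implicit.
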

\begin{question}
	If $\delta$ is an atom of $\pi$, what is the distribution of the height of the MRCA of the nodes with mass exactly $\delta$ under $\n$, conditionally on $\Delta=\delta$?
\end{question}
\begin{proof}
	The first part follows from Corollary \ref{cor: unique node with mass delta}. Then, using Theorem \ref{thm: disintegration wrt degree}, we have:
	\begin{equation*}
		\n[\Delta>\delta, H_\Delta>h] = \int_{(\delta,\infty)} \n[H_\Delta>h|\Delta=r] \, \n[\Delta\in \dd r].
	\end{equation*}
	Now under $\n[\cdot|\Delta=r]$, $H_\Delta$ is distributed as $H_s= H(\rho_s)$ under $\tilted{\mathnormal{r}}(\dd s,\dd \rho)$. Lemma \ref{lemma: height under tilted} allows to conclude.
\end{proof}

\section{Conditioning on $\Delta=\delta$ and $H_\Delta= h$}\label{sect: conditioning on degree and height}
In this section, we assume that the Lévy measure $\pi$ is diffuse. Recall then from Proposition \ref{prop: joint distribution degree and height} that there is a unique node with mass $\Delta$ and $H_\Delta$ is its height. The goal of this section is to make sense of the conditional measure $\n[\cdot|\Delta=\delta, H_\Delta= h]$. Let
\begin{equation}F_{\delta,\epsilon} = \{\delta-\epsilon < \Delta< \delta+\epsilon, \, Z_0^{\delta-\epsilon} = 1,\, h-\epsilon< H(\rho_{T_{\delta-\epsilon}}) <h+\epsilon\}
\end{equation}
be the event that the maximal degree is between $\delta-\epsilon$ and $\delta+\epsilon$, there is a unique first-generation node with mass larger than $\delta-\epsilon$ and its height is between $h-\epsilon$ and $h+\epsilon$. Recall from \eqref{eq: definition w} the definition of $\w$.
\begin{lemma}\label{lemma: equivalent conditionings h}
	Assume that the Lévy measure $\pi$ is diffuse. For every $\delta \in \supp{\pi}$ such that $\pibar(\delta)>0$ and $h>0$, we have as $\epsilon \to 0$:
	\begin{align}
		\n[\delta-\epsilon<\Delta<\delta+\epsilon, h-\epsilon < H_\Delta<h+\epsilon] &\sim \n[F_{\delta,\epsilon}] \nonumber \\
		&\sim 2\epsilon \rfordinf(\Delta<\delta+\epsilon)\pibar(\delta) \e^{-h/\w(\delta)}.
	\end{align}
\end{lemma}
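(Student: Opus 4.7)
I follow the scheme of Lemma \ref{lemma: equivalent conditionings} and prove the two equivalences in turn. Write $A\cap B = \{\delta-\epsilon<\Delta<\delta+\epsilon,\, h-\epsilon<H_\Delta<h+\epsilon\}$ for the full event. For the second equivalence, I would directly compute $\n[F_{\delta,\epsilon}]$ via Theorem \ref{thm: degree decomposition} and the Bismut-type identity \eqref{eq: bismut}. Applying the decomposition at level $\delta-\epsilon$: under $\ndinf$, conditionally on $\rho$, let $\sum_i \delta_{(s_i,\rho^{(i)})}$ be a Poisson point measure on $[0,\sigma]\times\D$ with intensity $\pibar(\delta-\epsilon)\,ds\,\rfordinf(d\tilde\rho)$, with cardinality $\zeta$. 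Since each graft contributes exactly one first-generation node with mass $>\delta-\epsilon$ (namely its root), the event $F_{\delta,\epsilon}$ translates to $\{\zeta=1,\,\Delta(\rho^{(1)})<\delta+\epsilon,\,H_{s_1}\in(h-\epsilon,h+\epsilon)\}$, so by independence
\begin{equation*}
	\n[F_{\delta,\epsilon}]=\rfordinf(\Delta<\delta+\epsilon)\,\pibar(\delta-\epsilon)\,\ndinf\!\left[e^{-\pibar(\delta-\epsilon)\sigma}\int_0^\sigma\ind{H_s\in(h-\epsilon,h+\epsilon)}\,ds\right].
\end{equation*}
Arguing exactly as in \eqref{eq: bismut} but with the interval $(h-\epsilon,h+\epsilon)$ in place of $(h,\infty)$, Bismut's decomposition identifies the bracket as $\int_{h-\epsilon}^{h+\epsilon} e^{-t/\wplus(\delta-\epsilon)}\,dt$. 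Since $\pi$ is diffuse at $\delta$ (Corollary \ref{lemma: pi diffuse}), $\pibar(\delta-\epsilon)\to\pibar(\delta)$ and $\wplus(\delta-\epsilon)\to w(\delta)$, so this integral is $\sim 2\epsilon\, e^{-h/w(\delta)}$ and the second equivalence follows.

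\textbf{First equivalence.} To prove $\n[A\cap B]\sim \n[F_{\delta,\epsilon}]$, I would introduce the auxiliary event $F'_{\delta,\epsilon}:=F_{\delta,\epsilon}\cap\{\Delta=r_1\}$, where $r_1$ is the mass of the unique first-generation node with mass $>\delta-\epsilon$ (which exists on $F_{\delta,\epsilon}$). On $F'_{\delta,\epsilon}$ the maximum is realized at this node, hence $H_\Delta=H_{s_1}\in(h-\epsilon,h+\epsilon)$ and $F'_{\delta,\epsilon}\subset F_{\delta,\epsilon}\cap A\cap B$. It then suffices to show that the three differences
\begin{equation*}
	\n[F_{\delta,\epsilon}\setminus F'_{\delta,\epsilon}],\qquad \n[(A\cap B)\cap\{\zeta=1,\Delta>r_1\}],\qquad \sum_{k\geq 2} \n[A\cap B\cap \{\zeta=k\}]
\end{equation*}
are all $o(\n[F_{\delta,\epsilon}])$. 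The first two reduce, by the same Bismut computation as in Step 1 (noting that a length-$2\epsilon$ constraint on $H_\Delta=H_{s_1}+H'_1$ forces a length-$2\epsilon$ constraint on $H_{s_1}$ regardless of the height $H'_1$ at which the excess atom appears in the graft), to controlling $\rfordinf(r_1<\Delta<\delta+\epsilon)$. Using the identity $\for{r}(\Delta\leq r) = e^{-r\n[\Delta>r]}$ from Lemma \ref{lemma: poisson decomposition exploration} together with $1-e^{-x}\leq x$, one finds
\begin{equation*}
	\rfordinf(r_1<\Delta<\delta+\epsilon)\leq (\delta+\epsilon)\,\n[\delta-\epsilon<\Delta<\delta+\epsilon]\,\rfordinf(\Delta<\delta+\epsilon),
\end{equation*}
and the factor $\n[\delta-\epsilon<\Delta<\delta+\epsilon]$ tends to $0$ as $\epsilon\to 0$ since $\pi$ is diffuse at $\delta$.

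\textbf{Main obstacle.} The delicate point will be the $\zeta\geq 2$ contribution, where the factor of order $\epsilon$ coming from the localization of $H_\Delta$ must be extracted even when the maximum is realized in a graft different from the one whose time coordinate is being constrained by $B$. My plan is to treat it via the Slivnyak--Mecke formula combined with the occupation-time identity $\int_0^\sigma\ind{H_s\in J}\,ds = \int_J L^a_\sigma\,da$ and with moment estimates for the local times $L^a_\sigma$ under $\ndinf$ (viewing $(L^a_\sigma,a\geq 0)$ as a $\psi_{\delta-\epsilon}$-CSBP under its excursion measure), obtaining a bound of the form $\n[A\cap B\cap\{\zeta=k\}]\leq C(h)\,\epsilon\,(\pibar(\delta-\epsilon)\rfordinf(\Delta<\delta+\epsilon))^k$. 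Summing over $k\geq 2$ produces an extra small factor $\rfordinf(\Delta<\delta+\epsilon)\to 0$, giving $o(\n[F_{\delta,\epsilon}])$. Controlling these local-time second moments is the technical heart of the argument; once it is in place, the rest of the bookkeeping parallels the proof of Lemma \ref{lemma: equivalent conditionings}.
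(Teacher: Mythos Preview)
Your computation of $\n[F_{\delta,\epsilon}]$ via Theorem \ref{thm: degree decomposition} and the Bismut identity is exactly what the paper does, so the second equivalence is fine.

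For the first equivalence, however, you are working much harder than necessary. The paper does not attempt to decompose the event $\{\delta-\epsilon<\Delta<\delta+\epsilon,\ h-\epsilon<H_\Delta<h+\epsilon\}$ according to the value of $\zeta$; instead it simply invokes Proposition \ref{prop: joint distribution degree and height}, which (since $\pi$ is diffuse) gives that under $\n[\cdot\mid\Delta=r]$ the height $H_\Delta$ is exponential with mean $w(r)$. Disintegrating with respect to $\Delta$ yields
\[
\n[\delta-\epsilon<\Delta<\delta+\epsilon,\ h-\epsilon<H_\Delta<h+\epsilon]
=\int_{(\delta-\epsilon,\delta+\epsilon)}\n[\Delta\in\dd r]\,w(r)^{-1}\!\int_{h-\epsilon}^{h+\epsilon}\e^{-t/w(r)}\,\dd t,
\]
and dominated convergence together with the continuity of $w$ (again because $\pi$ is diffuse) gives this as $\sim 2\epsilon\, w(\delta)^{-1}\e^{-h/w(\delta)}\,\n[\delta-\epsilon<\Delta<\delta+\epsilon]$. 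Then one feeds in Lemma \ref{lemma: equivalent conditionings} and \eqref{eq: E delta epsilon} to finish. Your ``main obstacle'' (the $\zeta\geq 2$ contribution, Slivnyak--Mecke, local-time second moments) disappears entirely.

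Your direct route is not wrong in spirit, but the bookkeeping you outline is genuinely delicate (in particular, extracting the $\epsilon$ factor uniformly when the maximal node lies in a graft other than the one being height-constrained requires care), and it duplicates work already encapsulated in Proposition \ref{prop: joint distribution degree and height}. The paper's argument is both shorter and logically cleaner: it uses the conditional law of $H_\Delta$ given $\Delta$ as a black box, rather than re-deriving its consequences from scratch at the level of the Poisson decomposition.
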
 
\begin{proof}
	By Proposition \ref{prop: joint distribution degree and height}, we have:
	\begin{equation*}
		\n[\delta-\epsilon<\Delta<\delta+\epsilon, h-\epsilon < H_\Delta<h+\epsilon]
		= \int_{(\delta-\epsilon,\delta+\epsilon)} \n[\Delta\in \dd r] \,w(r)^{-1} \int_{h-\epsilon}^{h+\epsilon} \e^{-t /\w(r)}\, \dd t.
	\end{equation*}
	A straightforward application of the dominated convergence theorem gives:
	\begin{equation*}
		\n[\delta-\epsilon<\Delta<\delta+\epsilon, h-\epsilon < H_\Delta<h+\epsilon]
		\sim 2\epsilon \int_{(\delta-\epsilon,\delta+\epsilon)} \n[\Delta\in \dd r] \,\w(r)^{-1}\e^{-h /\w(r)}.
	\end{equation*}
	
	Since $\pi$ is diffuse, observe that $\n[\sigma \ind{\Delta=r}] = 0$ for every $r>0$ thanks to Corollary \ref{lemma: pi diffuse}. This implies that $\w$ is continuous and we deduce that
	\begin{equation*}
		\n[\delta-\epsilon<\Delta<\delta+\epsilon, h-\epsilon < H_\Delta<h+\epsilon]
		\sim 2\epsilon \n[\delta-\epsilon<\Delta<\delta+\epsilon]w(\delta)^{-1}\e^{-h /\w(\delta)}.
	\end{equation*}
	But Lemma \ref{lemma: equivalent conditionings} gives:
	\begin{equation*}
		\n[\delta-\epsilon<\Delta<\delta+\epsilon] \sim \n[E_{\delta,\epsilon}] .
	\end{equation*}
	Moreover, thanks to \eqref{eq: E delta epsilon} and the continuity of $w$, we have:
	\begin{equation*}
		\n[E_{\delta,\epsilon}] = \pibar(\delta-\epsilon) w(\delta-\epsilon) \rfordinf(\Delta<\delta+\epsilon)\sim \pibar(\delta)w(\delta)\rfordinf(\Delta<\delta+\epsilon).
	\end{equation*}
	We deduce that
	\begin{equation*}
		\n[\delta-\epsilon<\Delta<\delta+\epsilon, h-\epsilon < H_\Delta<h+\epsilon] \sim 2\epsilon \rfordinf(\Delta<\delta+\epsilon)\pibar(\delta) \e^{-h/w(\delta)}.
	\end{equation*}
	
	On the other hand, thanks to Theorem \ref{thm: degree decomposition}, we have:
	\begin{equation}\label{eq: F delta epsilon}
		\n[F_{\delta,\epsilon}] = \ndinf\left[\pibar(\delta-\epsilon)\e^{-\pibar(\delta-\epsilon)\sigma}\int_0^\sigma \ind{h-\epsilon < H_s<h+\epsilon} \, \dd s\right] \rfordinf(\Delta< \delta+\epsilon).
	\end{equation}
	Using Bismut's decomposition as in \eqref{eq: bismut} , we get:
	\begin{equation}\label{eq: local time equivalent}
		\ndinf\left[\e^{-\pibar(\delta-\epsilon)\sigma}\int_0^\sigma \ind{h-\epsilon < H_s<h+\epsilon} \, \dd s\right] \\
		= \int_{h-\epsilon}^{h+\epsilon} \e^{-t /w(\delta-\epsilon)} \, \dd t \sim 2\epsilon \e^{-h/w(\delta)},
	\end{equation}
	where again we used the continuity of $w$. It follows that
	\begin{equation*}
		\n[F_{\delta,\epsilon}] \sim 2\epsilon \rfordinf(\Delta<\delta+\epsilon)\pibar(\delta) \e^{-h/w(\delta)}.
	\end{equation*}
\end{proof}

For every $\delta,h>0$, denote by $\tiltedh{\delta}$ the probability measure on the space $\real_+\times \D$ defined by:
\begin{equation}\label{eq: definition Ptilted h}
	\int_{\real_+\times \D} F\, \dd \!\tiltedh{\delta} =  \frac{1}{\n[L^h_\sigma\,\ind{\Delta< \delta}] }\n\left[\int_0^\sigma F(s,\rho)\, L^h(\dd s)\,\ind{\Delta< \delta}\right],
\end{equation}
for every $F\in \B(\real_+\times \D )$. Since we are assuming that the Lévy measure $\pi$ is diffuse, we may replace the event $\{\Delta<\delta\}$ by $\{\Delta\leqslant \delta\}$ thanks to Corollary \ref{lemma: pi diffuse}. Thus, using Corollary \ref{cor: identity in distribution before big jump}, \cite[Theorem 4.5]{duquesne2005probabilistic} and \eqref{eq: w plus general}, we have:
\begin{equation}\label{eq: computation local time exp}
	\n[L^h_\sigma\,\ind{\Delta< \delta}]= \ndelta\left[L^h_\sigma\,\e^{-\pibar(\delta)\sigma} \right] = \e^{-h \psi_\delta'(\n[\Delta>\delta])}= \e^{-h/w(\delta)}.
\end{equation}
In particular, the following identity relating the measures $\tilted{\delta}$ and $\tiltedh{\delta}$ holds:
\begin{equation*}
	\frac{1}{\w(\delta)}\int_0^\infty \dd h\, \e^{-h/w(\delta)}\tiltedh{\delta}(\dd s, \dd \rho) = \tilted{\delta}(\dd s,\dd \rho),
\end{equation*}
where we used that $\mathbf{1}_{[0,\sigma]}(s) \, \dd s = \int_0^{a} \dd a \,L^{a}(\dd s)$, see Section \ref{subsect: local times}. The next lemma gives an approximation of the measure $\tiltedh{\delta}$.

\begin{lemma}\label{lemma: local time convergence}
	Let $F\colon \real_+\times \D\to \real$ be measurable and bounded. We have for every $\delta,h >0$:
	\begin{equation}\label{eq: local time convergence F}
		\lim_{\epsilon \to 0} \frac{1}{2\epsilon}\n\left[\int_0^\sigma F(s,\rho)\ind{h-\epsilon<H_s<h+\epsilon}\, \dd s\,\ind{\Delta\leqslant \delta-\epsilon}\right] 
		= \n\left[\int_0^\sigma F(s,\rho)\, L^h(\dd s)\,\ind{\Delta< \delta}\right].
	\end{equation}
\end{lemma}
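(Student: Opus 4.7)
The plan is to split the constraint $\ind{\Delta\leq \delta-\epsilon}$ into the $\epsilon$-independent part $\ind{\Delta<\delta}$ and a vanishing error $\ind{\delta-\epsilon<\Delta<\delta}$, which is legitimate $\n$-a.s.~because diffuseness of $\pi$ gives $\n[\Delta=\delta]=0$ via Corollary \ref{lemma: pi diffuse}. Accordingly, I would write the left-hand side of \eqref{eq: local time convergence F} as $(2\epsilon)^{-1}(I_\epsilon-J_\epsilon)$ with
\begin{equation*}
I_\epsilon=\n\!\left[\int_0^\sigma\! F(s,\rho)\,\ind{h-\epsilon<H_s<h+\epsilon}\,\ind{\Delta<\delta}\,\dd s\right],
\end{equation*}
\begin{equation*}
J_\epsilon=\n\!\left[\int_0^\sigma\! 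F(s,\rho)\,\ind{h-\epsilon<H_s<h+\epsilon}\,\ind{\delta-\epsilon<\Delta<\delta}\,\dd s\right],
\end{equation*}
handling the main term $I_\epsilon$ by a local time approximation and the error $J_\epsilon$ by an explicit Bismut computation together with the continuity of $w$.

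For the main term, I would invoke the approximation of $L^h$ recalled in Section \ref{subsect: local times}: the nondecreasing processes $A_\epsilon(s)\coloneqq (2\epsilon)^{-1}\!\int_0^s\ind{h-\epsilon<H_r<h+\epsilon}\,\dd r$ converge to $L^h_s$ uniformly on compact intervals, in $L^1(\n(\cdot\cap\{\sup H>h\}))$, as $\epsilon\to 0$. Since $F\,\ind{\Delta<\delta}$ is bounded, this translates into convergence of the Stieltjes integrals $\int_0^\sigma F(s,\rho)\,\dd A_\epsilon(s)\to\int_0^\sigma F(s,\rho)\,L^h(\dd s)$. The convergence is straightforward when $F$ has product form $F(s,\rho)=\phi(s)G(\rho)$ with $\phi$ continuous and $G$ bounded measurable (apply Helly-Bray path-by-path after conditioning on $\rho$), and a monotone class argument extends it to arbitrary bounded measurable $F$. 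Taking $\n$-expectations and passing to the limit then yields $(2\epsilon)^{-1}I_\epsilon\to \n[\int_0^\sigma F(s,\rho)\,\ind{\Delta<\delta}\,L^h(\dd s)]$, matching the right-hand side of \eqref{eq: local time convergence F}.

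For the error term, I would bound $|J_\epsilon|\leq \|F\|_\infty K_\epsilon$ where $K_\epsilon\coloneqq \n[\int_0^\sigma\ind{h-\epsilon<H_s<h+\epsilon}\,\ind{\delta-\epsilon<\Delta<\delta}\,\dd s]$. Splitting $\ind{\delta-\epsilon<\Delta<\delta}=\ind{\Delta<\delta}-\ind{\Delta\leq\delta-\epsilon}$, two applications of Corollary \ref{cor: identity in distribution before big jump} (with $\psi_{\delta-}=\psi_\delta$ and $\pi[\delta,\infty)=\pibar(\delta)$ thanks to diffuseness) followed by the Bismut-type computation performed in \eqref{eq: local time equivalent} give
\begin{equation*}
K_\epsilon=\int_{h-\epsilon}^{h+\epsilon}\!\bigl(e^{-t/w(\delta)}-e^{-t/w(\delta-\epsilon)}\bigr)\,\dd t.
\end{equation*}
Dividing by $2\epsilon$ produces the average over $[h-\epsilon,h+\epsilon]$ of $e^{-t/w(\delta)}-e^{-t/w(\delta-\epsilon)}$, which tends to $e^{-h/w(\delta)}-e^{-h/w(\delta)}=0$ thanks to the continuity of $w$ established in the proof of Lemma \ref{lemma: equivalent conditionings h} from the diffuseness of $\pi$.

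The main obstacle will be the monotone class extension in the first step: upgrading the uniform-in-time, $L^1$-in-expectation approximation of $L^h$ by $A_\epsilon$ to convergence of Stieltjes integrals against a general bounded measurable path functional $F(s,\rho)$, rather than merely an integrand continuous in $s$. An alternative route would be to apply a Bismut-type decomposition at time $s$ directly under the excursion measure---analogous to Proposition \ref{prop: representation exploration} but adapted to the local time measure $L^h(\dd s)$---and derive the limit by a direct computation using the right-continuity of the subordinator $J_a$ in the level $a$.
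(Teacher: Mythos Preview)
Your route is correct in spirit but genuinely different from the paper's, and more laborious. The paper neither splits the $\Delta$-indicator nor invokes Bismut's decomposition or the continuity of $w$. Instead it works pathwise via the occupation time formula $\mathbf{1}_{[0,\sigma]}(s)\,\dd s=\int_0^\infty L^a(\dd s)\,\dd a$, rewriting
\[
\frac{1}{2\epsilon}\int_0^\sigma F(s,\rho)\,\ind{h-\epsilon<H_s<h+\epsilon}\,\dd s
=\frac{1}{2\epsilon}\int_{h-\epsilon}^{h+\epsilon}\Bigl(\int_0^\sigma F(s,\rho)\,L^a(\dd s)\Bigr)\,\dd a.
\]
Since $(L^a_\sigma,\,a\geq 0)$ is a $\psi$-CB process under $\n$ and hence has no \emph{fixed} jump times, $\n$-a.e.\ the map $a\mapsto L^a$ is continuous at $h$, so the level average converges $\n$-a.e.\ to $\int_0^\sigma F(s,\rho)\,L^h(\dd s)$; meanwhile $\ind{\Delta\leq\delta-\epsilon}\to\ind{\Delta<\delta}$ trivially. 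The exchange of limit and $\n$-expectation is then handled by the \emph{generalized} dominated convergence theorem, with dominating sequence $\frac{\|F\|_\infty}{2\epsilon}\int_{h-\epsilon}^{h+\epsilon}L^a_\sigma\,\dd a$ converging $\n$-a.e.\ to $\|F\|_\infty L^h_\sigma$ and in $\n$-mean by \cite[Eq.~(12)]{duquesne2005probabilistic}.

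This buys two things over your proposal. First, your error term $J_\epsilon$ and its Bismut analysis become unnecessary---the indicator is absorbed into the $\n$-a.e.\ limit---and in particular the proof never uses diffuseness of $\pi$ (though it is a standing hypothesis of the section). Second, and more to the point, the obstacle you correctly flag---upgrading the $L^1$ approximation of $L^h$ to Stieltjes convergence against an arbitrary bounded measurable integrand via a monotone class argument---is sidestepped entirely: the paper obtains pathwise convergence from the level average and the continuity of $a\mapsto L^a$ at the deterministic level $h$, rather than from the $L^1$ approximation of $L^h_s$ by occupation densities at level $h$. The monotone class step you worried about simply does not appear.
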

\begin{proof}
	Recall from Section \ref{subsect: local times} that the measure $L^{a}$ is supported on the set $\{s \in [0,\sigma]\colon \, H_s = a\}$. Thus, we have:
	\begin{equation}\label{eq: occupation time}
		\frac{1}{2\epsilon}\int_0^\sigma F(s,\rho)\ind{h-\epsilon<H_s<h+\epsilon}\, \dd s = \frac{1}{2\epsilon} \int_{h-\epsilon}^{h+\epsilon} \dd a \int_0^\sigma F(s,\rho) \, L^{a}(\dd s). 
	\end{equation}
	Furthermore, $h$ is a jump time for the local time process $a\mapsto L^{a}$ if and only if it is a jump time for the total mass process $a\mapsto L^{a}_\sigma$. But, under $\n$, the process $(L_\sigma^{a}, \, a \geqslant 0)$ is a $\psi$-CB process. In particular, it has no fixed jump times. As a result, $\n$-a.e.~the mapping $a\mapsto L^{a}$ is continuous at $h$. We deduce that the following convergence holds $\n$-a.e.:
	\begin{equation*}
		\lim_{\epsilon\to 0}\frac{1}{2\epsilon}\int_0^\sigma F(s,\rho)\ind{h-\epsilon<H_s<h+\epsilon}\, \dd s\,\ind{\Delta\leqslant \delta-\epsilon}= \int_0^\sigma F(s,\rho)\, L^{h}(\dd s)\,\ind{\Delta<\delta}.
	\end{equation*}
	
	Next, using \eqref{eq: occupation time}, we have:
	\begin{equation*}
		\frac{1}{2\epsilon}\left|\int_0^\sigma F(s,\rho)\ind{h-\epsilon<H_s<h+\epsilon}\, \dd s\right| \ind{\Delta\leqslant \delta-\epsilon} \leqslant \frac{\norm{F}_\infty}{2\epsilon} \int_{h-\epsilon}^{h+\epsilon} L_\sigma^{a} \, \dd a,
	\end{equation*}
	where the last term converges $\n$-a.e. to $\norm{F}_\infty L_\sigma^h$ thanks to the continuity of $a \mapsto L^{a}_\sigma$ at $h$. Furthermore, by \cite[Eq.~(12)]{duquesne2005probabilistic} we have the convergence:
	\begin{equation*}
		\lim_{\epsilon\to 0}\frac{1}{2\epsilon} \n\left[\int_{h-\epsilon}^{h+\epsilon} L_\sigma^{a} \, \dd a\right] = \n[L_\sigma^h].
	\end{equation*}
	Thus, the generalized dominated convergence theorem yields \eqref{eq: local time convergence F}.
\end{proof}

The main result of this section is the following description of the exploration process conditioned on having maximal degree $\delta$ at height $h$.
\begin{thm}\label{thm: disintegration wrt degree and height} Assume that the Lévy measure $\pi$ is diffuse. There exists a conditional probability measure $\n[\cdot|\Delta=\delta, H_\Delta=h]$ for $\delta \in \supp{\pi}$. Furthermore, for every $F\in \B(\D)$, we have:
	\begin{equation}
		\n[F(\rho)|\Delta=\delta,H_\Delta=h] 
		=  \int_{\real_+\times \D}  \tiltedh{\delta}(\dd s, \dd \tilde{\rho}) \int_{\D} \for{\delta}(\dd \hat{\rho}|\Delta\leqslant \delta) F(\tilde{\rho}\circledast (s,\hat{\rho})).
	\end{equation}
\end{thm}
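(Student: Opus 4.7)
The plan is to follow the same pattern as in the proof of Theorem \ref{thm: disintegration wrt degree}, now using the event $F_{\delta,\epsilon}$ to approximate $\{\Delta=\delta,\,H_\Delta=h\}$ and combining the decomposition at the first large node with the local time approximation at height $h$. By Lemma \ref{lemma: equivalent conditionings h}, for continuous bounded $F$ the quantity $\n[F(\rho)\mid \delta-\epsilon<\Delta<\delta+\epsilon,\, h-\epsilon<H_\Delta<h+\epsilon]$ is equivalent as $\epsilon\to 0$ to $\n[F(\rho)\mid F_{\delta,\epsilon}]$, so it suffices to compute the limit of the latter.

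On the event $F_{\delta,\epsilon}$ the exploration process decomposes as $\rho=\rho^{\delta-\epsilon,-}\circledast(T_{\delta-\epsilon},\rho^{\delta-\epsilon,+})$, and an argument analogous to Lemma \ref{lemma: decomposition degree delta}, but retaining the indicator $\ind{h-\epsilon<H_{T_{\delta-\epsilon}}<h+\epsilon}$, yields
\begin{multline*}
\n[F(\rho)\ind{F_{\delta,\epsilon}}]=\pibar(\delta-\epsilon)\,\rfordinf(\Delta<\delta+\epsilon)\\
\times \ndinf\left[\e^{-\pibar(\delta-\epsilon)\sigma}\int_0^\sigma\!\!\ind{h-\epsilon<H_s<h+\epsilon}G_\epsilon(\rho,s)\,\dd s\right],
\end{multline*}
where $G_\epsilon(\tilde\rho,s)=\int_\D\rfordinf(\dd\hat\rho\mid\Delta<\delta+\epsilon)\,F(\tilde\rho\circledast(s,\hat\rho))$. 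Using Corollary \ref{cor: identity in distribution before big jump} to rewrite the expectation under $\ndinf$ with the tilt $\e^{-\pibar(\delta-\epsilon)\sigma}$ as an integral under $\n$ against $\ind{\Delta\leq\delta-\epsilon}$, one obtains
\begin{equation*}
\n[F(\rho)\ind{F_{\delta,\epsilon}}]=\pibar(\delta-\epsilon)\rfordinf(\Delta<\delta+\epsilon)\n\!\left[\int_0^\sigma\!\!\ind{h-\epsilon<H_s<h+\epsilon}G_\epsilon(\rho,s)\,\dd s\,\ind{\Delta\leq\delta-\epsilon}\right].
\end{equation*}

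Dividing by $2\epsilon$ and sending $\epsilon\to 0$, the prefactor $\pibar(\delta-\epsilon)\rfordinf(\Delta<\delta+\epsilon)$ is continuous in $\epsilon$ with limit $\pibar(\delta)\for{\delta}(\Delta\le\delta)$ (when interpreted appropriately, or it cancels in the ratio via Lemma \ref{lemma: equivalent conditionings h}). The bracketed integral is handled by Lemma \ref{lemma: local time convergence} applied with integrand $G_\epsilon$; the technical point is that $G_\epsilon$ itself depends on $\epsilon$, but by Lemma \ref{lemma: weak convergence of forest with no big nodes delta} and the continuity of $\hat\rho\mapsto\tilde\rho\circledast(s,\hat\rho)$ from Lemma \ref{lemma: continuity}, we have $G_\epsilon\to G_0$ pointwise where $G_0(\tilde\rho,s)=\int_\D\for{\delta}(\dd\hat\rho\mid\Delta\le\delta)F(\tilde\rho\circledast(s,\hat\rho))$. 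Splitting $G_\epsilon=G_0+(G_\epsilon-G_0)$, the $G_0$ piece converges by Lemma \ref{lemma: local time convergence} directly, and the error piece is controlled by $\|F\|_\infty$ times a uniform estimate on $\rfordinf(\cdot\mid\Delta<\delta+\epsilon)-\for{\delta}(\cdot\mid\Delta\le\delta)$ on the relevant Lipschitz functionals, together with the bound $\n[\int_0^\sigma\ind{h-\epsilon<H_s<h+\epsilon}\,\dd s\,\ind{\Delta<\delta}]=O(\epsilon)$ from \eqref{eq: local time equivalent}.

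Putting the limits together and using \eqref{eq: computation local time exp} to identify $\n[L^h_\sigma\ind{\Delta<\delta}]=\e^{-h/w(\delta)}$, the asymptotics from Lemma \ref{lemma: equivalent conditionings h} cancel the factors $2\epsilon$, $\pibar(\delta)$, $\rfordinf(\Delta<\delta+\epsilon)$ and $\e^{-h/w(\delta)}$, leaving exactly
\begin{equation*}
\lim_{\epsilon\to 0}\n[F(\rho)\mid F_{\delta,\epsilon}]=\int_{\real_+\times\D}\tiltedh{\delta}(\dd s,\dd\tilde\rho)\int_\D\for{\delta}(\dd\hat\rho\mid\Delta\le\delta)F(\tilde\rho\circledast(s,\hat\rho)).
\end{equation*}
A standard measure differentiation argument as in the end of the proof of Theorem \ref{thm: disintegration wrt degree} (using e.g.~\cite[Theorem 1.30]{evans2015measure}) then promotes this pointwise limit to the existence of a regular conditional probability with the announced expression. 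The main obstacle is the joint epsilon-dependence: the integrand $G_\epsilon$ varies with $\epsilon$ inside the local time approximation, so Lemma \ref{lemma: local time convergence} cannot be applied as a black box and one has to quantify the convergence $G_\epsilon\to G_0$ via the total variation control on $\rfordinf(\cdot\mid\Delta<\delta+\epsilon)$ supplied by the proof of Lemma \ref{lemma: weak convergence of forest with no big nodes delta}.
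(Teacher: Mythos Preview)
Your proposal is correct and follows essentially the same route as the paper: reduce to conditioning on $F_{\delta,\epsilon}$ via Lemma \ref{lemma: equivalent conditionings h}, use the Poissonian decomposition and Corollary \ref{cor: identity in distribution before big jump} to obtain the explicit expression for $\n[F(\rho)\mid F_{\delta,\epsilon}]$, then pass to the limit using Lemmas \ref{lemma: local time convergence}, \ref{lemma: continuity} and \ref{lemma: weak convergence of forest with no big nodes delta}, and conclude with the measure-differentiation theorem. The paper's proof is terse at the limiting step (it simply cites the three lemmas), whereas you explicitly flag and address the joint $\epsilon$-dependence of $G_\epsilon$; this is a valid refinement of the same argument rather than a different approach.
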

Assuming the Grey condition, this can be interpreted as follows in terms of trees. Under $\n$, conditionally on $\Delta = \delta$, $H_\Delta$ is exponentially distributed with mean $\w(\delta)$. Moreover, conditionally on $\Delta=\delta$ and $H_\Delta= h$, the Lévy tree can be constructed as follows: start with $\widetilde{\rdtree}$ with distribution $\n[L_\sigma^h \,\ind{\Delta< \delta}]^{-1} \ndelta[L_\sigma^h \,\ind{\Delta< \delta}\, \dd \rdtree]$, choose a leaf uniformly at random in $\widetilde{\rdtree}$ at height $h$ (i.e. according to the probability measure $L^h(\dd x)/L_\sigma^h$) and on this leaf graft an independent Lévy forest with initial mass $\delta$ conditioned to have degree $\leqslant \delta$. Notice that this result generalizes Theorem \ref{thm: disintegration wrt degree} when the Lévy measure $\pi$ is diffuse. In particular, one can recover the latter simply by integrating with respect to $h$.
\begin{proof}
	Let $\delta \in \supp{\pi}$ and $h>0$. Thanks to Lemma \ref{lemma: equivalent conditionings h}, we have as $\epsilon \to 0$:
	\begin{equation}\label{eq: equivalent conditionings}
		\n\left[F(\rho)\middle|\delta-\epsilon<\Delta<\delta+\epsilon,h-\epsilon<H_\Delta<h+\epsilon\right] \\
		\sim \n\left[F(\rho)\middle|F_{\delta,\epsilon}\right].
	\end{equation}
	Recall from \eqref{eq: definition bottom} and \eqref{eq: definition upper} the definitions of ${\rho}^{\delta-\epsilon,-}$ and ${\rho}^{\delta-\epsilon,+}$. Using the Poissonian decomposition from Theorem \ref{thm: degree decomposition} and Corollary \ref{cor: identity in distribution before big jump}, we have:
	\begin{multline*}
		\n\left[F(\rho) \mathbf{1}_{F_{\delta,\epsilon}}\right] \\
		\begin{aligned}[t]
		&=\n\left[F\left({\rho}^{\delta-\epsilon,-}\circledast(T_{\delta-\epsilon},{\rho}^{\delta-\epsilon,+})\right)\mathbf{1}_{F_{\delta,\epsilon}}\right] \\
		&= \pibar(\delta-\epsilon) \n\left[\int \mathbf{1}_{[0,\sigma]}(s) \,\dd s\rfordinf(\ind{\Delta< \delta+\epsilon}\, \dd \hat{\rho}) F(\rho\circledast(s,\hat{\rho})) \ind{h-\epsilon<H_s<h+\epsilon,\,\Delta\leqslant \delta-\epsilon}\right].
		\end{aligned}
	\end{multline*}
	By conditioning, it follows from \eqref{eq: F delta epsilon} that
	\begin{multline}\label{eq: conditioning on F delta epsilon}
		\n\left[F(\rho)\middle|F_{\delta,\epsilon}\right]=\frac{1}{\n\left[\int_0^\sigma \ind{h-\epsilon<H_s<h+\epsilon}\, \dd s\,\ind{\Delta\leqslant \delta-\epsilon}\right]}\\
		\times\n\left[\int \mathbf{1}_{[0,\sigma]}(s)\, \dd s \rfordinf(\dd \hat{\rho}|\Delta< \delta+\epsilon)F(\rho\circledast(s,\hat{\rho})) \ind{h-\epsilon<H_s<h+\epsilon,\,\Delta\leqslant \delta-\epsilon}\right].
	\end{multline}
	
	Therefore, using Lemma \ref{lemma: local time convergence}, Lemma \ref{lemma: continuity} and Lemma \ref{lemma: weak convergence of forest with no big nodes delta}, we deduce that
	\begin{multline}
		\lim_{\epsilon \to 0}\n\left[F(\rho)\middle|\delta-\epsilon<\Delta<\delta+\epsilon,h-\epsilon<H_\Delta<h+\epsilon\right]  \\
		= \int_{\real_+\times \D} F\, \dd \! \tiltedh{\delta}\times \for{\delta}(G({\rho})|\Delta\leqslant \delta),
	\end{multline}
	and the result readily follows by using \cite[Theorem 1.30]{evans2015measure}.
\end{proof}

\section{Local limit of the Lévy tree conditioned on large maximal degree}\label{sect: local limit}
In this section, we shall investigate the behavior of the exploration process conditionally on $\Delta = \delta$ as $\delta \to \infty$. We start with the subcritical case. Then recall from \eqref{eq: expectation sigma} that $\n[\sigma] = \alpha^{-1} <\infty$. We define a probability measure $\tilted{\infty}$ on the space $ \real_+\times \D$ by setting:
\begin{equation}
	\int_{\real_+\times \D} F\, \dd \!\tilted{\infty}= \alpha \n\left[\int_0^\sigma F(s,\rho)\, \dd s\right],
\end{equation}
for every $F\in \B(\real_+\times \D)$.
\begin{lemma}\label{lemma: weak convergence of bottom part}
	Assume that $\psi$ is subcritical. The probability measure $\tilted{\delta}$ converges to $\tilted{\infty}$ in total variation distance on the space $\real_+\times \D$ as $\delta \to \infty$.
\end{lemma}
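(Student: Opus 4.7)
The plan is direct. The measures $\tilted{\delta}$ and $\tilted{\infty}$ differ only through a normalization and the truncation $\ind{\Delta<\delta}$, so everything reduces to showing $w(\delta)\to 1/\alpha = \n[\sigma]$ and $\n[\sigma\ind{\Delta\geqslant\delta}]\to 0$ as $\delta\to\infty$.

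\textbf{Step 1: limit of $w(\delta)$.} Under $\n$, the lifetime $\sigma$ is $\n$-a.e.~finite, so the exploration process only runs for finite time and therefore $\Delta<\infty$ $\n$-a.e. Consequently $\ind{\Delta<\delta}\uparrow 1$ as $\delta\to\infty$, and monotone convergence gives
\begin{equation*}
w(\delta)=\n[\sigma\ind{\Delta<\delta}]\ \uparrow\ \n[\sigma]=1/\alpha,
\end{equation*}
using the subcriticality assumption and \eqref{eq: expectation sigma}. In particular, $\alpha w(\delta)\in(0,1]$ for $\delta$ large enough and $1-\alpha w(\delta)=\alpha\,\n[\sigma\ind{\Delta\geqslant\delta}]\to 0$.

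\textbf{Step 2: controlling the total variation distance.} By the variational characterization, it suffices to bound $|\int F\,d\tilted{\delta}-\int F\,d\tilted{\infty}|$ uniformly over measurable $F\colon\real_+\times\D\to[-1,1]$. Writing
\begin{equation*}
\int F\, d\tilted{\delta}-\int F\, d\tilted{\infty}=\left(\tfrac{1}{w(\delta)}-\alpha\right)\n\!\left[\int_0^\sigma F(s,\rho)\,\dd s\,\ind{\Delta<\delta}\right]-\alpha\,\n\!\left[\int_0^\sigma F(s,\rho)\,\dd s\,\ind{\Delta\geqslant\delta}\right],
\end{equation*}
and using $|\int_0^\sigma F(s,\rho)\,\dd s|\leqslant\sigma$, we get the estimate
\begin{equation*}
\left|\int F\, d\tilted{\delta}-\int F\, d\tilted{\infty}\right|\leqslant\left|\tfrac{1}{w(\delta)}-\alpha\right|w(\delta)+\alpha\,\n[\sigma\ind{\Delta\geqslant\delta}]=2\bigl(1-\alpha w(\delta)\bigr)=2\alpha\,\n[\sigma\ind{\Delta\geqslant\delta}].
\end{equation*}
The right-hand side does not depend on $F$, and tends to $0$ as $\delta\to\infty$ by dominated convergence (the integrand is dominated by $\sigma$, which is $\n$-integrable since $\psi$ is subcritical). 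Taking the supremum over $F$ concludes the proof.

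\textbf{Obstacles.} There is essentially no obstacle: the argument is a simple normalization computation combined with the integrability $\n[\sigma]<\infty$ afforded by subcriticality. The only point worth care is that the total variation convergence is genuinely uniform over test functions, which is why one must subtract and add the truncated integral as above rather than naively comparing the two densities.
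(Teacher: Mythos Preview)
Your proof is correct and follows essentially the same approach as the paper: both arguments reduce the total variation distance to the quantity $\n[\sigma\ind{\Delta\geqslant\delta}]$, which tends to $0$ by dominated convergence since $\n[\sigma]<\infty$ in the subcritical case. Your version is slightly more explicit about handling the normalization constants, but the underlying idea is identical.
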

\begin{proof}
	Let $F\colon \real_+\times \D$ be measurable and bounded. We have:
	\begin{equation*}
		\left| \n\left[\int_0^{\sigma} F(s,\rho)\, \dd s\,\ind{\Delta< \delta}\right]-\n\left[\int_0^\sigma F(s,\rho)\, \dd s\right]\right| \leqslant \norm{F}_\infty \n[\sigma \ind{\Delta\geqslant \delta}].
	\end{equation*}
	Since $\psi$ is subcritical, we have $\n[\sigma]< \infty$ and the right-hand side converges to $0$ as $\delta \to \infty$. This proves the result.
\end{proof}

For every measure-valued process $\mu = (\mu_t, \, t \geqslant 0)\in \D$, we define the measure-valued process $R_0(\mu)$ obtained from $\mu$ by removing any atoms at $0$:
\begin{equation}\label{eq: definition R0}
	R_0(\mu)_t = \mu_t - \mu_t(0) \delta_0.
\end{equation}
Denote by $\explo$ the distribution of the exploration process $\rho$ with branching mechanism $\psi$ starting from $0$.
\begin{lemma}\label{lemma: weak convergence of forest with no big nodes}
	Assume that $\psi$ is subcritical and that $\pi$ is unbounded. Under $\for{\delta}(\cdot|\Delta\leqslant \delta)$, the process $R_0(\rho)$ converges in distribution to $\explo$ in the space $(\D,\dS)$ as $\delta \to \infty$.
\end{lemma}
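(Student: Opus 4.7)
The plan is to couple $\for{\delta}$ and $\explo$ pathwise via the representation \eqref{eq: representation forest r}. Under $\explo$, let $\rho$ denote the exploration process and recall that its local time at $0$ is $L^0 = -I$. Then \eqref{eq: representation forest r} says that the process
\[
\tilde{\rho}^{(\delta)}_t = (\delta - L^0_t)_+ \delta_0 + \rho_t \ind{L^0_t \leqslant \delta}
\]
has distribution $\for{\delta}$, and crucially $R_0(\tilde{\rho}^{(\delta)})_t = \rho_t \ind{L^0_t \leqslant \delta}$. Setting $S_\delta = \inf\{t \geqslant 0 \colon L^0_t > \delta\}$, the process $R_0(\tilde{\rho}^{(\delta)})$ coincides with $\rho$ on $[0, S_\delta]$ and vanishes thereafter.

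Since $\psi$ is subcritical, $\ex{X_1} = -\alpha < 0$, so $X_t \to -\infty$ and hence $L^0_t = -I_t \to \infty$ almost surely under $\explo$. In particular $S_\delta \to \infty$ almost surely, and for every $T > 0$ the restrictions of $R_0(\tilde{\rho}^{(\delta)})$ and $\rho$ to $[0, T]$ agree on an event whose probability tends to $1$. This yields the almost-sure convergence $R_0(\tilde{\rho}^{(\delta)}) \to \rho$ in $(\D, \dS)$.

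Next I would show that the conditioning event has probability tending to $1$. By Lemma \ref{lemma: poisson decomposition exploration}, $\for{\delta}(\Delta \leqslant \delta) = \e^{-\delta\, \n[\Delta > \delta]}$. Since $\psi_\delta(\lambda) \geqslant \alpha \lambda$, Proposition \ref{prop: joint distribution degree lifetime} gives $\n[\Delta > \delta] = \psi_\delta^{-1}(\pibar(\delta)) \leqslant \pibar(\delta)/\alpha$, whence
\[
\delta\, \n[\Delta > \delta] \leqslant \alpha^{-1} \delta \pibar(\delta) \leqslant \alpha^{-1} \int_{(\delta,\infty)} r\, \pi(\dd r) \xrightarrow[\delta \to \infty]{} 0,
\]
using that $r \pi(\dd r)$ is integrable near infinity. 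Consequently $\for{\delta}(\Delta \leqslant \delta) \to 1$.

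To conclude, for any bounded continuous $F \colon (\D, \dS) \to \real$, the almost-sure convergence above and dominated convergence yield $\explo[F(R_0(\tilde{\rho}^{(\delta)}))] \to \explo[F(\rho)]$, while
\[
\left|\explo\!\left[F(R_0(\tilde{\rho}^{(\delta)}))\, \ind{\Delta(\tilde{\rho}^{(\delta)}) > \delta}\right]\right| \leqslant \norm{F}_\infty \for{\delta}(\Delta > \delta) \to 0.
\]
Dividing by $\for{\delta}(\Delta \leqslant \delta) \to 1$ gives
\[
\for{\delta}\!\left[F(R_0(\rho)) \,\middle|\, \Delta \leqslant \delta\right] = \frac{\explo\!\left[F(R_0(\tilde{\rho}^{(\delta)}))\, \ind{\Delta(\tilde{\rho}^{(\delta)}) \leqslant \delta}\right]}{\for{\delta}(\Delta \leqslant \delta)} \xrightarrow[\delta \to \infty]{} \explo[F(\rho)],
\]
which is the claimed convergence in distribution. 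No step is deep: the main insight is that \eqref{eq: representation forest r} makes the coupling between $\for{\delta}$ and $\explo$ completely explicit, reducing the lemma to a pathwise comparison (where subcriticality guarantees $S_\delta \to \infty$) together with a routine small-probability estimate on the conditioning event (where the integrability tail of $\pi$ is the only real input).
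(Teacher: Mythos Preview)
Your proof is correct and follows the same approach as the paper: both couple $\for{\delta}$ and $\explo$ via the representation \eqref{eq: representation forest r}, obtain almost-sure Skorokhod convergence of $R_0(\tilde\rho^{(\delta)})$ to $\rho$, and remove the conditioning by showing $\for{\delta}(\Delta\leqslant\delta)\to 1$. Your direct bound $\n[\Delta>\delta]\leqslant\pibar(\delta)/\alpha$ (from $\psi_\delta(\lambda)\geqslant\alpha\lambda$) is a pleasant elementary alternative to the paper's appeal to the asymptotic $\n[\Delta>\delta]\sim\pibar(\delta)/\alpha$ from \cite{he2014maximal}, and your local argument for Skorokhod convergence (agreement on $[0,S_\delta]$ with $S_\delta\to\infty$) is cleaner than the paper's bound via $\sup_{t\geqslant 0}\dbl(\cdot,\cdot)$.
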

\begin{proof}
	Recall from \eqref{eq: distribution degree forest} that $\for{\delta}(\Delta\leqslant \delta) = \e^{-\delta \n[\Delta>\delta]}$. Since $\psi$ is subcritical, by \cite[Proposition 3.8]{he2014maximal}, we have as $\delta \to \infty$:
	\begin{equation}\label{eq: degree tail equivalent}
		\n[\Delta>\delta]\sim \frac{\pibar(\delta)}{\alpha}\cdot
	\end{equation}
	But $\delta \pibar(\delta) \leqslant \int_{(\delta,\infty)} r\, \pi(\dd r)$ and the last term goes to $0$ as $\delta\to \infty$. It follows that $\lim_{\delta \to \infty} \delta \n[\Delta>\delta] = 0$ and
	\begin{equation}\label{eq: proba big forest}
		\lim_{\delta\to \infty }\for{\delta}(\Delta\leqslant \delta) = 1.
	\end{equation}
	Thus, it suffices to show that for every continuous and bounded function $F \colon \D \to \real$, the following convergence holds:
	\begin{equation}\label{eq: convergence without atom 0}
		\lim_{\delta\to \infty} \for{\delta}(F\circ R_0(\rho) ) = \explo(F(\rho)).
	\end{equation}
	
	Let $\rho$ be the exploration process with branching mechanism $\psi$ starting from $0$, that is $\rho$ has distribution $\explo$. Then, the process $\tilde{\rho}^{(\delta)}$ defined in \eqref{eq: representation forest r} has distribution $\for{\delta}$. Notice that we have $R_0(\tilde{\rho}^{(\delta)})_t = \rho_t \ind{L_t^0\leqslant r}$, which implies that
	\begin{equation*}
		\dS(R_0(\tilde{\rho}^{(\delta)}), \rho) \leqslant \sup_{t\geqslant 0} \dbl(R_0(\tilde{\rho}^{(\delta)})_t, \rho_t) = \sup_{L_t^0>\delta} \langle \rho_t,1\rangle.
	\end{equation*}
	Recall that $\langle \rho_t, 1\rangle = X_t - I_t$. Since the Lévy measure $\pi$ satisfies the integrability assumption $\int_{(0,\infty)} (r\wedge r^2) \, \pi(\dd r) <\infty$, the process $X$ does not drift to $\infty$; see e.g.~\cite[Chapter VII]{bertoin1996levy}. This implies that the following convergence holds a.s.:
	\begin{equation*}
		\lim_{\delta\to \infty} \sup_{L_t^0>\delta} (X_t-I_t) = 0.
	\end{equation*}
	Therefore, the process $R_0(\tilde{\rho}^{(\delta)})$ converges a.s.~to $\rho$ for the Skorokhod topology. This proves \eqref{eq: convergence without atom 0} and the proof is complete.
\end{proof}
\begin{remark}
	It should be clear from \eqref{eq: representation forest r} that the mass $\tilde{\rho}^{(\delta)}_0(0)$ of the atom at $0$ goes to $\infty$ as $\delta \to \infty$. This corresponds to the condensation phenomenon: a node with infinite mass appears at the limit. By introducing the operator $R_0$, we remove this mass which allows us to study the limiting behavior above the condensation node.
\end{remark}

Similarly to what was done in Section \ref{sect: conditioning on degree} (see \eqref{eq: definition bottom} and \eqref{eq: definition upper}), we split the path of the exploration process into two parts around the first node with mass $\Delta$: ${\rho^{\Delta,-}}$ is the pruned exploration process (that is the exploration process minus the first  node with mass $\Delta$) and $\rho^{\Delta,+}$ is the path of the exploration process above the first node with mass $\Delta$. Notice that $\rho^{\Delta,+}_0 $ is equal to $\Delta$ times the Dirac measure at $0$.
Let 
\begin{equation}\label{eq: definition Edelta}
	\unique{\delta} = \{\Delta= \delta, \, \Delta(\rho^{\Delta,-})< \delta\}
\end{equation}
be the event that the maximal degree is equal to $\delta$ and there is a unique first-generation node with mass $\delta$. Recall from \eqref{eq: definition g} the definition of $\g$.
\begin{lemma}\label{lemma: weak convergence atom}
	Assume that $\psi$ is subcritical and that the set of atoms of the Lévy measure $\pi$ is unbounded. The following holds as $\delta\to \infty$ along the set of atoms of $\pi$:
	\begin{equation}
		\n[\Delta=\delta] \sim \n[\unique{\delta}] \sim\frac{\g(\delta)}{\alpha}\cdot
	\end{equation}
\end{lemma}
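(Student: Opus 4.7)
The plan is to compute $\n[\unique{\delta}]$ and $\n[\Delta=\delta]$ essentially explicitly via the Poissonian decomposition of part~(ii) of Theorem~\ref{thm: degree decomposition}, then to extract the asymptotics from the convergence $w(\delta)\to 1/\alpha$. The latter is immediate from $w(\delta)=\n[\sigma\ind{\Delta<\delta}]$, the finiteness $\n[\sigma]=1/\alpha$, and monotone convergence (noting that $\Delta<\infty$ holds $\n$-a.e.\ since $\pi$ is finite on $\{r>1\}$).

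Under $\operatorname{\mathbf{N}}^{\psi_{\delta-}}$ the pruned process $\rho$ has $\Delta(\rho)<\delta$, and the grafted marks form a Poisson point measure with intensity $\dd s\int_{[\delta,\infty)}\pi(\dd r)\,\for{r}(\dd\tilde\rho)$. Conditionally on $\sigma$, the numbers of marks in $[0,\sigma]$ with initial mass $=\delta$ (resp.~$>\delta$) are independent Poisson variables with means $\sigma\pi(\delta)$ (resp.~$\sigma\pibar(\delta)$); for a $\delta$-mark $\tilde\rho\sim\for{\delta}$, the probability that all sub-excursions above its root have maximal degree $<\delta$ (resp.~$\leqslant\delta$) is $\e^{-\delta\n[\Delta\geqslant\delta]}$ (resp.~$\e^{-\delta\n[\Delta>\delta]}=\for{\delta}(\Delta\leqslant\delta)$) by Lemma~\ref{lemma: poisson decomposition exploration} (resp.~\cref{eq: distribution degree forest}). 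The event $\unique{\delta}$ thus requires no mark with $r_i>\delta$, exactly one mark with $r_i=\delta$, and that mark having all sub-excursions of degree $<\delta$; by \cref{eq: w general} this gives
\begin{equation*}
\n[\unique{\delta}]=\pi(\delta)\,w(\delta)\,\e^{-\delta\n[\Delta\geqslant\delta]}.
\end{equation*}
Summing the analogous calculation over $k\geqslant 1$ $\delta$-marks (each only needing $\Delta(\tilde\rho_i)\leqslant\delta$) and applying \cref{eq: laplace transform sigma} yields
\begin{equation*}
\n[\Delta=\delta]=\operatorname{\mathbf{N}}^{\psi_{\delta-}}\!\!\left[\e^{-\sigma\pi[\delta,\infty)}\bigl(\e^{\sigma\g(\delta)}-1\bigr)\right]=\psi_{\delta-}^{-1}(\pi[\delta,\infty))-\psi_{\delta-}^{-1}(\pi[\delta,\infty)-\g(\delta)),
\end{equation*}
which by the mean value theorem equals $\g(\delta)/\psi_{\delta-}'(\xi_\delta)$ for some $\xi_\delta\in[\n[\Delta>\delta],\n[\Delta\geqslant\delta]]$.

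To conclude, I would squeeze $\psi_{\delta-}'(\xi_\delta)\to\alpha$ between $\psi_{\delta-}'(0)=\alpha+\int_{[\delta,\infty)}r\,\pi(\dd r)$, which decreases to $\alpha$ using $\int_{[1,\infty)}r\,\pi(\dd r)<\infty$, and $\psi_{\delta-}'(\n[\Delta\geqslant\delta])=1/w(\delta)\to\alpha$; this yields $\n[\Delta=\delta]\sim\g(\delta)/\alpha$. Next, concavity of $\psi_{\delta-}^{-1}$ gives $\n[\Delta\geqslant\delta]\leqslant\pi[\delta,\infty)/\alpha$, so $\delta\n[\Delta\geqslant\delta]\leqslant\alpha^{-1}\!\int_{[\delta,\infty)}r\,\pi(\dd r)\to 0$; hence $\e^{-\delta\n[\Delta\geqslant\delta]}\to 1$ and, by the same argument applied to $\n[\Delta>\delta]$, $\g(\delta)=\pi(\delta)\e^{-\delta\n[\Delta>\delta]}\sim\pi(\delta)$, giving $\n[\unique{\delta}]\sim\pi(\delta)/\alpha\sim\g(\delta)/\alpha$. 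The most delicate point is the bookkeeping producing the exponent $\n[\Delta\geqslant\delta]$ for $\n[\unique{\delta}]$ versus $\n[\Delta>\delta]$ inside $\g(\delta)$: the discrepancy $\delta\n[\Delta=\delta]$ vanishes in the limit but must be carefully tracked because $\delta$ is an atom of $\pi$.
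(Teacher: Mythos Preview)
Your overall strategy matches the paper's: both compute $\n[\unique{\delta}]$ and $\n[\Delta=\delta]$ from the Poisson decomposition of Theorem~\ref{thm: degree decomposition}(ii), and both use $w(\delta)\to 1/\alpha$. There is one bookkeeping slip and one genuine methodological difference.

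The slip is in your exact formula for $\n[\unique{\delta}]$. On $\unique{\delta}$ the forest above the unique first-generation $\delta$-node is \emph{removed} when forming $\rho^{\Delta,-}$, so the condition $\Delta(\rho^{\Delta,-})<\delta$ imposes nothing on that forest beyond what $\Delta=\delta$ already requires, namely $\Delta(\tilde\rho)\leqslant\delta$. Hence the correct constraint on the sub-excursions is $\leqslant\delta$, not $<\delta$, and the exact identity is $\n[\unique{\delta}]=\pi(\delta)\,w(\delta)\,\e^{-\delta\n[\Delta>\delta]}=\g(\delta)w(\delta)$ (this is the paper's \eqref{eq: E delta measure}). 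Your formula has a spurious extra factor $\e^{-\delta\n[\Delta=\delta]}$; as you correctly observe, $\delta\n[\Delta=\delta]\to 0$, so the asymptotic is unaffected, but the ``delicate bookkeeping'' you flag is in fact an artifact of the slip.

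For $\n[\Delta=\delta]$ your argument diverges from the paper's. After reaching the same expression $\operatorname{\mathbf{N}}^{\psi_{\delta-}}[\e^{-\sigma\pi[\delta,\infty)}(\e^{\sigma\g(\delta)}-1)]$, the paper passes to $\n[(\e^{\g(\delta)\sigma}-1)\ind{\Delta<\delta}]$ via Corollary~\ref{cor: identity in distribution before big jump} and applies dominated convergence using the subcritical exponential moment $\n[\sigma\e^{\lambda_0\sigma}]<\infty$. You instead write the quantity as $\psi_{\delta-}^{-1}(\pi[\delta,\infty))-\psi_{\delta-}^{-1}(\pi[\delta,\infty)-\g(\delta))$, note (correctly, via $\psi_{\delta-}(\n[\Delta>\delta])=\pibar(\delta)+\pi(\delta)(1-\e^{-\delta\n[\Delta>\delta]})=\pi[\delta,\infty)-\g(\delta)$) that the endpoints map to $\n[\Delta\geqslant\delta]$ and $\n[\Delta>\delta]$, and squeeze $\psi_{\delta-}'(\xi_\delta)$ between $\psi_{\delta-}'(0)$ and $\psi_{\delta-}'(\n[\Delta\geqslant\delta])=1/w(\delta)$ by convexity. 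This is a valid alternative that avoids invoking the exponential moment of $\sigma$; the paper's route is perhaps more direct but uses slightly more input.
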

\begin{proof}
	Under $\operatorname{\mathbf{N}^{\psi_{\delta--}}}$ and conditionally on $\rho$, let $\sum_{i=1}^N \delta_{(s_i,\rho_i)}$ be a Poisson point measure with intensity $\dd s \int_{[\delta,\infty)} \pi(\dd r)\for{r}(\dd \tilde{\rho})$. Thanks to Theorem \ref{thm: degree decomposition}, we have:
	\begin{equation*}
		\n[E_\delta] = \operatorname{\mathbf{N}^{\psi_{\delta--}}}\left[N=1, \Delta(\rho_1) \leqslant \delta\right].
	\end{equation*}
	But, under $\operatorname{\mathbf{N}^{\psi_{\delta--}}}$ and conditionally on $\rho$, $N$ has Poisson distribution with parameter $\pi[\delta,\infty) \sigma$, $\rho_1$ has distribution $\pi[\delta,\infty)^{-1} \int_{[\delta,\infty)} \pi(\dd r) \for{r}(\dd \tilde{\rho})$ and they are independent. It follows that
	\begin{align}\label{eq: E delta measure}
		\n[E_\delta] &= \operatorname{\mathbf{N}^{\psi_{\delta--}}}\left[\sigma \e^{-\pi[\delta,\infty)\sigma} \int_{[\delta,\infty)} \pi(\dd r) \for{r}(\Delta\leqslant \delta)\right]\nonumber\\
		&=\operatorname{\mathbf{N}^{\psi_{\delta--}}}\left[\sigma \e^{-\pi[\delta,\infty)\sigma} \pi(\delta) \e^{-\delta \n[\Delta>\delta]}\right]\nonumber\\
		&=\g(\delta)w(\delta),
	\end{align}
	where we used \eqref{eq: distribution degree forest} for the second equality and \eqref{eq: w general} for the last.
	
	Recall from \eqref{eq: definition w} the definition of $w$. Since $\psi$ is subcritical, it follows from \eqref{eq: expectation sigma} that $\lim_{\delta\to\infty} w(\delta) = \alpha^{-1}$. This proves that
	\begin{equation*}
		\n[\unique{\delta}] \sim \frac{\g(\delta)}{\alpha}\cdot
	\end{equation*}
	
	A similar computation yields:
	\begin{align}\label{eq: degree delta measure}
		\n[\Delta= \delta] &= \operatorname{\mathbf{N}^{\psi_{\delta--}}}\left[N\geqslant 1, \Delta(\rho_i) \leqslant \delta, \, \forall 1\leqslant i \leqslant N\right]\nonumber\\
		&= \operatorname{\mathbf{N}^{\psi_{\delta--}}}\left[ \e^{-\pi[\delta,\infty)\sigma}\left(\e^{\g(\delta)\sigma}-1\right)\right]\nonumber\\
		&= \n\left[\left(\e^{\g(\delta)\sigma}-1\right)\ind{\Delta<\delta}\right],
	\end{align}
	where we used Corollary \ref{cor: identity in distribution before big jump} for the last equality.
	
	Observe that since $\pi(1,\infty) <\infty$, $\pi(\delta)$ (and thus also $\g(\delta)$) converges to $0$
	as $\delta \to \infty$. It is clear that
	\begin{equation*}
		\lim_{\delta \to \infty}\frac{\e^{\g(\delta)\sigma}-1}{\g(\delta)} \ind{\Delta<\delta} = \sigma.
	\end{equation*}
	Furthermore, since $\psi$ is subcritical, there exists $\lambda_0>0$ such that $\n\left[\sigma \e^{\lambda_0 \sigma}\right]<\infty$. Thus, using Taylor's inequality, we have for $\delta >0$ large enough:
	\begin{equation*}
		\frac{\e^{\g(\delta)\sigma}-1}{\g(\delta)} \ind{\Delta<\delta} \leqslant \sigma \e^{\g(\delta)\sigma} \leqslant \sigma \e^{\lambda_0 \sigma},
	\end{equation*}
	Thanks to the dominated convergence theorem, we deduce that
	\begin{equation*}
		\lim_{\delta\to \infty}\frac{\n[\Delta=\delta]}{\g(\delta)} = \lim_{\delta\to \infty}\frac{1}{\g(\delta)}\n\left[\left(\e^{\g(\delta)\sigma}-1\right)\ind{\Delta<\delta}\right] = \n[\sigma] = \alpha^{-1}.
	\end{equation*}
	This finishes the proof.
\end{proof}

The first main result of this section concerns the limit of the subcritical Lévy tree conditioned on having a large maximal degree. Then there is a condensation phenomenon: the limit consists of a size-biased Lévy tree onto which one grafts -- at a uniformly chosen leaf -- an independent Lévy forest with infinite mass. In particular, the height of the condensation node is exponentially distributed. Recall from \eqref{eq: definition TDelta} that $T_\Delta$ is the first time that the exploration process contains an atom with mass $\Delta$. Recall also that $\rho^{\Delta,-}$ denotes the path of the exploration process after removing the first node with mass $\Delta$ while $\rho^{\Delta,+}$ denotes the path of the exploration process above that node. Finally, recall that $\explo$ is the distribution of the exploration process with branching mechanism $\psi$ starting from $0$.

\begin{thm}\label{thm: convergence subcritical delta}
	Assume that $\psi$ is subcritical and that $\pi$ is unbounded. Let $F\colon \real_+\times\D\to \real$ and $G\colon  \D\to \real$ be continuous and bounded. We have:
	\begin{equation}
		\lim_{\delta \to \infty} \n\left[F({T}_\Delta, \rho^{\Delta,-})G\circ R_0({\rho^{\Delta,+}})|\Delta=\delta\right] = \alpha \n\left[\int_0^\sigma F(s,\rho)\, \dd s\right]\explo(G(\rho)).
	\end{equation}
\end{thm}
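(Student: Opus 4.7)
The plan is to apply the explicit conditional decomposition of Theorem \ref{thm: disintegration wrt degree}, in the form of Remark \ref{rk: conditioning by Edelta} which uses the cleaner auxiliary event $\unique{\delta}$, in order to rewrite the expectation as a product over the pruned piece $\rho^{\Delta,-}$ and the piece $\rho^{\Delta,+}$ above the maximal node. The two factors are then controlled separately by Lemma \ref{lemma: weak convergence of bottom part} (convergence in total variation) and by Lemma \ref{lemma: weak convergence of forest with no big nodes} (weak convergence on $(\D,\dS)$). The only nontrivial bookkeeping is to pass from conditioning on $\unique{\delta}$ to conditioning on $\{\Delta=\delta\}$ when $\delta$ happens to be an atom of $\pi$.

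The first step is to invoke Remark \ref{rk: conditioning by Edelta}: for every $\delta>0$ with $\pibar(\delta)>0$,
\begin{equation*}
\n\!\left[F(T_\Delta, \rho^{\Delta,-})\, G\circ R_0(\rho^{\Delta,+}) \,\middle|\, \unique{\delta}\right] = \int_{\real_+\times\D} F(s,\tilde{\rho})\, \tilted{\delta}(\dd s, \dd \tilde{\rho}) \cdot \for{\delta}\!\bigl(G\circ R_0(\rho)\,\big|\,\Delta\leqslant \delta\bigr),
\end{equation*}
because on $\unique{\delta}$ the exploration process decomposes uniquely as $\rho = \tilde{\rho} \circledast (s, \hat{\rho})$ with $s=T_\Delta$, $\tilde{\rho}=\rho^{\Delta,-}$ and $\hat{\rho}=\rho^{\Delta,+}$, so that the integrand factorizes across the two coordinates of the product measure $\tilted{\delta}\otimes \for{\delta}(\cdot\mid\Delta\leqslant\delta)$.

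Next, I would pass to the limit in each factor as $\delta\to\infty$. Since $\psi$ is subcritical, Lemma \ref{lemma: weak convergence of bottom part} yields $\tilted{\delta}\to \tilted{\infty}$ in total variation, whence
\begin{equation*}
\int F(s,\tilde{\rho})\, \tilted{\delta}(\dd s, \dd \tilde{\rho}) \xrightarrow[\delta\to\infty]{} \int F\, \dd\tilted{\infty} = \alpha\, \n\!\left[\int_0^\sigma F(s,\rho)\, \dd s\right]
\end{equation*}
for every bounded measurable $F$. Since $\pi$ is unbounded, Lemma \ref{lemma: weak convergence of forest with no big nodes} gives that $R_0(\rho)$ under $\for{\delta}(\cdot\mid\Delta\leqslant\delta)$ converges in distribution to $\rho$ under $\explo$, so by continuity and boundedness of $G$ we get $\for{\delta}(G\circ R_0(\rho)\mid \Delta\leqslant\delta)\to \explo(G(\rho))$. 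The product of these two limits is exactly the announced right-hand side.

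Finally, I would pass from $\unique{\delta}$ to $\{\Delta=\delta\}$. When $\delta$ is not an atom of $\pi$, Corollary \ref{cor: unique node with mass delta} shows that the regular conditional probabilities $\n[\cdot\mid\Delta=\delta]$ and $\n[\cdot\mid\unique{\delta}]$ are given by the same formula in Theorem \ref{thm: disintegration wrt degree} and Remark \ref{rk: conditioning by Edelta} and therefore coincide. When $\delta$ is an atom of $\pi$, setting $H=F(T_\Delta, \rho^{\Delta,-})\, G\circ R_0(\rho^{\Delta,+})$ and using $\unique{\delta}\subset \{\Delta=\delta\}$ we have
\begin{equation*}
\bigl|\n[H\mid \Delta=\delta]\, \n[\Delta=\delta] - \n[H\mid \unique{\delta}]\, \n[\unique{\delta}]\bigr| \leqslant \|F\|_\infty\|G\|_\infty\, \bigl(\n[\Delta=\delta] - \n[\unique{\delta}]\bigr),
\end{equation*}
so dividing by $\n[\Delta=\delta]$ and using Lemma \ref{lemma: weak convergence atom}, which gives $\n[\unique{\delta}]/\n[\Delta=\delta]\to 1$ along atoms of $\pi$, yields $\n[H\mid \Delta=\delta]=\n[H\mid \unique{\delta}]+o(1)$, and the limit identified in the previous step propagates to $\n[\cdot\mid\Delta=\delta]$. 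All the analytic content is delegated to Lemmas \ref{lemma: weak convergence of bottom part}, \ref{lemma: weak convergence of forest with no big nodes} and \ref{lemma: weak convergence atom}; the uniform treatment of atomic and non-atomic $\delta$ via this third lemma is the only mildly delicate point of the argument.
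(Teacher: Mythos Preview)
Your proposal is correct and follows essentially the same approach as the paper's proof: both rely on the factorized description of $\n[\cdot\mid\unique{\delta}]$ from Remark~\ref{rk: conditioning by Edelta}, pass to the limit in each factor via Lemmas~\ref{lemma: weak convergence of bottom part} and~\ref{lemma: weak convergence of forest with no big nodes}, and use Lemma~\ref{lemma: weak convergence atom} to transfer the conclusion from $\unique{\delta}$ to $\{\Delta=\delta\}$ along atoms of $\pi$. The only cosmetic difference is that the paper splits into the atomic and non-atomic cases up front, whereas you treat $\unique{\delta}$ uniformly and reconcile with $\{\Delta=\delta\}$ at the end.
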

\begin{proof}
	When $\delta \to \infty$ along the set of non-atoms $\{\delta>0\colon\, \pi(\delta)=0\}$, the convergence is a direct consequence of Theorem \ref{thm: disintegration wrt degree}, Lemma \ref{lemma: weak convergence of bottom part} and Lemma \ref{lemma: weak convergence of forest with no big nodes}. 
	
	Now assume that $\delta>0$ is an atom of $\pi$. Thanks to Lemma \ref{lemma: weak convergence atom} and since the inclusion $\unique{\delta}\subset \{\Delta=\delta\}$ holds, it is enough to show that the result holds when conditionning by $\unique{\delta}$. But, thanks to Remark \ref{rk: conditioning by Edelta}, we have:
	\begin{equation*}
		\n\left[F({T}_\Delta, \rho^{\Delta,-})G\circ R_0({\rho^{\Delta,+}})\middle|\unique{\delta}\right] = \int_{\real_+\times \D} F(s,\tilde{\rho})\, \tilted{\delta}(\dd s,\dd \tilde{\rho}) \for{\delta}(G\circ R_0(\rho)|\Delta\leqslant \delta).
	\end{equation*}
	The result readily follows from Lemma \ref{lemma: weak convergence of bottom part} and Lemma \ref{lemma: weak convergence of forest with no big nodes}.
\end{proof}

Next, we consider the critical case. Recall from \eqref{eq: definition w} the definition of $\w$. The next lemma is a key ingredient in the proof of the local convergence of the critical Lévy tree.
\begin{lemma}\label{lemma: convergence to immortal tree}
	Assume that $\psi$ is critical and that the Lévy measure $\pi$ is unbounded. For every $h>0$, we have
	\begin{equation}\label{eq: critical convergence bottom part to kesten}
		\lim_{\delta\to\infty} \frac{1}{w(\delta)} \n\left[\sigma F(r_h(\rho))\ind{\Delta< \delta}\right] = \n\left[L_\sigma^h \,F(r_h(\rho))\right].
	\end{equation}
\end{lemma}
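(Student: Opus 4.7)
The strategy is to apply the branching property at level $h$ (Proposition~\ref{prop: branching property}): this splits $\rho$ into the part below height $h$ (coded by $r_h(\rho)$) and a Poisson family of excursions above $h$, and the Campbell--Mecke formula will manufacture exactly the factor $w(\delta)$ from the excursions above $h$, allowing the denominator $w(\delta)$ to cancel.

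Set $\sigma_h = \int_0^\sigma \ind{H_s\leq h}\,\dd s$ and $\Delta_h=\sup\{\Delta(\rho_t)\colon t\leq\sigma,\,H_t\leq h\}$; both are $\mathcal{E}_h$-measurable, as is $L_\sigma^h$. Using the notation of Proposition~\ref{prop: branching property}, we have the pathwise decompositions
\begin{equation*}
\sigma=\sigma_h+\sum_{i\in I_h}\sigma(\rho^{i}),\qquad \ind{\Delta<\delta}=\ind{\Delta_h<\delta}\prod_{i\in I_h}\ind{\Delta(\rho^{i})<\delta}.
\end{equation*}
Conditionally on $\mathcal{E}_h$, the family $\sum_{i\in I_h}\delta_{(\ell_i,\rho^{i})}$ is a Poisson point measure with intensity $\mathbf{1}_{[0,L_\sigma^h]}(\ell)\,\dd\ell\,\n[\dd\tilde{\rho}]$, so Campbell's (Mecke's) formula yields
\begin{align*}
\n\!\left[\prod_{i}\ind{\Delta(\rho^{i})<\delta}\,\middle|\,\mathcal{E}_h\right]
&=\exp\!\bigl(-L_\sigma^h\,\n[\Delta\geq\delta]\bigr),\\
\n\!\left[\sum_{i}\sigma(\rho^{i})\prod_{j}\ind{\Delta(\rho^{j})<\delta}\,\middle|\,\mathcal{E}_h\right]
&=L_\sigma^h\,w(\delta)\,\exp\!\bigl(-L_\sigma^h\,\n[\Delta\geq\delta]\bigr).
\end{align*}
Since $F(r_h(\rho))\ind{\Delta_h<\delta}$ is $\mathcal{E}_h$-measurable, combining these identities gives
\begin{equation*}
\frac{1}{w(\delta)}\,\n[\sigma\, F(r_h(\rho))\ind{\Delta<\delta}] = \n\!\left[F(r_h(\rho))\ind{\Delta_h<\delta}\,e^{-L_\sigma^h\,\n[\Delta\geq\delta]}\!\left(\frac{\sigma_h}{w(\delta)}+L_\sigma^h\right)\right].
\end{equation*}

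It then remains to pass $\delta\to\infty$ inside the expectation. Criticality gives $w(\delta)\uparrow\n[\sigma]=1/\alpha=\infty$; the monotonicity $\psi_{\delta-}\geq\psi$ together with Proposition~\ref{prop: joint distribution degree lifetime} gives $\n[\Delta\geq\delta]\leq\psi^{-1}(\pi[\delta,\infty))\to 0$ (since $\pi(1,\infty)<\infty$ forces $\pi[\delta,\infty)\to 0$); and $\Delta_h<\infty$ $\n$-a.e., so $\ind{\Delta_h<\delta}\to 1$. Hence the integrand converges pointwise to $F(r_h(\rho))\,L_\sigma^h$.

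The real technical step—and the point I view as the main obstacle—is justifying dominated convergence, since $L_\sigma^h$ need not be bounded and we must control it uniformly in $\delta$. For $\delta\geq\delta_0$ the integrand is dominated by $\|F\|_\infty(\sigma_h/w(\delta_0)+L_\sigma^h)$. Under $\n$ the process $(L_\sigma^a)_{a\geq 0}$ is a CB process under its canonical measure with $\n[L_\sigma^h]<\infty$ (indeed $\n[L_\sigma^h]=1$ in the critical case), and Fubini combined with the occupation-time formula $\mathbf{1}_{[0,\sigma]}(s)\,\dd s=\int_0^\infty L^{a}(\dd s)\,\dd a$ gives $\n[\sigma_h]=\int_0^h\n[L_\sigma^a]\,\dd a=h<\infty$. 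The dominating function is therefore $\n$-integrable, and dominated convergence yields the claimed identity.
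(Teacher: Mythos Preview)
Your proof is correct and follows essentially the same approach as the paper: apply the branching property at level $h$, use the Mecke formula to compute the conditional expectation and obtain the identity
\[
\frac{1}{w(\delta)}\,\n[\sigma\, F(r_h(\rho))\ind{\Delta<\delta}] = \n\!\left[F(r_h(\rho))\ind{\Delta_h<\delta}\,e^{-L_\sigma^h\,\n[\Delta\geq\delta]}\!\left(\frac{\sigma_h}{w(\delta)}+L_\sigma^h\right)\right],
\]
then pass to the limit by dominated convergence using $\n[\sigma_h]=h<\infty$ and $\n[L_\sigma^h]=1$. The paper's argument is identical in structure and detail.
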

\begin{proof}
	We shall use the decomposition of the exploration process above level $h$, see Section \ref{subsect: local times}. Let $(\rho^{i}, \, i\in I_h)$ be the excursions of the exploration process above level $h$. For every $i \in I_h$, let $\sigma^{i}$ (resp.~$\Delta^{i}$) be the lifetime (resp.~the maximal degree) of $\rho^{i}$. Similarly, denote by $\sigma_h$ (resp. $\Delta_h$) the lifetime (resp. the maximal degree) of $r_h(\rho)$. Thanks to Proposition \ref{prop: branching property}, we have:
	\begin{align*}
		\n\left[\sigma F(r_h(\rho)) \ind{\Delta< \delta}\right] &=  \n\left[\left(\sigma_h+\sum_{i\in I_h}\sigma^{i}\right) F(r_h(\rho)); \Delta_h< \delta, \,\Delta^{i}< \delta, \, \forall i \in I_h\right]\\
		&= \n\left[F(r_h(\rho))\ind{\Delta_h< \delta}\n\left[\sigma_h+\sum_{i\in I_h}\sigma^{i};\Delta^{i}< \delta, \, \forall i \in I_h\middle|\mathcal{E}_h\right]\right].
	\end{align*} 
	
	Thanks to the Mecke formula for Poisson random measures, see e.g. \cite[Chapter 4, Theorem 4.1]{last2017lectures}, we get:
	\begin{equation*}
		\n\left[\sigma_h+\sum_{i\in I_h}\sigma^{i};\Delta^{i}< \delta, \, \forall i \in I_h\middle|\mathcal{E}_h\right]= \left(\sigma_h +L_\sigma^h \n[\sigma \ind{\Delta< \delta}]\right)\e^{-L_\sigma^h \n[\Delta\geqslant\delta]}.
	\end{equation*}
	We deduce that
	\begin{equation}\label{eq: last step local convergence}
		\frac{1}{\w(\delta)} \n\left[\sigma F(r_h(\rho))\ind{\Delta< \delta}\right]
		= \n\left[F(r_h(\rho))\ind{\Delta_h < \delta}\left(L_\sigma^h + w(\delta)^{-1}\sigma_h\right)\e^{-L_\sigma^h \n[\Delta\geqslant\delta]}\right].
	\end{equation}
	Notice that $w(\delta) \to \infty$ as $\delta\to \infty$ since $\psi$ is critical. Furthermore, it is clear that $\sigma_h = \int_0^h L_\sigma^{a}\, \dd a$. Now letting $\delta\to \infty$ in \eqref{eq: computation local time exp} gives that $\n[L_\sigma^{a}] = 1$. It follows that $\n[\sigma_h] = h < \infty$. Thus, the dominated convergence theorem applies and we obtain the desired result by letting $\delta\to\infty$ in \eqref{eq: last step local convergence}.
\end{proof}

Recall from Theorem \ref{thm: disintegration wrt degree} that when the Lévy measure $\pi$ has an atom $\delta >0$, the exploration process conditioned on $\Delta=\delta$ has a random number of first-generation nodes with mass $\delta$. The next lemma gives a sufficient condition for there to be exactly one with high probability as $\delta\to \infty$. Recall from \eqref{eq: definition w} the definition of $w$. Recall also from \eqref{eq: definition Edelta} that $E_\delta$ denotes the event that the maximal degree is equal to $\delta$ and there is a unique first-generation node with mass $\delta$.
\begin{lemma}\label{lemma: critical atom equivalence of conditionings}
	Assume that $\psi$ is critical and that the Lévy measure $\pi$ is unbounded. Furthermore, assume that
	\begin{equation}\label{eq: pi small atoms}
		\lim_{\delta \to \infty} \frac{\pi(\delta)}{w(\delta)\pibar(\delta)\int_{[\delta,\infty)} r\, \pi(\dd r)} = 0.
	\end{equation} 
	We have as $\delta \to \infty$ along the set of atoms of $\pi$:
	\begin{equation}
		\n[\Delta=\delta]  \sim \n[E_\delta] = \g(\delta) w(\delta).
	\end{equation}
\end{lemma}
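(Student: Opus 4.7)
The plan is to compute $\n[\Delta=\delta]$ and $\n[\unique{\delta}]$ as integrals involving $\psi_{\delta-}'$ and to show that the integrand is asymptotically flat over the relevant interval. From \eqref{eq: degree delta measure} and \eqref{eq: E delta measure} (obtained in the proof of Lemma \ref{lemma: weak convergence atom}), we have
\begin{equation*}
\n[\Delta=\delta] = \psi_{\delta-}^{-1}(\pi[\delta,\infty)) - \psi_{\delta-}^{-1}(\pi[\delta,\infty)-\g(\delta)) \quad \text{and} \quad \n[\unique{\delta}] = \g(\delta)\w(\delta).
\end{equation*}
Setting $\mu_1 = \psi_{\delta-}^{-1}(\pi[\delta,\infty))$ and $\mu_2 = \psi_{\delta-}^{-1}(\pi[\delta,\infty)-\g(\delta))$, the change of variable $t=\psi_{\delta-}(\mu)$ yields the clean identity $\n[\Delta=\delta] = \mu_1-\mu_2$, while $\g(\delta) = \int_{\mu_2}^{\mu_1}\psi_{\delta-}'(s)\,\dd s$. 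Since $\psi_{\delta-}'$ is increasing and $\w(\delta) = 1/\psi_{\delta-}'(\mu_1)$, these identities give
\begin{equation*}
\frac{\n[\Delta=\delta]}{\n[\unique{\delta}]} = \frac{(\mu_1-\mu_2)\psi_{\delta-}'(\mu_1)}{\int_{\mu_2}^{\mu_1}\psi_{\delta-}'(s)\,\dd s} \in \left[1,\ \frac{\psi_{\delta-}'(\mu_1)}{\psi_{\delta-}'(\mu_2)}\right].
\end{equation*}

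The heart of the argument is then to show $\psi_{\delta-}'(\mu_1)/\psi_{\delta-}'(\mu_2) \to 1$. Since $\psi_{\delta-}''$ is decreasing, one has both $\mu_2\psi_{\delta-}''(\mu_2) \leq \int_0^{\mu_2}\psi_{\delta-}''(t)\,\dd t \leq \psi_{\delta-}'(\mu_2)$ and $\psi_{\delta-}'(\mu_1)-\psi_{\delta-}'(\mu_2) = \int_{\mu_2}^{\mu_1}\psi_{\delta-}''(t)\,\dd t \leq (\mu_1-\mu_2)\psi_{\delta-}''(\mu_2)$, which combine into $\psi_{\delta-}'(\mu_1)/\psi_{\delta-}'(\mu_2)-1 \leq (\mu_1-\mu_2)/\mu_2$. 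Next, the inverse-function bound $\mu_1-\mu_2 \leq \g(\delta)/\psi_{\delta-}'(\mu_2)$ and the convexity bound $\mu_2\psi_{\delta-}'(\mu_2) \geq \psi_{\delta-}(\mu_2) = \pi[\delta,\infty)-\g(\delta) \geq \pibar(\delta)$ together give
\begin{equation*}
\frac{\psi_{\delta-}'(\mu_1)}{\psi_{\delta-}'(\mu_2)}-1 \leq \frac{\g(\delta)}{\pibar(\delta)} \leq \frac{\pi(\delta)}{\pibar(\delta)}.
\end{equation*}

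It thus remains to verify $\pi(\delta)/\pibar(\delta) \to 0$ under \eqref{eq: pi small atoms}. In the critical case, $\psi_{\delta-}'(0) = \int_{[\delta,\infty)}r\,\pi(\dd r)$, and monotonicity of $\psi_{\delta-}'$ gives $\w(\delta)\psi_{\delta-}'(0) = \psi_{\delta-}'(0)/\psi_{\delta-}'(\mu_1) \leq 1$. The factorisation
\begin{equation*}
\frac{\pi(\delta)}{\pibar(\delta)} = \frac{\pi(\delta)}{\w(\delta)\pibar(\delta)\int_{[\delta,\infty)}r\,\pi(\dd r)} \cdot \w(\delta)\psi_{\delta-}'(0)
\end{equation*}
has first factor tending to $0$ by \eqref{eq: pi small atoms} and second factor at most $1$, closing the argument. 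The main obstacle, to my mind, is recognising the clean identity $\n[\Delta=\delta] = \mu_1-\mu_2$ and then choosing the right chain of elementary convexity bounds that terminates in an expression controlled purely by $\pi(\delta)/\pibar(\delta)$; once this reduction is in place, assumption \eqref{eq: pi small atoms} plugs in through the routine factorisation above.
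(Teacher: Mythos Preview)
Your proof is correct and takes a genuinely different route from the paper. The paper bounds the ratio $\n[\Delta=\delta]/\n[E_\delta]$ from above by $w_1(\delta)/w(\delta)$ with $w_1(\delta)=\n[\sigma\e^{\g(\delta)\sigma}\ind{\Delta<\delta}]$, then passes to $\operatorname{\mathbf{N}^{\psi_{\delta--}}}$ via Corollary~\ref{cor: identity in distribution before big jump} and uses the elementary inequalities $\e^{-x}\geqslant 1-x$ and $x\e^{-x}\leqslant M$ on the second moment $\operatorname{\mathbf{N}^{\psi_{\delta--}}}[\sigma^2\e^{-\pibar(\delta)\sigma}]$; this produces directly the quantity appearing in~\eqref{eq: pi small atoms}. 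You instead recognise the closed form $\n[\Delta=\delta]=\mu_1-\mu_2$ with $\mu_i$ explicit values of $\psi_{\delta-}^{-1}$ (this is consistent with Remark~\ref{rk: atoms of degree}, since $\psi_\delta^{-1}(\pibar(\delta))=\psi_{\delta-}^{-1}(\pi[\delta,\infty)-\g(\delta))$), and then run a chain of convexity bounds on $\psi_{\delta-}$, $\psi_{\delta-}'$ and $\psi_{\delta-}''$ to obtain the simpler estimate $\n[\Delta=\delta]/\n[E_\delta]-1\leqslant \pi(\delta)/\pibar(\delta)$. Your approach is purely analytic and avoids any moment computation under the excursion measure; the price is the extra factorisation step at the end to deduce $\pi(\delta)/\pibar(\delta)\to 0$ from~\eqref{eq: pi small atoms}, whereas the paper lands on~\eqref{eq: pi small atoms} immediately. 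Both arguments are short; yours makes more transparent that only convexity of the branching mechanism is used.
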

\begin{proof}
	Recall from \eqref{eq: E delta measure} and \eqref{eq: degree delta measure} that
	\begin{equation*}
		\n[E_\delta]=\g(\delta)w(\delta) \quad \text{and} \quad \n[\Delta= \delta]=\n\left[\left(\e^{\g(\delta)\sigma}-1\right)\ind{\Delta<\delta}\right].
	\end{equation*}
	Using Taylor's inequality, we deduce that
	\begin{equation}\label{eq: encadrement}
		1\leqslant \frac{\n[\Delta=\delta]}{\n[E_\delta]} \leqslant \frac{w_1(\delta)}{w(\delta)},
	\end{equation}
	where we set $w_1(\delta) = \n\left[\sigma\e^{\g(\delta)\sigma} \ind{\Delta<\delta}\right]$.
	

	Using \eqref{eq: w general} and the inequality $\e^{-x} \geqslant 1-x$ for every $x\geqslant 0$, we have:
	\begin{equation*}
		w(\delta) = \operatorname{\mathbf{N}^{\psi_{\delta--}}}\left[\sigma \e^{-\pi[\delta,\infty) \sigma}\right]
		\geqslant \operatorname{\mathbf{N}^{\psi_{\delta--}}}\left[\sigma (1-\pi(\delta)\sigma)\e^{-\pibar(\delta)\sigma}\right].
	\end{equation*}
	But thanks to Corollary \ref{cor: identity in distribution before big jump}, observe that
	\begin{equation*}
		w_1(\delta) = \operatorname{\mathbf{N}^{\psi_{\delta--}}}\left[\sigma \e^{(\g(\delta)-\pi[\delta,\infty))\sigma}\right]\leqslant \operatorname{\mathbf{N}^{\psi_{\delta--}}}\left[\sigma \e^{-\pibar(\delta)\sigma}\right],
	\end{equation*}
	where we used that $\g(\delta)\leqslant \pi(\delta)$ for the inequality. Furthermore, using that the function $x\mapsto x \e^{-x}$ is bounded on $\real_+$ by some constant $M>0$, we have:
	\begin{equation*}
		\operatorname{\mathbf{N}^{\psi_{\delta--}}}\left[ \sigma^2 \e^{-\pibar(\delta)\sigma}\right]\leqslant \frac{M}{\pibar(\delta)} \operatorname{\mathbf{N}^{\psi_{\delta--}}}[\sigma] = \frac{M}{\pibar(\delta)\int_{[\delta,\infty)} r\,\pi(\dd r)}\cdot
	\end{equation*}
	We deduce that
	\begin{equation*}
		w(\delta)\geqslant w_1(\delta)- \frac{M \pi(\delta)}{\pibar(\delta)\int_{[\delta,\infty)} r\,\pi(\dd r)}\cdot
	\end{equation*}

	It follows from \eqref{eq: encadrement} that
	\begin{equation*}
		1\leqslant \frac{\n[\Delta=\delta]}{\n[E_\delta]} \leqslant 1 + \frac{M \pi(\delta)}{w(\delta)\pibar(\delta) \int_{[\delta,\infty)}r\, \pi(\dd r)},
	\end{equation*}
	and the result readily follows by using \eqref{eq: pi small atoms}.
\end{proof}

		In the critical case, the Lévy tree conditioned on having a large maximal degree converges locally to the immortal Lévy tree. Intuitively, the condensation node goes to infinity and thus becomes invisible to local convergence.
		\begin{thm}\label{thm: convergence critical delta}
			Assume that $\psi$ is critical and that $\pi$ is unbounded. Furthermore, assume that \eqref{eq: pi small atoms} holds. Let $F \colon \D \to \real$ be continuous and bounded. For every $h >0$, we have:
			\begin{equation}
				\lim_{\delta\to\infty} \n\left[F(r_h(\rho))\middle|\Delta=\delta\right] = \n\left[L_\sigma^h\, F(r_h(\rho))\right].
			\end{equation}
		\end{thm}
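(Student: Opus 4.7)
The plan is to reduce the limit to Lemma \ref{lemma: convergence to immortal tree}, which already provides exactly the right-hand side $\n[L_\sigma^h F(r_h(\rho))]$ as the limit of $w(\delta)^{-1}\n[\sigma F(r_h(\rho))\ind{\Delta<\delta}]$. Observe that this last quantity is, by definition \eqref{eq: definition Pdelta}, equal to $\int F(r_h(\tilde\rho))\,\tilted{\delta}(\dd s,\dd\tilde\rho)$ (since the integrand does not depend on $s$, the $\dd s$-integral produces the factor $\sigma$). So everything reduces to comparing $\n[F(r_h(\rho))|\Delta=\delta]$ with $\int F(r_h(\tilde\rho))\,\tilted{\delta}(\dd s,\dd\tilde\rho)$.

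First I handle the case when $\delta>0$ is not an atom of $\pi$. Theorem \ref{thm: disintegration wrt degree} gives
\[
\n[F(r_h(\rho))|\Delta=\delta]=\int\tilted{\delta}(\dd s,\dd\tilde\rho)\int\for{\delta}(\dd\hat\rho|\Delta\leqslant\delta)\,F\bigl(r_h(\tilde\rho\circledast(s,\hat\rho))\bigr).
\]
The key geometric observation is that $r_h(\tilde\rho\circledast(s,\hat\rho))=r_h(\tilde\rho)$ on the event $\{H(\tilde\rho_s)>h\}$: by the definition of the grafting \eqref{eq: definition grafting}, the process $\hat\rho$ is inserted between times $s$ and $s+\sigma(\hat\rho)$ as a concatenation $[\tilde\rho(s),\hat\rho(t-s)]$, whose support has heights bounded below by $H(\tilde\rho_s)$, so the time change $\tau^h$ defining $r_h$ in \eqref{eq: definition exploration under h} simply skips over that entire interval when $H(\tilde\rho_s)>h$. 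Consequently
\[
\Bigl|\n[F(r_h(\rho))|\Delta=\delta]-\!\int F(r_h(\tilde\rho))\,\tilted{\delta}(\dd s,\dd\tilde\rho)\Bigr|\leqslant 2\|F\|_\infty\,\tilted{\delta}\bigl(\{H(\tilde\rho_s)\leqslant h\}\bigr).
\]
By Lemma \ref{lemma: height under tilted}, $H(\tilde\rho_s)$ is exponential with mean $w(\delta)$ under $\tilted{\delta}$, so the right-hand side equals $2\|F\|_\infty(1-\e^{-h/w(\delta)})$. Since $\psi$ is critical, $w(\delta)=\n[\sigma\ind{\Delta<\delta}]\nearrow\n[\sigma]=\infty$ as $\delta\to\infty$, and this error vanishes. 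Combined with Lemma \ref{lemma: convergence to immortal tree}, this settles the non-atom case.

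For the case when $\delta$ is an atom of $\pi$, I use Lemma \ref{lemma: critical atom equivalence of conditionings}: under the standing assumption \eqref{eq: pi small atoms}, $\n[\Delta=\delta]\sim\n[\unique{\delta}]$ as $\delta\to\infty$ along atoms. Writing
\[
\n[F(r_h(\rho))\ind{\Delta=\delta}]=\n[F(r_h(\rho))\mathbf{1}_{\unique{\delta}}]+\n[F(r_h(\rho))\ind{\Delta=\delta}\mathbf{1}_{\unique{\delta}^c}],
\]
the second summand is bounded by $\|F\|_\infty(\n[\Delta=\delta]-\n[\unique{\delta}])=o(\n[\Delta=\delta])$. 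Thus it suffices to prove the same limit after replacing the conditioning event $\{\Delta=\delta\}$ by $\unique{\delta}$. But Remark \ref{rk: conditioning by Edelta} says that $\n[\,\cdot\,|\unique{\delta}]$ is given by exactly the same formula as the non-atom disintegration of Theorem \ref{thm: disintegration wrt degree}, so the argument of the previous paragraph applies verbatim, completing the proof.

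The only non-routine point is the geometric observation that grafting above level $h$ is invisible to the restriction $r_h$; everything else is bookkeeping on top of Lemma \ref{lemma: convergence to immortal tree} (which carries the analytic content) and Lemma \ref{lemma: critical atom equivalence of conditionings} (which handles the atom case). I expect the grafting-invisibility claim to be the main subtlety worth spelling out carefully from the definitions of $\circledast$ and $\tau^h$.
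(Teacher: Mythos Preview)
Your proof is correct and follows essentially the same route as the paper: use Theorem~\ref{thm: disintegration wrt degree} (and Remark~\ref{rk: conditioning by Edelta} together with Lemma~\ref{lemma: critical atom equivalence of conditionings} for atoms) to write the conditioned process as a graft $\tilde\rho\circledast(s,\hat\rho)$, observe via Lemma~\ref{lemma: height under tilted} that the graft height exceeds $h$ with probability tending to $1$ so that $r_h(\tilde\rho\circledast(s,\hat\rho))=r_h(\tilde\rho)$, and conclude by Lemma~\ref{lemma: convergence to immortal tree}. Your emphasis on the grafting-invisibility claim is well placed, since the paper treats this step as evident.
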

		\begin{proof}
			First assume that $\delta >0$ is not an atom of $\pi$. Thanks to Theorem \ref{thm: disintegration wrt degree}, conditionally on $\Delta=\delta$, $\rho$ is distributed as $\tilde{\rho}\circledast (s,\hat{\rho})$, where $(s,\tilde{\rho})$ has distribution $\tilted{\delta}$, ${\hat{\rho}}$ has distribution $\for{\delta}(\cdot|\Delta\leqslant \delta)$ and they are independent. 
			
			Next, assume that $\delta>0$ is an atom of $\pi$. Recall that $E_\delta$ denotes the event that $\Delta=\delta$ and there is a unique first-generation node with mass $\delta$. Thanks to Lemma \ref{lemma: critical atom equivalence of conditionings}, since $E_\delta \subset \{\Delta=\delta\}$, the two conditionings are equivalent and it is enough to show that the result holds when conditioning on $E_\delta$. But Remark \ref{rk: conditioning by Edelta} gives that, conditionally on $E_\delta$, $\rho$ is again distributed as $\tilde{\rho}\circledast (s,\hat{\rho})$.
			
			Thus, in all cases, it is enough to show that
			\begin{equation*}
				\lim_{\delta\to \infty}\int_{\real_+\times \D} \tilted{\delta}(\dd s, \dd \tilde{\rho}) \int_{ \D} \for{\delta}(\dd \hat{\rho}|\Delta \leqslant \delta) F(r_h(\tilde{\rho}\circledast(s,\hat{\rho}))) \\
				= \n\left[L_\sigma^{h}\, F(r_h(\rho))\right].
			\end{equation*}
			Now, Lemma \ref{lemma: height under tilted} gives that the height $H(\tilde{\rho}_s)$ at which $\hat{\rho}$ is grafted is exponentially distributed with mean $\w(\delta)$. Since $\psi$ is critical, it holds that $\lim_{\delta \to \infty} w(\delta) = \infty$. Thus, we deduce that $H(\tilde{\rho}_s)>h$ with high probability as $\delta \to \infty$ under $\tilted{\delta}(\dd s, \dd \tilde{\rho})$, i.e.~we have:
			\begin{equation*}
				\lim_{\delta\to \infty}\int_{\real_+\times \D} \tilted{\delta}(\dd s, \dd \tilde{\rho}) \ind{H(\tilde{\rho}_s)\leqslant h} = 0.
			\end{equation*}
			Furthermore, on the event $\{H(\tilde{\rho_s})>h\}$, it holds that $r_h(\tilde{\rho}\circledast (s,\hat{\rho})) = r_h(\tilde{\rho})$, and the proof reduces to showing the following convergence:
			\begin{equation}\label{eq: proof critical convergence}
				\lim_{\delta\to \infty}\int_{\real_+\times \D} \tilted{\delta}(\dd s, \dd \tilde{\rho})  F(r_h(\tilde{\rho})) \\
				=  \n\left[L_\sigma^{h}\, F(r_h(\rho))\right].
			\end{equation}
			Recalling from \eqref{eq: definition Pdelta} the definition of $\tilted{\delta}$, Lemma \ref{lemma: convergence to immortal tree} yields \eqref{eq: proof critical convergence} and the proof is complete.
		\end{proof}
		We end this section with the following result dealing with the asymptotic behavior of the exploration process conditioned on having a large maximal degree at a fixed height $h$. Notice that this conditioning does not allow the condensation node to escape to infinity (even in the critical case as opposed to the conditioning of large maximal degree) and forces condensation to occur at a finite height. The limit consists of a Lévy tree biased by the population size at level $h$ onto which one grafts -- at a leaf chosen uniformly at random at height $h$ -- an independent Lévy forest with infinite mass.
		\begin{thm}\label{thm: convergence  delta h}
			Assume that $\psi$ is (sub)critical and that $\pi$ is unbounded and diffuse. Let $F\colon \real_+\times\D\to \real$ and $G\colon \D\to \real$ be continuous and bounded. We have:
			\begin{equation}
				\lim_{\delta \to \infty} \n\left[F({T}_\Delta, \rho^{\Delta,-})G(\rho^{\Delta,+})|\Delta=\delta, H_\Delta=h\right] =  \e^{\alpha h}\n\left[\int_0^\sigma F(s,\rho)\, L^h(\dd s)\right]\explo(G(\rho)).
			\end{equation}
		\end{thm}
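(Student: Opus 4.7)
The plan is to apply the disintegration from Theorem~\ref{thm: disintegration wrt degree and height} (available since $\pi$ is diffuse) and then analyze the two resulting independent factors separately as $\delta\to\infty$. By Corollary~\ref{cor: unique node with mass delta}, there is $\n$-a.e.\ a unique node with mass $\Delta$ under $\n[\cdot\mid\Delta=\delta]$, so the grafting data in the disintegration is unambiguously identified with the triple $(T_\Delta,\rho^{\Delta,-},\rho^{\Delta,+})$. As in Theorem~\ref{thm: convergence subcritical delta}, $G(\rho^{\Delta,+})$ should be read as $G\circ R_0(\rho^{\Delta,+})$, since $\rho^{\Delta,+}_0=\delta\delta_0$ carries an atom whose mass diverges with $\delta$ and must be erased via $R_0$ (see \eqref{eq: definition R0}) for the limit to be finite. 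With this convention, Theorem~\ref{thm: disintegration wrt degree and height} factors the conditional expectation as
\begin{equation*}
\n\!\left[F(T_\Delta,\rho^{\Delta,-})\,G(\rho^{\Delta,+})\,\middle|\,\Delta=\delta,H_\Delta=h\right]
= \int F(s,\tilde\rho)\,\tiltedh{\delta}(\dd s,\dd\tilde\rho)\cdot\for{\delta}\!\left(G\circ R_0(\hat\rho)\,\middle|\,\Delta\leq\delta\right).
\end{equation*}

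For the first factor, unpacking \eqref{eq: definition Ptilted h} and \eqref{eq: computation local time exp} gives
\begin{equation*}
\int F(s,\tilde\rho)\,\tiltedh{\delta}(\dd s,\dd\tilde\rho) = \e^{h/w(\delta)}\,\n\!\left[\int_0^\sigma F(s,\rho)\,L^h(\dd s)\,\ind{\Delta<\delta}\right].
\end{equation*}
The prefactor converges to $\e^{\alpha h}$: since $\n[\Delta\geq\delta]\to 0$ and
$\psi_{\delta-}'(\lambda) = \alpha+\int_{[\delta,\infty)}r\,\pi(\dd r)+2\beta\lambda+\int_{(0,\delta)}r(1-\e^{-\lambda r})\,\pi(\dd r)$,
dominated convergence (with dominant $r^2\ind{r\leq 1}+r\ind{r>1}$, integrable by the standing assumption on $\pi$) shows $1/w(\delta)=\psi_{\delta-}'(\n[\Delta\geq\delta])\to\alpha$ in both the critical and subcritical regimes. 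The expectation converges to $\n\!\left[\int_0^\sigma F(s,\rho)\,L^h(\dd s)\right]$ by dominated convergence, using boundedness of $F$ and $\n[L^h_\sigma]=\e^{-\alpha h}<\infty$ (obtained by letting $\delta\to\infty$ in \eqref{eq: computation local time exp}).

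For the second factor, Lemma~\ref{lemma: weak convergence of forest with no big nodes} directly yields the limit $\explo(G(\rho))$ in the subcritical case. In the critical case one must extend that lemma. The coupling \eqref{eq: representation forest r} still gives $R_0(\tilde{\rho}^{(\delta)})_t = \rho_t\,\ind{L^0_t\leq\delta}$ with $\rho\sim\explo$, which converges almost surely in $(\D,\dS)$ to $\rho$ since $X$ does not drift to $+\infty$ in the (sub)critical regime. The main obstacle is the conditioning on $\{\Delta\leq\delta\}$: in contrast to the subcritical setting, \eqref{eq: degree tail equivalent} need not hold and $\for{\delta}(\Delta\leq\delta)$ may remain bounded away from $1$. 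A truncation argument circumvents this: first establish the convergence for $G$ depending only on $r_N(\rho)$---for such $G$, the event $\{\Delta\leq\delta\}$ can be replaced up to a $\for{\delta}$-negligible error by its restriction to times with $H_t\leq N$ and $L^0_t\leq\delta$, whose conditional probability tends to $1$ as $\delta\to\infty$---then extend to general continuous bounded $G$ by Skorokhod-continuity of $r_N$ and a standard approximation.
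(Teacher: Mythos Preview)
Your treatment of the first factor (convergence of $\tiltedh{\delta}$ via \eqref{eq: computation local time exp} and dominated convergence) and your handling of the subcritical case for the second factor via Lemma~\ref{lemma: weak convergence of forest with no big nodes} are exactly the paper's proof. You are also right that $G(\rho^{\Delta,+})$ must be read as $G\circ R_0(\rho^{\Delta,+})$; the paper's own proof invokes Lemma~\ref{lemma: weak convergence of forest with no big nodes}, which concerns $R_0(\rho)$.

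You go further than the paper on one point: the paper simply cites Lemma~\ref{lemma: weak convergence of forest with no big nodes} for the second factor, even though that lemma is stated only for subcritical $\psi$. You correctly identify the obstruction in the critical case---$\for{\delta}(\Delta\leq\delta)=\e^{-\delta\n[\Delta>\delta]}$ need not tend to $1$ (it equals $\e^{-c_\gamma}<1$ in the stable case)---so the proof of Lemma~\ref{lemma: weak convergence of forest with no big nodes} does not extend verbatim.

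However, your proposed fix does not work as written. The claim that $\{\Delta\leq\delta\}$ can be replaced ``up to a $\for{\delta}$-negligible error'' by its restriction to $\{H_t\leq N\}$ is false in the critical regime: these events differ by an event of non-vanishing probability. More to the point, if $G$ depends only on $r_N(\rho)$, the branching property shows that conditioning on $\{\Delta\leq\delta\}$ biases the law of $r_N(\rho)$ by the factor $\e^{-L^N_\sigma\n[\Delta>\delta]}$ coming from the excursions above level $N$; under $\for{\delta}$ in the critical case one has $\for{\delta}[L^N_\sigma]=\delta$, so $\n[\Delta>\delta]\,\for{\delta}[L^N_\sigma]=\delta\n[\Delta>\delta]$ does not tend to $0$ and the bias does not disappear. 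The correct localization is in \emph{local time at level $0$}, not in height: under the coupling \eqref{eq: representation forest r}, conditioning on $\{\Delta\leq\delta\}$ turns the excursion point measure into a Poisson point measure on $[0,\delta]$ with intensity $\dd u\,\n[\dd\rho,\Delta\leq\delta]$; by independence across disjoint local-time intervals, on any fixed window $[0,r]$ this differs from the unconditioned intensity only through the event $\{\Delta\leq\delta\}$, whose probability on $[0,r]$ is $\e^{-r\n[\Delta>\delta]}\to1$. Passing from local-time windows to Skorokhod convergence on $\D$ then requires a tightness step, not the height-truncation you describe.
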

		\begin{proof}
			Letting $\delta \to \infty$ in \eqref{eq: computation local time exp}, we have that $\lim_{\delta\to\infty} \n[L_\sigma^h \,\ind{\Delta< \delta}] = \e^{-\alpha h}$. Furthermore, the dominated convergence theorem yields:
			\begin{equation*}
				\lim_{\delta\to \infty} \n\left[ \int_0^\sigma F(s,\rho)\, L^h(\dd s)\,\ind{\Delta< \delta}\right]=\n\left[\int_0^\sigma F(s,\rho)\, L^h(\dd s)\right].
			\end{equation*}
			This proves that the following convergence holds:
			\begin{equation*}
				\lim_{\delta\to \infty} \int_{\real_+\times \D} F\, \dd\! \tiltedh{\delta} = \e^{\alpha h} \n\left[\int_0^\sigma F(s,\rho)\, L^h(\dd s)\right].
			\end{equation*}
			The result is then a direct consequence of Theorem \ref{thm: disintegration wrt degree and height} and Lemma \ref{lemma: weak convergence of forest with no big nodes}.
		\end{proof}
		
		\section{Other conditionings of large maximal degree}\label{sect: other conditionings}
		In this section, we look at other conditionings of large maximal degree. Recall from Section \ref{subsect: structure of big nodes} that $\init$ denotes the number of first-generation nodes with mass larger than $\delta$ while $\total$ denotes the total number of nodes with mass larger than $\delta$. Specifically, we study the conditionings $\Delta>\delta$ (which is equal to $\init \geqslant 1$ or $\total \geqslant 1$), $\init = 1$ and $\total = 1$. We shall see that, in the subcritical and critical cases, all three give rise to the same asymptotic behavior as conditioning by $\Delta= \delta$. 
		
		Notice that $\{\total =1\}$ (resp.~$\{\init=1\}$) is the event that $\rho$ contains exactly one node (resp.~one first-generation node) with mass larger than $\delta$. To begin, we compute the measure of these two events. In the subcritical case, they are equivalent in $\n$-measure to $\{\Delta >\delta\}$. However, this is no longer the case for critical branching mechanisms, see Proposition \ref{prop: three conditionings stable} for the (critical) stable case.
		\begin{proposition}\label{prop: exactly one node}
			We have:
			\begin{align}
				\n[\init =1] &= \frac{\pibar(\delta)}{\psi_\delta'(\n[\Delta>\delta])}, \label{eq: exactly one first-generation node}\\
				\n[\total =1] &= \frac{1}{\psi_\delta'(\n[\Delta>\delta])}\int_{(\delta,\infty)} \e^{-r\n[\Delta>\delta]} \, \pi(\dd r). \label{eq: exactly one node}
			\end{align}
			In particular, assuming that $\psi$ is subcritical and that $\pi$ is unbounded, we have as $\delta \to \infty$:
			\begin{equation}
				\n[\init=1]\sim\n[\total=1]\sim \n\left[\Delta>\delta\right] \sim \frac{\pibar(\delta)}{\alpha}\cdot
			\end{equation}
		\end{proposition}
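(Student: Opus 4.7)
The plan is to apply the Poissonian decomposition of Theorem \ref{thm: degree decomposition} and conclude by a direct computation, followed by standard asymptotic estimates in the subcritical regime.

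First I would prove \eqref{eq: exactly one first-generation node}. Under $\ndelta$, let $\sum_i \delta_{(s_i,\rho_i)}$ be the Poisson point measure with intensity $\pibar(\delta)\,ds\,\rfor(d\tilde{\rho})$, independent of the pruned exploration process $\rho$, so that $\rho\circledast_i (s_i,\rho_i)$ has distribution $\n$. Then $\init$ is distributed as the number of atoms with $s_i<\sigma$, which conditionally on $\rho$ is Poisson with parameter $\pibar(\delta)\sigma$. Hence
\begin{equation*}
\n[\init=1] = \ndelta\bigl[\pibar(\delta)\sigma e^{-\pibar(\delta)\sigma}\bigr] = \frac{\pibar(\delta)}{\psi_\delta'\circ\psi_\delta^{-1}(\pibar(\delta))} = \frac{\pibar(\delta)}{\psi_\delta'(\n[\Delta>\delta])},
\end{equation*}
where I apply \eqref{eq: laplace transform sigma} to the branching mechanism $\psi_\delta$ and use \eqref{eq: distribution degree strict}.

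Next, for \eqref{eq: exactly one node}, I observe that under the Poisson decomposition, $\{\total=1\}$ is the event $\{\init=1\}$ intersected with the event that the unique grafted subtree $\rho_1$ contains no additional node with mass $>\delta$ beyond its initial atom at height $0$. By Lemma \ref{lemma: poisson decomposition exploration}, the exploration process above height $0$ under $\for{r}$ is coded by a Poisson point process of excursions with intensity $\mathbf{1}_{[0,r]}(u)\,du\,\n[d\rho]$, so the probability that all such excursions have maximal degree $\leq \delta$ is $e^{-r\n[\Delta>\delta]}$. Integrating against the initial mass distribution $\mathbf{1}_{(\delta,\infty)}\pi(dr)/\pibar(\delta)$ under $\rfor$ and multiplying by the formula for $\n[\init=1]$ gives \eqref{eq: exactly one node}.

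For the subcritical asymptotics, I use the equivalent \eqref{eq: degree tail equivalent}, $\n[\Delta>\delta]\sim \pibar(\delta)/\alpha$ as $\delta\to\infty$. Since $\psi_\delta'(0)=\alpha+\int_{(\delta,\infty)}r\,\pi(dr)\to\alpha$ by the integrability $\int_{(1,\infty)}r\,\pi(dr)<\infty$ and $\psi_\delta'$ is continuous at $0$ with $\n[\Delta>\delta]\to 0$, we get $\psi_\delta'(\n[\Delta>\delta])\to\alpha$, giving $\n[\init=1]\sim\pibar(\delta)/\alpha$. For $\n[\total=1]$, the bounds $1-r\n[\Delta>\delta]\leq e^{-r\n[\Delta>\delta]}\leq 1$ sandwich the integral:
\begin{equation*}
\pibar(\delta)-\n[\Delta>\delta]\int_{(\delta,\infty)}r\,\pi(dr)\leq \int_{(\delta,\infty)}e^{-r\n[\Delta>\delta]}\pi(dr)\leq \pibar(\delta).
\end{equation*}
The lower error term is $o(\pibar(\delta))$ since $\int_{(\delta,\infty)}r\,\pi(dr)\to 0$, so this integral is equivalent to $\pibar(\delta)$, yielding $\n[\total=1]\sim\pibar(\delta)/\alpha$, and the equivalent for $\n[\Delta>\delta]$ is already \eqref{eq: degree tail equivalent}. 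The only place requiring some care is the control of this last integral, but the sandwich above handles it cleanly thanks to the integrability assumption on $\pi$ at infinity.
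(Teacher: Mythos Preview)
Your proof is correct and follows essentially the same route as the paper: the Poissonian decomposition of Theorem~\ref{thm: degree decomposition} reduces $\n[\init=1]$ to $\ndelta[\pibar(\delta)\sigma e^{-\pibar(\delta)\sigma}]$, the extra factor for $\n[\total=1]$ is computed via the excursion decomposition of $\for{r}$, and the subcritical asymptotics go through \eqref{eq: degree tail equivalent} together with the same sandwich on $\int_{(\delta,\infty)} e^{-r\n[\Delta>\delta]}\pi(\dd r)$. The only place to tighten is the limit $\psi_\delta'(\n[\Delta>\delta])\to\alpha$: since $\psi_\delta'$ varies with $\delta$, invoking ``continuity at $0$'' is not quite enough; the paper instead writes $\psi_\delta'(\lambda)=\psi'(\lambda)+\int_{(\delta,\infty)}re^{-\lambda r}\pi(\dd r)$ and lets each term converge separately, which you can adopt verbatim.
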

		Since we have the inclusions $\{\total = 1\}\subset \{\init =1\} \subset \{\Delta>\delta\}$, Proposition \ref{prop: exactly one node} entails that, in the subcritical case, the three conditionings are equivalent as $\delta \to \infty$. In particular, conditionally on $\Delta>\delta$, there is exactly one node with mass larger than $\delta$ with probability tending to $1$ as $\delta \to \infty$.
		\begin{proof}
			Notice that $\{\total=1\}$ is the event that $\rho$ contains only one first-generation node with mass larger than $\delta$ and that this node has no descendants with mass larger than $\delta$. Thus, using the Poissonian decomposition of Theorem \ref{thm: degree decomposition}, we get $\n[\init =1] = \ndelta[\zeta=1]$ and
			\begin{equation}\label{eq: exactly one node proof}
				\n\left[\total=1\right] = \ndelta\left[\zeta = 1\right] \rfor(\Delta\leqslant \delta).
			\end{equation}
			
			Recall that under $\ndelta$ and conditionally on $\rho$, $\zeta$ has Poisson distribution with parameter $\pibar(\delta)\sigma$. Thus we have
			\begin{equation}\label{eq: exactly one first-generation node proof}
				\ndelta\left[\zeta = 1\right] = \ndelta\left[\pibar(\delta)\sigma \e^{-\pibar(\delta)\sigma}\right] =\frac{\pibar(\delta)}{\psi_\delta'\circ \psi_\delta^{-1}(\pibar(\delta))} = \frac{\pibar(\delta)}{\psi_\delta'(\n[\Delta>\delta])},
			\end{equation}
			where we used \eqref{eq: distribution degree strict} for the last equality. This proves \eqref{eq: exactly one  first-generation node}.
			
			Moreover, using the Poissonian decomposition of Proposition \ref{prop: degree decomposition forests} together with the fact that, under $\rfordelta$ and conditionally on $\rho$, $\xi$ has Poisson distribution with parameter $\pibar(\delta)\sigma$, we get:
			\begin{equation*}
				\rfor(\Delta\leqslant \delta)= \rfordelta(\xi = 0) 
				= \rfordelta(\e^{-\pibar(\delta)\sigma}) 
				= \frac{1}{\pibar(\delta)} \int_{(\delta,\infty)} \pi(\dd r) \fordelta{r}(\e^{-\pibar(\delta)\sigma}).
			\end{equation*}
			Thus, it follows from \eqref{eq: laplace transform of sigma for a forest} and \eqref{eq: distribution degree strict} that
			\begin{equation}\label{eq: no second-generation nodes}
				\rfor(\Delta\leqslant \delta) = \frac{1}{\pibar(\delta)} \int_{(\delta,\infty)} \e^{-r \psi_\delta^{-1}(\pibar(\delta))}\,\pi(\dd r)=\frac{1}{\pibar(\delta)} \int_{(\delta,\infty)} \e^{-r \n[\Delta>\delta]}\,\pi(\dd r).
			\end{equation}
			Finally, combining \eqref{eq: exactly one node proof}, \eqref{eq: exactly one first-generation node proof} and \eqref{eq: no second-generation nodes}, we deduce \eqref{eq: exactly one node}.

			Now assume that $\psi$ is subcritical and that $\pi$ is unbounded. Recall from \eqref{eq: degree tail equivalent} that
			\begin{equation*}
				\n[\Delta>\delta] \sim \frac{\pibar(\delta)}{\alpha}\cdot
			\end{equation*}
			On the other hand, differentiating \eqref{eq: definition psi delta}, we get:
			\begin{equation*}
				\psi_\delta'(\n[\Delta>\delta]) = \psi'(\n[\Delta>\delta]) + \int_{(\delta,\infty)}  r \e^{-r\n[\Delta>\delta] } \, \pi(\dd r).
			\end{equation*}
			Since $\int_{(1,\infty)} r\, \pi(\dd r) <\infty$, the dominated convergence theorem shows that the last integral converges to $0$ as $\delta \to \infty$. It follows that 
			\begin{equation}\label{eq: limit psi delta}
				\lim_{\delta \to \infty} 	\psi_\delta'(\n[\Delta>\delta])  = \psi'(0) = \alpha.
			\end{equation}
			In particular, we get that $\n[\init =1]\sim \alpha^{-1}\pibar(\delta)$.
			
			Furthermore, we have:
			\begin{align*}
				0\leqslant 1- \frac{1}{\pibar(\delta)}\int_{(\delta,\infty)} \e^{-r\n[\Delta>\delta]} \, \pi(\dd r)&= \frac{1}{\pibar(\delta)}\int_{(\delta,\infty)} \left(1-\e^{-r \n[\Delta>\delta]}\right)\, \pi(\dd r)\\
				&\leqslant \frac{\n[\Delta>\delta]}{\pibar(\delta)}\int_{(\delta,\infty)} r \, \pi(\dd r).
			\end{align*}
			The dominated convergence theorem gives $\lim_{\delta \to \infty}\int_{(\delta,\infty)} r\, \pi(\dd r) = 0$. Since $\lim_{\delta\to\infty}\n[\Delta>\delta]/\pibar(\delta) = \alpha^{-1}$, we deduce that
			\begin{equation*}
				\lim_{\delta \to \infty} \frac{1}{\pibar(\delta)}\int_{(\delta,\infty)} \e^{-r\n[\Delta>\delta]}\, \pi(\dd r) = 1.
			\end{equation*}
			Together with \eqref{eq: exactly one node} and \eqref{eq: limit psi delta}, this yields $\n[\total=1]\sim \alpha^{-1}\pibar(\delta)$. This concludes the proof.
		\end{proof}

		In the subcritical case, the three conditionings $\Delta >\delta$, $\init = 1$ and $\total = 1$ are equivalent as $\delta \to \infty$ and thus they yield the same asymptotic behavior: a condensation phenomenon occurs at the limit just like in Theorem \ref{thm: convergence subcritical delta} where we condition by $\Delta =\delta$. Recall from \eqref{eq: definition TDelta} that $T_\Delta$ is the first time that the exploration process contains an atom with mass $\Delta$. Recall also from Section \ref{sect: local limit} that $\rho^{\Delta,-}$ denotes the path of the exploration process after removing the first node with $\Delta$ while $\rho^{\Delta,+}$ denotes the path of the exploration process above that node.
		\begin{thm}\label{thm: convergence subcritical}
			Assume that $\psi$ is subcritical and that $\pi$ is unbounded. Let $F \colon \real_+\times\D \to \real$ and $G\colon  \D\to \real$ be continuous and bounded and let $A_\delta$ be equal to $\{\Delta>\delta\}$, $\{\init=1\}$ or $\{\total = 1\}$.
			We have:
			\begin{equation}
				\lim_{\delta\to \infty}\n\left[F({T}_\Delta, \rho^{\Delta,-})G\circ R_0(\rho^{\Delta,+})\middle|A_\delta\right] = \alpha \n\left[\int_0^\sigma F(s,\rho)\, \dd s\right] \explo(G (\rho)).
			\end{equation}
		\end{thm}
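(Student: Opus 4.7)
The plan is to reduce all three conditionings to the smallest one, $A_\delta = \{\total = 1\}$, and then exploit the Poissonian decomposition of Theorem \ref{thm: degree decomposition}. We have the chain of inclusions $\{\total = 1\} \subset \{\init = 1\} \subset \{\Delta > \delta\}$, and Proposition \ref{prop: exactly one node} shows that the $\n$-measures of these three events are all equivalent to $\pibar(\delta)/\alpha$ as $\delta \to \infty$. A standard splitting argument then gives that the three conditional expectations share the same limit (provided it exists for $\{\total = 1\}$), since the contribution from the differences is bounded by $\|FG\|_\infty$ times a ratio tending to $0$. Moreover, on the event $\{\total = 1\}$ the unique node realizing $\Delta$ is a first-generation one, so $T_\Delta = T_\delta$ and $\rho^{\Delta,\pm} = \rho^{\delta,\pm}$ coincide with the splitting of Section \ref{sect: conditioning on degree}.

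Using the Poissonian decomposition of Theorem \ref{thm: degree decomposition} under $\ndelta$, the event $\{\total = 1\}$ corresponds to the grafted Poisson point measure having exactly one atom whose forest has maximal degree $\leqslant \delta$. Together with Corollary \ref{cor: identity in distribution before big jump}, a direct computation yields
\begin{equation*}
\n\!\left[F(T_\delta,\rho^{\delta,-}) G\circ R_0(\rho^{\delta,+}) \ind{\total = 1}\right] = \pibar(\delta)\, \n\!\left[\int_0^\sigma F(s,\rho)\,\dd s\,\ind{\Delta\leqslant \delta}\right] \cdot \rfor\!\left(G\circ R_0(\rho)\ind{\Delta \leqslant \delta}\right),
\end{equation*}
and similarly $\n[\total = 1] = \pibar(\delta)\wplus(\delta)\rfor(\Delta\leqslant\delta)$, so that the conditional expectation factors as $\int F\,\dd\!\tilted{\delta+}\cdot \rfor(G\circ R_0(\rho)\mid \Delta\leqslant \delta)$. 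The first factor converges to $\int F \,\dd\!\tilted{\infty}$ in total variation, by the same argument as in Lemma \ref{lemma: weak convergence of bottom part} using that $\n[\sigma] = 1/\alpha < \infty$ in the subcritical case.

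The main obstacle is to establish the convergence $\rfor(R_0(\rho) \in \cdot \mid \Delta \leqslant \delta) \to \explo$ in the Skorokhod topology on $(\D, \dS)$; this is the analogue of Lemma \ref{lemma: weak convergence of forest with no big nodes} but with a random initial mass. I would adapt the coupling argument therein: given $\rho \sim \explo$ with local time $L^0$ at $0$, sample an independent initial mass $R$ with distribution $\pibar(\delta)^{-1} \pi|_{(\delta,\infty)}$ and form the exploration process $\tilde\rho^{(R)}$ via \eqref{eq: representation forest r}, obtaining a representative of $\rfor$. As $\delta \to \infty$, $R \to \infty$ almost surely, and the subcriticality of $\psi$ (which implies that $X - I$ does not drift to infinity) yields $R_0(\tilde\rho^{(R)}) \to \rho$ in $(\D, \dS)$. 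Furthermore, the conditioning $\{\Delta \leqslant \delta\}$ becomes vacuous in the limit, because
\begin{equation*}
1 - \rfor(\Delta \leqslant \delta) \leqslant \frac{\n[\Delta>\delta]}{\pibar(\delta)} \int_{(\delta,\infty)} r\,\pi(\dd r) \xrightarrow[\delta\to\infty]{} 0,
\end{equation*}
by the tail equivalence \eqref{eq: degree tail equivalent} and the integrability of $r \wedge r^2$ against $\pi$. Combining these ingredients yields the limit $\tilted{\infty}(F) \cdot \explo(G(\rho)) = \alpha \n[\int_0^\sigma F(s,\rho)\,\dd s]\explo(G(\rho))$ for $A_\delta = \{\total = 1\}$, and by the reduction step for all three events.
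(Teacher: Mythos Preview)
Your proof is correct but follows a different path than the paper's. The paper reduces to the \emph{largest} event $\{\Delta > \delta\}$ (rather than the smallest $\{\total = 1\}$) and then simply disintegrates with respect to $\Delta$: writing $\n[\,\cdot\mid\Delta>\delta]$ as an average of $\n[\,\cdot\mid\Delta = r]$ over $r > \delta$, the conclusion drops out immediately from Theorem~\ref{thm: convergence subcritical delta}, which has already handled the $\Delta = \delta$ conditioning and absorbed all the analytic work (Lemmas~\ref{lemma: weak convergence of bottom part} and~\ref{lemma: weak convergence of forest with no big nodes}). Your route, by contrast, bypasses Theorem~\ref{thm: convergence subcritical delta} entirely: you work directly on $\{\total = 1\}$, invoke the Poissonian factorization (this is exactly Lemma~\ref{lemma: decomposition conditionally on large degree}), and then establish the two limits separately---including a variant of Lemma~\ref{lemma: weak convergence of forest with no big nodes} with random initial mass $R > \delta$ in place of $\delta$. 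The paper's approach is shorter because it recycles the density-version result; your approach is more self-contained and makes transparent that the tail-version conditionings do not logically depend on the finer $\Delta=\delta$ analysis.
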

		\begin{proof}
			As the three events are equivalent it is enough to show the result for $A_\delta=\{\Delta >\delta\}$. Disintegrating with respect to $\Delta$, we have:
			\begin{multline*}
				\n\left[F({T}_\Delta, \rho^{\Delta,-})G\circ(\rho^{\Delta,+})\middle|\Delta>\delta\right]\\
				= \frac{1}{\n[\Delta>\delta]}\int_{(\delta,\infty)} \n[\Delta\in \dd r] \n\left[F({T}_\Delta, \rho^{\Delta,-})G\circ R_0(\rho^{\Delta,+})\middle|\Delta=r\right].
			\end{multline*}
			The conclusion follows from Theorem \ref{thm: convergence subcritical delta}.
		\end{proof}
		
		Recall from \eqref{eq: definition Tdelta} that $T_\delta$ is the first time $\rho$ contains a node with mass larger than $\delta$. Also recall from \eqref{eq: definition bottom} and \eqref{eq: definition upper} that $\rho^{\delta,-}$ denotes the path of the exploration process after removing the first node with mass larger than $\delta$ while $\rho^{\delta,+}$ denotes the path of the exploration process above that node. We shall determine the joint distribution of $(T_\delta, \rho^{\delta,-},\rho^{\delta,+})$ conditionally on $\init =1$ and $\total = 1$. Recall from \eqref{eq: definition w} the definition of $\wplus$.
		\begin{lemma}\label{lemma: decomposition conditionally on large degree}
			Assume that $\psi$ is (sub)critical and let $F \in \B(\real_+\times \D)$ and $G\in \B(\D)$. We have:
			\begin{align}
				\n\left[F(T_\delta,{\rho}^{\delta,-})G({\rho}^{\delta,+})\middle|\init =1\right] &= \frac{1}{\wplus(\delta)}\n\left[\int_0^\sigma F(s,\rho)\, \dd s\,\ind{\Delta\leqslant \delta}\right]\rfor(G(\rho)),\\
				\n\left[F(T_\delta,{\rho}^{\delta,-})G({\rho}^{\delta,+})\middle|\total =1\right] &= \frac{1}{\wplus(\delta)}\n\left[\int_0^\sigma F(s,\rho)\, \dd s\,\ind{\Delta\leqslant \delta}\right]\rfor(G(\rho)|\Delta\leqslant \delta).
			\end{align}
		\end{lemma}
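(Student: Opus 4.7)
The plan is to apply the Poissonian decomposition of Theorem \ref{thm: degree decomposition} and translate the two events $\{Z_0^\delta=1\}$ and $\{W^\delta=1\}$ into conditions on the underlying Poisson point measure of grafts. Working under $\ndelta$ with $\sum_{i\in I}\delta_{(s_i,\rho^i)}$ a Poisson point measure with intensity $\pibar(\delta)\,\dd s\,\rfor(\dd\tilde\rho)$ independent of $\rho$, Theorem \ref{thm: degree decomposition} identifies $\rho\circledast_{i\in I}(s_i,\rho^i)$ with the exploration process under $\n$. Under this identification, the pruned process $\rho^{\delta,-}$ is $\rho$, the time $T_\delta$ is the (unique) location $s_1$ of the first atom with $s_1<\sigma$, and $\rho^{\delta,+}$ is the corresponding grafted process $\rho^1$. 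Hence $\{Z_0^\delta=1\}$ translates to $\{\zeta=1\}$ where $\zeta=\#\{i\in I\colon s_i<\sigma\}$, and $\{W^\delta=1\}$ translates to $\{\zeta=1,\Delta(\rho^1)\leq\delta\}$.

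The first identity now follows by standard properties of Poisson point measures. Conditionally on $\rho$, the variable $\zeta$ is Poisson with parameter $\pibar(\delta)\sigma$, and conditionally on $\{\zeta=1\}$ and $\rho$, $s_1$ is uniform on $[0,\sigma]$ and $\rho^1$ has distribution $\rfor$, independent of each other and of $\rho$. Therefore
\begin{equation*}
\ndelta\!\left[F(s_1,\rho)G(\rho^1)\ind{\zeta=1}\right]=\pibar(\delta)\,\ndelta\!\left[\e^{-\pibar(\delta)\sigma}\int_0^\sigma F(s,\rho)\,\dd s\right]\rfor(G(\rho)).
\end{equation*}
By Corollary \ref{cor: identity in distribution before big jump}, $\ndelta[\e^{-\pibar(\delta)\sigma}\,\cdot\,] = \n[\,\cdot\,\ind{\Delta\leq\delta}]$, which yields
\begin{equation*}
\n\!\left[F(T_\delta,\rho^{\delta,-})G(\rho^{\delta,+})\ind{Z_0^\delta=1}\right]=\pibar(\delta)\,\n\!\left[\int_0^\sigma F(s,\rho)\,\dd s\,\ind{\Delta\leq\delta}\right]\rfor(G(\rho)).
\end{equation*}
Taking $F=G=1$ gives $\n[Z_0^\delta=1]=\pibar(\delta)\wplus(\delta)$, and dividing produces the first formula.

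For the second identity, the extra constraint $\Delta(\rho^1)\leq\delta$ factors through the independence of $\rho^1$ from $(\rho,s_1)$: the same computation gives
\begin{equation*}
\n\!\left[F(T_\delta,\rho^{\delta,-})G(\rho^{\delta,+})\ind{W^\delta=1}\right]=\pibar(\delta)\,\n\!\left[\int_0^\sigma F(s,\rho)\,\dd s\,\ind{\Delta\leq\delta}\right]\rfor(G(\rho)\ind{\Delta\leq\delta}),
\end{equation*}
and, in particular, $\n[W^\delta=1]=\pibar(\delta)\wplus(\delta)\rfor(\Delta\leq\delta)$. Dividing and rewriting $\rfor(G(\rho)\ind{\Delta\leq\delta})/\rfor(\Delta\leq\delta)=\rfor(G(\rho)\mid\Delta\leq\delta)$ concludes the proof. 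There is no real obstacle here: the lemma is essentially a bookkeeping consequence of the Poissonian decomposition combined with the Mecke-type identity for a Poisson process conditioned to have a single atom, so the only care needed is in correctly matching the events $\{Z_0^\delta=1\}$ and $\{W^\delta=1\}$ with the Poisson-atom description.
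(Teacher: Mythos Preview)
Your proof is correct and follows essentially the same route as the paper: apply the Poissonian decomposition of Theorem~\ref{thm: degree decomposition}, use that conditionally on $\rho$ and $\{\zeta=1\}$ the atom location is uniform on $[0,\sigma]$ with an independent $\rfor$-distributed graft, then invoke Corollary~\ref{cor: identity in distribution before big jump} to pass from $\ndelta$ to $\n$. The only cosmetic difference is that the paper cites \eqref{eq: exactly one first-generation node} and \eqref{eq: w plus general} for the normalizing constant, whereas you obtain it directly by setting $F=G=1$.
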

		\begin{proof}
			We only prove the first identity, the second one being similar. Theorem \ref{thm: degree decomposition} gives:
			\begin{equation*}
				\n\left[F(T_\delta,{\rho}^{\delta,-})G({\rho}^{\delta,+})\mathbf{1}_{\{\init =1\}}\right] = \ndelta\left[F(U,\rho)G(\mathcal{F}^{\delta})\ind{\zeta=1}\right],
			\end{equation*}
			where under $\ndelta$ and conditionally on $\rho$, $\rho^{\delta}$ has distribution $\rfor$, $U$ is uniformly distributed on $[0,\sigma]$, $\zeta$ has Poisson distribution with parameter $\pibar(\delta)\sigma$ and they are independent. Therefore, conditioning on $\rho$ in the last term, we get:
			\begin{align*}
				\n\left[F(T_\delta,{\rho}^{\delta,-})G({\rho}^{\delta,+})\mathbf{1}_{\{\init =1\}}\right] &= \pibar(\delta)\ndelta\left[\e^{-\pibar(\delta)\sigma}\int_0^\sigma F(s,\rho)\, \dd s\right]\rfor(G(\rho)) \\
				&=\pibar(\delta)\n\left[\int_0^\sigma F(s,\rho)\, \dd s \,\ind{\Delta\leqslant\delta}\right]\rfor(G(\rho)),
			\end{align*}
			where we used Corollary \ref{cor: identity in distribution before big jump} for the last equality. This in conjunction with \eqref{eq: exactly one first-generation node} and \eqref{eq: w plus general} yields the desired result.
		\end{proof}

		In the critical case, the three conditionings $\Delta >\delta$, $\init = 1$ and $\total = 1$ are not equivalent but they still yield the same asymptotic behavior: local convergence to the immortal Lévy tree just like in Theorem \ref{thm: convergence critical delta} where we condition by $\Delta = \delta$.
		\begin{thm}\label{thm: convergence critical}
			Assume that $\psi$ is critical and that $\pi$ is unbounded. Let $F\colon \D\to \real$ be continuous and bounded and let $A_\delta$ be equal to $\{\Delta>\delta\}$, $\{\init=1\}$ or $\{\total = 1\}$.We have
			\begin{equation}
				\lim_{\delta \to \infty}\n\left[F(r_h(\rho))\middle|A_\delta\right] = \n\left[L_\sigma^h\, F(r_h(\rho))\right].
			\end{equation}
		\end{thm}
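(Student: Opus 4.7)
The three events are not asymptotically equivalent in the critical case, so I would treat them separately. The cases $A_\delta=\{Z_0^\delta=1\}$ and $A_\delta=\{W^\delta=1\}$ follow the strategy of Theorem \ref{thm: convergence critical delta}, using the decompositions provided by Lemma \ref{lemma: decomposition conditionally on large degree}: under $A_\delta$, the process $\rho$ factors as $\tilde\rho \circledast (T_\delta,\hat\rho)$ where $(T_\delta,\tilde\rho)$ is drawn from the tilted measure $\tilted{\delta+}$ and $\hat\rho$ is an independent Lévy forest. An analog of Lemma \ref{lemma: height under tilted} shows $H(\tilde\rho_{T_\delta})$ is exponential with mean $\wplus(\delta)\to\infty$ in the critical case, so the grafting height exceeds $h$ with probability tending to $1$. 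On that event $r_h(\rho)=r_h(\tilde\rho)$, reducing the claim to the convergence
\begin{equation*}
\frac{1}{\wplus(\delta)}\n\bigl[\sigma\, F(r_h(\rho))\,\ind{\Delta\leq\delta}\bigr]\xrightarrow[\delta\to\infty]{}\n\bigl[L_\sigma^h\, F(r_h(\rho))\bigr],
\end{equation*}
which is the $\leq\delta$ variant of Lemma \ref{lemma: convergence to immortal tree}; the discrepancy from the stated $<\delta$ version comes only from atoms of $\pi$ and is absorbed by the normalization $\wplus(\delta)\to\infty$.

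The main case $A_\delta=\{\Delta>\delta\}$ is more delicate. I would use the branching property at level $h$ (Proposition \ref{prop: branching property}): conditionally on $r_h(\rho)$, the excursions $\rho^{(i)}$ of $\rho$ above level $h$ form a Poisson point measure with intensity $\ind{[0,L_\sigma^h]}(\ell)\,\dd\ell\,\n[\dd\rho]$. Writing $\Delta_h=\Delta(r_h(\rho))$ and splitting $\{\Delta>\delta\}=\{\Delta_h>\delta\}\sqcup\{\Delta_h\leq\delta,\ \max_i\Delta(\rho^{(i)})>\delta\}$, this yields
\begin{equation*}
\n[F(r_h(\rho))\ind{\Delta>\delta}]=\n[F(r_h(\rho))\ind{\Delta_h>\delta}]+\n\bigl[F(r_h(\rho))\ind{\Delta_h\leq\delta}(1-e^{-L_\sigma^h \n[\Delta>\delta]})\bigr].
\end{equation*}
For the first (``below-$h$'') term, applying Theorem \ref{thm: degree decomposition} under $\ndelta$ (which has no jumps exceeding $\delta$) shows that $\{\Delta_h>\delta\}$ occurs iff some Poisson grafting point lands below height $h$, whence $\n[\Delta_h>\delta]\leq\pibar(\delta)\,\ndelta[\int_0^\sigma\ind{H_s\leq h}\dd s]\leq h\,\pibar(\delta)$. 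Writing $\pibar(\delta)=\psi_\delta(\n[\Delta>\delta])$ and using $\psi_\delta(\lambda)\leq\psi(\lambda)+\alpha_\delta\lambda$ (valid since $\alpha=0$), one obtains
\begin{equation*}
\frac{\n[\Delta_h>\delta]}{\n[\Delta>\delta]}\leq h\Bigl(\frac{\psi(\n[\Delta>\delta])}{\n[\Delta>\delta]}+\alpha_\delta\Bigr)\xrightarrow[\delta\to\infty]{} 0,
\end{equation*}
since $\psi'(0)=0$, $\alpha_\delta=\int_{(\delta,\infty)}r\,\pi(\dd r)\to 0$, and $\n[\Delta>\delta]\to 0$.

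For the second (``above-$h$'') term, divided by $\n[\Delta>\delta]$ the integrand converges pointwise to $F(r_h(\rho))L_\sigma^h$ (using $(1-e^{-L_\sigma^h y})/y\to L_\sigma^h$ as $y\to 0$ and $\Delta_h<\infty$ a.e.), and is dominated by $\|F\|_\infty L_\sigma^h$ via $1-e^{-x}\leq x$. Since $\n[L_\sigma^h]=e^{-\alpha h}=1<\infty$, dominated convergence yields the claimed limit $\n[F(r_h(\rho))L_\sigma^h]$. The hardest part will be the below-$h$ estimate in this case: making $\n[\Delta_h>\delta]=o(\n[\Delta>\delta])$ crucially relies on $\psi'(0)=0$, which is exactly why the subcritical case (Theorem \ref{thm: convergence subcritical}) produces the completely different condensation limit.
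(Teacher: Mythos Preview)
Your treatment of $A_\delta=\{\init=1\}$ and $A_\delta=\{\total=1\}$ matches the paper's proof exactly: decompose via Lemma~\ref{lemma: decomposition conditionally on large degree}, use Lemma~\ref{lemma: height under tilted} to send the grafting height above $h$, and invoke the $\leq\delta$ variant of Lemma~\ref{lemma: convergence to immortal tree}. The paper also handles that variant by saying ``adapting the proof'' rather than by the atom-discrepancy remark you give; your phrasing ``absorbed by the normalization $\wplus(\delta)\to\infty$'' is not literally a proof, but rerunning the argument of Lemma~\ref{lemma: convergence to immortal tree} with $\leq\delta$ and $\wplus$ in place of $<\delta$ and $w$ goes through verbatim, so this is harmless.

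For $A_\delta=\{\Delta>\delta\}$ you take a genuinely different route: the paper simply cites \cite{he2022local} for this case, whereas you give a direct self-contained argument via the branching property at level $h$ (Proposition~\ref{prop: branching property}) plus the degree decomposition (Theorem~\ref{thm: degree decomposition}). Your argument is correct: the identification $\{\Delta_h>\delta\}=\{\text{some grafting point lands at height}\leq h\}$ holds because the base tree under $\ndelta$ has no large nodes while each grafted forest has its root of degree $>\delta$; the bound $\ndelta[\int_0^\sigma \ind{H_s\leq h}\,\dd s]\leq h$ follows from $\ndelta[L_\sigma^a]=\e^{-\psi_\delta'(0)a}\leq 1$; and the key estimate $\pibar(\delta)/\n[\Delta>\delta]=\psi_\delta(\n[\Delta>\delta])/\n[\Delta>\delta]\to 0$ uses criticality exactly as you say. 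The dominated-convergence step for the above-$h$ term is clean. Your approach has the advantage of being internal to the paper and of making transparent precisely where $\psi'(0)=0$ enters, at the cost of a little more work than invoking the reference.
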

		
		\begin{proof}
			Since the conditioning by $\Delta >\delta$ was already treated in \cite{he2022local}, we only consider the other two. The proof uses similar arguments to that of Theorem \ref{thm: convergence critical delta} and we only give a sketch. By Lemma \ref{lemma: decomposition conditionally on large degree}, under $\n$ and conditionally on $\init = 1$, $\rho$ is distributed as $\tilde{\rho} \circledast (s,\hat{\rho})$, where $(s,\tilde{\rho})$ has distribution
			\begin{equation*}
				\ex{F(s,\tilde{\rho})}=\frac{1}{\wplus(\delta)} \n\left[\int_0^\sigma F(s,\rho)\,\dd s\,\ind{\Delta\leqslant \delta}\right],
			\end{equation*}
			$\hat{\rho}$ has distribution $\rfor$ and they are independent. But Lemma \ref{lemma: height under tilted} gives that the height $H(\tilde{\rho}_s)$ is exponentially distributed with mean $\wplus(\delta)$. Since $\psi$ is critical, this last quantity goes to $\infty$ as $\delta \to \infty$. In particular, it holds that $H(\tilde{\rho}_s)>h$ with high probability as $\delta \to \infty$. Furthermore, on the event $\{H(\tilde{\rho}_s)>h\}$, we have that 
			\begin{equation}\label{eq: invisible forest}
				r_h(\tilde{\rho}\circledast (s,\hat{\rho})) = r_h (\tilde{\rho}).
			\end{equation}
			As a consequence, in order to show the result, it is enough to prove that
			\begin{equation*}
				\lim_{\delta\to\infty} \frac{1}{\wplus(\delta)} \n\left[\sigma F(r_h(\rho))\ind{\Delta\leqslant \delta}\right] = \n\left[L_\sigma^h\, F(r_h(\rho))\right].
			\end{equation*}
			This last convergence holds by adapting the proof of Lemma \ref{lemma: convergence to immortal tree}. Finally, when conditioning on $\total = 1$, the only change is that $\hat{\rho}$ has distribution $\rfor(\cdot|\Delta\leqslant \delta)$ but this does not contribute to the limit because of \eqref{eq: invisible forest}. This completes the proof.
		\end{proof}
		
		\section{Stable case}\label{sect: stable}
		We consider the stable case $\psi(\lambda) = \lambda^\gamma$ with $\gamma \in (1,2)$. Notice that the branching mechanism is critical with $\alpha = \beta = 0$ and the Lévy measure $\pi$ is given by:
		\begin{equation*}
			\pi(\dd r)= a_\gamma r^{-1-\gamma}\, \dd r, \quad \text{where}\quad a_\gamma = \frac{\gamma(\gamma-1)}{\Gamma(2-\gamma)}\cdot
		\end{equation*}
		Then we have:
		\begin{equation}\label{eq: tail stable}
			\pibar(\delta) = \pi(\delta,\infty)=\frac{a_\gamma}{\gamma}\delta^{-\gamma}.
		\end{equation}
		Furthermore, the Grey condition \eqref{eq: grey condition} is satisfied and we can speak of the Lévy tree $\rdtree$, see Section \ref{subsect: levy tree}.
		
		We recall the scaling property of the stable tree. For every $\gamma \in (1,2)$, define the mapping $R_\gamma \colon \T\times (0,\infty)\to \T$ by:
		\begin{equation}
			R_\gamma((T,\root,d,\mu),a) = (T,\root,ad,a^{\gamma/(\gamma-1)}\mu), \quad \forall T \in \T.
		\end{equation}
		In words, the real tree $R_\gamma((T,\root,d,\mu),a)$ is obtained from $(T,\root,d,\mu)$ by multiplying the metric by $a$ and the measure by $a^{\gamma/(\gamma-1)}$. The choice of the exponent is justified by the following identity: for every $a>0$,
		\begin{equation}\label{eq: scaling property N}
			R_\gamma(\rdtree,a) \quad \text{under } \n \quad \lawd \quad \rdtree \quad \text{under } a^{1/(\gamma-1)} \n.
		\end{equation}
		Using this, one can define a regular conditional probability measure $\n[\cdot|\sigma=a]$ such that $\n[\cdot|\sigma=a]$-a.s. $\sigma = a$ and
		\begin{equation}\label{eq: disintegration wrt duration}
			\n[\dd \rdtree] = \frac{1}{\gamma \Gamma(1-1/\gamma)} \int_0^\infty \frac{\dd a}{a^{1+1/\gamma}} \n[\dd \rdtree|\sigma=a].
		\end{equation}
		Furthermore, under $\n[\cdot|\sigma=a]$, $\rdtree$ is distributed as $R_\gamma(\rdtree, a^{1-1/\gamma})$ under $\n[\cdot|\sigma=1]$. We shall now establish the scaling property of the degree.
		\begin{proposition}\label{prop: scaling property conditional on degree}
			Let $\psi(\lambda) = \lambda^\gamma$ with $\gamma \in (1,2)$. Then, under $\n[\cdot|\Delta=\delta]$, the stable tree $\rdtree$ is distributed as $R_\gamma(\rdtree,\delta^{\gamma-1})$ under $\n[\cdot|\Delta=1]$.
		\end{proposition}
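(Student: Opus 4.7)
The plan is to combine the scaling property \eqref{eq: scaling property N} of $\n$ with the fact that the maximal degree is homogeneous of degree $1/(\gamma-1)$ under $R_\gamma$, namely
\begin{equation*}
\Delta(R_\gamma(\rdtree,a)) = a^{1/(\gamma-1)} \Delta(\rdtree),
\end{equation*}
and then use the density-version approximation $\n[\cdot|\Delta=\delta] = \lim_{\epsilon \to 0^+} \n[\cdot|\Delta \in (\delta,\delta+\epsilon)]$ made rigorous in Theorem~\ref{thm: disintegration wrt degree}. The degree-scaling claim can be justified by writing the occupation-time decomposition $\mu(\dd x) = \int_0^\infty L^b(\dd x)\,\dd b$ and substituting $b = ab'$: since $R_\gamma(\cdot,a)$ multiplies distances by $a$ and $\mu$ by $a^{\gamma/(\gamma-1)}$, matching both sides forces the rescaled local time to satisfy $L^{\mathrm{new},b}(\dd x) = a^{1/(\gamma-1)} L^{b/a}(\dd x)$, and hence the masses of its atoms (which are exactly the degrees of the infinite branching points) are rescaled by $a^{1/(\gamma-1)}$.

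Combining these two ingredients, for every nonnegative measurable $F \colon \T \to \real$ and Borel set $A \subset (0,\infty)$:
\begin{equation*}
\n\left[F(R_\gamma(\rdtree,a))\ind{\Delta \in a^{-1/(\gamma-1)}A}\right] = a^{1/(\gamma-1)} \n\left[F(\rdtree)\ind{\Delta \in A}\right].
\end{equation*}
Setting $a = \delta^{\gamma-1}$ and $A = (\delta,\delta+\epsilon)$ (so $a^{-1/(\gamma-1)}A = (1,1+\epsilon/\delta)$), and applying the same identity with $F \equiv 1$, I would divide the two equalities to obtain
\begin{equation*}
\n\left[F(R_\gamma(\rdtree,\delta^{\gamma-1}))\,\middle|\,\Delta \in (1,1+\epsilon/\delta)\right] = \n\left[F(\rdtree)\,\middle|\,\Delta \in (\delta,\delta+\epsilon)\right].
\end{equation*}
Letting $\epsilon \to 0$ and invoking Theorem~\ref{thm: disintegration wrt degree} on both sides yields the identity $\n[F(R_\gamma(\rdtree,\delta^{\gamma-1}))|\Delta=1] = \n[F(\rdtree)|\Delta=\delta]$, which is the proposition.

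The main obstacle is the passage to the limit $\epsilon \to 0$: one needs the density-differentiation argument of Theorem~\ref{thm: disintegration wrt degree} to apply to the rescaled functional $F \circ R_\gamma(\cdot,\delta^{\gamma-1})$, which follows from the continuity of $R_\gamma(\cdot,\delta^{\gamma-1})$ on $\T$ for the Gromov--Hausdorff--Prokhorov topology when $F$ is continuous and bounded. A cleaner alternative (which avoids $\epsilon$ altogether) is to disintegrate both sides of the scaling identity above along $\Delta$, observe using the self-similarity $\n[h(c\Delta)] = c\,\n[h(\Delta)]$ (obtained by taking $F \equiv 1$ in the displayed identity) that the measure $\n[\Delta \in \cdot]$ has density proportional to $1/x^2$ on $(0,\infty)$, and change variables to read off the pointwise equality $\n[F(R_\gamma(\rdtree,\delta^{\gamma-1}))|\Delta=\delta'] = \n[F(\rdtree)|\Delta=\delta\delta']$ for $\n[\Delta\in\cdot]$-a.e.~$\delta'$ and every $\delta>0$; specializing to $\delta'=1$ (which is in the support of $\n[\Delta\in\cdot]$) concludes.
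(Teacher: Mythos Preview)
Your proposal is correct and follows essentially the same route as the paper: establish the homogeneity $\Delta(R_\gamma(\rdtree,a)) = a^{1/(\gamma-1)}\Delta(\rdtree)$ and combine it with the scaling property~\eqref{eq: scaling property N} of $\n$ to pass to the conditioned law. The only noteworthy difference is in how this homogeneity is justified: the paper invokes the intrinsic formula $\Delta(\rdtree) = \sup_{x\in\rdtree}\lim_{\epsilon\to 0}((\gamma-1)\epsilon)^{-1/(\gamma-1)} n_\rdtree(x,\epsilon)$ from \cite[Theorem~4.7]{duquesne2005probabilistic}, where $n_\rdtree(x,\epsilon)$ counts the subtrees of height $>\epsilon$ above $x$; since $n_{R_\gamma(\rdtree,a)}(x,a\epsilon)=n_\rdtree(x,\epsilon)$ is immediate from the metric rescaling, the scaling of $\Delta$ drops out in one line without any appeal to local times or occupation densities. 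The paper then simply says the conclusion ``readily follows from~\eqref{eq: scaling property N}'', leaving implicit the $\epsilon$-approximation or disintegration step that you spell out; your first approach via $\n[\cdot\mid \delta-\epsilon<\Delta<\delta+\epsilon]$ and Theorem~\ref{thm: disintegration wrt degree} is the natural way to make this rigorous (two-sided intervals match the statement there more directly than one-sided). Your ``cleaner alternative'' is fine in spirit, but the final step of specializing an a.e.-in-$\delta'$ identity to $\delta'=1$ needs more than ``$1$ is in the support''; it requires continuity of $\delta\mapsto\n[\cdot\mid\Delta=\delta]$, which is true here but not automatic.
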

		\begin{proof}
			Thanks to \cite[Theorem 4.7]{duquesne2005probabilistic}, we can write the degree of the stable tree $\rdtree$ as
			\begin{equation*}
				\Delta(\rdtree) = \sup_{x\in \rdtree} \left(\lim_{\epsilon \to 0} ((\gamma-1)\epsilon)^{-1/(\gamma-1)} n_\rdtree(x,\epsilon)\right),
			\end{equation*}
			where $n_\rdtree(x,\epsilon)$ is the number of subtrees originating from $x$ with height greater than $\epsilon$. In particular, it is straightforward to check that $\Delta(R_\gamma(\rdtree,a)) = a^{1/(\gamma-1)} \Delta(\rdtree)$. Then the conclusion readily follows from \eqref{eq: scaling property N}.
		\end{proof}
		
		Denote by $\Gamma(s,y)$ the upper incomplete gamma function:
		\begin{equation*}
			\Gamma(s,y) = \int_y^\infty t^{s-1}\e^{-t}\, \dd t, \quad \forall s \in \real, y >0.
		\end{equation*}
		Then the Laplace exponent $\psi_\delta$ is given by:
		\begin{equation}\label{eq: psi delta stable}
			\psi_\delta(\lambda) = \lambda^\gamma + a_\gamma\int_{\delta}^\infty (1-\e^{-\lambda r})\, \frac{\dd r}{r^{1+\gamma}} = \lambda^\gamma (1-a_\gamma \Gamma(-\gamma,\lambda\delta)) + \gamma^{-1}a_\gamma \delta^{-\gamma}.
		\end{equation}
		We will aslo need its derivative:
		\begin{equation*}
			\psi_\delta'(\lambda) = \lambda^{\gamma-1}(\gamma + a_\gamma \Gamma(1-\gamma,\lambda \delta)).
		\end{equation*}
		\begin{proposition}\label{prop: three conditionings stable}
			In the stable case $\psi(\lambda) = \lambda^\gamma$, we have:
			\begin{align}
				\n[\Delta >\delta] &= c_\gamma \delta^{-1}, \label{eq: degree distribution stable case}\\
				\n[\init = 1] &= \frac{c_\gamma }{\gamma}\e^{c_\gamma}\delta^{-1},\\
				\n[\total = 1] &=  \left(c_\gamma - \frac{\gamma c_\gamma^{\gamma+1}}{a_\gamma}\e^{c_\gamma}\right) \delta^{-1},
			\end{align}
			where $c_\gamma \in (0,\infty)$ is such that $\Gamma(-\gamma,c_\gamma) = a_\gamma^{-1}$.
			
		\end{proposition}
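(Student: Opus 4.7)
The proof of each of the three identities reduces to an explicit computation using the formulas from Proposition~\ref{prop: joint distribution degree lifetime} and Proposition~\ref{prop: exactly one node}, combined with the defining relation $a_\gamma\Gamma(-\gamma,c_\gamma)=1$ and the standard recurrence
\begin{equation*}
\Gamma(1-\gamma,c_\gamma)=-\gamma\,\Gamma(-\gamma,c_\gamma)+c_\gamma^{-\gamma}\e^{-c_\gamma},
\end{equation*}
obtained from integration by parts.

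First, I would apply \eqref{eq: distribution degree strict} to get $\n[\Delta>\delta]=\psi_\delta^{-1}(\pibar(\delta))$. Using \eqref{eq: tail stable} and the explicit form \eqref{eq: psi delta stable} of $\psi_\delta$, the equation $\psi_\delta(\lambda)=\pibar(\delta)$ reduces to $\lambda^\gamma(1-a_\gamma\Gamma(-\gamma,\lambda\delta))=0$, whose positive solution satisfies $\Gamma(-\gamma,\lambda\delta)=a_\gamma^{-1}$, i.e.~$\lambda\delta=c_\gamma$ by definition of $c_\gamma$. This gives \eqref{eq: degree distribution stable case}.

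Second, for $\n[\init=1]$ I would use \eqref{eq: exactly one first-generation node}. The derivative $\psi_\delta'(\lambda)=\lambda^{\gamma-1}(\gamma+a_\gamma\Gamma(1-\gamma,\lambda\delta))$ evaluated at $\lambda=c_\gamma/\delta$ simplifies, via the recurrence together with $a_\gamma\Gamma(-\gamma,c_\gamma)=1$, to $\gamma+a_\gamma\Gamma(1-\gamma,c_\gamma)=a_\gamma c_\gamma^{-\gamma}\e^{-c_\gamma}$, hence $\psi_\delta'(c_\gamma/\delta)=a_\gamma c_\gamma^{-1}\delta^{1-\gamma}\e^{-c_\gamma}$. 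Dividing $\pibar(\delta)=\gamma^{-1}a_\gamma\delta^{-\gamma}$ by this yields the stated expression for $\n[\init=1]$.

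Finally, for $\n[\total=1]$ I would start from \eqref{eq: exactly one node}. I would rewrite the integral therein by noting that $\int_{(\delta,\infty)}(1-\e^{-\lambda r})\pi(\dd r)=\psi_\delta(\lambda)-\psi(\lambda)$, so after specializing at $\lambda=\n[\Delta>\delta]$ (where $\psi_\delta(\lambda)=\pibar(\delta)$) one obtains $\int_{(\delta,\infty)}\e^{-r\n[\Delta>\delta]}\pi(\dd r)=\psi(\n[\Delta>\delta])=c_\gamma^\gamma\delta^{-\gamma}$. Dividing by the value of $\psi_\delta'(c_\gamma/\delta)$ computed above and rearranging by means of the recurrence (which converts the prefactor involving $\Gamma(-\gamma,c_\gamma)=a_\gamma^{-1}$ into the desired combination with $\Gamma(1-\gamma,c_\gamma)$) produces the announced form. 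No step presents a genuine obstacle; the only subtlety is bookkeeping: one must apply the recurrence and the defining identity in the right order so that the final expressions are written purely in terms of $c_\gamma$, $\gamma$, and $a_\gamma$.
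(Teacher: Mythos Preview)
Your proposal is correct and follows essentially the same approach as the paper: use \eqref{eq: distribution degree strict} together with the explicit form \eqref{eq: psi delta stable} for the first identity, simplify $\psi_\delta'(c_\gamma/\delta)$ via the incomplete-gamma recurrence $\Gamma(s+1,x)=s\Gamma(s,x)+x^s\e^{-x}$, and then invoke Proposition~\ref{prop: exactly one node} for the remaining two. Your shortcut $\int_{(\delta,\infty)}\e^{-r\n[\Delta>\delta]}\,\pi(\dd r)=\psi(\n[\Delta>\delta])$ for the third identity is a clean variant of the paper's direct evaluation of that integral.
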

		\begin{proof}
			Thanks to \eqref{eq: distribution degree strict}, we have $\psi_\delta(\n[\Delta>\delta]) = \pibar(\delta)$.
			Together with \eqref{eq: psi delta stable}, this implies that $\delta\n[\Delta>\delta]$ is solution to $\Gamma(-\gamma,x) = a_\gamma^{-1}$. This proves \eqref{eq: degree distribution stable case}.
			
			To prove the remaining two identities, notice that
			\begin{equation*}
				\psi_\delta'(\n[\Delta>\delta]) = \psi_\delta'(c_\gamma \delta^{-1}) = c_\gamma^{\gamma-1}\delta^{1-\gamma} \left(\gamma + a_\gamma \Gamma(1-\gamma,c_\gamma)\right) = \frac{a_\gamma}{c_\gamma}\e^{-c_\gamma} \delta^{1-\gamma},
			\end{equation*}
			where we used the identity $\Gamma(s+1,x) = s\Gamma(s,x) + x^s \e^{-x}$ together with the definition of $c_\gamma$ for the last equality. The result readily follows from Proposition \ref{prop: exactly one node} by a straightforward computation.
		\end{proof}
		\begin{lemma}\label{lemma: inverse stable lambda}
			For every $\lambda \geqslant 0$, there exists a constant $c_\gamma(\lambda) \in (0,\infty)$ such that
			\begin{equation}\label{eq: inverse stable lambda}
				\psi_\delta^{-1}\left((1-\e^{-\lambda})\pibar(\delta)\right) = \frac{c_\gamma(\lambda)}{\delta}\cdot
			\end{equation}
			Moreover, $c_\gamma(\lambda)$ is the unique positive solution to $x^\gamma (a_\gamma \Gamma(-\gamma,x)-1) = \gamma^{-1} a_\gamma \e^{-\lambda}$.
		\end{lemma}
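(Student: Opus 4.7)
The plan is to exploit the scaling invariance of the stable branching mechanism to reduce everything to a universal (i.e. $\delta$-independent) object. The first step is to establish the scaling identity
\begin{equation*}
\psi_\delta(\lambda) = \delta^{-\gamma}\psi_1(\delta\lambda), \qquad \forall\lambda\geq 0,\ \delta>0,
\end{equation*}
which follows by the substitution $r = \delta s$ in the integral $a_\gamma\int_\delta^\infty(1-\e^{-\lambda r})r^{-1-\gamma}\,\dd r$ appearing in the definition of $\psi_\delta$, together with the homogeneity $\lambda^\gamma = \delta^{-\gamma}(\delta\lambda)^\gamma$. Since $\psi_1$ is continuous, strictly convex, vanishes at $0$ and tends to $+\infty$ at $+\infty$, it is a bijection of $[0,\infty)$ onto $[0,\infty)$, and the scaling identity inverts as $\psi_\delta^{-1}(\mu)=\delta^{-1}\psi_1^{-1}(\delta^\gamma\mu)$.

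The second step is to evaluate at the specific argument $\mu=(1-\e^{-\lambda})\pibar(\delta)$. Using the explicit form $\pibar(\delta)=\gamma^{-1}a_\gamma\delta^{-\gamma}$ from \eqref{eq: tail stable}, the prefactor $\delta^\gamma$ exactly cancels the $\delta$-dependence of $\pibar(\delta)$, giving $\delta^\gamma\mu=(1-\e^{-\lambda})\gamma^{-1}a_\gamma$. Setting
\begin{equation*}
c_\gamma(\lambda) := \psi_1^{-1}\!\left((1-\e^{-\lambda})\gamma^{-1}a_\gamma\right)
\end{equation*}
therefore yields \eqref{eq: inverse stable lambda}. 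For $\lambda>0$ the argument of $\psi_1^{-1}$ lies strictly between $0$ and $\gamma^{-1}a_\gamma$, so $c_\gamma(\lambda)\in(0,\infty)$ (the value $c_\gamma(0)=0$ is the only edge case).

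The third and final step is to translate the definition of $c_\gamma(\lambda)$ into the implicit equation stated in the lemma. Apply $\psi_1$ to $c_\gamma(\lambda)$ and use the closed form \eqref{eq: psi delta stable} specialized to $\delta=1$, namely $\psi_1(x)=x^\gamma(1-a_\gamma\Gamma(-\gamma,x))+\gamma^{-1}a_\gamma$. Writing out $\psi_1(c_\gamma(\lambda))=(1-\e^{-\lambda})\gamma^{-1}a_\gamma$ and rearranging algebraically produces the equation $x^\gamma(a_\gamma\Gamma(-\gamma,x)-1)=\gamma^{-1}a_\gamma\e^{-\lambda}$ at $x=c_\gamma(\lambda)$. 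Uniqueness of the positive solution is immediate: any other root would provide a second preimage of $(1-\e^{-\lambda})\gamma^{-1}a_\gamma$ under the strictly increasing map $\psi_1$.

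This argument is essentially a scaling computation; there is no substantive obstacle. The only point to be handled with minor care is verifying the scaling identity and confirming that the target value $(1-\e^{-\lambda})\gamma^{-1}a_\gamma$ lies in the range of $\psi_1$ for every $\lambda\geq 0$, which is immediate since $\psi_1$ is a continuous bijection of $[0,\infty)$.
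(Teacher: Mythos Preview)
Your proof is correct and follows essentially the same approach as the paper. Both arguments hinge on the homogeneity of $\psi_\delta$ in the stable case: the paper defines $c_\gamma(\lambda)$ directly as the root of $u_\gamma^\lambda(x)=x^\gamma(1-a_\gamma\Gamma(-\gamma,x))+\gamma^{-1}a_\gamma\e^{-\lambda}$ and then checks via \eqref{eq: psi delta stable} that $\psi_\delta(\delta^{-1}c_\gamma(\lambda))=(1-\e^{-\lambda})\pibar(\delta)$, whereas you first extract the scaling identity $\psi_\delta(\lambda)=\delta^{-\gamma}\psi_1(\delta\lambda)$ and then invert --- but these are two presentations of the same computation, since the formula \eqref{eq: psi delta stable} depends on $(\lambda,\delta)$ only through $\lambda\delta$ and $\delta^{-\gamma}$.
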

		\begin{proof}
			Fix $\lambda \geqslant 0$ and let
			\begin{equation*}
				u_{\gamma}^\lambda(x) = x^\gamma(1-a_\gamma \Gamma(-\gamma,x)) + \gamma^{-1}a_\gamma \e^{-\lambda}, \quad \forall x \geqslant 0.
			\end{equation*}
			Using the estimate $\Gamma(-\gamma,x) \sim \gamma^{-1}x^{\gamma}$ as $x\to 0$, elementary analysis gives that $u_\gamma^\lambda$ has a unique root which we denote by $c_\gamma(\lambda)$. Thanks to \eqref{eq: psi delta stable}, we get:
			\begin{equation*}
				\psi_\delta(\delta^{-1}c_\gamma(\lambda)) = (1-\e^{-\lambda}) \gamma^{-1} a_\gamma \delta^{-\gamma},
			\end{equation*}
			and the conclusion readily follows from \eqref{eq: tail stable}.
		\end{proof}

		In the stable case, we can make explicit the distribution of the Bienaymé-Galton-Watson forest $\rddtree_\delta$.
		\begin{proposition}\label{prop: galton watson stable}
			Under $\n$, conditionally on $\Delta >\delta$, the random forest $\rddtree_\delta$ consisting of nodes with mass larger than $\delta$ is a critical $(\init,\off)$-Bienaymé-Galton-Watson forest, where
			\begin{equation}
				\n\left[1-\e^{-\lambda \init}\middle|\Delta>\delta\right]= \frac{c_\gamma(\lambda)}{c_\gamma} \quad \text{and} \quad 
				\n\left[\e^{-\lambda \off}\middle|\Delta>\delta\right] = \e^{-\lambda} + \frac{\gamma}{a_\gamma}c_\gamma(\lambda).
			\end{equation}
			In particular, conditionally on $\Delta>\delta$, the distribution of $\rddtree_\delta$ is independent of $\delta$.
		\end{proposition}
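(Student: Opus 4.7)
The plan is to deduce the proposition directly from the general formulas in Proposition \ref{prop: galton watson} by substituting the explicit stable form $\psi(\lambda)=\lambda^\gamma$ and using the identities already established in Section~\ref{sect: stable}. The Bienaymé-Galton-Watson forest structure is already known from Proposition \ref{prop: galton watson}, and criticality follows from Proposition~4.8 applied to the critical branching mechanism $\psi$. So the task reduces to computing the two Laplace transforms conditionally on $\Delta>\delta$ and checking that the answers do not depend on $\delta$.

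For $\init$, I would observe that $\{\init\ge 1\}=\{\Delta>\delta\}$, so that $(1-\e^{-\lambda\init})\ind{\init=0}=0$ and hence
\begin{equation*}
\n[1-\e^{-\lambda\init}\mid\Delta>\delta]=\frac{\n[1-\e^{-\lambda\init}]}{\n[\Delta>\delta]}.
\end{equation*}
The numerator equals $\psi_\delta^{-1}((1-\e^{-\lambda})\pibar(\delta))=c_\gamma(\lambda)/\delta$ by Proposition~\ref{prop: galton watson} and Lemma~\ref{lemma: inverse stable lambda}, while the denominator equals $c_\gamma/\delta$ by Proposition~\ref{prop: three conditionings stable}, yielding $c_\gamma(\lambda)/c_\gamma$ and showing $\delta$ cancels.

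For $\off$, the key point is that in a Bienaymé-Galton-Watson forest, conditioning on non-emptiness of the forest (i.e.~on $\init\ge1$) only reweights the initial distribution and leaves the offspring distribution unchanged, hence $\n[\e^{-\lambda\off}\mid\Delta>\delta]=\n[\e^{-\lambda\off}]$. Plugging $\pi(\dd r)=a_\gamma r^{-1-\gamma}\dd r$ and $\psi_\delta^{-1}((1-\e^{-\lambda})\pibar(\delta))=c_\gamma(\lambda)/\delta$ into \eqref{eq: offspring distribution} and substituting $u=r/\delta$ in the integral eliminates the $\delta$-dependence and gives
\begin{equation*}
\n[\e^{-\lambda\off}]=\gamma\int_1^\infty \e^{-c_\gamma(\lambda) u}\,u^{-1-\gamma}\,\dd u.
\end{equation*}
Integration by parts reduces this to $\e^{-c}-c^\gamma\Gamma(1-\gamma,c)$ with $c=c_\gamma(\lambda)$. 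The identity $\Gamma(1-\gamma,c)=-\gamma\Gamma(-\gamma,c)+c^{-\gamma}\e^{-c}$ (the standard recurrence $\Gamma(s+1,x)=s\Gamma(s,x)+x^s\e^{-x}$ at $s=-\gamma$) together with the defining relation $c^\gamma(a_\gamma\Gamma(-\gamma,c)-1)=\gamma^{-1}a_\gamma\e^{-\lambda}$ from Lemma~\ref{lemma: inverse stable lambda} then collapses the right-hand side to $\e^{-\lambda}+\gamma c_\gamma(\lambda)^\gamma/a_\gamma$.

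The only real obstacle is the final algebraic reduction, which requires carefully combining the gamma-function recurrence with the implicit equation defining $c_\gamma(\lambda)$; the rest is a mechanical substitution. In particular, the conclusion that the law of $\rddtree_\delta$ (conditionally on $\Delta>\delta$) is independent of $\delta$ drops out transparently once the two Laplace transforms are shown to be functions of $\lambda$ alone, since a Bienaymé-Galton-Watson forest is characterized by its initial and offspring distributions.
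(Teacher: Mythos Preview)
Your approach is essentially the same as the paper's: for $\init$ both you and the paper use $\{\init\geqslant 1\}=\{\Delta>\delta\}$ together with \eqref{eq: initial distribution}, \eqref{eq: degree distribution stable case} and Lemma~\ref{lemma: inverse stable lambda}; for $\off$ both argue that the offspring law is unaffected by conditioning on $\{\Delta>\delta\}$ (the paper phrases this as independence of $\off$ and $\ind{\Delta>\delta}$ via Theorem~\ref{thm: degree decomposition}) and then invoke \eqref{eq: offspring distribution} with Lemma~\ref{lemma: inverse stable lambda}. You simply make the algebra explicit where the paper does not.

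One point worth flagging: your careful computation yields $\e^{-\lambda}+\frac{\gamma}{a_\gamma}c_\gamma(\lambda)^{\gamma}$, not $\e^{-\lambda}+\frac{\gamma}{a_\gamma}c_\gamma(\lambda)$ as in the displayed statement. Your version is the correct one. Indeed, a direct substitution gives $\n[\e^{-\lambda\off}]=\gamma c^\gamma\Gamma(-\gamma,c)$ with $c=c_\gamma(\lambda)$, and the defining relation $a_\gamma c^\gamma\Gamma(-\gamma,c)=c^\gamma+\gamma^{-1}a_\gamma\e^{-\lambda}$ then gives $\e^{-\lambda}+\frac{\gamma}{a_\gamma}c^\gamma$. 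As a sanity check, letting $\lambda\to\infty$ one has $c_\gamma(\lambda)\to c_\gamma$ and $\pr{\off=0}=\rfor(\Delta\leqslant\delta)=\gamma c_\gamma^\gamma\Gamma(-\gamma,c_\gamma)=\gamma c_\gamma^\gamma/a_\gamma$ by \eqref{eq: no second-generation nodes} and the definition of $c_\gamma$, which matches your formula but not the one without the exponent. So the discrepancy is a typo in the paper's statement, not an error in your argument.
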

		\begin{proof}
			Under $\ndelta$ and conditionally on $\rdtree$, let $\zeta$ be a Poisson random variable with parameter $\pibar(\delta)\sigma$. Notice that conditionally on $\Delta>\delta$, $\init$ is distributed as $\zeta$ under $\ndelta$ conditionally on $\zeta \geqslant 1$. Thus we have:
			\begin{equation}\label{eq: initial distribution conditioned proof}
				\n\left[1-\e^{-\lambda\init}\middle|\Delta>\delta\right] = \frac{\ndelta\left[(1-\e^{-\lambda \zeta}) \ind{\zeta \geqslant 1}\right]}{\ndelta\left[\zeta \geqslant 1\right]} =\frac{\ndelta\left[1-\e^{-\lambda \zeta}\right]}{\ndelta\left[\zeta \geqslant 1\right]}\cdot
			\end{equation}
			Since $\ndelta[\zeta \geqslant 1] = \n[\Delta>\delta]$ thanks to Theorem \ref{thm: degree decomposition}, it follows from \eqref{eq: initial distribution} that
			\begin{equation*}
				\n\left[1-\e^{-\lambda \init}\middle|\Delta>\delta\right]= \frac{\psi_\delta^{-1}\left((1-\e^{-\lambda})\pibar(\delta)\right)}{\n[\Delta>\delta]}\cdot
			\end{equation*}
			Combining \eqref{eq: degree distribution stable case} and \eqref{eq: inverse stable lambda}, we deduce that
			\begin{equation*}
				\n\left[1-\e^{-\lambda \init}\middle|\Delta>\delta\right]= \frac{c_\gamma(\lambda)}{c_\gamma}\cdot
			\end{equation*}
			
			Next, thanks to Theorem \ref{thm: degree decomposition}, it is easy to see than under $\n$, the random variables $\off$ and $\ind{\Delta>\delta}= \mathbf{1}_{\{\init \geqslant 1\}}$ are independent. It follows from \eqref{eq: offspring distribution} and Lemma \ref{lemma: inverse stable lambda} that
			\begin{equation*}
				\n\left[\e^{-\lambda \off}\middle|\Delta>\delta\right]  = \n\left[\e^{-\lambda \off}\right] = \e^{-\lambda} + \frac{\gamma}{a_\gamma} c_\gamma(\lambda).
			\end{equation*}
		\end{proof}
	\medskip	
	\bibliographystyle{amsplain}
	\bibliography{randomtrees}
	
	\newpage
	\cuthere \medskip
	\begin{center}
		\textbf{Index of notation}
	\end{center}
	
	\def\arraystretch{1.3}
	\begin{tabularx}{\textwidth}{p{2cm} X}
		\multicolumn{2}{l}{\textbf{Spaces}}\\
		$\Mf(E)$ & space of finite measures on $E$\\
		$\D$ & space of càdlàg functions from $\real_+$ to $\Mf(\real_+)$\\
		$\DD$ & space of càdlàg excursions from $\real_+$ to $\Mf(\real_+)$\\[1em]
		\multicolumn{2}{l}{\textbf{Random variables}}\\
		$\rho_t$ & exploration process\\
		$\eta_t$ & dual process\\
		$H_t$ & height process\\
		$\sigma$ & lifetime of the exploration process\\
		$L^{h}(\dd s)$ & local time at level $h$\\
		$\Delta$ & maximal degree of the exploration process\\
		$T_\delta$ & first time the exploration process contains a node with mass larger than $\delta$\\
		$\rho^{\delta,-}$ & path of the exploration process after removing the first node with mass larger than $\delta$\\
		$\rho^{\delta,+}$ & path of the exploration process above the first node with mass larger than $\delta$\\
		$\rddtree_\delta$ & discrete tree consisting of nodes with mass larger than $\delta$\\ 
		$\total$ & number of nodes with mass larger $\delta$\\
		$\init$ & number of first-generation nodes with mass larger than $\delta$\\
		$T_\Delta$ & first time the exploration process contains a node with mass $\Delta$\\
		$H_\Delta$ & height of the first node with mass $\Delta$\\
		$\rho^{\Delta,-}$ & path of the exploration process after removing the first node with mass $\Delta$\\
		$\rho^{\Delta,+}$ & path of the exploration process above the first node with mass $\Delta$\\[1em]
		\multicolumn{2}{l}{\textbf{Measures}}\\
		$\explo$ & distribution of the exploration process starting from $0$\\
		$\n$ & excursion measure of the exploration process\\
		$\mathbb{P}^{\psi,\ast}_{\nu}$ & distribution of the exploration process starting at $\nu$ and killed when it first reaches $0$\\
		$\for{r}$ & distribution of the exploration process with initial degree $r$\\
		$\rfor$ & distribution of the exploration process with random initial degree, \eqref{eq: definition random forest}\\
		$\tilted{\delta}$ & distribution of a marked exploration process with degree restriction, \eqref{eq: definition Pdelta}\\
		$\tiltedh{\delta}$ & distribution of a marked (at level $h$) exploration process with degree restriction, \eqref{eq: definition Ptilted h}\\[1em]
		\multicolumn{2}{l}{\textbf{Functions}}\\
		$\pibar(\delta)$ & tail of the Lévy measure $\pi$\\
		$\w(\delta)$ & $\n[\sigma \ind{\Delta< \delta}]$\\
		$\wplus(\delta)$ & $\n[\sigma \ind{\Delta\leqslant \delta}]$\\
		$\g(\delta)$ & $\pi(\delta)\e^{-\delta\n[\Delta>\delta]}$
	\end{tabularx}
\end{document}